\documentclass[11pt]{amsart}
\usepackage{amsmath,amssymb}


\topmargin-0.1in
\textwidth6.2in
\textheight8.5in
\oddsidemargin0in
\evensidemargin0in


\theoremstyle{plain}
\newtheorem{thm}{Theorem}[section]
\newtheorem{theorem}[thm]{Theorem}

\newtheorem{lemma}[thm]{Lemma}
\newtheorem{corollary}[thm]{Corollary}
\newtheorem{proposition}[thm]{Proposition}
\theoremstyle{definition}
\newtheorem{remark}[thm]{Remark}

\newtheorem{notation}[thm]{Notation}
\newtheorem{definition}[thm]{Definition}

\newtheorem{assumption}[thm]{Assumption}

\newtheorem{example}[thm]{Example}

\newtheorem{question}[thm]{Question}

\numberwithin{equation}{section}

\newcommand{\sA}{{\mathcal A}}
\newcommand{\sB}{{\mathcal B}}
\newcommand{\sC}{{\mathcal C}}
\newcommand{\sD}{{\mathcal D}}
\newcommand{\sE}{{\mathcal E}}
\newcommand{\sF}{{\mathcal F}}

\newcommand{\sJ}{{\mathcal J}}
\newcommand{\sK}{{\mathcal K}}
\newcommand{\sL}{{\mathcal L}}

\newcommand{\sN}{{\mathcal N}}
\newcommand{\sO}{{\mathcal O}}
\newcommand{\sP}{{\mathcal P}}
\newcommand{\sQ}{{\mathcal Q}}
\newcommand{\sR}{{\mathcal R}}
\newcommand{\sS}{{\mathcal S}}
\newcommand{\sT}{{\mathcal T}}
\newcommand{\sU}{{\mathcal U}}
\newcommand{\sV}{{\mathcal V}}
\newcommand{\sW}{{\mathcal W}}
\newcommand{\sX}{{\mathcal X}}
\newcommand{\sY}{{\mathcal Y}}
\newcommand{\sZ}{{\mathcal Z}}

\newcommand{\C}{{\mathbb C}}

\newcommand{\F}{{\mathbb F}}
\newcommand{\G}{{\mathbb G}}

\newcommand{\N}{{\mathbb N}}
\newcommand{\BP}{{\mathbb P}}

\newcommand{\Z}{{\mathbb Z}}


\newcommand{\Sp}{{\rm Sp}}

\newcommand{\End}{{\rm End}}

\newcommand{\fg}{{\mathfrak g}}

\newcommand{\fgl}{{\mathfrak g}{\mathfrak l}}

\newcommand{\fsp}{{\mathfrak s}{\mathfrak p}}
\newcommand{\spo}{{\mathfrak s}{\mathfrak p}_{\omega}}
\newcommand{\aut}{{\mathfrak a}{\mathfrak u}{\mathfrak t}}

\newcommand\Aut{\rm Aut}

\newcommand\sd{\rtimes}

\def\Gr{\mathop{\rm Gr}\nolimits}
\def\Go{\G_{\omega}}
\def\no{{\rm \bf n}_{\omega}}
\def\GL{\mathop{\rm GL}\nolimits}

\def\Sym{\mathop{\rm Sym}\nolimits}

\def\Hom{\mathop{\rm Hom}\nolimits}

\def\min{\mathop{\rm min}\nolimits}

\title[Characterizing symplectic Grassmannians by VMRT]{Characterizing symplectic Grassmannians by varieties of minimal rational tangents}
\author{Jun-Muk Hwang and Qifeng Li}
\thanks{The authors are supported
by National Researcher Program 2010-0020413 of NRF}

\begin{document}

\maketitle

\begin{abstract} We show that
if the variety of minimal rational tangents (VMRT) of a uniruled projective manifold at a general point is projectively equivalent to that of a symplectic or an odd-symplectic Grassmannian, the germ of a general minimal rational curve is biholomorphic to the germ of a general line in a presymplectic Grassmannian. As an application, we characterize symplectic and odd-symplectic Grassmannians, among Fano manifolds of Picard number 1, by their VMRT at a general point and prove their rigidity under global  K\"ahler deformation.  Analogous results for $G/P$ associated with a long root were obtained by Mok and Hong-Hwang a decade ago by using Tanaka theory for parabolic geometries. When $G/P$ is associated with a short root, for which symplectic Grassmannians are most prominent examples, the associated local differential geometric structure is no longer a parabolic geometry and standard machinery of Tanaka theory cannot be applied because of several degenerate features. To overcome the difficulty, we show that Tanaka's method can be generalized to a setting much broader than parabolic geometries, by assuming a pseudo-concavity type condition that certain vector bundles arising from Spencer complexes have no nonzero sections. The pseudo-concavity type condition is checked by exploiting geometry of minimal rational curves.
\end{abstract}

\medskip
MSC2010: 14M17, 14M22, 32G05, 53B15, 53C15

\medskip
\tableofcontents
\section{Introduction}

In a series of joint works with Ngaiming Mok,  the first author developed the   theory of varieties of minimal rational tangents (VMRT) on uniruled projective manifolds (see \cite{HM98}, \cite{Hw01}, \cite{Hw12}, \cite{Hw14}, \cite{M16} for introductory surveys). The basic idea of this theory
is that a large part of the global geometry of a uniruled projective manifold $X$ is controlled by the projective geometry of its VMRT $\sC_x \subset \BP T_x X$ at a general point $x \in X,$ that is, the variety comprising tangents directions to rational curves of minimal degree through $x$.  One formulation of this idea is the following.

\begin{question}\label{q.1}
Let $X$ be  a uniruled projective manifold with a family $\sK$ of minimal rational curves.
Let $\sC_x \subset \BP T_x X$ be the VMRT of $\sK$ at a general point $x \in X$ and $C \subset X$ be a general member of $\sK$. To what extent the projective geometry
of $\sC_x \subset \BP T_xX$ determines the biholomorphic geometry of a Euclidean neighborhood (i.e. the germ ) of the curve $C \subset X$?  \end{question}

This question is interesting because
the holomorphic geometry of  a neighborhood of a minimal rational curve determines a substantial part of the algebraic geometry of $X$. For example, we have the following result from \cite{HM01}.

\begin{theorem}\label{t.CF}
Let $X_1$ and $ X_2$ be two Fano manifolds of Picard number 1 with families of minimal rational curves $\sK_1$ and $\sK_2$, respectively. Assume that the members of $\sK_1$ (resp. $\sK_2$) through a general point of $X_1$ (resp. $X_2$) form an irreducible family.    Let $C_1 \subset X_1$ (resp. $C_2 \subset X_2$) be a general member of $\sK_1$ (resp. $\sK_2$).
If there exists a Euclidean neighborhood $C_1 \subset O_{C_1} \subset X_1$ (resp.
$C_2 \subset O_{C_2} \subset X_2$) and a biholomorphic map $\varphi: O_{C_1} \cong O_{C_2}$ with $\varphi(C_1) = C_2$, then $\varphi$ can be extended to a biregular morphism $X_1 \cong X_2$. \end{theorem}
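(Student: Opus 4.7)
The plan is to extend $\varphi$ by analytic continuation along chains of minimal rational curves, exploiting the fact that a Fano manifold of Picard number $1$ is chain-connected by any family of minimal rational curves.

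First I would show that the differential of $\varphi$ sends the VMRT of $\sK_1$ to the VMRT of $\sK_2$ at a general point. Concretely, the deformations of $C_1$ inside the open set $O_{C_1}$ that remain close to $C_1$ form, under $\varphi$, a family of rational curves on $X_2$ whose members are numerically close to $C_2$; by semicontinuity of degree and the minimality of $C_2$, these image curves belong to $\sK_2$. Evaluating tangent directions at a common point gives $d\varphi_x(\sC_{1,x}) = \sC_{2,\varphi(x)}$ for $x$ in a dense open subset of $O_{C_1}$.

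Next, I would propagate $\varphi$ off of $O_{C_1}$ as follows. Pick a minimal rational curve $C'\in\sK_1$ meeting the open set $O_{C_1}$ at a general point $y$. Because $\varphi$ takes $\sC_{1,y}$ to $\sC_{2,\varphi(y)}$ and because a minimal rational curve is recovered from a VMRT tangent direction, the tangent direction $[T_y C']$ determines a unique minimal rational curve $\widetilde{C}' \in \sK_2$ through $\varphi(y)$. One then identifies a neighborhood of $C'$ in $X_1$ biholomorphically with a neighborhood of $\widetilde{C}'$ in $X_2$ by matching deformations of $C'$ and $\widetilde{C}'$ in their respective minimal rational curve families. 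Iterating this procedure along chains of minimal rational curves, one extends $\varphi$ to a local biholomorphism in a neighborhood of any point reachable by such a chain from $C_1$. Since $X_1$ has Picard number $1$, such chains reach every point, so $\varphi$ is defined on a dense open subset of $X_1$.

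The main obstacle is single-valuedness: one must rule out monodromy, i.e.\ show that the value of the extension at a point $p$ does not depend on the chosen chain from $C_1$ to $p$. This is handled by observing that along any such chain the extension is uniquely determined by the tangential correspondence $d\varphi$ between VMRTs, so two chains reaching the same point yield extensions whose germs agree on an overlap of full dimension by the irreducibility of the relevant moduli spaces and analytic continuation. Once $\varphi$ is shown to extend to a biholomorphism on a Zariski open subset with complement of codimension at least $2$ (via a careful chain argument using that minimal rational curves sweep out $X_1$), Hartogs extension and the fact that biholomorphisms between smooth projective manifolds are biregular give the desired biregular morphism $X_1 \cong X_2$.
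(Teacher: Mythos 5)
A preliminary remark: the paper does not actually prove Theorem \ref{t.CF}; it is imported from the Cartan--Fubini extension theorem of \cite{HM01} (Sections 3 and 4), with an algebraic proof given in \cite{Gu}. Your outline follows the same overall strategy as \cite{HM01} --- propagate $\varphi$ along chains of minimal rational curves and finish with Picard number $1$ --- and your first step (deformations of $C_1$ inside $O_{C_1}$ map to deformations of $C_2$, hence to members of $\sK_2$, giving $d\varphi_x(\sC_{1,x})=\sC_{2,\varphi(x)}$) is sound and is indeed why the ``neighborhood of a whole curve'' hypothesis suffices here. However, the two steps where all of the work in \cite{HM01} and \cite{Gu} is concentrated are exactly the ones you assert rather than prove, and as written they are genuine gaps.

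The first gap is the propagation step. For a curve $C'\in\sK_1$ meeting $O_{C_1}$ only at points near $y$, knowing $[T_yC']$ and the VMRT correspondence produces (after invoking generic injectivity of the tangent map, which itself needs justification) a candidate curve $\widetilde{C}'\in\sK_2$, and ``matching deformations'' produces at best a holomorphic correspondence between open subsets of $\sK_1$ near $[C']$ and of $\sK_2$ near $[\widetilde{C}']$. That is a map on curve spaces, not on points. To identify a neighborhood of $C'$ in $X_1$ with a neighborhood of $\widetilde{C}'$ in $X_2$ you must show that members of $\sK_1$ passing through a common point $p$ outside $O_{C_1}$ are sent to members of $\sK_2$ passing through a common point, that the resulting point map is holomorphic, and that it again sends VMRTs to VMRTs so that the induction along the chain can continue. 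This is the content of Section 4 of \cite{HM01} (and of the corresponding algebraic argument in \cite{Gu}), and nothing in your sketch indicates how it is done. The second gap is univalence. Saying that two chains reaching the same point give germs that ``agree on an overlap of full dimension by irreducibility \dots and analytic continuation'' does not rule out monodromy: the two continuations are germs at the same point with no given common domain of agreement, and irreducibility of a moduli space does not by itself make a multivalued continuation single-valued. In \cite{HM01}/\cite{Gu} this is handled by working on the irreducible algebraic parameter space of (marked) chains, showing the transported map is defined there and descends to a rational, in fact birational, map $X_1\dashrightarrow X_2$; single-valuedness comes from this descent, not from classical analytic continuation. Relatedly, your last step is off target: one does not yet have a biholomorphism of projective manifolds to which ``biholomorphic implies biregular'' could apply; one has to show the extension is birational and an isomorphism in codimension one, and then use that a birational map between Fano manifolds of Picard number $1$ preserving the ample generators is biregular. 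So the proposal is the right skeleton, but the essential arguments of the cited proof are missing.
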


Although Theorem \ref{t.CF} is not explicitly stated in \cite{HM01}, its proof is contained in the proof of Main Theorem in \cite{HM01}, in particular Sections 3 and 4 of \cite{HM01}.
 An algebraic proof Theorem \ref{t.CF}, which works in arbitrary characteristic,   can be found in  \cite{Gu}, Corollary 3.13 of which is a version of Theorem \ref{t.CF}.

 By results like Theorem \ref{t.CF}, Question \ref{q.1} has become  an important issue in the theory of VMRT.  The first significant result on this question is the following result of Ngaiming Mok in \cite{Mo}.

\begin{theorem}\label{t.Mok}
In Question \ref{q.1}, assume that $(\sC_x \subset \BP T_x X)$ is projectively equivalent to
the VMRT $(\sC_s \subset \BP T_s S)$ of an irreducible Hermitian symmetric space $S$ at a point $s \in S$. Then some Euclidean neighborhood of $C \subset X$ is biholomorphic to a Euclidean neighborhood of a line in $S$. \end{theorem}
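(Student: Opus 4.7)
The plan is to exploit the classical fact that an irreducible Hermitian symmetric space $S = G/P$ corresponds to a parabolic subgroup whose unipotent radical is abelian, so that the differential--geometric structure on $X$ induced by the VMRT condition is an ordinary (first--order) $G_0$-structure rather than a non-trivial filtered structure. First, the projective equivalence of $(\sC_x \subset \BP T_xX)$ with $(\sC_s \subset \BP T_sS)$ at every point $x$ of some open dense subset $U \subset X$ provides a reduction of the frame bundle of $U$ to the group $G_0 := \Aut(\sC_s)^0 \subset \GL(T_sS)$, producing a holomorphic $G_0$-structure on $U$ modelled on that of $S$.

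The heart of the argument is to show that this $G_0$-structure is locally flat, i.e.\ locally equivalent to the standard $G_0$-structure on $S$. For this I would invoke Tanaka's theory of equivalence of $G$-structures: the obstructions to integrability at each order are sections of bundles whose fibres are the Spencer cohomology groups $H^{k,2}(T_sS,\fg)$ of the Tanaka prolongation $\fg = \fg_{-1}\oplus\fg_0\oplus\fg_1$, where $\fg_{-1}=T_sS$ and $\fg$ turns out to coincide with the Lie algebra of $\Aut(S)$. For an irreducible Hermitian symmetric $S$ of rank at least two, Kostant's theorem on Lie-algebra cohomology of nilradicals forces these cohomology groups to vanish in the relevant bidegrees, so the obstructions disappear automatically and the $G_0$-structure is flat. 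This yields a biholomorphism between a Euclidean neighborhood of $x$ in $X$ and a Euclidean neighborhood of a point $s \in S$ sending $\sC_x$ to $\sC_s$.

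To upgrade this pointwise equivalence to a biholomorphism near the entire curve $C$, I would observe that a line in $S$ is uniquely determined, as a germ of an analytic curve, by its tangent direction in the VMRT (lines are the integral curves of the canonical distribution associated with $\sC_s$). Hence the germ of $C$ at $x$ is carried into a line $\ell \subset S$. Because $C \cong \BP^1$ is simply connected and the local model is flat, the pointwise equivalence can be analytically continued along $C$, producing a biholomorphism of some Euclidean neighborhood of $C$ in $X$ onto a Euclidean neighborhood of $\ell$ in $S$.

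The main technical obstacle is the cohomological vanishing: once Kostant's theorem secures it (with the rank-one cases $\BP^n$ and hyperquadrics handled separately by their own elementary $G_0$-structure arguments), everything else is formal -- integration of a flat Cartan connection plus analytic continuation along a simply connected curve. It is precisely this cohomological step that fails for short-root parabolics, and overcoming that failure for symplectic Grassmannians is, according to the abstract, the technical innovation of the present paper.
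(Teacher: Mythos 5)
Your central flatness step is wrong. For an irreducible Hermitian symmetric space of rank at least two, Kostant's theorem does \emph{not} give vanishing of the degree-two Lie algebra cohomology in the relevant (positive) homogeneities; on the contrary, that cohomology is precisely where the harmonic curvature of the associated $|1|$-graded geometry lives (for example the Weyl curvature in the conformal case, or the curvature of a Grassmannian structure). If those groups vanished, every $G_0$-structure modeled on $S$ would be automatically locally flat, which is false: there are plenty of non-flat holomorphic Grassmannian and conformal structures. So nothing "disappears automatically"; Tanaka theory only gives you a canonical Cartan connection, and the local flatness must come from a \emph{global} input, namely the minimal rational curves. In the scheme of the present paper this is Proposition \ref{p.Biswas} combined with Proposition \ref{p.ratconn}: a Cartan connection on a complex manifold carrying a rational curve $f:\BP^1\to M$ with $f^*TM$ ample is necessarily locally flat, and such curves exist in every neighborhood of $C$ because the affine cones $\widehat{\sC}_x$ span $T_xX$. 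Mok's original proof likewise kills the curvature by evaluating it along the minimal rational curves, not by a cohomological vanishing.

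The second gap is that you pass from "an open dense subset $U\subset X$ where the VMRT has the model type" to "analytic continuation along $C$" without justifying that the structure is defined on a full Euclidean neighborhood of $C$. A general member $C$ of $\sK$ lies in the Zariski-open set of general points, but a priori the VMRT could degenerate at special points of $C$ or at nearby points, and the developing-map argument (Proposition \ref{p.develop}) needs a (flat) Cartan connection on a whole simply-connected neighborhood $O_C$. The paper explicitly identifies this extension as the key point of Mok's proof: one uses the constancy of the second fundamental form of $\sC_y$ along $y\in C$ (Theorem \ref{t.standard}(3)) together with a rigidity statement for the model VMRT under projective degeneration (Mok's Proposition 2.3; the symplectic analogue here is Theorem \ref{t.rigid} and Corollary \ref{c.rigid}, used in Proposition \ref{p.extensionsp}) to conclude that every point of a neighborhood of $C$ still carries the model VMRT, so the $G_0$-structure extends to $O_C$. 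Without this step, and with the flatness argument corrected as above, your outline does not yet yield the biholomorphism of a neighborhood of $C$ onto a neighborhood of a line in $S$.
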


Combined with Theorem \ref{t.CF}, it has the following consequence.

\begin{theorem}\label{t.Mokrecog}
Let $X$ be a Fano manifold of Picard number 1. Assume that the VMRT $(\sC_x \subset \BP T_x X)$ at a general point $x \in X$ is projectively equivalent to the VMRT $(\sC_s \subset \BP T_s S)$ of an irreducible Hermitian symmetric space $S$  at a point $s \in S$.
Then $X$ is biregular to $S$. \end{theorem}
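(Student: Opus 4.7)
The plan is to combine Theorem \ref{t.Mok} with Theorem \ref{t.CF}, so that essentially no new work is required beyond checking the hypotheses of the latter. Since $X$ is Fano of Picard number 1, it is uniruled and carries a family $\sK$ of minimal rational curves; similarly $S$ carries its family $\sK_S$ of lines. Let $x \in X$ be a general point and $C \subset X$ a general member of $\sK$ through $x$, and let $s \in S$, $L \subset S$ be the corresponding objects for $S$.

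First I would verify that on each side the family of minimal rational curves through a general point is irreducible, as required by Theorem \ref{t.CF}. For the Hermitian symmetric space $S$ this is classical: the VMRT $\sC_s \subset \BP T_s S$ is homogeneous under the isotropy subgroup of $\Aut(S)$ at $s$, hence irreducible, and the tangent map from the space $(\sK_S)_s$ of lines through $s$ to $\sC_s$ is birational (in fact bijective). For $X$, the hypothesis that $\sC_x$ is projectively equivalent to $\sC_s$ forces $\sC_x$ to be irreducible as well, and birationality of the tangent map at a general point then implies that the space of members of $\sK$ through $x$ is irreducible.

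Next, Theorem \ref{t.Mok} applies directly by the hypothesis on the VMRT and produces a biholomorphism $\varphi: O_C \to O_L$ between Euclidean neighborhoods of $C \subset X$ and $L \subset S$ with $\varphi(C) = L$. Feeding the triple $(X, S, \varphi)$ into Theorem \ref{t.CF} then extends $\varphi$ to a biregular morphism $X \cong S$, which is the desired conclusion.

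There is no substantial obstacle in this deduction itself: the real work is hidden in Theorem \ref{t.Mok}, whose proof uses Cartan--Fubini type rigidity together with the Hermitian symmetric structure of $S$, and in Theorem \ref{t.CF}, whose analytic continuation argument extends a local biholomorphism along chains of minimal rational curves. The only points that need attention above are the verification of irreducibility of the family through a general point on both sides, and the routine observation that the germ biholomorphism produced by Theorem \ref{t.Mok} is precisely the data needed as input to Theorem \ref{t.CF}.
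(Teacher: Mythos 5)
Your proposal is correct and follows exactly the route the paper itself takes: Theorem \ref{t.Mokrecog} is obtained there precisely by feeding the germ biholomorphism from Theorem \ref{t.Mok} into the Cartan--Fubini extension Theorem \ref{t.CF}. Your additional check of the irreducibility hypothesis of Theorem \ref{t.CF} (via homogeneity of the VMRT of $S$ and birationality of the tangent map) is a sensible verification of a point the paper leaves implicit, but it does not change the argument.
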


The original motivation for results like these was to apply them to certain rigidity problems of complex structures
(see  Subsection (3.3) of \cite{M16} and Theorem \ref{t.deform} below), but they have applications in problems in projective geometry, too. For example, Theorem \ref{t.Mokrecog} has been used in the classification
problem of certain prime Fano manifolds in Proposition 4.8 of \cite{FH18}. Theorem \ref{t.Mok} and
Theorem \ref{t.Mokrecog}  have been generalized to the case when $S$ is a homogeneous contact manifold
(\cite{Mo}), and more generally, when $S$ is a homogeneous space $G/P$  of a complex simple Lie group $G$ with a maximal parabolic subgroup $P$ associated to a long root (\cite{HH}).

What if $S=G/P$ is associated to a short root? The most prominent examples  of such $G/P$ are symplectic Grassmannians. It turns out that a strict analog of Theorem \ref{t.Mok} fails for a symplectic Grassmannian $S$. Counter-examples are provided by (desingularizations of) presymplectic Grassmannians
(Definition \ref{d.spgrass}).
The VMRT at a general point of a presymplectic Grassmannian with even nullity is projectively equivalent to that of a symplectic Grassmannian (see Proposition \ref{p.SpVMRT} below or Subsection (3.3) of \cite{M16}), while the biholomorphic geometry of germs of lines on it can be different from that of a symplectic Grassmannian (see Subsections \ref{ss.positive} and \ref{ss.rank0}).

So what should be the correct analog of  Theorem \ref{t.Mok} for the short root case, e.g., for symplectic Grassmannians?  This problem  has been puzzling experts for many years: it was stated as one of the most tantalizing problems in the field in p. 390 of \cite{Hw14} and was discussed extensively in Section 3 of \cite{M16}.   Our main result is an answer to this problem as follows. In fact, our argument works equally well for odd symplectic Grassmannians (in the sense of \cite{Mi}).

\begin{theorem}\label{t.Moksp}
Let $S$ be a symplectic Grassmannian or an odd-symplectic Grassmannian.
Let $X$ be a uniruled projective manifold with a family $\sK$ of minimal rational curves.  Assume that the VMRT $\sC_x \subset \BP T_x X$ of $\sK$ at a general point $x\in X$ is projectively equivalent to
the VMRT $\sC_s \subset \BP T_s S$ of lines at a general point $s \in S.$   Then some Euclidean neighborhood of a general member of $\sK$  is biholomorphic to a Euclidean neighborhood of a general line in one of the presymplectic Grassmannians. \end{theorem}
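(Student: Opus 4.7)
The plan is to build on Tanaka's prolongation approach to filtered differential--geometric structures, suitably generalized beyond the parabolic setting. Starting from the hypothesis, the projective equivalence of VMRTs endows a Zariski open subset $X^\circ \subset X$ with a cone structure $\sC \subset \BP TX^\circ$ modeled on the VMRT $\sC_s \subset \BP T_sS$ of a presymplectic Grassmannian. The linear span of $\sC_x$ cuts out a distribution $D \subset TX^\circ$, and iterated weak Lie brackets yield a canonical filtration $TX^\circ = F^{-k}\supset\cdots\supset F^{-1} = D$. I would first check that the associated graded at a general point is the negative part $\fm = \bigoplus_{i<0}\fg_i$ of the graded Lie algebra $\fg$ realizing one of the presymplectic Grassmannians as the flat homogeneous model; here the nullity is recovered from the rank of the Levi bracket $\Lambda^2 \fg_{-1}\to \fg_{-2}$ at a general point.

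Next I would set up the associated frame bundle. The automorphism group $G_0 \subset GL(\fg_{-1})$ of $(\sC_s \subset \BP\fg_{-1})$ together with the Levi bracket defines a principal $G_0$-structure on $X^\circ$, and standard step-by-step prolongation produces a tower of principal bundles $P_1 \to P_0 \to X^\circ$. Classical Tanaka theory then guarantees a canonical Cartan connection, whose flatness is equivalent to local isomorphism with the homogeneous model, provided certain positive Spencer cohomology groups vanish. In the long root case treated in \cite{HH} this vanishing holds pointwise. For the short root case treated here, two degenerate features obstruct the standard argument: the Levi bracket has a nontrivial kernel (corresponding to the additional directions appearing in a presymplectic Grassmannian), and pointwise Spencer cohomology no longer vanishes in the relevant bidegrees.

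The main new ingredient, suggested by the abstract, is a version of Tanaka theory in which the pointwise cohomology vanishing is replaced by a global statement on a neighborhood of a minimal rational curve: the obstructions to flatness live in certain vector bundles $\sE_i$ assembled from Spencer cochain groups, and it suffices that $\sE_i$ admit no nonzero holomorphic sections on such a neighborhood. To verify this \emph{pseudo-concavity type condition}, I would restrict $\sE_i$ to a general member $C \in \sK$, use the standard splitting of $TX\vert_C$ for minimal rational curves together with the weight decomposition of the model, and read off the Grothendieck splitting type of $\sE_i\vert_C$. The task is to show that every line bundle summand has strictly negative degree, so that $H^0(C,\sE_i\vert_C)=0$, and then extend this to a Euclidean neighborhood of $C$ by a semicontinuity and connectedness argument exploiting that $\sK$ is dominating.

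Once pseudo-concavity is established, the generalized Tanaka prolongation terminates and produces a holomorphic coframe in a neighborhood of $C$ matching that of the flat model, which by Frobenius integrability yields the desired biholomorphism with a Euclidean neighborhood of a general line in the corresponding presymplectic Grassmannian. I expect the principal obstacle to be the first step of this program: formulating the correct generalized Tanaka framework that accommodates the degeneracy of the Levi bracket, identifying precisely which Spencer bundles encode the genuine local obstructions, and tracking the action of the non-reductive structure group on them with enough control to compute splitting types on $C$. The computation of these splitting types — and in particular the verification that the negative contributions from $\sC$-directions dominate the positive contributions coming from the kernel of the Levi bracket — is where the geometry of minimal rational curves has to be used in an essential, non-formal way.
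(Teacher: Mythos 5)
Your overall skeleton — filtration from the span of the VMRT, a $G_0$-structure subordinate to it, a generalized Tanaka prolongation whose obstruction to proceeding lives in bundles built from Spencer cochains, vanishing of their global sections checked via minimal rational curves, and a developing map at the end — is indeed the architecture of the paper's proof. But two of your steps, as formulated, would fail. First, you pass from the hypothesis at a general point directly to a cone structure on a Zariski open set and then work near a general member $C$ of $\sK$ as if the structure were automatically of the modeled type along all of $C$. A general member of a dominating family can still meet the closed locus where the VMRT degenerates, and — more delicately — even where $\sC_y$ is projectively equivalent to $Z$, the distinguished point $\BP T_yC\in\sC_y$ could a priori sit on the degenerate orbit $E\subset Z$ (the exceptional divisor of $Z\to\BP(Q\oplus U)$), which would ruin every subsequent bundle computation. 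The paper devotes Section 6 and Proposition 7.6 to exactly this: a rigidity theorem for projective degenerations of $Z\subset\BP W$, proved by transporting Mok's argument from $G$-structures to locally flat projective connections on $Z\setminus E$, combined with the constancy of the second fundamental form along $C$, to conclude both that $C$ lies in the good locus and that $\BP TC$ avoids the bad divisor. Your proposal has no substitute for this step, and without it the bundles $\sU,\sQ$ on a neighborhood of $C$ and the SAF-type splitting statements cannot be obtained.

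Second, your method for verifying the ``pseudo-concavity'' condition — show that every summand of each Spencer bundle restricted to $C$ has strictly negative degree, so $H^0(C,\cdot|_C)=0$, then spread out — is not available: those bundles do \emph{not} have negative splitting type on a single minimal rational curve. For instance $\Hom(\wedge^2\Sym^2\sU,\Sym^2\sU)$ restricted to $C$, with $\sU|_C\cong\sO(1)\oplus\sO^{m-1}$, has summands of positive degree, so its space of sections on $C$ is nonzero, even though the global Hom over the neighborhood vanishes. The paper's vanishing argument (Propositions 5.3--5.7) is genuinely multi-directional: a section defined on the whole neighborhood is evaluated at a point $x$ against curves $f_{u_1},\dots,f_{u_k}$ through $x$ in $k$ pairwise independent directions, and each curve forces the value into a smaller subspace whose common intersection is zero; this is where the SAF-condition (single ample factor in $f_u^*\sU$, triviality of $f_u^*\sQ$) is used, and it cannot be replaced by a one-curve splitting computation plus semicontinuity. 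Moreover one graded piece, $\Hom(\wedge^2(\sU\otimes\sQ),\Sym^2\sU)$, does not vanish at all: it is precisely where the Frobenius bracket of $\sD$ lives. The paper deals with it by a case split on the nullity — when the bracket is nonzero it is absorbed into the symbol algebra (it becomes part of the degree-zero data defining the filtration of type $\fg_-$), and when the bracket vanishes one must use the modified construction (Theorem \ref{t.Cartan'}) in which integrability of $\sD$ kills the offending section $\bar\gamma$. Finally, note that existence of the Cartan connection does not by itself give a coframe ``matching the flat model'': local flatness is obtained separately, from Biswas's theorem applied to a rational curve in the neighborhood with ample restricted tangent bundle, before the developing map can be invoked.
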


Why is the behavior of symplectic Grassmannians regarding VMRT so different  from that of $G/P$ associated with a long root, such as Grassmannians or orthogonal Grassmannians? The key difference is the following.
\begin{itemize}
\item[(I)] If $S=G/P$ is associated to a long root, the VMRT $\sC_s \subset \BP T_s S$ is a homogeneous variety. When $(\sC_x \subset \BP T_x X)$ in Question \ref{q.1} is projectively equivalent to $(\sC_s \subset \BP T_s S)$, the VMRT's define a parabolic geometry (in the sense of \cite{CSl}) in a neighborhood of $x \in X$.
\item[(II)] If $S = G/P$ is associated to a short root, the VMRT $\sC_s \subset \BP T_s S$ is {\em not} a homogeneous variety. When $(\sC_x \subset \BP T_x X)$ in Question \ref{q.1} is projectively equivalent to $(\sC_s \subset \BP T_s S)$,  the differential geometric structure in a neighborhood of $x \in X$  defined by the VMRT's is {\em not} a parabolic geometry. \end{itemize}

    The  parabolic geometry in (I) plays a crucial role in Theorem \ref{t.Mok}. The key point in the proof of Theorem \ref{t.Mok} in \cite{Mo} is to show that such a parabolic geometry can be extended to a neighborhood $O_C$ of the curve $C$. Once this is done, the standard differential geometric machinery -- Cartan connections constructed by Tanaka in \cite{Ta}-- can be employed to settle the problem.
     As we will see in Section \ref{s.vmrt}, the argument of Mok can be modified in the case of symplectic Grassmannians in (II) to show that the corresponding differential geometric structure can be extended to a neighborhood $O_C$. But the main difficulty begins from there.
     The differential geometric structures arising in (II) have not been studied by differential geometers. These structures have several degenerate features which make it hard to apply the available generalizations  of Tanaka theory  such as \cite{Mor} and \cite{Ki}.

      We overcome the difficulty by constructing a Cartan connection in a fairly general setting
      (see Section \ref{s.Cartan}). Our construction itself is a  modification of Tanaka's prolongation procedure, but it applies to a wide class of homogeneous spaces, including those with degenerate features.  The price we pay for this wide generality is the  requirement of a condition unusual in local differential geometry: the vanishing of sections of certain vector bundles arising from Spencer complexes (see Theorem \ref{t.Cartan}). Such a vanishing condition seldom holds locally, which explains why it has never been considered by differential geometers.   Our condition is meaningful only in the global holomorphic setting: it is a kind of pseudo-concavity of the underlying complex manifold. In our setting, this condition can be checked  (see Section \ref{s.SAF}) using the geometry of rational curves. We expect that the result of Section \ref{s.Cartan} can be applied to analogous problems for many quasi-projective homogeneous varieties.

Combined with Theorem \ref{t.CF}, Theorem \ref{t.Moksp} gives the following.

\begin{theorem}\label{t.recog}
Let $S$ be a symplectic Grassmannian or an odd-symplectic Grassmannian.
Let $X$ be a Fano manifold of Picard number 1 with a family $\sK$ of minimal rational curves. Assume that the VMRT $\sC_x \subset \BP T_x X$ of $\sK$ at a general point $x \in X$ is projectively equivalent to the VMRT $\sC_s \subset \BP T_s S$ of lines at a general point $s \in S$.
Then $X$ is biregular to $S$. \end{theorem}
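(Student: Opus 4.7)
The plan is to combine Theorem~\ref{t.Moksp} with Theorem~\ref{t.CF}, using the Picard number $1$ hypothesis to pin down which presymplectic Grassmannian actually arises.

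First I would apply Theorem~\ref{t.Moksp} to $X$: the projective-equivalence hypothesis on the VMRT produces, for a general member $C\in\sK$ and a general line $L$ in some presymplectic Grassmannian $\tilde S$, a biholomorphism $\varphi:O_C\to O_L$ between a Euclidean neighborhood of $C\subset X$ and a Euclidean neighborhood of $L\subset\tilde S$, sending $C$ to $L$. The remaining task is then to identify $\tilde S$ with $S$ and to globalize $\varphi$ to a biregular isomorphism.

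To identify $\tilde S$ with $S$, I would argue that among all presymplectic Grassmannians with the given VMRT, only the symplectic Grassmannian (nullity $0$) and the odd-symplectic Grassmannian (nullity $1$) are themselves smooth Fano manifolds of Picard number $1$. A presymplectic Grassmannian of nullity at least $2$ carries, as described in Subsections~\ref{ss.positive} and \ref{ss.rank0}, a natural rational fibration onto a smaller symplectic Grassmannian coming from the kernel of the presymplectic form, so any smooth projective birational model has Picard number at least $2$. Since the goal is to identify $X$ biregularly with $\tilde S$ via Theorem~\ref{t.CF}, and since $X$ is Fano of Picard number $1$, only the cases $\tilde S=S$ are compatible with the hypotheses.

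With $\tilde S=S$, I would finally apply Theorem~\ref{t.CF} with $X_1=X$ and $X_2=S$ to the biholomorphism $\varphi$. Both sides are Fano of Picard number $1$, and the irreducibility of $\sC_s\subset\BP T_sS$, which holds for symplectic and odd-symplectic Grassmannians, transfers under projective equivalence to irreducibility of $\sC_x\subset\BP T_xX$, and hence to irreducibility of the family of minimal rational curves through a general point on each side. Theorem~\ref{t.CF} then extends $\varphi$ to a biregular morphism $X\cong S$, as desired. The substantive work is entirely absorbed into Theorem~\ref{t.Moksp}; the only genuinely new point here is the Picard-number elimination of higher-nullity presymplectic Grassmannians, which relies on their explicit geometric description.
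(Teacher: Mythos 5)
The weak point is your elimination of the higher-nullity presymplectic Grassmannians; as written it is circular and, more importantly, it is not the kind of statement that can be extracted from Theorem~\ref{t.Moksp} alone. That theorem gives only a biholomorphism of a Euclidean neighborhood of one general curve $C\subset X$ with a neighborhood of a line in some presymplectic Grassmannian $\tilde S$; it establishes no global or birational relationship between $X$ and $\tilde S$, so Picard-number considerations about smooth models of $\tilde S$ do not constrain anything yet. Your sentence ``since the goal is to identify $X$ with $\tilde S$ via Theorem~\ref{t.CF} \dots only $\tilde S=S$ is compatible'' assumes exactly what has to be proved, and you cannot even invoke Theorem~\ref{t.CF} to produce a contradiction in the nullity $\geq 2$ case, because that theorem requires both sides to be Fano manifolds of Picard number $1$, whereas $\Go(m,V)$ with $\no\geq 2$ is singular (Proposition~\ref{p.SpVMRT}(d)) and even its normalization is singular (Proposition~\ref{p.normalization}). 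The unsupported claim that any smooth projective birational model of a higher-nullity presymplectic Grassmannian has Picard number at least $2$ is also not proved anywhere in the paper and is not how the obstruction actually works.

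The paper's route is structurally different and avoids this trap: it never applies Theorem~\ref{t.CF} to the pair $(X,\tilde S)$. Instead, from the Cartan connection one gets an open immersion $h:O_C\to \Go(m,V;0)$ for whatever nullity ${\bf n}_X$ occurs, one pulls back the Lie algebra $\fg$ of vector fields, and Theorem~\ref{t.CF} is used with $X_1=X_2=X$ to extend the resulting local biholomorphisms (equivalently the vector fields $h^*\fg$) to all of $X$. This produces an algebraic group action on $X$ with a Zariski-open orbit biregular to $\Go(m,V;0)$, and only then does Theorem~\ref{t.SpPic} --- whose proof rests on the codimension estimate of Corollary~\ref{c.codim}, a Hartogs/ampleness argument, and the singularity of the normalization for $\no\geq 2$ (Corollary~\ref{c.normal}, Proposition~\ref{p.normalization}) --- simultaneously force $\no\leq 1$ and identify $X$ with $\Go(m,V)=S$. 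So the ``Picard-number elimination'' you treat as a short remark is in fact the substantive global step, and it requires the group-action globalization plus the normalization analysis, not a direct comparison of $X$ with $\tilde S$. (A smaller issue: irreducibility of the VMRT does not formally give irreducibility of $\sK_x$, so that hypothesis of Theorem~\ref{t.CF} would also need an argument, e.g. via generic injectivity of the tangent map; but this is secondary to the main gap.)
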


 Theorem \ref{t.recog} has the following application to deformation rigidity.

\begin{theorem}\label{t.deform}
Let $\pi: \sX \to \Delta$ be a smooth projective morphism from a complex manifold $\sX$ to
the unit disc $\Delta :=\{ t \in \C, \ |t| <1 \}$. If  the fiber $\pi^{-1}(t)$ is biholomorphic to a symplectic Grassmannian or an odd-symplectic Grassmannian for each $t \in \Delta \setminus \{ 0 \}$, then so is the central fiber $\pi^{-1}(0)$. \end{theorem}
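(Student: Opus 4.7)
The strategy is to reduce Theorem \ref{t.deform} to Theorem \ref{t.recog}, applied to the central fiber $X_0 := \pi^{-1}(0)$. Since $\Delta \setminus \{0\}$ is connected and the numerical invariants of a smooth family are locally constant, a fixed symplectic or odd-symplectic Grassmannian $S$ realizes $\pi^{-1}(t) \cong S$ for every $t \neq 0$. First I would verify that $X_0$ is Fano of Picard number one: smoothness of $\pi$ makes the family topologically trivial, so all Betti numbers are constant; the relative anticanonical divisor is ample in a neighborhood of $X_0$ since it is so on nearby fibers, so $X_0$ is Fano as well; and for these Fano manifolds $H^{2,0}=0$, whence $\rho(X_0) = b_2(X_0) = b_2(S) = 1$.

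Next I would construct a family of minimal rational curves on $X_0$. Let $\sK \to \Delta$ denote the irreducible component of the relative space $\RatCurves^n(\sX/\Delta)$ whose general fiber $\sK_t$ (for $t \neq 0$) is the family of lines on $X_t \cong S$. After shrinking $\Delta$, $\sK \to \Delta$ is flat, and the central fiber $\sK_0$ is an irreducible family of rational curves on $X_0$ of the same anticanonical degree as lines on $S$; combined with the Fano/Picard-one property of $X_0$ and the constancy of the minimal anticanonical degree in the family, $\sK_0$ is a minimal dominating family of rational curves on $X_0$. The crux is to identify the VMRT $\sC_0 \subset \BP T_{x_0} X_0$ of $\sK_0$ at a general $x_0$ with the VMRT of lines on $S$. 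Choose a holomorphic section $\sigma : \Delta \to \sX$ hitting a general point of each $X_t$; the relative VMRT along $\sigma$ is a closed subscheme $\sC \subset \BP(\sigma^* T_{\sX/\Delta})$ which, after possibly shrinking $\Delta$, is flat over $\Delta$ with fiber over $t \neq 0$ projectively equivalent to the VMRT of $S$. The flat limit $\sC_0$ then has the same Hilbert polynomial as that VMRT and sits inside the VMRT of $\sK_0$ at $\sigma(0)$.

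The hard part will be promoting $\sC_0$ from a flat degeneration of the VMRT of $S$ to a variety projectively equivalent to it. In the long-root setting this is essentially immediate from homogeneity of the VMRT, but in the short-root case of symplectic and odd-symplectic Grassmannians the VMRT is non-homogeneous and its projective rigidity must be established directly. I would approach this using the explicit structural description of the VMRT developed earlier in the paper (in particular the quadric-bundle structure over a projective space factor and its isomorphism type), combined with irreducibility, reducedness, and smoothness of the generic fiber $\sC_t$, and a normal-bundle cohomology vanishing ruling out non-trivial projective deformations. Once $\sC_0 \subset \BP T_{\sigma(0)} X_0$ is projectively equivalent to the VMRT of lines on $S$, Theorem \ref{t.recog} applies directly to $(X_0, \sK_0)$ and yields a biregular isomorphism $X_0 \cong S$, finishing the proof.
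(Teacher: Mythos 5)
Your overall strategy coincides with the paper's: establish that the central fiber $X_0$ is Fano of Picard number one, produce a family $\sK_0$ of minimal rational curves as the limit of the lines on the nearby fibers, show that its VMRT at a general point is projectively equivalent to the VMRT $Z \subset \BP W$ of lines on $S$, and then invoke Theorem \ref{t.recog}. The preliminary steps you spell out (topological triviality, ampleness of the relative anticanonical bundle, $\rho(X_0)=b_2(X_0)=1$, flatness of the relative space of lines) are fine and are exactly what the paper leaves implicit.

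The genuine gap is at the step you yourself call the crux. The paper does not prove the identification of the limit VMRT with $Z$ by any soft rigidity argument; it quotes Proposition 3.5.2 of \cite{HM05}, a nontrivial limiting analysis, observing only that its proof works verbatim in the odd-symplectic case. Your proposed substitute --- flatness of the relative VMRT plus ``a normal-bundle cohomology vanishing ruling out non-trivial projective deformations'' --- does not do the job. Vanishing of $H^1$ of the normal bundle of $Z$ in $\BP W$ would at most give smoothness of the Hilbert scheme at the point $[Z]$; it says nothing about which point of the Hilbert scheme the flat limit $\sC_0$ is, and flat limits with the same Hilbert polynomial can perfectly well be reducible, non-reduced, cones, or otherwise projectively inequivalent degenerations of $Z$. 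That such degenerations are not excluded by general principles is visible in the paper itself: its rigidity statement for $Z \subset \BP W$ under projective degeneration (Theorem \ref{t.rigid}, Corollary \ref{c.rigid}) requires substantial extra geometric hypotheses --- control of the lines through points of the special fiber, linear nondegeneracy, and constancy of the second fundamental form --- and in Section \ref{s.vmrt} those hypotheses are verified only along a fixed minimal rational curve inside one manifold, using Theorem \ref{t.standard}(3); no analogous input is available across the central fiber of the deformation without the argument of \cite{HM05}. There is also a secondary point folded into that citation which your sketch glosses over: one must show not merely that the flat limit $\sC_0$ sits inside the VMRT of $\sK_0$ at $\sigma(0)$, but that it equals it (as a reduced, irreducible variety), which again is part of what Proposition 3.5.2 of \cite{HM05} establishes. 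As written, the heart of the proof is missing.
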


For symplectic Grassmannians, Theorem \ref{t.deform} was proved in \cite{HM05} by a complicated limiting argument. Our proof here is more stream-lined and works equally well
for odd-symplectic Grassmannians. Combined with the infinitesimal deformation rigidity of odd-symplectic Grassmannians  (Theorem 0.5 in \cite{PP}),  Theorem \ref{t.deform} shows that complex structures of odd symplectic Grassmannians do not admit nontrivial deformations as K\"ahler manifolds.

We mention that there is a related work \cite{OSW} where a special case of  Theorem \ref{t.recog} is considered for symplectic Grassmannians under the stronger assumption  that $\sC_x \subset \BP T_x X$ is isomorphic to $\sC_s \subset \BP T_s S$ at {\em every point } $x \in X$. The method of \cite{OSW} is purely algebro-geometric and completely different from ours. Because of the stronger assumption, their approach does not lead to applications like Theorem \ref{t.deform}.

It is natural to ask whether an analog of Theorem \ref{t.recog} holds for any $S=G/P$ with maximal parabolic $P$. Only two cases remain unsettled after Theorem \ref{t.recog}: when $G$ is of type $F_4$ and $P$ is defined by either the  short root $\alpha_1$ or the short root $\alpha_2$ (in the notation of Section 7 in \cite{HM05}). We believe that our approach can be applied in these cases: the resulting differential geometric structures are expected to be covered by the construction in Section \ref{s.Cartan}. As we see below, however, checking the structure of Lie algebra prolongations  and the vanishing conditions for Spencer complexes
requires substantial work involving many special features of the individual homogeneous space. For this reason, we leave the question on these two $F_4$-homogeneous spaces for future investigations.

\medskip
The paper consists of two parts: Section \ref{s.Cartan} and the rest.
The general construction of Cartan connections in Section \ref{s.Cartan} is of independent interest. It is the differential geometric core of the paper and little knowledge of algebraic geometry is used in this section.
 Except those needed for the statement of Theorems \ref{t.Cartan} and \ref{t.Cartan'}, the notation and symbols in Section \ref{s.Cartan} will not be used in the rest of the paper.

The rest of the paper is concerned with the situation of Theorem \ref{t.Moksp}.
We introduce presymplectic Grassmannians and study their geometry in Section \ref{s.pre}.
Section \ref{s.prolong} will be devoted to the graded Lie algebra structure arising from
presymplectic Grassmannians. We check the vanishing conditions for the bundles arising from Spencer complexes in Section \ref{s.SAF}. Section \ref{s.Z}  presents some geometric properties of the VMRT of presymplectic Grassmannians. The theory of VMRT itself will not appear explicitly until the very last section, Section \ref{s.vmrt}, where we will give the proofs of Theorems \ref{t.Moksp}, \ref{t.recog}  and \ref{t.deform}.

\bigskip
\noindent {\bf Conventions}

\noindent 1.
We work in the holomorphic category. All geometric objects are defined over complex numbers and holomorphic.

\noindent 2.  When we say an open set, we mean it  in the
classical topology. An open set in Zariski topology will be called
a Zariski open set.

\noindent 3.  The projectivization $\BP V$ of a vector space  $V$
is the set of 1-dimensional subspaces in $V$. The dual space of $V$ is denoted by $V^*$.

\noindent 4. For two vector bundles $\sU, \sV$ on a manifold $M$, the space of sections of the vector bundle $Hom(\sU, \sV)$ on $M$ is denoted by $\Hom(\sU, \sV)$. If $\sV$ is the trivial vector bundle $M \times V$ associated to a vector space $V$, we write $\Hom( \sU, V)$ in place of $\Hom( \sU, M \times V)$.

\bigskip

\section{A construction of Cartan connections}\label{s.Cartan}

\subsection{Generalities on Cartan connections}
To fix notation, we recall the definition of a Cartan connection on a principal bundle.

\begin{definition}\label{d.Cartan}
Let $G$ be a connected algebraic group and $G^0 \subseteq G$ be a connected algebraic subgroup. Let $\fg^0 \subset \fg$ be their Lie algebras. A {\em Cartan connection of type} $(G, G^0)$ on a complex manifold $M$ with $\dim M =\dim G/G^0$ is a principal $G^0$-bundle $E \rightarrow M$ with a $\fg$-valued $1$-form $\Upsilon$ on $E$ with the following properties.
\begin{itemize}
\item[(i)] For $A \in \fg^0$, denote by $\zeta_A$ the fundamental vector field on $E$ induced by the right $G^0$-action on $E$. Then $\Upsilon(\zeta_{A})=A$ for each $A \in \fg^0$.
\item[(ii)] For $a \in G^0$, denote by $R_a: E \to E$ the right action of $a$. Then $R_{a}^{*}\Upsilon={\rm Ad}(a^{-1}) \circ \Upsilon$ for each $a\in G^0$.
\item[(iii)] The linear map $\Upsilon_{y}: T_{y} E \rightarrow \fg$ is an isomorphism for each $y \in E$. \end{itemize}
A standard example of a Cartan connection of type $(G,G^0)$ is given by $E= G$ and $M = G/G^0$ with the Maurer-Cartan form on $G$ as $\Upsilon.$ We will call it the {\em flat Cartan connection} of type $(G, G^0)$ and denote it by $E^{\rm flat}$ and $\Upsilon^{\rm flat}$.
\end{definition}

\begin{definition}\label{d.equivalence}
Let $(E, \Upsilon)$ be a Cartan connection of type $(G, G^0)$ on a complex manifold $M.$
\begin{itemize}
\item[(1)] Given an open subset $O \subset M$, the restriction $(E, \Upsilon)|_O$ is a Cartan connection on $O$ given by the restriction of $\Upsilon$ to the open subset
    $E|_O \subset E$.
\item[(2)] For a Cartan connection $(E', \Upsilon')$  of type $(G, G^0)$ on a complex manifold $M',$  an open immersion $\varphi: M \to M'$   {\em lifts to an equivalence} of $(E, \Upsilon)$ and $(E', \Upsilon')$, if there exists an open immersion $\phi: E \rightarrow E'$ descending to $\varphi$  such that $\phi^{*}\Upsilon'=\Upsilon$.
\item[(3)] The Cartan connection $(E, \Upsilon)$  is  {\em locally flat}, if each point $x \in M$ has a neighborhood $O$ with an open immersion
 $\varphi: O \to G/G^0$ which lifts to an equivalence of $(E, \Upsilon)|_O$ and $(E^{\rm flat}, \Upsilon^{\rm flat})|_{\varphi(O)}$.  \end{itemize}
\end{definition}

The following lemma is an immediate consequence of the fact that a Cartan connection is locally flat if and only if a naturally defined $\fg$-valued holomorphic 2-form (arising from the Maurer-Cartan equation) on the principal bundle $E$ vanishes identically (see Definition 1.10 and Theorem 5.1 in Chapter 5 of \cite{Sh}).

\begin{lemma}
 A Cartan connection of type $(G, G^0)$ on $M$ is locally flat if and only if its restriction on some open subset of $M$ is locally flat.\end{lemma}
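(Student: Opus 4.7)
The plan is to reduce the statement to the identity principle for holomorphic tensors, leveraging the curvature-form criterion for local flatness already invoked in the paragraph preceding the lemma.

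First I would introduce the $\fg$-valued curvature $2$-form
\[
K := d\Upsilon + \tfrac{1}{2}[\Upsilon,\Upsilon]
\]
on the total space $E$. The cited Theorem 5.1 in Chapter 5 of \cite{Sh} states that $(E,\Upsilon)$ is locally flat on $M$ precisely when $K$ vanishes identically on $E$. The ``only if'' direction of the lemma is then immediate from Definition \ref{d.equivalence}(3): if every point of $M$ admits a locally flattening neighborhood, the same is true for every point of any open subset.

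For the nontrivial ``if'' direction, I would assume $(E,\Upsilon)|_O$ is locally flat on some nonempty open $O \subseteq M$. Since the curvature form is intrinsic to the connection and compatible with the restriction of Definition \ref{d.equivalence}(1), applying the Sharpe criterion to $(E,\Upsilon)|_O$ yields $K \equiv 0$ on the open subset $E|_O \subseteq E$. The base $M$ is a connected complex manifold and $G^0$ is connected by hypothesis in Definition \ref{d.Cartan}, so the principal bundle $E$ is itself a connected complex manifold. The identity principle for holomorphic forms on $E$ then forces $K \equiv 0$ on all of $E$, and a second application of the Sharpe criterion delivers local flatness on all of $M$.

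The only point that deserves care is the connectedness of $E$, which is what powers the identity principle; this is guaranteed by the standing connectedness assumptions on $M$ and on $G^0$. Beyond this bookkeeping there is no real obstacle: once one takes the curvature characterization of local flatness as a black box, the lemma is simply an instance of analytic continuation for holomorphic tensors on a connected complex manifold, and no further geometric input is needed.
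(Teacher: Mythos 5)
Your argument is correct and is essentially the proof the paper has in mind: the paper derives the lemma directly from the curvature characterization of local flatness (Sharpe, Theorem 5.1 of Chapter 5), with the identity principle for the holomorphic curvature form on the connected total space $E$ supplying the propagation from an open subset to all of $M$. Your write-up just makes explicit the connectedness bookkeeping that the paper leaves implicit, so no further comment is needed.
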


The following result is from Theorem 3.1  of \cite{Bi}, where
 it was stated for a rationally connected projective manifold $M$.  The argument there works
for any complex manifold $M$ as long as there is   a holomorphic map $f: \BP^1 \to M$ such that
$f^* T(M)$ is ample.

\begin{proposition}\label{p.Biswas}
Let $M$ be a complex manifold admitting a holomorphic map $f: \BP^1 \to M$ such that
$f^* T(M)$ is ample. Then any Cartan connection of type $(G, G^0)$ on $M$ is locally flat.
\end{proposition}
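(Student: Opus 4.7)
The plan is to identify local flatness with the vanishing of the standard curvature invariant and then exploit the ampleness of $f^{*}T(M)$ to force this invariant to vanish on a Euclidean open set. Set $V:=E\times_{G^{0}}\fg$ and $\mathrm{ad}(E):=E\times_{G^{0}}\fg^{0}$. By (i) and (ii) of Definition \ref{d.Cartan} the $\fg$-valued $2$-form $K=d\Upsilon+\frac{1}{2}[\Upsilon,\Upsilon]$ on $E$ is horizontal and $G^{0}$-equivariant, and by (iii) the quotient $V/\mathrm{ad}(E)$ is identified with $TM$, giving the exact sequence $0\to\mathrm{ad}(E)\to V\to TM\to 0$. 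Consequently $K$ descends to a holomorphic section
$$\kappa\in H^{0}\bigl(M,\ \wedge^{2}T^{*}M\otimes V\bigr),$$
and by the lemma preceding the proposition it suffices to produce a Euclidean open subset on which $\kappa$ vanishes.

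First I would show $f^{*}\kappa=0$ on $\BP^{1}$. By Grothendieck's theorem, $f^{*}TM\cong\bigoplus_{i=1}^{n}\sO_{\BP^{1}}(a_{i})$ with every $a_{i}\geq 1$, so every summand of $\wedge^{2}f^{*}T^{*}M$ has degree $\leq -2$. The target vanishing is
$$H^{0}\bigl(\BP^{1},\ \wedge^{2}f^{*}T^{*}M\otimes f^{*}V\bigr)=0,$$
which would force $f^{*}\kappa=0$. This reduces to bounding the degrees of the summands of $f^{*}V$; the needed estimate uses the Bianchi identity $dK+[\Upsilon,K]=0$ together with the decomposition of $\fg$ under the adjoint action of $G^{0}$, which allows the curvature to be analysed in equivariant components. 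This is the content of Section 3 of \cite{Bi}.

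To promote vanishing of $f^{*}\kappa$ to vanishing of $\kappa$ on an open set, I would observe that $H^{1}(\BP^{1},f^{*}TM)=0$ by ampleness, so deformations of $f$ are unobstructed, and the evaluation of the universal family at any marked point of $\BP^{1}$ is a submersion onto a Euclidean neighborhood of $f(\BP^{1})$. Since ampleness of the pull-back is an open condition, the vanishing argument of the previous paragraph applies verbatim to every nearby deformation $g:\BP^{1}\to M$ and yields $g^{*}\kappa=0$; as the images of these deformations sweep out a Euclidean open subset $U\subset M$, one concludes $\kappa|_{U}\equiv 0$, and the preceding lemma then gives local flatness on all of $M$. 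The main obstacle is the degree bound on summands of $f^{*}V$: the extension $0\to f^{*}\mathrm{ad}(E)\to f^{*}V\to f^{*}TM\to 0$ need not split, and in principle it could produce summands of $f^{*}V$ of large positive degree, so the bound must exploit the specific structure of the curvature (horizontality, $G^{0}$-equivariance, Bianchi) rather than extension data alone.
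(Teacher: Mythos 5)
Your overall outline (curvature $2$-form descending to $\kappa\in H^{0}(M,\wedge^{2}T^{*}M\otimes V)$, vanishing of $f^{*}\kappa$ on $\BP^{1}$, spreading by unobstructed deformations of $f$, then Lemma 2.3) is indeed the strategy behind the result; the paper itself offers no independent argument but simply cites Theorem 3.1 of \cite{Bi} and observes that the argument only needs one rational curve with ample pull-back of $TM$. However, your proposal has a genuine gap exactly at the step you flag as "the main obstacle": the vanishing of $H^{0}(\BP^{1},\wedge^{2}f^{*}T^{*}M\otimes f^{*}V)$ requires a degree bound on the summands of $f^{*}V$, and you neither prove it nor identify the mechanism that actually gives it. The Bianchi identity together with the $G^{0}$-equivariant decomposition of $\fg$ will not produce degree bounds: these are pointwise/differential identities on the curvature, while the issue is the holomorphic structure of the bundle $V=E\times_{G^{0}}\fg$ restricted to the curve, and as you note the extension $0\to\mathrm{ad}(E)\to V\to TM\to 0$ by itself allows arbitrarily positive summands.

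The missing ingredient is that $V$ carries a holomorphic connection. Form the extended principal $G$-bundle $E_{G}:=E\times_{G^{0}}G$; property (iii) of Definition 2.1 implies that the $\fg$-valued $1$-form $\Upsilon$ extends uniquely to a principal connection on $E_{G}$, and $V$ is precisely $\mathrm{ad}(E_{G})=E_{G}\times_{G}\fg$, hence inherits a holomorphic connection. Pulling back, $f^{*}V$ is a bundle on $\BP^{1}$ with a holomorphic connection, so by the Atiyah--Weil criterion every indecomposable summand has degree $0$; on $\BP^{1}$ indecomposables are line bundles, so $f^{*}V\cong\sO_{\BP^{1}}^{\oplus\dim\fg}$. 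Since ampleness gives $f^{*}TM\cong\bigoplus_{i}\sO_{\BP^{1}}(a_{i})$ with all $a_{i}\geq 1$, every summand of $\wedge^{2}f^{*}T^{*}M\otimes f^{*}V$ has degree $\leq -2$, whence $f^{*}\kappa=0$. With this inserted, your spreading argument (openness of ampleness, $H^{1}(\BP^{1},f^{*}TM)=0$, submersive evaluation map, identity of $g^{*}\kappa$ with the restriction of the tensor $\kappa$ along $g$) and the appeal to Lemma 2.3 complete the proof; without it, the central vanishing is unproved and the route you suggest for it would not succeed.
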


The existence of a locally flat Cartan connection has the following well-known consequence,
the existence of developing maps.

\begin{proposition}\label{p.develop}
Suppose there exists a locally flat Cartan connection of type $(G, G^0)$ on a simply-connected complex manifold $M$. Then there exists an open immersion $h: M \to G/G^0$
which  lifts to an equivalence of Cartan connections. \end{proposition}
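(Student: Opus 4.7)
The plan is the classical developing map construction by analytic continuation, using simple connectedness of $M$ to kill monodromy.

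The key technical step is a rigidity lemma: if $O \subseteq M$ is connected and $\psi_1, \psi_2 : O \to G/G^0$ are two open immersions, each lifting to an equivalence of $(E,\Upsilon)|_O$ with $(E^{\rm flat}, \Upsilon^{\rm flat})$, then there exists a unique $g \in G$ with $\psi_2 = L_g \circ \psi_1$, where $L_g$ denotes left translation by $g$ on $G/G^0$. To prove it, let $\phi_1,\phi_2$ be the corresponding bundle lifts; the composition $\Phi := \phi_2 \circ \phi_1^{-1}$ is a $G^0$-equivariant biholomorphism between open subsets of $E^{\rm flat} = G$ which pulls the Maurer--Cartan form back to itself. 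Pick any $y_0$ in the domain of $\Phi$ and set $g := \Phi(y_0) \cdot y_0^{-1}$; then $L_g$ and $\Phi$ agree at $y_0$ and, since both preserve the Maurer--Cartan form, have the same differential there. The classical fact that a holomorphic map into a Lie group is determined up to left translation by its Maurer--Cartan pullback then forces $\Phi = L_g$ in a neighborhood of $y_0$, and connectedness of the domain propagates this to all of $O$.

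Granted the rigidity lemma, I would fix a base point $x_0 \in M$ together with a germ of a local developing map $\varphi_0$ at $x_0$ supplied by local flatness. For any path $\gamma : [0,1] \to M$ starting at $x_0$ and any finite chain $O_0, \ldots, O_k$ of connected open sets covering $\gamma([0,1])$ with local developing maps $\varphi_i$, successively adjust each $\varphi_i$ by a left translation $L_{g_i}$, uniquely determined by the rigidity lemma on the component of $O_{i-1}\cap O_i$ containing the relevant piece of $\gamma$, so that consecutive maps agree on the overlaps. This produces a germ $h_\gamma$ of a developing map at $\gamma(1)$.

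The final step is the monodromy theorem. A homotopy between two paths from $x_0$ with the same endpoint can be subdivided into small squares each contained in a single $O_\alpha$, on which the rigidity lemma forces the adjusted germ to be unchanged across the square. Hence $h_\gamma$ depends only on the homotopy class of $\gamma$ relative to endpoints, and simple connectedness of $M$ assembles the germs into a single holomorphic map $h : M \to G/G^0$. Since $h$ locally coincides with an open immersion it is itself an open immersion, and the local bundle lifts $\phi_i$, composed with the same left translations $L_{g_i}$ on the target, glue into a global open immersion $\phi : E \to E^{\rm flat}$ with $\phi^* \Upsilon^{\rm flat} = \Upsilon$. The only substantive ingredient is the rigidity lemma; the rest is a routine monodromy argument on a simply-connected space.
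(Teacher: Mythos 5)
Your proposal is correct and follows essentially the same route as the paper: analytic continuation of local developing maps along a path, adjusting consecutive charts by left translations so they agree on overlaps, and using simple connectedness of $M$ to get independence of the path and a global map $h$. The only difference is that you prove the overlap rigidity step (two local developing maps on a connected open set differ by a left translation, via the Maurer--Cartan pullback argument on the bundle level), whereas the paper simply cites Theorem 5.2 in Chapter 5 of \cite{Sh} for this fact.
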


\begin{proof}
 Pick  a base point
$x_0 \in M$. By the local flatness, we have a neighborhood $O_{x_0}$ of $x_0$ and an open immersion $h_0: O_{x_0} \to G/G^0$ which lifts to an equivalence of Cartan connections.   For any point $x \in M$, we can analytically continue $h_0$ along a path in $M$ joining $x_0$ to $x$ as follows. Choose
 finitely many points $x_1, \ldots, x_N=x$ on the path with neighborhoods $O_{x_1}, \ldots, O_{x_N}$ in $M$ such that \begin{itemize} \item[(1)] the union
 $O_{x_0} \cup O_{x_1} \cup \cdots \cup O_{x_N}$ is a neighborhood of the path; \item[(2)] each intersection
 $O_{x_i} \cap O_{x_{i+1}}$ is connected and simply-connected;
  \item[(3)] there is an open immersion $h_{i}: O_{x_i} \to G/G^0$ which lifts to an equivalence of  Cartan connections.
     \end{itemize}
      By Theorem 5.2 in Chapter 5 of \cite{Sh}, there exists $g_1 \in G$ such that
      $g_1 \cdot h_{1}$ agrees with $h_0$ in the intersection $O_{x_0} \cap O_{x_1}$.
      Replace $h_1$ by $g_1 \cdot h_1$ to obtain an extension of $h_0$ to $O_{x_0} \cup O_{x_1}$.  Applying the same argument, we have  $g_2 \in G$
      such that $g_2 \cdot h_2$ agrees with $h_1$ in the intersection $O_{x_1} \cap O_{x_2}$. Replace $h_2$ by $g_2 \cdot h_2$ to obtain an extension of $h_0$ to
      $O_{x_0} \cup O_{x_1} \cup O_{x_2}$. Repeating this process, we end up with
      a holomorphic map from a neighborhood of the path into $G/G^0$ extending $h_0$.
       As $M$ is simply connected, this extension does not depends on the choice of paths and we obtain an extension of $h_0$ to the whole $M$.
       \end{proof}

These propositions show that  on a manifold admitting many rational curves, the existence of a Cartan connection has a strong consequence. The main goal of this section is the construction of Cartan connections associated to certain geometric structures for a   class of $(G,G^0)$.

\subsection{Prolongations of Lie algebras}

In this subsection, we introduce a  class of pairs of connected algebraic groups $(G, G^0)$, as specified in Proposition \ref{p.algebraic} below.

\begin{notation}
For two graded vector spaces $A = \oplus_{i \in \Z} A_i$ and $B= \oplus_{j \in \Z} B_j$, there is a natural gradation
$$ \Hom (A, B) = \bigoplus_{\ell \in \Z} \Hom(A,B)_{\ell} $$ defined by
$$\Hom(A,B)_{\ell} := \oplus_{k \in \Z} \Hom(A_k, B_{k + \ell}).$$
\end{notation}

\begin{definition}\label{d.fg}
Fix a positive integer $\nu$. Let $\fg_{-} = \fg_{-1} \oplus \cdots \oplus \fg_{-\nu}$ be a graded nilpotent Lie algebra. Denote by ${\rm gr}\Aut(\fg_{-})$ the group of  Lie algebra automorphisms of $\fg_{-}$ preserving the gradation and by ${\rm gr}\aut(\fg_{-})$ its Lie algebra. Fix a connected algebraic subgroup $G_0 \subset {\rm gr}\Aut(\fg_{-})$ and its Lie algebra $\fg_0 \subset {\rm gr}\aut(\fg_{-})$. For each positive integer $i,$ the $i$-th {\em prolongation} of $\fg_0$ is inductively defined as
\begin{align*}
\begin{aligned}
\fg_i &:= \Big\{ \phi \in \Hom(\mathfrak{g}_{-}, \bigoplus\limits_{-\nu \leq j < i} \mathfrak{g}_j)_i := \bigoplus\limits_{k=1}^{\nu} \Hom(\mathfrak{g}_{-k}, \mathfrak{g}_{-k + i}), \\
 & \qquad \phi([v_1, v_2]_{\mathfrak{g}_{-}}) = \phi(v_1)(v_2) - \phi(v_2)(v_1), \quad \text{for any} \quad v_1, v_2 \in \mathfrak{g}_{-}   \Big\}.
\end{aligned}
\end{align*}
Here $[\quad, \quad]_{\mathfrak{g}_{-}}$ denotes the Lie bracket on $\mathfrak{g}_{-}$ and,  if $\phi(v_{1})\in \mathfrak{g}_{-}$, then $$\phi(v_{1})(v_{2}):=[\phi(v_{1}), v_{2}]_{\mathfrak{g}_{-}}.$$ For convenience, we put $\fg_{-\nu - j} =0$ for every positive integer $j$ and write $$\fg_{-} = \bigoplus_{k \in \N} \fg_{-k}.$$ The graded vector space $$\fg:= \bigoplus_{k \in \Z} \fg_k$$ is called the
{\em universal prolongation } of $(\fg_0, \fg_{-})$.
\end{definition}

The following is well-known (see p. 430 of \cite{Ya}).

\begin{lemma}\label{l.fg}
In Definition \ref{d.fg}, write $$\fg^k: = \bigoplus_{j \geq k} \fg_j \mbox{ for each } k \in \Z.$$ Then $$\fg^{-\nu} = \fg=  \bigoplus_{j \in \Z} \fg_j$$ has a unique graded Lie algebra structure whose Lie bracket $[\quad, \quad]_{\fg}$ satisfies
\begin{itemize} \item[(i)] $[v_{1}, v_{2}]_{\fg}=[v_{1}, v_{2}]_{\mathfrak{g}_{-}}$  for any $ v_{1}, v_{2}\in \mathfrak{g}_{-}$;
\item[(ii)] $[\varphi, v]_{\fg}=\varphi(v)$ for any  $\varphi\in \fg^0$ and $ v\in \mathfrak{g}_{-}$;
\item[(iii)] $[\varphi_{1}, \varphi_{2}]_{\fg}(v)=[\varphi_{1}(v), \varphi_{2}]_{\fg}+[\varphi_{1}, \varphi_{2}(v)]_{\fg}$ for any $\varphi_{1}, \varphi_{2}\in \fg^0$ and $ v\in \mathfrak{g}_{-}.$
    \item[(iv)] $[\mathfrak{g}_i, \mathfrak{g}_j] \subseteq \mathfrak{g}_{i+j}$ for all $i,j \in \mathbb{Z}$.
\end{itemize}
In particular,  the subspace $\fg^0$ is a subalgebra of $\fg$ and
$\fg^{\ell}$ is an ideal of $\fg^0$ for each positive integer $\ell$.  \end{lemma}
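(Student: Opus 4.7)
My plan is to construct the bracket explicitly, degree by degree, and verify the axioms by induction. Uniqueness is essentially formal. Conditions (i) and (ii), together with the skew-symmetry required of any Lie bracket, determine $[\,\cdot\,,\,\cdot\,]_{\fg}$ whenever at least one argument lies in $\fg_-$; note also that (iv) for mixed negative/nonnegative degrees is automatic because $\varphi \in \fg_j$ is by construction a sum of maps $\fg_{-k} \to \fg_{-k+j}$. For $\varphi_1 \in \fg_i,\ \varphi_2 \in \fg_j$ with $i,j \geq 0$ and $i+j \geq 1$, condition (iv) forces $[\varphi_1, \varphi_2]_{\fg} \in \fg_{i+j}$, which by definition of the prolongation is a subspace of $\Hom(\fg_-, \bigoplus_{\ell < i+j} \fg_\ell)$; hence $[\varphi_1, \varphi_2]_{\fg}$ is determined by its action on $\fg_-$, and (iii) expresses that action in terms of brackets of strictly smaller total degree. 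An induction on $i+j$ therefore yields uniqueness.

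For existence, I would define $[\varphi_1, \varphi_2]_{\fg}$ inductively on $i+j$ by declaring its action on $v \in \fg_-$ to be the right-hand side of (iii), which by induction consists of previously-defined brackets of lower total degree. The critical point is to verify that this candidate genuinely lies in $\fg_{i+j}$, i.e.\ satisfies the prolongation relation
\[
[\varphi_1, \varphi_2]_{\fg}\bigl([v_1, v_2]_{\mathfrak{g}_{-}}\bigr) = \bigl[[\varphi_1, \varphi_2]_{\fg}(v_1), v_2\bigr]_{\fg} - \bigl[[\varphi_1, \varphi_2]_{\fg}(v_2), v_1\bigr]_{\fg}
\]
for all $v_1, v_2 \in \fg_-$. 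Expanding both sides by (iii) and using the prolongation conditions already known for $\varphi_1$ and $\varphi_2$, all terms rearrange (via Jacobi on $\fg_-$ and the inductively-known Jacobi relations at lower total degree) into a cancellation. The Jacobi identity for the full bracket is then established by an analogous induction on the total degree $i+j+k$ of triples in $\fg_i \times \fg_j \times \fg_k$: the base cases with all factors in $\fg_- \oplus \fg_0$ follow from Jacobi on $\fg_-$ and from $\fg_0 \subseteq {\rm gr}\aut(\fg_-)$ acting by derivations, while the inductive step evaluates the Jacobi expression on a test element $v \in \fg_-$, applies (iii), and exploits the injectivity principle just used: an element of $\fg^0$ of positive degree is determined by its values on $\fg_-$. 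Property (iv) is built into the construction, and the final assertions that $\fg^0$ is a subalgebra of $\fg$ and each $\fg^{\ell}$ is an ideal of $\fg^0$ follow at once from (iv).

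The main obstacle is the prolongation-condition verification in the existence step. The manipulation is elementary but the bookkeeping is delicate: for $v \in \fg_{-k}$, the element $\varphi_1(v) \in \fg_{i-k}$ may sit in $\fg_-$, in $\fg_0$, or in strictly positive degree according as $i < k$, $i = k$, or $i > k$, and the meaning of the bracket $[\varphi_1(v), \varphi_2]_{\fg}$ appearing in (iii) is supplied by a different clause of the definition in each of these sub-cases. Organizing the induction so that every sub-bracket arising in the expansion is either already available or reduces to (i)--(ii), and handling the three sub-cases uniformly by treating elements of $\fg_-$ throughout as test objects, is the technical heart of the argument; this is the Tanaka-style calculation alluded to in \cite{Ya}.
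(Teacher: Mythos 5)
Your outline is correct: it is the standard Tanaka-style inductive construction (uniqueness from the fact that elements of $\fg^0$ are determined by their action on $\fg_-$, existence by defining brackets of positive total degree via (iii) and verifying the prolongation relation, Jacobi by induction on total degree). The paper gives no proof of its own and simply cites Yamaguchi, whose argument is essentially the one you sketch, so your approach matches the intended one.
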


\begin{definition}\label{d.cohomology}
In the setting of Lemma \ref{l.fg}, define, for each  $\ell \in \N$,
\begin{eqnarray*}
C^{\ell, 1}(\mathfrak{g}_{0}) & := &\Hom (\mathfrak{g}_{-}, \mathfrak{g})_{\ell} \\ & =& \bigoplus_{j\in \N}\Hom(\fg_{-j}, \mathfrak{g}_{-j+\ell}) \\
C^{\ell, 2}(\fg_0) & := & \Hom(\wedge^2 \fg_{-}, \fg)_{\ell} \\ &=&
\bigoplus_{i, j \in \N} \Hom(\fg_{-i} \wedge \fg_{-j}, \fg_{- i -j + \ell}).
\end{eqnarray*}
For an element $f \in C^{\ell, 1}(\fg_0)$, define $\partial f \in C^{\ell, 2}(\mathfrak{g}_0)$ by
$$\partial f(u, v)  = [f(u), v] + [u, f(v)] -f([u, v]) \mbox{ for all } u, v \in \fg_{-}.$$ This determines a $G_0$-module homomorphism, called the Spencer coboundary operator,  $$\partial: C^{\ell, 1}(\fg_0) \to C^{\ell, 2}(\fg_0).$$  Its cokernel $C^{\ell, 2}(\fg_0)/\partial(C^{\ell,1}(\fg_0))$ has a natural $G_0$-module structure. \end{definition}

The following is immediate from the definition.

\begin{lemma}\label{l.cohom}
Let $\fg$ be as in Definition \ref{d.fg}. Then $$\fg_{\ell} = {\rm Ker}(\partial:
 C^{\ell, 1}(\fg_0) \to C^{\ell, 2}(\fg_0))$$ for each positive integer $\ell$.
\end{lemma}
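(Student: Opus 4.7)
The plan is to observe that this lemma is essentially a matching of definitions, so the proof should simply unpack both sides and check that the condition cutting out $\fg_\ell$ inside its ambient vector space agrees precisely with the vanishing of $\partial$.

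First, I would identify the ambient spaces. For $\ell \geq 1$, the space in which $\fg_\ell$ is carved out by Definition \ref{d.fg} is
$$\Hom(\fg_{-},\bigoplus_{-\nu\leq j<\ell}\fg_j)_\ell \;=\; \bigoplus_{k=1}^{\nu}\Hom(\fg_{-k},\fg_{-k+\ell}),$$
since for $k\geq 1$ we always have $-k+\ell<\ell$, and $\fg_{-k+\ell}=0$ for $k>\nu+\ell$. On the other hand, $C^{\ell,1}(\fg_0)=\bigoplus_{j\in\N}\Hom(\fg_{-j},\fg_{-j+\ell})$ has the same summands (terms with $j=0$ would be $\Hom(\fg_0,\fg_\ell)$, which are absent since $\fg_-=\bigoplus_{k\geq 1}\fg_{-k}$). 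Hence the two ambient spaces coincide.

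Next I would reconcile the two forms of the defining relation. The definition of $\fg_\ell$ demands
$$\phi([v_1,v_2]_{\fg_-})=\phi(v_1)(v_2)-\phi(v_2)(v_1),$$
where, by convention, $\phi(v)(w):=[\phi(v),w]_{\fg_-}$ when $\phi(v)\in\fg_-$. The key observation is that this convention is consistent with the bracket in $\fg$ itself: by part (i) of Lemma \ref{l.fg}, the $\fg$-bracket restricts to the $\fg_-$-bracket, while by part (ii), for $\phi(v)\in\fg^0$ and $w\in\fg_-$ one has $[\phi(v),w]_{\fg}=\phi(v)(w)$. Therefore in every case, whether $-k+\ell<0$ or $\geq 0$,
$$\phi(v)(w) \;=\; [\phi(v),w]_{\fg}.$$

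Combining these, for any $u,v\in\fg_-$ the defining condition rewrites as
$$\phi([u,v]_{\fg})\;=\;[\phi(u),v]_{\fg}+[u,\phi(v)]_{\fg},$$
which is exactly $\partial\phi(u,v)=0$. Since both sides of the claimed equality live in the same ambient space and are cut out by the same relation, this gives $\fg_\ell=\Ker(\partial\colon C^{\ell,1}(\fg_0)\to C^{\ell,2}(\fg_0))$.

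The only potential subtlety (and the closest thing to an obstacle) is making sure the conventions match: that $\phi(v)(w)$ in Definition \ref{d.fg}, defined in two different ways depending on whether $\phi(v)\in\fg^0$ or $\phi(v)\in\fg_-$, agrees uniformly with the $\fg$-bracket $[\phi(v),w]_{\fg}$ from Definition \ref{d.cohomology}. Once this is justified via parts (i) and (ii) of Lemma \ref{l.fg}, there is nothing more to do.
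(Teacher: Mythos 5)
Your proof is correct and matches the paper, which states this lemma as immediate from the definitions: both arguments amount to observing that $C^{\ell,1}(\fg_0)$ coincides with the ambient space $\bigoplus_{k=1}^{\nu}\Hom(\fg_{-k},\fg_{-k+\ell})$ in Definition \ref{d.fg} and that, via Lemma \ref{l.fg} (i) and (ii), the prolongation condition is exactly $\partial\phi=0$. Your explicit check that $\phi(v)(w)=[\phi(v),w]_{\fg}$ in both the $\phi(v)\in\fg_{-}$ and $\phi(v)\in\fg^0$ cases is precisely the point the paper leaves implicit.
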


\begin{proposition}\label{p.algebraic}
In the setting of Definition \ref{d.fg},  assume that
\begin{itemize}
 \item[(i)] the action of $G_0$ on $\fg_{-}$ has no nonzero fixed vector, i.e.,
$$\{v \in \fg_{-}, \ \fg_0 \cdot v =0\} =0; \mbox{ and }$$
 \item[(ii)] there exists a  nonnegative integer $\mu$
such that $\fg_{\mu} \neq 0$ and $ \fg_{k} = 0$ for all $k > \mu$.
\end{itemize}
Note that the assumption (ii) implies that  $\fg$ has finite dimension.
Then the following holds.  \begin{itemize} \item[(1)] The adjoint representation of $\fg$  is faithful, defining an injection $\fg \cong {\rm ad}(\fg) \subset {\rm End}(\fg)$. \item[(2)] The connected Lie subgroup $G \subset {\rm GL}(\fg)$ corresponding to the Lie subalgebra  ${\rm ad}(\fg) \subset {\rm End}(\fg)$ is an algebraic subgroup.  \item[(3)] For each nonnegative integer $\ell$, the connected Lie subgroup $G^{\ell} \subset G$ corresponding to the subalgebra $\fg^{\ell} \subset \fg$ is an algebraic subgroup. \item[(4)] For each positive integer $\ell$,  the algebraic subgroup $G^{\ell}$ in (3) is a normal subgroup of $G^0$.
\item[(5)] The subgroup $\exp(\fg_0)$ of $G^0$ corresponding to the Lie algebra $\fg_0$ is naturally isomorphic to $G_0$ and  is projected isomorphically to the quotient group $G^0/G^1$. To simplify the notation, we will write $\exp(\fg_0)$ as $G_0 \subset G^0$ and
    also its image in $G^0/G^{\ell}, \ell \geq 1,$ as $G_0 \subset G^0/G^{\ell}$.  \end{itemize}  \end{proposition}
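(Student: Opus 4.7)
The plan is to establish (1) first by a direct grading argument, then derive (4) and (5) as structural consequences of Lemma \ref{l.fg}, and finally combine these with unipotence to obtain the algebraicity claims (2) and (3). Assumptions (i) and (ii) play complementary roles: (ii) guarantees finite-dimensionality of $\fg$, while (i) rules out nontrivial $\fg_0$-fixed vectors in $\fg_{-}$.

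For (1), since ${\rm ad}$ respects the grading, any central element $z\in\fg$ decomposes into central homogeneous components $z_k\in\fg_k$. If $k\geq 1$, then $z_k$ is by its very definition a linear map $\fg_{-}\to\fg$, and $[z_k,v]_{\fg}=z_k(v)=0$ for all $v\in\fg_{-}$ forces $z_k=0$. If $k=0$, then $z_0\in\fg_0\subset{\rm gr}\aut(\fg_{-})$ acts as the zero derivation on $\fg_{-}$, hence $z_0=0$. If $k<0$, then $z_k\in\fg_{-}$ satisfies $\fg_0\cdot z_k=0$, so $z_k=0$ by assumption (i). For (4), $\fg^\ell$ is an ideal of $\fg^0$ by the last statement of Lemma \ref{l.fg}, so $G^\ell$ is normal in $G^0$. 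For (5), the vector space decomposition $\fg^0=\fg_0\oplus\fg^1$ with $\fg^1$ a nilpotent ideal of $\fg^0$ exhibits $G^0$ as the semidirect product $\exp(\fg_0)\ltimes G^1$, so $\exp(\fg_0)$ projects isomorphically onto $G^0/G^1$. The restriction homomorphism $\exp(\fg_0)\to G_0\subset{\rm gr}\Aut(\fg_{-})$ is then surjective because both sides are connected with Lie algebra $\fg_0$, and injective because an element acting trivially on $\fg_{-}$ must act trivially on every $\fg_k$ with $k\geq 0$ by induction on $k$, using the defining embedding $\fg_k\hookrightarrow\Hom(\fg_{-},\fg)$ and the fact that $\exp(\fg_0)$ acts by graded Lie algebra automorphisms.

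For (2) and (3), I bootstrap from three elementary algebraic subgroups of $GL(\fg)$. For $\ell\geq 1$, the subalgebra $\fg^\ell$ is nilpotent because $[\fg^\ell,\fg^\ell]\subseteq\fg^{2\ell}$ and $\fg$ is finite-dimensional by (ii), so $G^\ell=\exp({\rm ad}(\fg^\ell))$ is unipotent, hence algebraic. The algebraic $G_0$-action on $\fg_{-}$ extends polynomially to all of $\fg$: each $\fg_k$ for $k\geq 1$ is defined inductively as the kernel of a $G_0$-equivariant linear map between finite-dimensional $G_0$-modules built from the previously defined pieces via $\Hom(\fg_{-j},\fg_{-j+k})$, yielding a polynomial $G_0$-representation on $\fg$; hence $G_0$ embeds as an algebraic subgroup of $GL(\fg)$. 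Combined with (4), this gives $G^0=G_0\ltimes G^1$ as an algebraic subgroup. Finally, $\exp({\rm ad}(\fg_{-}))$ is unipotent algebraic, and $G$ is the subgroup generated by $G^0$ and $\exp({\rm ad}(\fg_{-}))$; by the standard fact that a subgroup of a linear algebraic group generated by connected algebraic subgroups is itself algebraic, $G$ is algebraic. The main obstacle I anticipate is this last step: the Lie subalgebra ${\rm ad}(\fg)\subset{\rm End}(\fg)$ need not exponentiate to an algebraic subgroup in general, and one must use the special filtered structure here (algebraic $G_0$ flanked by unipotent radicals) to ensure that what one obtains is genuinely algebraic rather than merely a Zariski-dense subgroup of its closure.
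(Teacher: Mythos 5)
Your proposal is correct and follows essentially the same route as the paper: (1) from the gradation together with assumption (i), (2)--(3) by exponentiating the nilpotent pieces $\fg_{-}$ and $\fg^{1}$ to unipotent algebraic subgroups, extending the algebraic $G_0$-action to $\fg$, and invoking the standard fact that a subgroup generated by connected algebraic subgroups is algebraic, (4) from the ideal property in Lemma \ref{l.fg}, and (5) from the splitting $\fg^0=\fg_0\oplus\fg^1$. The only differences are cosmetic: you spell out the center computation that the paper calls immediate, and for (5) you leave implicit the verification $\exp(\fg_0)\cap G^1=\{1\}$ (which follows, e.g., since this intersection is an algebraic subgroup with trivial Lie algebra inside the torsion-free unipotent group $G^1$, or by the paper's grading-preservation argument).
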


\begin{proof}
(1) is immediate from the assumption (i) and the definition of the prolongations $\fg_i$ in Definition \ref{d.fg}.

The connected subgroups of ${\rm GL}(\fg)$ corresponding to the subalgebras $\fg_{-}$ and $\fg^1$ are algebraic because $\fg_{-}$ and $\fg^1$ are nilpotent subalgebras.
Since we have chosen $G_0 \subset {\rm GL}(\fg_{-})$ as an algebraic subgroup, it defines an algebraic subgroup of ${\rm GL}(\fg)$ under the natural $G_0$-actions on $\fg_{\ell}, \ell \in \N$. As $\fg$ is generated by $\fg_{-}, \fg^1$ and $\fg_0$, we see that $G$ is an algebraic subgroup of ${\rm GL}(\fg)$ (e.g. by Corollary 24.5.10 in \cite{TY}). This proves (2) and (3).

    (4) follows from the fact that $\fg^{\ell}$ is an ideal of $\fg^0$ as mentioned in Lemma \ref{l.fg}.

    From the proof of (2), we see a natural inclusion $G_0 \subset G$, which should coincide with $\exp(\fg_0)$. As the $G_0$-action on $\fg$ preserves the grading, while the action of $G^1$ does not, we see that $\exp(\fg_0) \cap G^1 = {\rm Id}_G$. Thus $$G_0 \cong \exp(\fg_0) \cong G^0/G^1,$$ proving (5).
\end{proof}

\subsection{Statement of the main result on Cartan connections}

Our main result is a construction of a Cartan connection associated to a certain geometric structure on a complex manifold equipped with a filtration.

\begin{definition}\label{d.filtration}
Let $E$ be a complex manifold. A {\em filtration } on $E$ is a collection of subbundles
$(F^j E, j \in \Z)$ of the tangent bundle $TE$ such that
 \begin{itemize}
 \item[(i)]
 $F^{j+1} E \subset F^{j}E$ is a vector subbundle for each $j \in \Z$;
 \item[(ii)] $F^{-\nu}E = TE$ for some $\nu \in \N$;
 and
 \item[(iii)] regarding the locally free sheaf $\sO(F^iE)$ as a sheaf of vector fields on $E$, we have $$[\sO(F^{-i}E), \sO( F^{-j}E)] \subset \sO(F^{-k}E)$$ for the Lie bracket of vector fields and for any nonnegative integers
     $i, j, k,$
 satisfying $i+j \leq k$. \end{itemize}
 In this case, we say that $(E, F^jE, j \in \Z)$ is a {\em filtered manifold}.
 For a point $x \in E$, we will denote by $F^{k}_x E$ the fiber of $F^k E$ at the point $x$. By (iii), the graded vector space $$\bigoplus_{i \in \N} F_x^{-i}E/F_x^{-i+1} $$
 for each point $x \in E$ has the structure of a nilpotent graded Lie algebra, called the {\em symbol algebra} of the filtration.
\end{definition}

\begin{definition}\label{d.partialform}
Let $(E, F^j E, j \in \Z)$ be a filtered manifold.
 For a vector space $V$, an element of $\Hom(F^{-k}E, V)$ for some $k \in \N$ is called a
{\em partial form} on $E$.
 In general a partial form $\vartheta \in \Hom(F^{-k}E, V)$ is not a
1-form on $E$, so the derivative ${\rm d} \vartheta$ does not make sense as a 2-form.  But for each $i, j \geq 0$ satisfying $i+j \leq k$, we can define an element $${\rm d} \vartheta
\in \Hom ( F^{-i}E \otimes F^{-j}E, V)$$ as follows.
For a point $x \in E$, choose a neighborhood $U \ni x$ such that there exists a holomorphic splitting $$ TE|_U \cong F^{-k}E \oplus J$$ for some subbundle $J \subset TE|_U$.   Extend $\vartheta$ to a $V$-valued 1-form $\widetilde{\vartheta}$ on $U$ by setting $\widetilde{\vartheta} (J) = 0$.
For $\xi \in F^{-i}_x E$
and $\eta \in F^{-j}_x E,$ define $${\rm d} \vartheta (\xi, \eta) = {\rm d} \widetilde{\vartheta} (\xi, \eta) \ \in V.$$ This definition does not depend on the choice of $\widetilde{\vartheta}$.
In fact, if we choose  local sections $\widetilde{\xi}$ of $F^{-i}E$ and $\widetilde{\eta}$ of $F^{-j}E$ in $U$ such that $\widetilde{\xi}|_x =\xi$ and $\widetilde{\eta}|_x = \eta$, then
$${\rm d}\vartheta (\xi, \eta) = \widetilde{\xi} (\vartheta (\widetilde{\eta})) -
\widetilde{\eta} (\vartheta (\widetilde{\xi})) - \vartheta( [\widetilde{\xi}, \widetilde{\eta}]).$$ The right hand side does not depend on the choice of $\widetilde{\vartheta}$ because of Definition \ref{d.filtration} (iii), while the left hand side is independent of the choice of $\widetilde{\xi}$ and $\widetilde{\eta}.$
\end{definition}

\begin{definition}\label{d.filtered}
Let $\fg_{-}= \oplus_{k\in \N} \fg_{-k}$ be a graded nilpotent Lie algebra with $\fg_{-j} =0$ for all $j$ larger than for a fixed positive integer $\nu$.
A {\em filtration of type} $\fg_{-}$ on a complex manifold $M$ is a filtration $(F^j M, j \in \Z)$ on $M$ such that
\begin{itemize}
\item[(i)] $F^k M =0$ for all $k \geq 0$; \item[(ii)] $F^{-k} M = TM$ for all $k \geq \nu$; and
\item[(iii)] for any $x\in M$, the symbol algebra $${\rm gr}_{x}(M)
:= \bigoplus_{i \in \N} F^{-i}_{x}M/F^{-i+1}_{x}M$$  is isomorphic to $\mathfrak{g}_{-}$ as graded Lie algebras. \end{itemize}
\end{definition}

\begin{definition}\label{d.G_0-structure}
In the setting Definition \ref{d.filtered}, the {\em graded frame bundle} of the manifold $M$ with a filtration of type $\fg_{-}$  is the  ${\rm gr}\Aut(\fg_{-})$-principal bundle ${\rm grFr}(M)$ on $M$ whose fiber at $x$ is the set of graded Lie algebra isomorphisms from
$\fg_{-}$ to ${\rm gr}_x(M)$. Let $G_0 \subset {\rm gr}\Aut(\fg_{-})$ be a connected algebraic subgroup. \begin{itemize}
 \item[(1)] A $G_0$-{\em structure subordinate to the filtration} on $M$ means a $G_0$-principal subbundle  $E_0 \subset {\rm grFr}(M)$.
  \item[(2)]   Denoting by  $p: E_0 \to M$ the natural projection and by ${\rm d}p: TE_0 \to TM$ its derivative, the manifold $E_0$ is equipped with a filtration $F^{i}E_0$  whose fiber $F^{i}_{y}E_{0}$ at $y \in E_{0}$ is $$({\rm d}_y p)^{-1} \left(F^{i}_{p(y)} M \right) \mbox{ if } i \leq 0,$$ and 0 if $i \in \N$.
\item[(3)] For each $k \in \N$, there exists a natural partial form $s_{-k} \in \Hom(F^{-k} E_0, \fg_{-k})$ defined as follows.
Each $y \in E_0$ corresponds to  a
graded Lie algebra isomorphism $\fg_{-} \to {\rm gr}_{x}(M), x = p(y).$ Denote by
$y^{-1}: {\rm gr}_{x}(M) \to \fg_{-}$ the inverse isomorphism.
For a vector $\xi \in F^{-k}E_0$, let $[{\rm d} p (\xi)] \in {\rm gr}_{x}(M)$ be the class of ${\rm d} p (\xi)
\in F^{-k}_x M.$ Define $$ s_{-k} (\xi) := y^{-1} ([{\rm d} p (\xi)] ) \in \fg_{-k}.$$
The collection of partial forms $s = (s_{-k}, k \in \N)$ on $E_0$ is called the {\em soldering form } of $E_0$. \end{itemize} \end{definition}

The main result of this section is the following construction of a Cartan connection associated to a $G_0$-structure subordinate to a filtration.

\begin{theorem}\label{t.Cartan}
  Assume that $G_0 \subset {\rm gr}\Aut(\fg_{-})$ satisfies the conditions (i) and (ii) of  Proposition \ref{p.algebraic}. In particular, we have $$\fg_{k} \neq 0 \mbox{ only if } - \nu \leq k \leq \mu.$$
Let $(M, F^{j}M, j \in \Z)$ be a complex manifold  with a filtration of type $\fg_{-}$, and $E_0 \subset {\rm grFr}(M)$ be a $G_0$-structure on $M$ subordinate to the filtration. Let $$C^{\ell,2}(E_0)/\partial(C^{\ell,1}(E_0))$$ be the vector bundle associated to the principal $G_0$-bundle $E_0$ by the natural representation of $G_0$ on
$$C^{\ell, 2}(\fg_0)/\partial(C^{\ell,1}(\fg_0))$$ from Definition \ref{d.cohomology}.  Assume that $H^0(M, C^{\ell,2}(E_0)/\partial(C^{\ell,1}(E_0)) ) =0$ for all $1 \leq \ell \leq \mu + \nu.$

Then there exists a Cartan connection $(E \to M, \Upsilon)$ of type $(G, G^0)$ on $M$ such that the $G_0$-structure $E_0 \to M$ is isomorphic to the quotient of $E \to M$ by the normal subgroup
$G^1 \subset G^0$ and the soldering form $s$ on $E_0$ is given by the quotient of  the $\fg_{-}$-components of $\Upsilon$, i.e., for any $k \in \N$ and $\xi \in F^{-k} E_0$,
$$s_{-k} (\xi) = \fg_{-k}\mbox{-component of } \Upsilon(\widetilde{\xi})$$
where
$\widetilde{\xi} \in TE$  is a vector  projected to $\xi$ by the quotient map $E \to E_0$.  \end{theorem}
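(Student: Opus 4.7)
The plan is to carry out Tanaka's prolongation procedure inductively, adapted to the general setting of the theorem, with the global vanishing hypotheses playing the role that algebraic cohomology vanishing plays in the parabolic case. Concretely, I would build a tower of principal bundles
$$E_0 \;\longleftarrow\; E_1 \;\longleftarrow\; E_2 \;\longleftarrow\; \cdots \;\longleftarrow\; E_\mu \;=:\; E$$
where each $E_\ell \to M$ is a principal $G^0/G^{\ell+1}$-bundle and each $E_\ell \to E_{\ell-1}$ is a principal $G_\ell$-bundle. At every stage, $E_\ell$ carries a natural $\fg$-valued partial form $\omega_\ell$ (in the sense of Definition \ref{d.partialform}) extending the soldering form $s$ on $E_0$, whose components of weight $\leq \ell$ have been canonically pinned down. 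The final form $\Upsilon := \omega_\mu$ on $E = E_\mu$, which is a genuine $G^0$-principal bundle since $G^{\mu+1}$ is trivial by Proposition \ref{p.algebraic}, will be the desired Cartan connection.

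For the inductive step $E_{\ell-1} \leadsto E_\ell$, at each point of $E_{\ell-1}$ I would consider the affine space of possible local extensions of $\omega_{\ell-1}$ by a new weight-$\ell$ component. Two local extensions differ by an element of $C^{\ell,1}(\fg_0)$, and the weight-$\ell$ component of the structure expression $d\omega_{\ell-1} + \tfrac{1}{2}[\omega_{\ell-1},\omega_{\ell-1}]$ produces a torsion $T_\ell$ taking values in $C^{\ell,2}(\fg_0)$ which changes precisely by $\partial$ of the perturbation. Hence the class $[T_\ell] \in C^{\ell,2}(\fg_0)/\partial C^{\ell,1}(\fg_0)$ is independent of the local choice, and by $G_0$-equivariance descends to a global section of the associated bundle $C^{\ell,2}(E_0)/\partial C^{\ell,1}(E_0)$ on $M$. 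The vanishing hypothesis forces $[T_\ell]=0$, so torsion-free normalizations exist locally; the locus of such normalized lifts forms a principal $G_\ell$-subbundle of the candidate bundle, giving $E_\ell \to E_{\ell-1}$ together with the extended form $\omega_\ell$.

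After completing all the steps, $\Upsilon = \omega_\mu$ is a $\fg$-valued $1$-form on the $G^0$-bundle $E$. Properties (i) and (ii) of Definition \ref{d.Cartan}, concerning fundamental vector fields and $G^0$-equivariance, are inherited from the principal bundle structure and propagate through the prolongation by construction. Property (iii), the pointwise isomorphism $\Upsilon_y\colon T_y E \to \fg$, follows because the $\fg_{-}$-components of $\Upsilon$ reproduce the soldering identification, the $\fg^0$-components trivialize the $G^0$-vertical directions, and total dimensions match. Compatibility with $E_0 = E/G^1$ and its soldering form is built in from the start.

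The main obstacle I expect is the careful bookkeeping in the inductive step: one must verify that $T_\ell$ is genuinely $G_0$-equivariant and descends to a global section over $M$ rather than merely a function on the total space, and that the normalization condition cuts out a smooth principal $G_\ell$-subbundle of the candidate bundle. Equally delicate is justifying the precise range $1 \leq \ell \leq \mu+\nu$ of the hypotheses: higher-weight torsion classes arise from the Jacobi and Maurer--Cartan relations that $\Upsilon$ must ultimately satisfy, and one must show that vanishing throughout this range, and no further, suffices to run the full procedure in the possibly degenerate setting of the theorem where the classical parabolic shortcuts are unavailable.
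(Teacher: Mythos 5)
Your strategy is the paper's own (Tanaka-type prolongation in which the length-$\ell$ torsion class descends to a section of $C^{\ell,2}(E_0)/\partial(C^{\ell,1}(E_0))$ over $M$ and is killed by the hypothesis), but your induction stops too early, and this is a genuine gap rather than bookkeeping. You terminate the tower at $E_\mu$ and declare $\Upsilon=\omega_\mu$, on the grounds that $G^{\mu+1}$ is trivial so $E_\mu$ is already a $G^0$-bundle. The underlying principal bundle is indeed complete at that point, but the form is not: in the filtered setting the datum produced at stage $\ell$ is a collection of partial forms $\theta_{-j}$ with values only in the quotients $\fg^{-j}/\fg^{-j+\ell}$ (a $G^0$-frame bundle of length $\ell$ in the sense of Definition \ref{d.length}), and one obtains a genuine coframe $\Upsilon_y\colon T_yE\to\fg$, i.e.\ property (iii) of Definition \ref{d.Cartan}, only when the length reaches $\mu+\nu+1$, because $\theta_{-\nu}$ is defined on all of $TE$ but takes values in $\fg^{-\nu}/\fg^{-\nu+\ell}$, which equals $\fg$ only for $\ell\geq\mu+\nu+1$ (Remark \ref{r.Cartan}). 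Equivalently, after your last step all components of the would-be connection of $\Hom(\fg_-,\fg)$-weight $\mu+1,\dots,\mu+\nu$ are still undetermined; "total dimensions match" does not produce them canonically.

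Hence $\nu$ further normalization steps are needed, at lengths $\mu+1\leq\ell\leq\mu+\nu$. At these stages the normalized locus has fiber $\Ker(\partial\colon C^{\ell,1}(\fg_0)\to C^{\ell,2}(\fg_0))=\fg_\ell=0$ over the previous bundle, so the principal bundle no longer changes, but each step still carries its own torsion obstruction, again a section of $C^{\ell,2}(E_0)/\partial(C^{\ell,1}(E_0))$ --- which is precisely why the hypothesis is imposed for all $1\leq\ell\leq\mu+\nu$ and not merely for $\ell\leq\mu$. You flagged this tension ("justifying the precise range") but left it unresolved; resolving it is the remaining substance of the proof (the paper's Proposition \ref{p.widetilde} iterated as in Subsection \ref{ProofCartan}). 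A secondary point your "careful bookkeeping" must actually contain: the descent of the torsion class to $M$ is not bare $G_0$-equivariance; one must also prove constancy along the fibers of $E_{\ell-1}\to E_0$, i.e.\ invariance under $G^1/G^{\ell}$, and this uses the vanishing of the lower-weight torsions arranged at the earlier stages (Lemma \ref{l.equivariance}).
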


The next four subsections will be devoted to the proof of Theorem \ref{t.Cartan}.
In Subsection \ref{ss.special}, we present a modified version of Theorem \ref{t.Cartan}.

\subsection{$G^0$-frame bundle of length $\ell$}\label{ss.G^0}

The proof of Theorem \ref{t.Cartan}  follows a standard prolongation procedure developed by N. Tanaka (e.g. \cite{Ta}). There are several presentations (e.g. \cite{CS}, \cite{CSl} \cite{Mor}) of this procedure  each of which uses a construction somewhat different from others, although the basic ideas are all equivalent to Tanaka's. Unfortunately, they do not fit our purpose completely and we have to modify them.
Although it should be possible to adapt any of these presentations,  we find the one by Cap and Schichl (\cite{CS}) most convenient for our purpose and perhaps most accessible to algebraic geometers. In this subsection, we give a number of definitions adapted from \cite{CS}, modifying some notation and terminology to make it more streamlined and accessible to algebraic geometers.  It is not necessary to be familiar with \cite{CS} to follow our argument, but it will help the reader to consult  \cite{CS} for the detailed explanation at several points of the construction.

\begin{notation}\label{n.quotient}
As $\fg$ has a fixed gradation, there is a natural identification for any integers $i \leq j, $
$$\fg_i + \fg_{i+1} + \cdots + \fg_j  \cong \fg^i/\fg^{j+1}.$$ In \cite{CS}, the left hand side is used consistently, but we will often use the right hand side notation whenever we find it  more convenient. We often regard an  element in the quotient
$\fg^i/\fg^{j+1}$ as an element of $\fg$ by the above identification. \end{notation}

\begin{definition}\label{d.inducedfiltration}
Let $(M, F^j M, j \in \Z)$ be a complex manifold with a filtration of type $\fg_{-}$ and let $(G, G^0)$ be as in Proposition \ref{p.algebraic}.
Fix a positive integer $\ell$. Let $\pi: E \to M$ be a $G^0/G^{\ell}$-principal bundle on $M$.
For each $A \in \fg^0/\fg^{\ell}$, denote by $\zeta_A$ the fundamental vector field on $E$ induced by the right $G^0/G^{\ell}$-action on $E$.
   We define a filtration $(F^k E \subset T E, k \in \Z) $ on $E$ as follows. \begin{itemize}
\item[(a)] For $k \geq \ell$, the subbundle $F^{k} E$ is zero.
\item[(b)] For $0 \leq k < \ell$, the subbundle $F^k E \subset TE$ is generated by the
vector fields $\{\zeta_A, \ A \in  \fg^k/\fg^{\ell}\}$ induced by the right $G^0/G^{\ell}$-action on $E$.
\item[(c)] For $k \in \mathbb{N}$, the subbundle $F^{-k} E \subset TE$ has the fiber
 $F^{-k}_y  E$ at $y \in E$ defined by
 $$F^{-k}_y E := ({\rm d}_y \pi)^{-1} (F^{-k}_{\pi(y)} M)$$ where
  ${\rm d}_y \pi: T_y E \to T_{\pi(y)}M$ is the derivative of $\pi$ at $y$. \end{itemize}
  \end{definition}

\begin{definition}\label{d.length}
In the setting of Definition \ref{d.inducedfiltration}, a $G^0$-{\em frame bundle of length} $\ell$ is
a  $G^0/G^{\ell}$-principal bundle $\pi: E \to M$ equipped with a collection of partial forms
$$\theta = \{ \theta_{-j} \in \Hom( F^{-j}E, \fg^{-j}/\fg^{-j+ \ell}), j \in \N \}$$
  satisfying the following properties.
 \begin{itemize}
\item[(i)] For each $y \in E$, the kernel of  $\theta_{-j}|_y \in \Hom(F^{-j}_y E, \fg^{-j}/\fg^{-j+\ell}) $ is
$F^{-j + \ell}_y E,$ i.e., the homomorphism $\theta_{-j}|{y}$ induces an isomorphism
 $$F^{-j}_y E/ F_y^{-j+\ell} E \cong \fg^{-j}/\fg^{-j+ \ell}. $$\item[(ii)] When $j \geq 2$, the restriction of $$\theta_{-j} \in \Hom(F^{-j}E, \fg^{-j}/\fg^{-j+ \ell})$$ to the subbundle $F^{-j+1}E \subset F^{-j}E$
 is equal to the composition of $$\theta_{-j+1} \in \Hom(F^{-j+1}E, \fg^{-j+1}/\fg^{-j +1 + \ell})$$ with the natural homomorphism $$\fg^{-j+1}/\fg^{-j+1 + \ell} \to \fg^{-j+1}/\fg^{-j + \ell} \subset \fg^{-j} / \fg^{-j + \ell}.$$ In other words, the following diagram commutes for each $y \in E$ and $ j \geq 2 $.
$$\begin{array}{ccc}
F^{-j+1}_y E & \stackrel{\theta_{-j+1}|_y}{\rightarrow} & \mathfrak{g}^{-j+1}/\fg^{-j+1 +\ell} \\ \cap & & \downarrow  \\
 F^{-j}_y E &\stackrel{\theta_{-j}|_y}{\rightarrow} & \mathfrak{g}^{-j}/\fg^{-j+ \ell}.
\end{array}$$
\item[(iii)]  Each $\theta_{-j}$ is $G^0/G^{\ell}$-equivariant, i.e. $$R_{b}^{*}\theta_{-j}={\rm Ad}(b^{-1})\circ\theta_{-j}$$ where $R_{b}$ denotes the principal right action of $b \in G^0/G^{\ell}$ and ${\rm Ad}$ denotes the natural action of $G^0/G^{\ell}$ on  $\mathfrak{g}^{-j}/\fg^{-j+ \ell}$ induced by the adjoint representation of $G$.
    \item[(iv)] For $A\in \fg^0/\fg^{\ell}$,
   the element $\theta_{-j}(\zeta_{A}) \in \fg^{-j}/\fg^{-j + \ell} $ is  the projection of $A$ in $$\fg^0/\fg^{-j+ \ell} \subset \fg^{-j}/\fg^{-j+ \ell}$$ if $- j + \ell > 0$ and is zero if $-j+ \ell \leq 0.$
\end{itemize}
Note that $\theta_{-j} =0$ if $j \geq \nu + \ell$ and the value of  $\theta_{-\nu-i}$ is
$\theta_{-\nu}$ modulo $\fg^{-\nu -i + \ell}$ for all positive integer $i$.
Thus $\theta$ is determined by $\theta_{-1}, \cdots, \theta_{-\nu}$.
\end{definition}

\begin{remark}\label{r.Cartan}
A $G_0$-structure $E_0 \to M$ subordinate to the filtration of type $\fg_{-}$ with the
soldering from $$(s_{-k}, k \in \N),\  s_{-k} \in \Hom(F^{-k}E_0, \fg_{-k})$$ is a $G^0$-frame bundle of length 1 by setting $\theta_{-k} = s_{-k}$. On the other hand, if $\ell \geq \mu + \nu +1$, then   a $G^0$-frame bundle $(E, \theta)$ of length $\ell$ is a $G^0$-principal bundle and $\Upsilon:= \theta_{-\nu} \in \Hom
(T E, \fg)$ makes it a Cartan connection of type $(G, G^0)$ on $M$. Thus $G^0$-frame bundles of length $\ell$, for
$2 \leq \ell \leq \mu+ \nu $, are intermediate geometric objects lying between a  $G_0$-structure and a Cartan connection. We will use them as auxiliary objects in an inductive construction for the proof of Theorem \ref{t.Cartan}.
\end{remark}

\begin{definition}\label{d.Theta}
Let $(E, \theta)$ be a $G^0$-frame bundle of length $\ell \geq 1$.
 Denote by $\Theta_{-k} \in \Hom(F^{-k}E, \fg_{-k})$ the partial form obtained from $$\theta_{-k} \in \Hom(F^{-k}E, \fg^{-k}/\fg^{-k + \ell})$$ by the quotient $$\fg^{-k}/\fg^{-k+ \ell} \to \fg^{-k}/\fg^{-k + 1} \ = \fg_{-k}.$$ We have  $${\rm d} \Theta_{-k}  \in \Hom(F^{-i}E \otimes F^{-j} E,   \fg_{-k})$$ for any nonnegative $i, j$ satisfying $i+j \leq k \geq 1$, as explained in Definition
 \ref{d.partialform}.  We say that $(E, \theta)$ {\em satisfies the structure equation} if for any $k \in \mathbb{N}$ and any two nonnegative integers $i$ and  $j$ satisfying $i+j \leq k$,
 $${\rm d} \Theta_{-k} + [\Theta_{-i}, \Theta_{-j}] = 0$$
as elements of $\Hom(F^{-i} E \otimes F^{-j} E, \fg_{-k})$.
\end{definition}

  \begin{definition}\label{d.torsion}
Let $(E, \theta)$ be a $G^0$-frame bundle of length $\ell \geq 1$. For each point $y \in E$, we define an element  $ \tau_y \in \Hom(\wedge^2 \fg_{-}, \fg)$ which sends $\fg_{-i} \wedge \fg_{-j}$ for each $i, j \in \N$  to $$\fg_{-i-j} + \fg_{-i-j+1} + \cdots + \fg_{-i-j+ \ell -1} \ \cong \ \fg^{-i-j}/\fg^{-i-j+ \ell} $$ as follows.
Recall that for any nonnegative integers $i,j,k$ satisfying $i+j \leq k \geq 1$, we have
$${\rm d} \theta_{-k} \in \Hom(F^{-i}E \otimes F^{-j}E, \fg^{-k}/\fg^{-k+\ell})$$
 from Definition \ref{d.partialform}.
Given $v \in \fg_{-i}$ and $w \in \fg_{-j}$, choose $\xi \in F^{-i}_y E$ and $\eta \in F^{-j}_y E$ such that $\theta_{-i}(\xi) =v$ and $\theta_{-j}(\eta) = w$. Define $$\tau_{y} (v, w):= {\rm d} \theta_{-i-j}(\xi, \eta) \in \fg^{-i-j}/\fg^{-i-j + \ell}.$$ It is easy to check that this does not depend on the choice of $\xi, \eta$ (see Section 3.9 of \cite{CS}). The assignment $y \mapsto \tau_y$ determines
 a  holomorphic function $\tau$ on $E$ with values in $\Hom(\wedge^2 \fg_{-}, \fg)$,  called  the {\em torsion} of $(E, \theta).$ We can write $$\tau_y = \tau_y^0 + \tau_y^1 + \cdots + \tau_y^{\ell-1}, \ \tau_y^k \in \Hom(\wedge^2 \fg_{-}, \fg)_k$$ to define a $\Hom(\wedge^2 \fg_{-}, \fg)_k$-valued holomorphic function $\tau^k, 0 \leq k \leq \ell-1$, on $E$.   Note that
$\tau_y^0 (v,w) = -[v, w]$ for all  $v, w \in \fg_{-}$ and $y \in E$  if $(E, \theta)$ satisfies the structure equation in the sense of Definition \ref{d.Theta}.
We will write $$\tau^{\geq 1} := \sum_{k=1}^{\ell -1} \tau^k.$$ \end{definition}

\subsection{From $(E, \theta)$ to $(\hat{E}, \hat{\theta})$}\label{ss.hat}

In this subsection, we will recall some definitions and results from \cite{CS}, with suitable modification. As the adaptation to our setting  is more-or-less straight-forward with only minor differences in notation, we will skip the proofs by giving precise references to the corresponding part of \cite{CS}.

\begin{definition}\label{d.hat}
Let $(E, \theta)$ be a $G^0$-frame bundle of length $\ell \geq 1$, satisfying the structure equation in the sense of Definition \ref{d.Theta}. For a point $y \in E$, let $$\hat{E}_y \subset \bigoplus_{j \in \N} \Hom(F^{-j}_y E, \fg^{-j}/\fg^{-j+ \ell +1})$$ be the subset consisting of $$\bigoplus_{j \in \N} \varphi_{-j},  \ \varphi_{-j} \in \Hom(F^{-j}_y E, \fg^{-j}/\fg^{-j + \ell +1}),$$  satisfying the following conditions.
\begin{itemize}
\item[(i)] $\varphi_{-j} \equiv \theta_{-j}|_y \mod \fg^{-j + \ell}.$
\item[(ii)] The restriction of $\varphi_{-j}$ to $F^{-j+1}_y E$ coincides with $\theta_{-j+1}|_y.$ In particular, it takes values in $\fg^{-j+1}/\fg^{-j+1 + \ell}$.
\item[(iii)] $\varphi_{-1}(\zeta_A) = A $ for all $A \in \fg^0/\fg^{\ell}.$
\end{itemize} Define $$\hat{E} := \bigcup_{y \in E} \hat{E}_y.$$ Denote by $\hat{\pi}: \hat{E} \to E$ the natural projection. \end{definition}

In the next two lemmata, we describe two different actions of Lie groups on $\hat{E}$.
Lemma \ref{l.affine} is proved in Proposition 3.13 of \cite{CS} and Lemma \ref{l.actionhat} is proved in Section 3.14 of \cite{CS}.

\begin{lemma}\label{l.affine} In Definition \ref{d.hat},  for $\varphi:= (\varphi_{-j}, j \in \N) \in \hat{E}_y $ and $$ \Hom(\fg_{-}, \fg)_{\ell} \ni \psi:= (\psi_{-j}, j \in \N), \psi_{-j} \in \Hom(\fg_{-j}, \fg_{-j+ \ell}),$$ define $$(\varphi \cdot \psi)_{-j} := \varphi_{-j} + \psi_{-j} \circ \Theta_{-j}|_y$$ where $\Theta_{-j}|_y \in \Hom( F^{-j}_y E, \fg_{-j})$ is as in Definition \ref{d.Theta}.
Then the collection $$\varphi \cdot \psi := \left( (\varphi \cdot \psi)_{-j}, j \in \N \right)$$ is an element of $\hat{E}$, hence the assignment $\varphi \mapsto \varphi \cdot \psi$ defines an action of the vector group $C^{\ell,1} (\fg_0) = \Hom(\fg_{-}, \fg)_{\ell}$ on
$\hat{E}.$ By this action,
the morphism $\hat{\pi}: \hat{E} \to E$  is a $C^{\ell,1}(\fg_0)$-principal bundle over $E$. \end{lemma}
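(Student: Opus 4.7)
The plan is to verify, in order, the four assertions bundled into the statement: that $\varphi\cdot\psi$ satisfies conditions (i)--(iii) of Definition \ref{d.hat}; that $\psi\mapsto\varphi\cdot\psi$ defines an action of the vector group $C^{\ell,1}(\fg_0)$; that this action is free and transitive on each fiber of $\hat\pi$; and finally that $\hat\pi$ admits local holomorphic sections. The central structural fact that drives everything is that $\Theta_{-j}|_y:F^{-j}_yE\to\fg_{-j}$ is surjective with kernel exactly $F^{-j+1}_yE$; this is immediate from Definition \ref{d.length}(i) by further quotienting $\fg^{-j}/\fg^{-j+\ell}\twoheadrightarrow\fg^{-j}/\fg^{-j+1}=\fg_{-j}$.

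First, the stability $\varphi\cdot\psi\in\hat{E}_y$ follows because the correction $\psi_{-j}\circ\Theta_{-j}|_y$ takes values in $\fg_{-j+\ell}=\fg^{-j+\ell}/\fg^{-j+\ell+1}$, embedded inside $\fg^{-j}/\fg^{-j+\ell+1}$. It therefore vanishes modulo $\fg^{-j+\ell}$, which gives (i); and since it factors through the symbol quotient $F^{-j}_yE/F^{-j+1}_yE$ via $\Theta_{-j}|_y$, it annihilates $F^{-j+1}_yE$ and in particular the vertical vectors $\zeta_A\in F^0_yE\subset F^{-j+1}_yE$, yielding (ii) and (iii). Additivity $(\varphi\cdot\psi)\cdot\psi'=\varphi\cdot(\psi+\psi')$ is immediate from the affine formula, so we do have an action of the vector group.

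Next, freeness follows at once from surjectivity of $\Theta_{-j}|_y$: if $\varphi\cdot\psi=\varphi$ then $\psi_{-j}\circ\Theta_{-j}|_y=0$ forces $\psi_{-j}=0$ for every $j$. For fiberwise transitivity, pick $\varphi,\varphi'\in\hat{E}_y$. Conditions (i) and (ii) together imply that $\varphi'_{-j}-\varphi_{-j}$ takes values in $\fg_{-j+\ell}$ and vanishes on $F^{-j+1}_yE=\ker\Theta_{-j}|_y$. Therefore it descends uniquely through the symbol isomorphism to a linear map $\psi_{-j}:\fg_{-j}\to\fg_{-j+\ell}$, and these $\psi_{-j}$ assemble into an element of $C^{\ell,1}(\fg_0)=\bigoplus_j\Hom(\fg_{-j},\fg_{-j+\ell})$ with $\varphi\cdot\psi=\varphi'$.

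It remains to obtain local sections of $\hat\pi$. Over a sufficiently small neighborhood $U$ of any point of $E$, choose holomorphic splittings of the short exact sequences $0\to\fg_{-j+\ell}\to\fg^{-j}/\fg^{-j+\ell+1}\to\fg^{-j}/\fg^{-j+\ell}\to0$ to lift each $\theta_{-j}|_U$ to a candidate partial form $\varphi_{-j}$ with values in $\fg^{-j}/\fg^{-j+\ell+1}$; a straightforward induction on $j$, correcting by elements of the just-established $C^{\ell,1}(\fg_0)$-action, arranges compatibility with (ii) and (iii). Combined with the simply transitive fiber action, this yields the principal bundle structure. The only step with any content is the transitivity argument, namely the observation that conditions (i) and (ii) together cut out exactly the image of the $C^{\ell,1}(\fg_0)$-action; I expect that to be where the care has to go, while the rest is bookkeeping with the filtration quotients.
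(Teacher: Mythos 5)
Your verification of the core of the lemma is correct and is essentially the standard argument: the paper itself gives no proof but cites Proposition 3.13 of \cite{CS}, and what you do — observe that $\Theta_{-j}|_y$ is surjective with kernel $F^{-j+1}_yE$, check that the correction $\psi_{-j}\circ\Theta_{-j}|_y$ is valued in $\fg_{-j+\ell}$ and kills $F^{-j+1}_yE$ (hence preserves (i), (ii), (iii)), and deduce freeness and fiberwise transitivity from the fact that conditions (i) and (ii) force $\varphi'_{-j}-\varphi_{-j}$ to be $\fg_{-j+\ell}$-valued and to vanish on $\ker\Theta_{-j}|_y$, so it factors uniquely through $\Theta_{-j}|_y$ — is exactly the content of that reference.

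The one step that does not work as written is the construction of local sections. Your proposed mechanism is to lift $\theta_{-j}$ through a splitting of $0\to\fg_{-j+\ell}\to\fg^{-j}/\fg^{-j+\ell+1}\to\fg^{-j}/\fg^{-j+\ell}\to 0$ and then repair conditions (ii) and (iii) ``by elements of the just-established $C^{\ell,1}(\fg_0)$-action.'' But those corrections are precisely of the form $\psi_{-j}\circ\Theta_{-j}|_y$, and as you yourself note they annihilate $F^{-j+1}_yE$ and all vertical vectors $\zeta_A$; so they can never change the restriction of a candidate $\varphi_{-j}$ to $F^{-j+1}_yE$, which is exactly where (ii) and (iii) live. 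The failure is genuine: already for $\ell=1$ the naive lift gives $\varphi_{-1}(\zeta_A)=0$ while (iii) demands $\varphi_{-1}(\zeta_A)=A$, and no correction factoring through $\Theta_{-1}$ (which kills $\zeta_A$) can fix this. The statement is still true, and the repair is routine but different from what you wrote: over a small $U\subset E$ choose a holomorphic splitting $F^{-j}E|_U=H_{-j}\oplus F^{-j+1}E|_U$, and define the section by decreeing $\varphi_{-j}$ on $F^{-j+1}E$ to be $\theta_{-j+1}$ (for $j\ge 2$) resp.\ $\zeta_A\mapsto A$ (for $j=1$), and on the complement $H_{-j}$ to be $s_{-j}\circ\theta_{-j}$ for a linear splitting $s_{-j}$ of the coefficient quotient; conditions (i)--(iii) then hold by construction, using Definition \ref{d.length} (ii) and (iv) to see that (i) is compatible with the prescribed values on $F^{-j+1}E$. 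With such local sections, your simply transitive fiberwise action does give the principal (affine) bundle structure.
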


\begin{lemma}\label{l.actionhat}
Let us use the notation of  Definition \ref{d.hat}.
For an element $b \in G^0/G^{\ell +1}$, let $b_0 \in G^0/G^{\ell}$ be its class modulo $G^{\ell}$. For an element  $$\varphi_{-j} \in \Hom( F^{-j}_y E, \fg^{-j}/\fg^{-j+ \ell +1}),$$
define $$\varphi_{-j} \cdot b \in \Hom(F^{-j}_{y \cdot b_o} E, \fg^{-j}/\fg^{-j+ \ell +1})$$
by $$\xi \in F^{-j}_{y\cdot b_0} E \ \mapsto \ \varphi_{-j} \cdot b (\xi) := {\rm Ad}(b^{-1}) \circ \varphi_{-j} \left( (R_{b_0^{-1}})_* \xi \right).$$ Then $(\varphi_{-j} \cdot b, j \in \N)$ is an element of $\hat{E}$, defining a free right action of $G^0/G^{\ell +1}$ on $\hat{E}$. Moreover, the morphism $\hat{\pi}: \hat{E} \to E$ is equivariant with respect to the projection $G^0/G^{\ell +1} \to G^0/G^{\ell}$.   \end{lemma}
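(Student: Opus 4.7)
The plan is to verify the four claims of the lemma in order: that $\varphi\cdot b$ lies in $\hat{E}$, that the formula defines a right action, that the action is free, and that $\hat{\pi}$ is equivariant with respect to the projection $G^0/G^{\ell+1} \to G^0/G^{\ell}$. Well-definedness requires checking conditions (i)--(iii) of Definition \ref{d.hat} for the collection $(\varphi_{-j}\cdot b)$. For (i) and (ii), the key observation is that the adjoint action on the quotient $\fg^{-j}/\fg^{-j+\ell}$ factors through $G^0/G^{\ell}$, so one can combine $\mathrm{Ad}(b^{-1})$ with the $G^0/G^{\ell}$-equivariance $R_{b_0}^{*}\theta_{-j} = \mathrm{Ad}(b_0^{-1})\circ \theta_{-j}$ from Definition \ref{d.length}(iii) to show both that $\varphi_{-j}\cdot b$ agrees with $\theta_{-j}|_{y\cdot b_0}$ modulo $\fg^{-j+\ell}$ and that its restriction to $F^{-j+1}_{y\cdot b_0}E$ equals $\theta_{-j+1}|_{y\cdot b_0}$.

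The delicate point is condition (iii), the value on fundamental vector fields. Using the standard principal-bundle identity $(R_{b_0^{-1}})_{*}\zeta_A|_{y\cdot b_0} = \zeta_{\mathrm{Ad}(b_0)A}|_y$, I would compute
$$(\varphi_{-1}\cdot b)(\zeta_A|_{y\cdot b_0}) \;=\; \mathrm{Ad}(b^{-1})\,\varphi_{-1}(\zeta_{\mathrm{Ad}(b_0)A}|_y) \;=\; \mathrm{Ad}(b^{-1})\,\mathrm{Ad}(b_0)A,$$
where the final element is viewed in $\fg^{-1}/\fg^{\ell}$. Lifting $b$ to $\tilde b\in G^0$, so that $\tilde b\equiv b_0 \bmod G^{\ell}$, and using that $\mathrm{Ad}$ on $\fg^0/\fg^{\ell}$ factors through $G^0/G^{\ell}$ while $\mathrm{Ad}$ on $\fg^{-1}/\fg^{\ell}$ factors through $G^0/G^{\ell+1}$, a direct calculation with the lift identifies the right-hand side with $\mathrm{Ad}(\tilde b^{-1})\mathrm{Ad}(\tilde b)A = A$ modulo $\fg^{\ell}$.

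Next, the group-action axiom $(\varphi\cdot b_1)\cdot b_2 = \varphi\cdot(b_1 b_2)$ follows from combining the composition rules for $\mathrm{Ad}$ and for $R_{*}$, together with the fact that $b\mapsto b_0$ is a group homomorphism. For freeness, the already-established free action of $G^0/G^{\ell}$ on $E$ reduces the problem to the case $b\in G^{\ell}/G^{\ell+1}$, where $b_0 = \mathrm{id}$ and $\varphi_{-j}\cdot b = \mathrm{Ad}(b^{-1})\circ\varphi_{-j}$. Writing $b = \exp X$ with $X\in\fg_{\ell}$, and using $[X,[X,v]]\in\fg^{-j+2\ell}\subseteq\fg^{-j+\ell+1}$, the condition $\varphi_{-j}\cdot b = \varphi_{-j}$ in $\fg^{-j}/\fg^{-j+\ell+1}$ reduces to $[X,\varphi_{-j}(\xi)] = 0$ in $\fg_{-j+\ell}$ for all $\xi$. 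By (i) of Definition \ref{d.hat}, the $\fg_{-j}$-components of $\varphi_{-j}(\xi)$ exhaust $\fg_{-j}$, so $[X,\fg_{-}]=0$; by the definition of $\fg_{\ell}$ as a subspace of $\Hom(\fg_{-},\fg)$ in Definition \ref{d.fg}, this forces $X=0$.

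Equivariance of $\hat{\pi}$ is then immediate from the formula $\hat{\pi}(\varphi\cdot b) = y\cdot b_0$, which follows directly from the definition of the action. The main obstacle throughout is the careful bookkeeping around the nested quotients $\fg^{-j}/\fg^{-j+\ell+1}$ versus $\fg^{-j}/\fg^{-j+\ell}$ and tracking which adjoint action factors through which quotient group; this is implicit in the treatment of C\v ap--Schichl \cite{CS} but deserves explicit verification in the present modified set-up.
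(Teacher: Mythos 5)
Your proof is correct: the verification of conditions (i)--(iii) of Definition \ref{d.hat} (including the lift-of-$b$ argument for the fundamental vector fields), the action axiom, freeness via reducing to $b\in G^{\ell}/G^{\ell+1}$ and using that $[X,\fg_{-}]=0$ forces $X=0$ for $X\in\fg_{\ell}$, and the equivariance of $\hat{\pi}$ are all sound, with the bookkeeping of which adjoint action factors through which quotient group handled correctly. The paper gives no argument of its own here — it simply cites Section 3.14 of \cite{CS} — and your computation is precisely the standard verification that reference carries out, so the approach is essentially the same.
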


\begin{definition}\label{d.partialhat}
In the setting of Definition \ref{d.hat}, define a filtration  $(F^k \hat{E}, k \in \Z)$ on $\hat{E}$ as follows:
\begin{itemize}
\item[(i)] for $k \geq \ell + 1$ the subbundle $F^k \hat{E}$ is zero; and
\item[(ii)] for each $k \leq \ell$, each $y \in E$ and $\varphi \in \hat{E}_y$, $$ F^k_{\varphi} \hat{E} := ({\rm d}_{\varphi} \hat{\pi})^{-1} \left( F^k_{y} E \right) $$
\end{itemize}
 For $\varphi = (\varphi_{-k}, k \in \N) \in \hat{E}_y$ and $\xi \in T^{-j}_{\varphi} \hat{E},$ and $j \in \N$, define
$$\hat{\theta}_{-j} (\xi) := \varphi_{-j} ( {\rm d}_{\varphi} \hat{\pi} (\xi)) \ \in \fg^{-j}/\fg^{-j + \ell +1}.$$ This gives
  a partial form on $\hat{E}$ $$ \hat{\theta}_{-j} \in \Hom( F^{-j} \hat{E}, \fg^{-j}/\fg^{-j + \ell +1}).$$
\end{definition}

The following is from Section 3.15 of \cite{CS}.

\begin{lemma}\label{l.hatRb}
In the setting of Definition \ref{d.partialhat}, the collection of partial forms $\hat{\theta} = (\hat{\theta}_{-j}, j \in \N)$ satisfies the following analogue of Definition \ref{d.length}.
 \begin{itemize}
\item[(i)] For each $\varphi \in \hat{E}$ and $j \in \N$, the kernel of  $$\hat{\theta}_{-j}|_{\varphi} \in \Hom(F^{-j}_{\varphi} \hat{E}, \fg^{-j}/\fg^{-j+\ell+1}) $$ is
$F^{-j + \ell + 1}_{\varphi} \hat{E}.$
\item[(ii)] When $j \geq 2$, the restriction of $$\hat{\theta}_{-j} \in \Hom(F^{-j} \hat{E}, \fg^{-j}/\fg^{-j+ \ell+1})$$ to the subbundle $F^{-j+1}\hat{E} \subset F^{-j} \hat{E}$
is equal to the composition of $$\hat{\theta}_{-j+1} \in \Hom(F^{-j+1}\hat{E}, \fg^{-j+1}/\fg^{-j +2 + \ell})$$ with the natural homomorphism $$\fg^{-j+1}/\fg^{-j+2 + \ell} \to \fg^{-j+1}/\fg^{-j +1 + \ell} \subset \fg^{-j} / \fg^{-j + 1+ \ell}.$$
\item[(iii)]   $$R_{b}^{*}\hat{\theta}_{-j}={\rm Ad}(b^{-1})\circ \hat{\theta}_{-j}$$ where $R_{b}$ denotes the principal right action of $b \in G^0/G^{\ell+1}$ in Lemma \ref{l.actionhat} and ${\rm Ad}$ denotes the natural action of $G^0/G^{\ell+1}$ on  $\mathfrak{g}^{-j}/\fg^{-j+ \ell+1}$ induced by the adjoint representation of $G$.
    \item[(iv)] For $A\in \fg^0/\fg^{\ell+1}$,
   the element $\hat{\theta}_{-j}(\zeta_{A}) \in \fg^{-j}/\fg^{-j + \ell +1} $ is  the projection of $A$ in $$\fg^0/\fg^{-j+ \ell+1} \subset \fg^{-j}/\fg^{-j+ \ell +1}$$ if $- j + \ell +1 > 0$ and is zero if $-j+ \ell +1 \leq 0.$
\end{itemize}
\end{lemma}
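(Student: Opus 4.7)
The plan is to verify the four conditions (i)--(iv) for $(\hat E, \hat\theta)$ by unwinding the pullback definition $\hat\theta_{-j}(\xi) = \varphi_{-j}(d_\varphi\hat\pi(\xi))$ and combining the defining properties of $\varphi \in \hat E_y$ (items (i)--(iii) of Definition \ref{d.hat}) with the already-verified properties of $\theta$ on $E$ (Definition \ref{d.length}).

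Conditions (i) and (ii) should follow by direct bookkeeping. For (i), iterating Definition \ref{d.hat}(ii) together with Definition \ref{d.length}(ii) shows that the restriction of $\varphi_{-j}$ to $F^{-j+\ell+1}_yE$ coincides with $\theta_{-j+\ell+1}|_y$, whose image lies in $\fg^{-j+\ell+1}/\fg^{-j+1+2\ell}$ and thus vanishes modulo $\fg^{-j+\ell+1}$; this gives the inclusion $F^{-j+\ell+1}_\varphi\hat E \subseteq \ker\hat\theta_{-j}|_\varphi$. The reverse inclusion uses Definition \ref{d.hat}(i), namely $\varphi_{-j}\equiv\theta_{-j}|_y \pmod{\fg^{-j+\ell}}$, together with the kernel isomorphism from Definition \ref{d.length}(i): reducing modulo $\fg^{-j+\ell}$ first places $d\hat\pi(\xi)$ in $F^{-j+\ell}_yE$, and a rank count in the next graded piece pushes it further into $F^{-j+\ell+1}_yE$. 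Condition (ii) is essentially a direct transcription of Definition \ref{d.hat}(ii) through the defining formula of $\hat\theta$.

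For the equivariance condition (iii), a direct computation using the right action from Lemma \ref{l.actionhat} yields
\[
(R_b^*\hat\theta_{-j})(\xi) \;=\; \hat\theta_{-j}|_{\varphi\cdot b}\bigl((R_b)_*\xi\bigr) \;=\; (\varphi_{-j}\cdot b)\bigl(d\hat\pi((R_b)_*\xi)\bigr) \;=\; {\rm Ad}(b^{-1})\bigl(\varphi_{-j}(d\hat\pi(\xi))\bigr),
\]
where the last step uses the definition of $\varphi_{-j}\cdot b$ and the equivariance of $\hat\pi$ with respect to the projection $G^0/G^{\ell+1}\to G^0/G^\ell$.

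The main obstacle is condition (iv): handling the fundamental vector fields in the newly introduced vertical direction $\fg^\ell/\fg^{\ell+1}$. I would split $A\in\fg^0/\fg^{\ell+1}$ according to its image in $\fg^0/\fg^\ell$. When $A$ has nontrivial image $\bar A\in\fg^0/\fg^\ell$, the vector field $\zeta_A$ on $\hat E$ projects under $d\hat\pi$ to $\zeta_{\bar A}$ on $E$, and the claim reduces to Definition \ref{d.hat}(iii) together with Definition \ref{d.length}(iv). When $A\in\fg^\ell/\fg^{\ell+1}$, however, $\zeta_A$ is vertical for $\hat\pi$ and must be identified with the infinitesimal generator of the affine $C^{\ell,1}(\fg_0)$-action from Lemma \ref{l.affine}; differentiating $\varphi_{-j}\mapsto\varphi_{-j}+\psi_{-j}\circ\Theta_{-j}$ then yields $\hat\theta_{-j}(\zeta_A)=\psi_{-j}(\Theta_{-j}(\xi))$, and I expect the delicate point to be the identification of this expression with the projection of $A$ in $\fg^{-j}/\fg^{-j+\ell+1}$. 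This amounts to checking that the affine $C^{\ell,1}(\fg_0)$-action of Lemma \ref{l.affine} and the principal $G^0/G^{\ell+1}$-action of Lemma \ref{l.actionhat} induce compatible infinitesimal actions along the overlapping direction $\fg^\ell/\fg^{\ell+1}$, and this is where the bulk of the computation lies.
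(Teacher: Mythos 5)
First, note that the paper does not actually prove this lemma: it is quoted from Section 3.15 of \cite{CS}. So your plan of verifying (i)--(iv) directly by unwinding $\hat{\theta}_{-j}=\varphi_{-j}\circ {\rm d}\hat{\pi}$ against Definitions \ref{d.hat} and \ref{d.length} is the right thing to do, and your arguments for (i), (ii) and (iii) are essentially the standard verification (with small bookkeeping caveats: in (i), when $-j+\ell+1\geq 0$ there is no form $\theta_{-j+\ell+1}$, and the corresponding piece of $F^{-j+\ell+1}_y E$ is spanned by fundamental vector fields, so that part of both inclusions has to be handled through Definition \ref{d.length} (iv) rather than by the iteration you describe; in (ii) you also need Definition \ref{d.hat} (i) for $\varphi_{-j+1}$, not just (ii)).

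The genuine problem is your treatment of (iv) for $A\in\fg^{\ell}/\fg^{\ell+1}$, which you flag as the bulk of the work. Since $G^{\ell}/G^{\ell+1}$ maps to the identity under $G^0/G^{\ell+1}\to G^0/G^{\ell}$ and $\hat{\pi}$ is equivariant (Lemma \ref{l.actionhat}), such $\zeta_A$ is vertical for $\hat{\pi}$, so by Definition \ref{d.partialhat} one gets $\hat{\theta}_{-j}(\zeta_A)=\varphi_{-j}({\rm d}\hat{\pi}(\zeta_A))=0$ in one line; and this is exactly what (iv) asserts, because $j\geq 1$ gives $\fg^{\ell}\subseteq\fg^{-j+\ell+1}$, so the projection of $A$ in $\fg^0/\fg^{-j+\ell+1}$ is zero. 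Your proposed computation -- identifying $\zeta_A$ with the infinitesimal affine $C^{\ell,1}(\fg_0)$-action of Lemma \ref{l.affine} and ``differentiating'' to get $\hat{\theta}_{-j}(\zeta_A)=\psi_{-j}(\Theta_{-j}(\xi))$ -- is not well formed (there is no $\xi$, and $\hat{\theta}_{-j}$ at a fixed $\varphi$ only sees ${\rm d}\hat{\pi}$ of the tangent vector, so motion along the fiber of $\hat{\pi}$ contributes nothing); it rests on the mistaken expectation that the value to be matched is a nonzero projection of $A$. The compatibility of the two actions along $\fg^{\ell}/\fg^{\ell+1}$ is indeed used in the paper, but later (in the proof of Proposition \ref{p.widetilde} (3)), not here. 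The clean route, covering all $A$ at once, is: ${\rm d}\hat{\pi}(\zeta_A)=\zeta_{\bar{A}}$ with $\bar{A}$ the class of $A$ in $\fg^0/\fg^{\ell}$; for $j=1$ apply Definition \ref{d.hat} (iii); for $j\geq 2$, note $\zeta_{\bar{A}}\in F^0_yE\subset F^{-j+1}_yE$, so Definition \ref{d.hat} (ii) gives $\varphi_{-j}(\zeta_{\bar{A}})=\theta_{-j+1}(\zeta_{\bar{A}})$, and Definition \ref{d.length} (iv) applied to $\theta_{-j+1}$ yields precisely the asserted value (your reduction for the case of nontrivial $\bar{A}$ also omits the needed use of Definition \ref{d.hat} (ii) here).
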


We have the following analog of Definition \ref{d.torsion} on $\hat{E}$.

\begin{definition}\label{d.torsionhat} In the setting of Definition \ref{d.partialhat},
for each point $\varphi \in \hat{E}_y$, we define an element  $ \tau_{\varphi} \in \Hom(\wedge^2 \fg_{-}, \fg)$ which sends $\fg_{-i} \wedge \fg_{-j}$ for each $i, j \in \N$  to $$\fg_{-i-j} + \fg_{-i-j+1} + \cdots + \fg_{-i-j+ \ell} \ \cong \ \fg^{-i-j}/\fg^{-i-j+ \ell+1} $$ as follows.
 Choose a neighborhood $U \subset E$ of $y \in E$ and a local holomorphic section $\sigma: U \to \hat{E}$ satisfying $\sigma (y) = \varphi$.  Then $\sigma^* \hat{\theta}_{-k}, k \in \N,$ is a partial form on $U$ and for any $i, j \in \mathbb{N}$ satisfying $i +j \leq k$, we have $${\rm d} \sigma^* \hat{\theta}_{-k} \ \in \ \Hom(F^{-i} E \otimes F^{-j} E, \fg^{-k}/\fg^{-k + \ell +1}).$$
 For   $v \in \fg_{-i}$ and $w \in \fg_{-j},$ choose $\xi \in F_y^{-i} E$ and $\eta \in F_y^{-j} E$ such that $\varphi_{-i}(\xi) =v$ and $\varphi_{-j}(\eta) = w$.
Define $$\hat{\tau}_{\varphi}(v, w) := {\rm d} \sigma^*\hat{\theta}_{-i-j}(\xi, \eta)  \ \in \ \fg^{-i-j}/\fg^{-i-j+ \ell + 1}.$$ This does not depend on the choice of $\xi, \eta$ and $\sigma$ (Section 3.17 of \cite{CS}). The assignment $\varphi \mapsto \hat{\tau}_{\varphi}$ determines
 a  $\Hom(\wedge^2 \fg_{-}, \fg)$-valued holomorphic function $\hat{\tau}$ on $\hat{E}$,  called  the {\em torsion} of $(\hat{E}, \hat{\theta}).$ We can write $$\hat{\tau}  = \hat{\tau}^0 + \hat{\tau}^1 + \cdots + \hat{\tau}^{\ell}$$ for a $\Hom(\wedge^2 \fg_{-}, \fg)_k$-valued holomorphic function $\hat{\tau}^k$ on $\hat{E}$ for each $0 \leq k \leq \ell$.
 Also define $$\hat{\tau}^{\geq 1} := \hat{\tau}^1 + \hat{\tau}^2 + \cdots + \hat{\tau}^{\ell}.$$   Definition \ref{d.torsion} and Definition \ref{d.hat} (i) imply  that $\hat{\tau}_{\varphi}^k = \tau^k_{y}$ for any $\varphi \in \hat{E}_y$ and  $0 \leq k \leq \ell -1.$
\end{definition}

   Next lemma, proved in Section 3.18 of \cite{CS}, describes the behavior of the torsion function under the group action of Lemma \ref{l.affine}.

\begin{lemma}\label{l.3.18}
In Definition \ref{d.torsionhat}, pick $\varphi, \tilde{\varphi} \in \hat{E}_y$ and $\psi \in C^{\ell,1}(\fg_0)$ such that $$\tilde{\varphi}_{-j}(\xi)  = \varphi_{-j} (\xi) + \psi (
\Theta_{-j}(\xi)) $$ for all $j \in \N$ and $\xi \in T^{-j}_y E$. Then in terms of $$\partial: C^{\ell, 1}(\fg_0) = \Hom(\fg_-, \fg)_{\ell} \ \to \  C^{\ell, 2} (\fg_0) = \Hom(\wedge^2 \fg_{-}, \fg)_{\ell}$$ in
Definition \ref{d.cohomology}, we have
$\hat{\tau}_{\tilde{\varphi}} = \hat{\tau}_{\varphi} + \partial \psi,$ i.e., for $v, w \in \fg_{-}$,
$$\tau_{\tilde{\varphi}} (v,  w) = \tau_{\varphi} (v, w) + [\psi(v), w] + [v, \psi(w)] - \psi ([v,w]).$$\end{lemma}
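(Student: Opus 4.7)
The plan is a direct computation of $\hat\tau_{\tilde\varphi}(v,w) - \hat\tau_\varphi(v,w)$ from Definition \ref{d.torsionhat}, mirroring the bookkeeping of Section 3.18 of \cite{CS}. First, I would lift the relation $\tilde\varphi = \varphi \cdot \psi$ of Lemma \ref{l.affine} to local sections of $\hat\pi: \hat E \to E$: pick $\sigma$ with $\sigma(y) = \varphi$ and set $\tilde\sigma := \sigma \cdot \psi$. Then $\tilde\sigma(y) = \tilde\varphi$, and at every $y' \in E$ and $\zeta \in F^{-j}_{y'}E$,
\[
\tilde\sigma^*\hat\theta_{-j}(\zeta) - \sigma^*\hat\theta_{-j}(\zeta) = \psi_{-j}(\Theta_{-j}(\zeta)).
\]
Call this difference, viewed as a collection of partial forms, $\alpha$; it is homogeneous of degree $\ell$.

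Second, the test vectors need to be adjusted: since $\tilde\varphi_{-i}(\xi) = v + \psi(v) \neq v$ when $\varphi_{-i}(\xi) = v$, I cannot reuse $\xi,\eta$ for $\tilde\varphi$. Using surjectivity of $\theta_{-i+1}$, I would pick $\delta\xi \in F^{-i+1}_yE$ with $\theta_{-i+1}(\delta\xi) = -\psi(v)$ concentrated in the $\fg_{-i+\ell}$-component, and analogously $\delta\eta$; then $\tilde\xi := \xi + \delta\xi$ and $\tilde\eta := \eta + \delta\eta$ satisfy $\tilde\varphi_{-i}(\tilde\xi) = v$ and $\tilde\varphi_{-j}(\tilde\eta) = w$.

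Third, expand $d\tilde\sigma^*\hat\theta_{-k}(\tilde\xi,\tilde\eta)$ bilinearly into four pieces and apply the Cartan formula of Definition \ref{d.partialform} to each, projecting onto the top-degree component $\fg_{-k+\ell}$, which by the final sentence of Definition \ref{d.torsionhat} is the only piece in which $\hat\tau$ depends on $\varphi$. The diagonal piece, via the symbol algebra identification $\Theta_{-k}([\tilde\xi_E,\tilde\eta_E]|_y) = [v,w]_{\fg_-}$, yields $-\psi([v,w])$; the two cross pieces contribute $[\psi(v),w]$ and $[v,\psi(w)]$ respectively, obtained by chasing the bracket $[\delta\xi_E,\eta_E]|_y \in F^{-k+1}_yE$ through the compatibility conditions of Definition \ref{d.hat}(ii) and the symbol bracket on $\fg_-$; the remaining piece lies in degrees at least $2\ell$ and vanishes. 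Summing,
\[
\hat\tau_{\tilde\varphi}(v,w) - \hat\tau_\varphi(v,w) = [\psi(v),w] + [v,\psi(w)] - \psi([v,w]) = \partial\psi(v,w),
\]
the second equality being Definition \ref{d.cohomology}. Conceptually the identity is the Maurer--Cartan transformation $\hat\tau' = \hat\tau + d\alpha + [\sigma^*\hat\theta,\alpha] + \tfrac{1}{2}[\alpha,\alpha]$ evaluated in degree $\ell$, with $[\alpha,\alpha]$ supported in degree $2\ell$ and so dropping out. The main obstacle is that $\sigma^*\hat\theta$ is only a partial form and not a genuine $\fg$-valued 1-form, so one cannot invoke the Maurer--Cartan identity directly and must verify the four-term expansion term by term along the filtration, as sketched above.
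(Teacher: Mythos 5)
Your proposal is correct and takes essentially the same route as the paper, whose own ``proof'' of this lemma is just the citation to Section 3.18 of \cite{CS}: the change of section contributes $-\psi([v,w])$ via the structure equation, the adjusted test vectors contribute $[\psi(v),w]$ and $[v,\psi(w)]$ through the cross terms, and the $(\delta\xi,\delta\eta)$ term dies modulo $\fg^{-k+\ell+1}$, exactly as in the cited computation. One small point to watch when writing it out in full: if $\ell\geq i$ the correction $\delta\xi$ is a vertical vector, so the cross-term bracket is governed by the $G^0/G^{\ell}$-equivariance properties (iii) and (iv) of Definition \ref{d.length} (and the standing structure-equation hypothesis) rather than literally by the symbol bracket on $\fg_{-}$.
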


The behavior of torsion function under the group action of Lemma \ref{l.actionhat} is described  in the next lemma. Its proof is given in Section 3.19 of \cite{CS}: the formula below is a translation into our notation of the last displayed equation of that section,  our $\hat{\tau}^{\geq 1}_{\varphi \cdot b}(v,w)$ corresponding to ${\rm d} \bar{\sigma}^*\hat{\theta}_{i+j}(\xi, \eta) + [v,w]$ in \cite{CS}.

\begin{lemma}\label{l.3.19}
In Definition \ref{d.torsionhat}, pick $\varphi \in \hat{E}_y$ and
 $b\in G^0/G^{\ell+1}$. Then for any $v \in \fg_{-i}$ and $ w \in \fg_{-j}$, we have
$$
\hat{\tau}^{\geq 1}_{\varphi\cdot b} (v, w) = {\rm Ad}(b^{-1}) \cdot \hat{\tau}^{\geq 1}_{\varphi} ({\rm Ad}_{-}(b) \cdot v, {\rm Ad}_{-}(b) \cdot w) \ \in \ \fg^{-i-j}/\fg^{-i-j + \ell +1},
$$
where ${\rm Ad}(b^{-1})$ denotes the  action of $b^{-1} \in G^0/G^{\ell +1 }$ on $\fg^{-i-j}/\fg^{-i-j + \ell +1}$ induced by the adjoint representation and ${\rm Ad}_{-}(b)$ denotes the action of $b$ on $\mathfrak{g}_{-}$ defined by the following composition:
$$ \mathfrak{g}_{-k} \stackrel{{\rm Ad}(b)}{\rightarrow}  \mathfrak{g}_{-k} + \cdots + \mathfrak{g}_{-k + \ell} \stackrel{{\rm projection}}{\rightarrow} \mathfrak{g}_{-k} + \cdots + \mathfrak{g}_{\min\{-k + \ell, -1\}}.$$ \end{lemma}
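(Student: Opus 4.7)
The strategy is the same computation performed in Section 3.19 of \cite{CS}, adapted to the present notation. Starting from a local holomorphic section $\sigma\colon U\to \hat{E}$ with $\sigma(y)=\varphi$ as used in Definition \ref{d.torsionhat}, form the transported section $\bar{\sigma}(z):=\sigma(z\cdot b_{0}^{-1})\cdot b$ defined on a neighborhood of $y\cdot b_{0}\in E$; Lemma \ref{l.actionhat} ensures $\bar{\sigma}(y\cdot b_{0})=\varphi\cdot b$. Applying the $G^{0}/G^{\ell+1}$-equivariance of $\hat{\theta}$ from Lemma \ref{l.hatRb}(iii), together with the equivariance of $\hat{\pi}$ in Lemma \ref{l.actionhat}, one obtains
$$\bar{\sigma}^{*}\hat{\theta}_{-k}=\mathrm{Ad}(b^{-1})\circ R_{b_{0}^{-1}}^{*}\sigma^{*}\hat{\theta}_{-k}\quad \text{for every } k\in\mathbb{N}.$$
Since $\mathrm{Ad}(b^{-1})$ is a constant linear map, the exterior derivative commutes with it, giving $d\bar{\sigma}^{*}\hat{\theta}_{-i-j}=\mathrm{Ad}(b^{-1})\circ R_{b_{0}^{-1}}^{*}d\sigma^{*}\hat{\theta}_{-i-j}$.

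Now choose $\xi'\in F^{-i}_{y\cdot b_{0}}E$ and $\eta'\in F^{-j}_{y\cdot b_{0}}E$ with $(\varphi\cdot b)_{-i}(\xi')=v$ and $(\varphi\cdot b)_{-j}(\eta')=w$, and set $\xi:=(R_{b_{0}^{-1}})_{*}\xi'\in F^{-i}_{y}E$, $\eta:=(R_{b_{0}^{-1}})_{*}\eta'\in F^{-j}_{y}E$. The definition of the right action in Lemma \ref{l.actionhat} gives $\varphi_{-i}(\xi)=\mathrm{Ad}(b)\cdot v$ in $\fg^{-i}/\fg^{-i+\ell+1}$ and $\varphi_{-j}(\eta)=\mathrm{Ad}(b)\cdot w$ in $\fg^{-j}/\fg^{-j+\ell+1}$. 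Evaluating the displayed identity at $(\xi',\eta')$ yields
$$\hat{\tau}_{\varphi\cdot b}(v,w)=\mathrm{Ad}(b^{-1})\cdot d\sigma^{*}\hat{\theta}_{-i-j}(\xi,\eta).$$

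To bring the right-hand side into the form of the lemma, decompose $\mathrm{Ad}(b)\cdot v=\mathrm{Ad}_{-}(b)\cdot v+v_{+}$ and $\mathrm{Ad}(b)\cdot w=\mathrm{Ad}_{-}(b)\cdot w+w_{+}$ into negative and non-negative degree components. Using property (iv) of Definition \ref{d.length}, rewrite $\xi=\xi_{-}+(\zeta_{A_{v}})_{y}$ and $\eta=\eta_{-}+(\zeta_{A_{w}})_{y}$, where $\zeta_{A_{v}},\zeta_{A_{w}}$ are fundamental vector fields attached to lifts $A_{v},A_{w}$ of $v_{+},w_{+}$, and where $\xi_{-},\eta_{-}$ satisfy $\varphi_{-i}(\xi_{-})=\mathrm{Ad}_{-}(b)\cdot v$, $\varphi_{-j}(\eta_{-})=\mathrm{Ad}_{-}(b)\cdot w$. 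Substituting into the Cartan formula for $d\sigma^{*}\hat{\theta}_{-i-j}$, the purely horizontal contribution recovers $\hat{\tau}_{\varphi}(\mathrm{Ad}_{-}(b)\cdot v,\mathrm{Ad}_{-}(b)\cdot w)$, while the remaining terms, computed via the Lie-derivative identity $\mathcal{L}_{\zeta_{A}}\hat{\theta}_{-k}=-\mathrm{ad}(A)\circ\hat{\theta}_{-k}$ implied by Lemma \ref{l.hatRb}(iii) and the identity $\hat{\tau}^{0}_{\varphi}=-[\cdot,\cdot]$ guaranteed by the structure equation on $(E,\theta)$ (last sentence of Definitions \ref{d.torsion} and \ref{d.torsionhat}), collapse to contributions of degree zero. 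Projecting onto the positive-weight components of $\fg^{-i-j}/\fg^{-i-j+\ell+1}$ eliminates these and leaves the asserted formula. The main obstacle is the graded-bracket bookkeeping which verifies that the correction terms recombine exactly as $[v,w]-[\mathrm{Ad}_{-}(b)\cdot v,\mathrm{Ad}_{-}(b)\cdot w]$, a $\hat{\tau}^{0}$-shaped discrepancy that vanishes upon projection to $\hat{\tau}^{\geq 1}$; this follows from the defining bracket relations of $\fg$ listed in Lemma \ref{l.fg}.
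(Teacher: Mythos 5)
Your opening reduction is sound and is precisely the computation that the paper outsources to Section 3.19 of \cite{CS} (the paper gives no proof of Lemma \ref{l.3.19}, only the notational translation): the transported section $\bar{\sigma}=R_{b}\circ\sigma\circ R_{b_{0}^{-1}}$, the equivariance $R_{b}^{*}\hat{\theta}_{-k}=\mathrm{Ad}(b^{-1})\circ\hat{\theta}_{-k}$ from Lemma \ref{l.hatRb}(iii), and the identification $\varphi_{-i}(\xi)=\mathrm{Ad}(b)\cdot v$ for $\xi=(R_{b_{0}^{-1}})_{*}\xi'$ are all correct, and they give $\hat{\tau}_{\varphi\cdot b}(v,w)=\mathrm{Ad}(b^{-1})\cdot {\rm d}\sigma^{*}\hat{\theta}_{-i-j}(\xi,\eta)$ exactly as in the source.

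The gap is in the last step, which is where the content of the lemma lies. Your accounting of the vertical correction terms is not right: they do not ``collapse to contributions of degree zero'' that are ``eliminated'' by projecting to the positive-weight part, and the quantity you say they recombine into, $[v,w]-[\mathrm{Ad}_{-}(b)v,\mathrm{Ad}_{-}(b)w]$, is the wrong one. Test it with $b=\exp A$, $A\in\fg_{1}$, and $v,w\in\fg_{-1}$: then $\mathrm{Ad}_{-}(b)v=v$, $\mathrm{Ad}_{-}(b)w=w$, so your claimed discrepancy vanishes identically, whereas the cross terms produced by $\zeta_{A_{v}},\zeta_{A_{w}}$ are of the shape $-[[A,v],w]-[v,[A,w]]+\cdots\in\fg_{-1}+\fg_{0}+\cdots$, i.e.\ of degree $\geq 1$ relative to $\fg_{-i-j}=\fg_{-2}$; they survive the passage to $\hat{\tau}^{\geq 1}$ rather than being killed by it. What must actually be verified is that the sum of all cross terms equals $[\mathrm{Ad}_{-}(b)v,\mathrm{Ad}_{-}(b)w]-[\mathrm{Ad}(b)v,\mathrm{Ad}(b)w]$ modulo $\fg^{-i-j+\ell+1}$; only then, using $\hat{\tau}^{0}=-[\cdot\,,\cdot]$ (structure equation) on \emph{both} sides, does the mismatch get absorbed when each torsion is replaced by its $\geq 1$ part, since the lemma is equivalent to $\hat{\tau}_{\varphi\cdot b}(v,w)+[v,w]=\mathrm{Ad}(b^{-1})\bigl(\hat{\tau}_{\varphi}(\mathrm{Ad}_{-}(b)v,\mathrm{Ad}_{-}(b)w)+[\mathrm{Ad}_{-}(b)v,\mathrm{Ad}_{-}(b)w]\bigr)$ together with $\mathrm{Ad}(b)[v,w]=[\mathrm{Ad}(b)v,\mathrm{Ad}(b)w]$. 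In addition, the identity $\mathcal{L}_{\zeta_{A}}\hat{\theta}_{-k}=-\mathrm{ad}(A)\circ\hat{\theta}_{-k}$ holds on $\hat{E}$ for the $G^{0}/G^{\ell+1}$-action, but your computation lives on $E$ with the pulled-back forms $\sigma^{*}\hat{\theta}_{-k}$, and $\sigma$ need not be equivariant; to make this step legitimate you must either invoke the independence of $\hat{\tau}_{\varphi}$ from the choices of $\sigma,\xi,\eta$ (Section 3.17 of \cite{CS}) or compute the terms $\zeta_{A}(\sigma^{*}\hat{\theta}_{-i-j}(\tilde{\eta}))$ and $\sigma^{*}\hat{\theta}_{-i-j}([\zeta_{A},\tilde{\eta}])$ directly, using that $\sigma^{*}\hat{\theta}_{-i-j}(\zeta_{A})$ is the constant projection of $A$ and that $[\zeta_{A},\tilde{\eta}]$ respects the filtration. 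As written, the bookkeeping that constitutes the proof is missing, and the mechanism you describe would not yield the stated formula.
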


\subsection{From $(\hat{E}, \hat{\theta})$ to $(\widetilde{E}, \widetilde{\theta})$}\label{ss.widetilde}

From now on, we proceed differently from \cite{CS}.

\begin{definition}\label{d.gamma}
In the setting of Definition \ref{d.torsionhat},
 Lemma \ref{l.3.18} implies that the torsion function $\hat{\tau}$ on $\hat{E}$ induces a $ C^{\ell, 2}(\fg_0)/\partial C^{\ell, 1}(\fg_0)$-valued holomorphic function on $E$ whose value at $y \in E$ is $$ \gamma (y) := \hat{\tau}^{\ell}_{\varphi} \mod \partial C^{\ell, 1}(\fg_0) \mbox{ for any } \varphi \in \hat{E}_y.$$
\end{definition}

\begin{lemma}\label{l.equivariance}
In the setting of  Definition \ref{d.torsionhat}, assume that $\tau^{\geq 1} = \sum_{k=1}^{\ell -1} \tau^k =0.$ Write $E_0 \to M$ for the $G_0$-principal bundle obtained by the quotient of $E \to M$ via the $G^1/G^{\ell}$-action. Then the function $\gamma$ on $E$ is constant along the fibers of $E \to E_0$ and descends to a $G_0$-equivariant holomorphic map $ E_0 \to C^{\ell, 2}(\fg_0)/\partial C^{\ell, 1}(\fg_0)$,  inducing an element $$\bar{\gamma} \in H^0(M, C^{\ell,2}(E_0)/\partial C^{\ell, 1}(E_0)),$$
a holomorphic section of the vector bundle on $M$ associated to  $E_0$ via the natural $G_0$-representation on $C^{\ell, 2}(\fg_0)/\partial C^{\ell, 1}(\fg_0).$ \end{lemma}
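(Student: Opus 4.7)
The plan is to verify the lemma in two steps: first, that $\gamma$ is constant along the $G^1/G^\ell$-orbits in $E$, and second, that the resulting function on $E_0$ is $G_0$-equivariant. The main tool is the transformation rule from Lemma \ref{l.3.19}, combined with a crucial simplification provided by the hypothesis $\tau^{\geq 1}=0$: by the last sentence of Definition \ref{d.torsionhat}, this forces $\hat{\tau}^k \equiv 0$ for $1 \leq k \leq \ell-1$, so that in Lemma \ref{l.3.19} we may replace $\hat{\tau}^{\geq 1}$ by $\hat{\tau}^\ell$. I would also record at the outset that, by Proposition \ref{p.algebraic}(4)(5), the splitting $G_0 \hookrightarrow G^0$ yields a semidirect decomposition $G^0/G^{\ell+1} = G_0 \ltimes (G^1/G^{\ell+1})$ compatible with the projection $G^0/G^{\ell+1} \twoheadrightarrow G^0/G^\ell$.

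For the $G^1/G^\ell$-invariance, fix $b \in G^1/G^\ell$ and choose a lift $\tilde{b} \in G^1/G^{\ell+1}$. The key observation is a degree count: for such $\tilde{b}$, both ${\rm Ad}(\tilde{b}^{-1}) - {\rm id}$ and ${\rm Ad}_-(\tilde{b}) - {\rm id}$ raise the grading by at least $1$, because $\tilde{b}$ lies in the group exponentiated from $\fg^1/\fg^{\ell+1}$. Since $\hat{\tau}^\ell$ is homogeneous of degree $\ell$ and $\hat{\tau}_\varphi(v,w)$ for $v \in \fg_{-i},\, w \in \fg_{-j}$ takes values in $\fg^{-i-j}/\fg^{-i-j+\ell+1}$, every ``correction term'' arising from the bilinear expansion of $\hat{\tau}^\ell_\varphi({\rm Ad}_-(\tilde{b}) v, {\rm Ad}_-(\tilde{b}) w)$ or from ${\rm Ad}(\tilde{b}^{-1})$ lands in $\fg^{-i-j+\ell+1}$ and thus vanishes in the quotient. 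Inserting this into Lemma \ref{l.3.19} gives $\hat{\tau}^\ell_{\varphi \cdot \tilde{b}} = \hat{\tau}^\ell_\varphi$, so $\gamma(y \cdot b) = \gamma(y)$. The choice of lift is immaterial: two lifts differ by an element of $G^\ell/G^{\ell+1}$, which sits inside the $C^{\ell,1}(\fg_0)$-action of Lemma \ref{l.affine} and therefore changes $\hat{\tau}^\ell$ only by a coboundary (Lemma \ref{l.3.18}), invisible modulo $\partial C^{\ell,1}(\fg_0)$.

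For the $G_0$-equivariance, I decompose an arbitrary $\tilde{b} \in G^0/G^{\ell+1}$ as $\tilde{b} = \tilde{b}_0 \cdot \tilde{b}_1$ via the semidirect product above. The factor $\tilde{b}_1$ contributes nothing modulo $\partial$ by the previous step, while for $\tilde{b}_0 \in G_0$ the operator ${\rm Ad}(\tilde{b}_0)$ preserves the gradation, so ${\rm Ad}_-(\tilde{b}_0)$ coincides with the standard $G_0$-action on $\fg_-$ and Lemma \ref{l.3.19} reduces to the statement that $\varphi \mapsto \hat{\tau}^\ell_\varphi$ intertwines the $\tilde{b}_0$-action on $\hat{E}$ with the natural $G_0$-action on $C^{\ell,2}(\fg_0) = \Hom(\wedge^2 \fg_-, \fg)_\ell$. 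Since $\partial : C^{\ell,1}(\fg_0) \to C^{\ell,2}(\fg_0)$ is a $G_0$-module homomorphism, this descends to the quotient, giving a $G_0$-equivariant holomorphic map $E_0 \to C^{\ell,2}(\fg_0)/\partial C^{\ell,1}(\fg_0)$, whence the desired section $\bar{\gamma} \in H^0(M, C^{\ell,2}(E_0)/\partial C^{\ell,1}(E_0))$. The main obstacle is solely the bookkeeping of degrees in the second paragraph; once that degree count is in hand everything else is formal.
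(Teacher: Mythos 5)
Your proof is correct and takes essentially the same route as the paper's: both substitute $\hat{\tau}^{\geq 1}=\hat{\tau}^{\ell}$ into Lemma \ref{l.3.19}, kill the $G^1$-contribution by the degree count (homogeneity of $\hat{\tau}^{\ell}$ together with the fact that ${\rm Ad}$ and ${\rm Ad}_-$ of elements of $G^1/G^{\ell+1}$ differ from the identity by degree-raising terms, which vanish in $\fg^{-i-j}/\fg^{-i-j+\ell+1}$), and obtain $G_0$-equivariance from the grading-preserving case plus $G_0$-equivariance of $\partial$. Your explicit semidirect decomposition and the lift-independence remark are harmless extras (the latter is already built into Definition \ref{d.gamma} via Lemma \ref{l.3.18}), so nothing essential differs from the paper's argument.
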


\begin{proof}
Let us examine the behavior of $\gamma$ under the action of $G^0/G^{\ell}$.
Choose an element $b \in G^0/G^{\ell +1}$ and pick any point $\varphi \in \hat{E}_y, y \in E$. By the assumption $\tau^{\geq 1}=0$, we have $\hat{\tau}^k =0$ for $1 \leq k \leq \ell-1$ and  $\hat{\tau}^{\geq 1} = \hat{\tau}^{\ell}$. Thus for any $v \in \fg_{-i}$ and $w \in \fg_{-j},$  Lemma \ref{l.3.19} reads
\begin{eqnarray}\label{e.1} \hat{\tau}^{\ell}_{\varphi \cdot b} (v, w) &=& {\rm Ad}(b^{-1}) \cdot \hat{\tau}^{\ell}_{\varphi} ({\rm Ad}_{-}(b) \cdot v, {\rm Ad}_-(b) \cdot w) \ \in \fg_{-i-j + \ell}. \end{eqnarray}

Suppose that $b \in G^1/G^{\ell +1}$. Then
$$v - {\rm Ad}_-(b) \cdot v \in \fg^{-i+1}/\fg^0 \mbox{ and } w - {\rm Ad}_-(b) \cdot w \in \fg^{-j+1}/\fg^0.$$ Since  $$\hat{\tau}^{\ell}_{\varphi} (\fg^{-i+1}, \fg^{-j}) = 0 = \hat{\tau}^{\ell}_{\varphi}(\fg^{-i}, \fg^{-j+1}) $$ as elements of $\fg_{-i-j+ \ell}$ and $\hat{\tau}^{\ell}_{\varphi} \in \Hom(\wedge^2 \fg_-, \fg)_{\ell}$, we see that $$\hat{\tau}^{\ell}_{\varphi}({\rm Ad}_-(b) \cdot v, {\rm Ad}_-(b) \cdot w) = \hat{\tau}^{\ell}_{\varphi} (v,w) \ \in \fg_{-i-j + \ell}.$$ Then $b^{-1} \in G^1/G^{\ell +1}$ implies
$${\rm Ad}(b^{-1}) \cdot \hat{\tau}^{\ell}_{\varphi}(v,w)  = \hat{\tau}^{\ell}_{\varphi}(v,w).$$
Thus (\ref{e.1}) shows $$\hat{\tau}^{\ell}_{\varphi \cdot b} (v, w) = \hat{\tau}^{\ell}_{\varphi} (v,w).$$
This proves that $\gamma$ is constant along the fibers of $E \to E_0$.

\medskip
 Suppose $b $ is an element in the image of $G_0$ (from Proposition \ref{p.algebraic} (5))  in $G^0/G^{\ell +1}$. Then ${\rm Ad}(b)$ preserves the grading of $\fg$ and  (\ref{e.1}) becomes
 \begin{eqnarray}\label{e.2} \hat{\tau}^{\ell}_{\varphi \cdot b} (v, w) &=& {\rm Ad}(b^{-1}) \cdot \hat{\tau}^{\ell}_{\varphi} ({\rm Ad}(b) \cdot v, {\rm Ad}(b) \cdot w). \end{eqnarray} Recall that the natural action of $b^{-1} \in G_0$ on
$C^{\ell, 2}(\fg_0) = \Hom (\wedge^2 \fg_0, \fg)_{\ell}$ sends
an element $\alpha \in C^{\ell, 2}(\fg_0)$ to $b^{-1} \cdot \alpha$ defined by $$
(b^{-1} \cdot \alpha) (v, w) := {\rm Ad}(b^{-1}) \cdot \alpha( {\rm Ad}(b) v, {\rm Ad}(b) w).$$
Thus the equation (\ref{e.2}) yields $$\gamma (y \cdot b_0) = b_0^{-1} \cdot \gamma(y)$$
where $b_0 \in G^0/G^{\ell}$ is the image of $b$ under the projection $G^0/G^{\ell +1} \to G^0/G^{\ell}$. This proves the $G_0$-equivariance. Consequently, we obtain the element $\bar{\gamma}$. \end{proof}

\begin{lemma}\label{l.surjective}
In the setting of Lemma \ref{l.equivariance}, assume that $\bar{\gamma} =0$. Then for any $y \in E$, the restriction of $\hat{\tau}^{\ell}$ induces a $G_0$-equivariant surjective holomorphic map $\hat{\tau}^{\ell}|_y: \hat{E}_y \to \partial C^{\ell, 1} (\fg_0).$
\end{lemma}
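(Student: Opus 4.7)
The plan is to assemble three ingredients that are already in place: the hypothesis $\bar{\gamma}=0$, the affine behaviour of the torsion under the fiber action from Lemma \ref{l.3.18}, and the transformation law under $G_0$ from Lemma \ref{l.3.19}. None of the steps requires a new computation; the point is to combine them correctly and keep careful track of graded degrees.

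First I would spell out what $\bar{\gamma}=0$ means pointwise on $\hat{E}$. By Definition \ref{d.gamma}, $\gamma(y)=\hat{\tau}^{\ell}_{\varphi}\bmod \partial C^{\ell,1}(\fg_0)$ for any $\varphi\in \hat{E}_y$, and Lemma \ref{l.equivariance} shows that $\gamma$ descends to a section $\bar{\gamma}$ of the associated bundle on $M$. Consequently the vanishing of $\bar{\gamma}$ is equivalent to $\hat{\tau}^{\ell}_{\varphi}\in \partial C^{\ell,1}(\fg_0)$ for every $\varphi\in \hat{E}$, i.e.\ the map $\hat{\tau}^{\ell}$ on $\hat{E}$ already takes values in $\partial C^{\ell,1}(\fg_0)$.

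Next I would deduce surjectivity onto $\partial C^{\ell,1}(\fg_0)$. Fix $y\in E$ and a base point $\varphi_0\in \hat{E}_y$. Since $\partial\psi$ is concentrated in degree $\ell$, Lemma \ref{l.3.18} gives $\hat{\tau}^{\ell}_{\varphi_0\cdot\psi}=\hat{\tau}^{\ell}_{\varphi_0}+\partial\psi$ for every $\psi\in C^{\ell,1}(\fg_0)$, while the lower-degree components $\hat{\tau}^{k}$, $0\le k<\ell$, are unaffected. Because $\hat{E}_y$ is a $C^{\ell,1}(\fg_0)$-torsor by Lemma \ref{l.affine}, the image of $\hat{\tau}^{\ell}|_y$ equals $\hat{\tau}^{\ell}_{\varphi_0}+\partial C^{\ell,1}(\fg_0)$, which is just $\partial C^{\ell,1}(\fg_0)$ by the previous paragraph.

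For the $G_0$-equivariance I would view the collection $\{\hat{\tau}^{\ell}|_y\}_{y\in E}$ as a single map $\hat{\tau}^{\ell}\colon \hat{E}\to \partial C^{\ell,1}(\fg_0)$, equivariant for the $G_0$-action of Proposition \ref{p.algebraic}(5). For $b$ in the image of $G_0$ inside $G^0/G^{\ell+1}$ the automorphism ${\rm Ad}(b)$ preserves the grading of $\fg$, so ${\rm Ad}_{-}(b)={\rm Ad}(b)|_{\fg_-}$. Under the standing hypothesis $\tau^{\geq 1}=0$ (so that $\hat{\tau}^{\geq 1}=\hat{\tau}^{\ell}$), Lemma \ref{l.3.19} specializes to $\hat{\tau}^{\ell}_{\varphi\cdot b}(v,w)={\rm Ad}(b^{-1})\cdot \hat{\tau}^{\ell}_{\varphi}({\rm Ad}(b)v,{\rm Ad}(b)w)$, which is precisely the natural $G_0$-action on $C^{\ell,2}(\fg_0)=\Hom(\wedge^2\fg_-,\fg)_\ell$ recalled in Lemma \ref{l.equivariance}. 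There is no real obstacle here; the only point that deserves attention is the bookkeeping of graded degrees, to ensure that $\partial\psi$ alters only the top component $\hat{\tau}^{\ell}$ and leaves the lower components $\hat{\tau}^{k}$, $k<\ell$, whose vanishing was crucial in the derivation of $\bar{\gamma}$, entirely intact.
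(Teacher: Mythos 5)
Your proposal is correct and follows essentially the same route as the paper: the vanishing of $\bar{\gamma}$ forces $\gamma=0$, i.e.\ $\hat{\tau}^{\ell}_{\varphi}\in\partial C^{\ell,1}(\fg_0)$ for every $\varphi$, and then the torsor structure of Lemma \ref{l.affine} together with the affine transformation law of Lemma \ref{l.3.18} gives surjectivity, while the $G_0$-equivariance is the computation already carried out in the proof of Lemma \ref{l.equivariance} via Lemma \ref{l.3.19}. Your extra bookkeeping (that $\partial\psi$ only shifts the degree-$\ell$ component) is a correct elaboration of what the paper leaves implicit.
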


\begin{proof}
By the definition of $\bar{\gamma}$  in Lemma \ref{l.equivariance}, the assumption $\bar{\gamma} =0$ implies $\gamma =0$.
Thus $\hat{\tau}^{\ell}$ induces a $G_0$-equivariant holomorphic map
$\tau^{\ell}|_y: \hat{E}_y \to  \partial C^{\ell, 1} (\fg_0).$ By Lemma \ref{l.affine} and Lemma \ref{l.3.18}, this map is surjective. \end{proof}

\begin{definition}\label{d.widetilde}
Let $\pi: E \to M$ and $\theta= (\theta_{-j}, j \in \N)$ be a $G^0$-frame bundle of length $\ell \geq 1$. Assume that \begin{itemize} \item[(a)]  $\theta$ satisfies the structure equation;
\item[(b)] the torsion function $\tau$ of $(E, \theta)$ satisfies  $\tau^{\geq 1}=0$; and  \item[(c)] the section $\bar{\gamma} \in H^0(M, C^{\ell,2}(E_0)/\partial C^{\ell,1}(E_0))$ defined in Lemma \ref{l.equivariance} is zero.\end{itemize}  By Lemma \ref{l.surjective},   the  holomorphic map $\hat{\tau}^{\ell}: \hat{E} \to \partial C^{\ell, 1}(\fg_0)$ is surjective. We define the following.
\begin{itemize}
\item[(1)] $\widetilde{E} := \{ \varphi \in \hat{E}, \hat{\tau}^{\ell}_{\varphi} = 0 \}$.
\item[(2)] $\widetilde{\pi} := \hat{\pi}|_{\widetilde{E}} : \widetilde{E} \to E.$
\item[(3)] A subbundle $F^{j} \widetilde{E} \subset T\widetilde{E}$ whose fiber $F^{j}_{\varphi} \widetilde{E}$ at $\varphi \in \widetilde{E}$ is $$({\rm d}_{\varphi} \widetilde{\pi})^{-1} ( F^{j} E) \mbox{ if } j \leq \ell,$$ and 0 if $j \geq \ell + 1.$
\item[(4)] A partial form $\widetilde{\theta}_{-j} \in \Hom(F^{-j} \widetilde{E}, \fg^{-j}/\fg^{-j+\ell +1})$ for each $j \in \N$ such that
$$   \widetilde{\theta}_{-j}(\xi) :=   \varphi_{-j}( {\rm d} \widetilde{\pi} (\xi))$$
     for each $\varphi \in \widetilde{E}$ and $\xi \in F^{-j}_{\varphi} \widetilde{E}$.  \end{itemize} \end{definition}

    \begin{proposition}\label{p.widetilde}
    In the setting of Definition \ref{d.widetilde}, we have the following.
    \begin{itemize}
    \item[(1)] The subset $\widetilde{E}$ of $\hat{E}$ is stable under the action of $G^0/G^{\ell+1}$.
\item[(2)] The action of $G^0/G^{\ell+1}$ on $\widetilde{E}$ is free and the morphism $\widetilde{\pi}: \widetilde{E}\rightarrow E$ is equivariant with respect to the natural projection $G^0/G^{\ell+1}\rightarrow G^0/G^{\ell}$. \item[(3)] The composition $\pi \circ \widetilde{\pi}: \widetilde{E} \to M$ is a principal $G^0/G^{\ell+1}$-bundle over $M.$
\item[(4)] The principal bundle $\widetilde{E}\to M$ with the partial forms $\widetilde{\theta}$ is a $G^0$-frame bundle of length $\ell+1$ satisfying the structure equation.
\item[(5)] The torsion $\widetilde{\tau}$ of $(\widetilde{E}, \widetilde{\theta})$ satisfies $\widetilde{\tau}^{\geq 1}=0$. \end{itemize} \end{proposition}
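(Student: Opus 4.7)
The plan is to verify the five assertions essentially in order, exploiting that $\widetilde{E}$ sits inside $\hat{E}$ as the vanishing locus of $\hat{\tau}^{\ell}$ and that $\widetilde{\theta}$ is the restriction of $\hat{\theta}$ to $T\widetilde{E}$. Most properties will be inherited from the corresponding properties on $(\hat{E},\hat{\theta})$ established earlier in the subsection; the two points requiring genuine checking are the equivariance of the defining equation $\hat{\tau}^{\ell}=0$ and the principal bundle structure of $\widetilde{E}\to M$.

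For (1), hypothesis (b) together with the identity $\hat{\tau}^{k}=\tau^{k}$ for $0\le k\le \ell-1$ from Definition \ref{d.torsionhat} gives $\hat{\tau}^{\ge 1}=\hat{\tau}^{\ell}$, so Lemma \ref{l.3.19} specializes to
\[
\hat{\tau}^{\ell}_{\varphi\cdot b}(v,w)\;=\;\mathrm{Ad}(b^{-1})\cdot\hat{\tau}^{\ell}_{\varphi}(\mathrm{Ad}_{-}(b)v,\mathrm{Ad}_{-}(b)w),
\]
which shows immediately that $\hat{\tau}^{\ell}_{\varphi}=0$ implies $\hat{\tau}^{\ell}_{\varphi\cdot b}=0$. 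Assertion (2) then follows by restricting the free $G^{0}/G^{\ell+1}$-action of Lemma \ref{l.actionhat}. For (3), the key observation is that for each $y\in E$ the affine-equivariance of Lemma \ref{l.3.18} turns $\hat{\tau}^{\ell}|_{y}\colon \hat{E}_{y}\to \partial C^{\ell,1}(\fg_{0})$ into an affine map with linear part $\partial$, and surjectivity (Lemma \ref{l.surjective}, using $\bar\gamma=0$) shows that its zero-fiber $\widetilde{E}_{y}$ is a torsor under $\ker\partial=\fg_{\ell}=\mathrm{Lie}(G^{\ell}/G^{\ell+1})$. Combined with the fact that $\pi\colon E\to M$ is a principal $G^{0}/G^{\ell}$-bundle and with (2), this identifies $\pi\circ\widetilde{\pi}$ as a principal $G^{0}/G^{\ell+1}$-bundle over $M$.

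For (4), I would verify the four axioms of Definition \ref{d.length} for length $\ell+1$ by restricting the analogous properties for $(\hat{E},\hat{\theta})$ in Lemma \ref{l.hatRb}: axioms (ii)--(iv) pass directly, and for axiom (i) I would use that $F^{k}\widetilde{E}=F^{k}\hat{E}\cap T\widetilde{E}$ for $k\le\ell$ (with $F^{\ell+1}\widetilde{E}=0$) to deduce
\[
\ker\widetilde{\theta}_{-j}|_{\varphi}\;=\;\ker\hat{\theta}_{-j}|_{\varphi}\cap T_{\varphi}\widetilde{E}\;=\;F^{-j+\ell+1}_{\varphi}\hat{E}\cap T_{\varphi}\widetilde{E}\;=\;F^{-j+\ell+1}_{\varphi}\widetilde{E}.
\]
The structure equation reduces, exactly as in Definition \ref{d.torsion}, to the identity $\widetilde{\tau}^{0}=-[\,\cdot\,,\,\cdot\,]|_{\fg_{-}}$. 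Since local sections of $\widetilde{E}\to E$ are in particular local sections of $\hat{E}\to E$ and $\widetilde{\theta}_{-k}$ pulls back to the same partial form on $E$ as $\hat{\theta}_{-k}$ does, the exterior derivatives coincide and hence $\widetilde{\tau}_{\varphi}=\hat{\tau}_{\varphi}$ for every $\varphi\in\widetilde{E}$; so $\widetilde{\tau}^{0}=\hat{\tau}^{0}=\tau^{0}=-[\,\cdot\,,\,\cdot\,]|_{\fg_{-}}$ by hypothesis (a). Finally (5) is now immediate: the same identification $\widetilde{\tau}_{\varphi}=\hat{\tau}_{\varphi}$ gives $\widetilde{\tau}^{k}=\hat{\tau}^{k}=\tau^{k}=0$ for $1\le k\le \ell-1$ by hypothesis (b), while $\widetilde{\tau}^{\ell}_{\varphi}=\hat{\tau}^{\ell}_{\varphi}=0$ by the very definition of $\widetilde{E}$.

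The only delicate point I foresee is the clean bookkeeping in part (3)—reconciling the $C^{\ell,1}(\fg_{0})$-action of Lemma \ref{l.affine} with the $G^{0}/G^{\ell+1}$-action of Lemma \ref{l.actionhat} on the fibers—and the verification that torsion is genuinely intrinsic to $(\widetilde{E},\widetilde{\theta})$, so that passing from $\hat{E}$ to $\widetilde{E}$ does not introduce new components. Both reduce to the identity $\widetilde{\theta}=\hat{\theta}|_{T\widetilde{E}}$ and the compatibility of filtrations, which is straightforward once the dimension count $\dim\widetilde{E}_{y}=\dim\fg_{\ell}$ from (3) is in hand.
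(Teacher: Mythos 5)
Your proposal follows essentially the same route as the paper: stability of $\widetilde{E}$ via the specialization of Lemma \ref{l.3.19} under $\tau^{\geq 1}=0$ (the paper's formula (\ref{e.1})), freeness and equivariance by restriction of Lemma \ref{l.actionhat}, the fibers of $\widetilde{\pi}$ identified as affine spaces modeled on $\ker\partial\cong\fg_{\ell}$ via Lemmas \ref{l.affine}, \ref{l.3.18}, \ref{l.surjective} and \ref{l.cohom}, the frame-form axioms from Lemma \ref{l.hatRb}, and (5) from $\widetilde{\tau}_{\varphi}=\hat{\tau}_{\varphi}$ together with conditions (a), (b) and the defining equation of $\widetilde{E}$. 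The "delicate point" you flag in (3) — matching the translation action of $\fg_{\ell}\subset C^{\ell,1}(\fg_0)$ from Lemma \ref{l.affine} with the $G^{\ell}/G^{\ell+1}$-action from Lemma \ref{l.actionhat} on $\widetilde{\pi}$-fibers — is treated at a comparable level of brevity in the paper, so no issue there.

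The one genuine divergence is the structure equation in (4). The paper gets it directly from $\widetilde{\Theta}_{-k}=\Theta_{-k}\circ {\rm d}\widetilde{\pi}|_{F^{-k}\widetilde{E}}$, so condition (a) on $(E,\theta)$ transfers by pullback. You instead argue that the structure equation for $(\widetilde{E},\widetilde{\theta})$ "reduces to" $\widetilde{\tau}^{0}=-[\,\cdot\,,\,\cdot\,]$, and then deduce that identity from $\widetilde{\tau}=\hat{\tau}$ on $\widetilde{E}$. Note that Definition \ref{d.torsion} only records the implication from the structure equation to $\tau^{0}=-[\,\cdot\,,\,\cdot\,]$; you are invoking the converse. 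The converse does hold — for $i+j<k$ both sides of the structure equation vanish by the filtration axioms, and for $i+j=k$ with $i,j\geq 1$ one decomposes arbitrary $\xi\in F^{-i}\widetilde{E}$, $\eta\in F^{-j}\widetilde{E}$ using axioms (i) and (ii) of Definition \ref{d.length} into parts with pure-degree $\theta$-values plus elements of the deeper filtration pieces, after which bilinearity reduces everything to the torsion identity — but this short verification should be supplied; as written it is the only unproved step, and the paper's pullback argument avoids it entirely. With that small addition your proof is complete and otherwise matches the paper's.
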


\begin{proof}
The formula (\ref{e.1}) in the proof of Lemma \ref{l.equivariance} shows that $\widetilde{E}$ is stable under the action of $G^0 / G^{\ell+1}$, proving  $(1)$.

Lemma \ref{l.actionhat} says that the action of $G^0 / G^{\ell+1}$ on $\hat{E}$ is free, the morphism $\hat{\pi} : \hat{E} \to E$ is equivariant and the subgroup $G^{\ell} / G^{\ell+1}$ of $G^0 / G^{\ell+1}$ preserves the fibres of $\hat{\pi}$.
Thus the action of $G^0 / G^{\ell+1}$ on $\widetilde{E}$ is free and the morphism $\widetilde{\pi}$ is equivariant  with respect to $G^0/G^{\ell+1}\rightarrow G^0/G^{\ell}$. This proves (2).

By Lemma \ref{l.affine} and Lemma \ref{l.surjective},
               each fiber of $\widetilde{\pi}: \widetilde{E} \rightarrow E$ is biregular to the affine space $$ {\rm Ker} \left(\partial: C^{\ell, 1}(\fg_0) \to C^{\ell, 2}(\fg_0) \right).$$ By Lemma \ref{l.cohom}, this is isomorphic to $\fg_{\ell}$. We know $G^{\ell}/G^{\ell +1}$ acts freely on each fiber of $\widetilde{\pi}$ and the action is equivalent to its adjoint action on $\fg_{\ell}$. Since the latter is a transitive action, we see that $G^{\ell}/ G^{\ell+1}$ acts transitively on each fiber of $\widetilde{\pi}$ and $\widetilde{\pi} : \widetilde{E} \rightarrow E$ is a principal $G^{\ell} / G^{\ell+1}$-bundle. Thus $\pi: E \rightarrow M$ is a principle $G^0/ G^{\ell}$-bundle and $\widetilde{\pi} : \widetilde{E} \to E$ is equivariant. We conclude that $\widetilde{E} \rightarrow M$ is a principal $G^0/ G^{\ell+1}$-bundle, proving (3).

By Lemma \ref{l.hatRb} and the definition of $\widetilde{\theta}$, we know that $\widetilde{\theta}$ is a frame form of length $\ell+1$ on $\tilde{E}$. Let $$\Theta_{-k} \in \Hom(F^{-k} E, \fg_{-k}) \left( \mbox{ resp. } \widetilde{\Theta}_{-k}
\in \Hom(F^{-k} \widetilde{E}, \fg_{-k}) \right)$$ be the partial form obtained from $\theta_{-k}$ (resp.  $\widetilde{\theta}_{-k}$) as in Definition \ref{d.Theta}. It is clear from the definition of $\widetilde{\theta}_{-k}$ that
$$\widetilde{\Theta}_{-k} = \Theta_{-k} \circ {\rm d} \widetilde{\pi}|_{F^{-k}\widetilde{E}}.$$
Since $\theta$  satisfies the structure equation  by the condition (a) of Definition \ref{d.widetilde}, so does $\widetilde{\theta}$.  This proves (4).

 Using Definition \ref{d.torsionhat}, we can see that $\widetilde{\tau}^{k}_{\varphi} = \hat{\tau}^{k}_{\varphi}$ for any $\varphi \in \widetilde{E}$ and $0 \leq k \leq \ell$.
By the definition of $\widetilde{E}$, we have $\hat{\tau}^{\ell}_{\varphi} =0$ if $\varphi \in \widetilde{E}.$  By the condition (b) of Definition \ref{d.widetilde},
we have $\hat{\tau}^{k}_{\varphi} = \tau^k_{\hat{\pi}(\varphi)} =0$ for $1 \leq k \leq \ell -1$. It follows that
$\widetilde{\tau}^k =0$ for $1 \leq k \leq \ell$, proving (5).
\end{proof}

\subsection{Inductive proof of Theorem \ref{t.Cartan}}\label{ProofCartan}

    To prove Theorem \ref{t.Cartan}, let us denote by $\pi_0: E_0 \to M$ the $G_0$-structure and
    set $\theta^0_{-k} =s_{-k}, k \in \N$, the soldering form on $E_0$.
    Then $(E_0, \theta^0)$ is a $G^0$-frame bundle of length 1, as mentioned in Remark \ref{r.Cartan}.

We claim that $(E_0, \theta^0)$ satisfies the conditions (a), (b), (c) of Definition \ref{d.widetilde}. (b) is automatic from $\ell =1$ and (c) is immediate by the assumption of Theorem \ref{t.Cartan}.
We claim that (a) follows from the fact that the filtration on $M$ is of type $\fg_{-}.$
In fact, given $\xi \in F^{-i}_y E_0$ and $\eta \in F^{-j}_y E_0$ for a point $y \in E_0$ and  $i, j \in \N$, choose a neighborhood $U$ of $y$ and  sections $\vec{\xi}$ of $F^{-i} E_0|_{U}$ extending $\xi$ and $\vec{\eta}$ of $F^{-j} E_0|_U$ extending $\eta$. Since $s_{-k}(F^{-i} E_0) = 0 = s_{-k}(F^{-j} E_0)$ for $k=i+j$, we have
$${\rm d} s_{-k} (\vec{\xi}, \vec{\eta}) = \vec{\xi} \left( s_{-k}(\vec{\eta}) \right) -\vec{\eta}\left( s_{-k}(\vec{\xi}) \right) - s_{-k}([\vec{\xi}, \vec{\eta}]) = - s_{-k}([\vec{\xi}, \vec{\eta}]).$$
But $s_{-k}([\vec{\xi}, \vec{\eta}])_y = [s_{-i}(\xi), s_{-j}(\eta)]$ because
$${\rm d} \pi_0 ([\vec{\xi}, \vec{\eta}])_y = [{\rm d}\pi_0 (\xi), {\rm d} \pi_0 (\eta)]$$ in the symbol ${\rm gr}_x(M) \cong \fg_{-}.$ Thus
 $${\rm d} s_{-k}(\xi, \eta) = {\rm d} s_{-k} (\vec{\xi}, \vec{\eta})_y = -[s_{-i}(\xi), s_{-j}(\eta)]$$
 which is exactly the structure equation.

    Now we apply Proposition \ref{p.widetilde} and set
    $$E_1 := \widetilde{E_0}, \ \pi_1 := \pi_0 \circ \widetilde{\pi_0}, \ \theta^1:= \widetilde{\theta^0}$$ to obtain a $G^0$-frame bundle
    $(\pi_1: E_1 \to M, \theta^1)$ of length $2$.  Then  $(E_1, \theta^1)$ satisfies the conditions (a) and (b) of Definition \ref{d.widetilde} by Proposition \ref{p.widetilde}. Moreover, the quotient of $E_1$ by $G^1/G^2$ is just $E_0$.
    Thus the condition (c) of Definition \ref{d.widetilde} follows from the assumption of
    Theorem \ref{t.Cartan}.

    Repeating the same argument, we can construct inductively a $G^0$-frame bundle $(\pi_k: E_k \to M, \theta^k)$ of length $\ell  = k +1$ for any $1 \leq k \leq \nu + \mu$ such that the conditions (a) and (b) are satisfied, and $E_0$ is the quotient of $E_k$ by $G^1/G^{k+1}$, while the condition (c) is satisfied for $0 \leq k \leq \nu + \mu - 1$ by assumption of Theorem \ref{t.Cartan}. For $k= \nu + \mu$, this gives (as mentioned in Remark
     \ref{r.Cartan} ) a Cartan connection $(E = E_{\nu + \mu}, \Upsilon = \theta^{\nu+ \mu}_{-\nu})$. From the construction, the soldering form of $E_0$ agrees with the quotient of $\Upsilon$. This finishes the proof of Theorem \ref{t.Cartan}.

\subsection{Evaluation of $\bar{\gamma}$ in a special setting}\label{ss.special}

Even when the vanishing condition in Theorem \ref{t.Cartan} is not satisfied,  we can construct a Cartan connection if we can show that the section $\bar{\gamma}$ in Lemma \ref{l.equivariance} vanishes at each stage of the inductive process in our proof of Theorem \ref{t.Cartan}. In this subsection, we examine the following special setting where this can be checked effectively.

\begin{notation}\label{n.Gst}
In Definition \ref{d.G_0-structure}, assume that $\nu =1$ and $\fg_-= \fg_{-1}$  is an abelian Lie algebra. Then ${\rm grFr}(M)$ is the ordinary frame bundle ${\rm Fr}(M)$. Let $G_0\subset {\rm GL}(\fg_{-1})$ be a connected algebraic subgroup satisfying the conditions of
 Proposition \ref{p.algebraic}. A $G_0$-principal subbundle $E_0 \subset {\rm Fr}(M)$ is
 a $G_0$-structure on $M$ in the usual sense (e.g. Definition 2.1 in Chapter VII of \cite{St}).  Assume that there exists a $G_0$-invariant subspace $D \subset \fg_{-1}$.
 Then we have the following two homomorphisms $\varpi$ and $\varrho$.
 \begin{itemize} \item[(a)] The associated vector bundle $E_0 \times^{G_0} D$ determines a distribution $\sD \subset TM.$ Denote by $\varpi \in \Hom(\wedge^2 \sD, TM/\sD)$ the Frobenius bracket tensor of the distribution $\sD$. \item[(b)]
Consider the $G_0$-homomorphism obtained by the composition
$$C^{1,2}(\fg_0) = \Hom(\wedge^2 \fg_{-1}, \fg_{-1})  \stackrel{\rho}{\to} \Hom(\wedge^2 D, \fg_{-1}) \stackrel{\chi}{\to} \Hom(\wedge^2 D, \fg_{-1}/D)$$ where $\rho$ is the restriction to the $G_0$-submodule
$D \subset \fg_{-1}$ and $\chi$ is the $G_0$-module quotient $\fg_{-1} \to \fg_{-1}/D$.
It is easy to see that $\partial(C^{1,1}(\fg_0))$ is annihilated by $\chi \circ \rho$.
 The resulting homomorphism
$$ C^{1,2}(\fg_0)/\partial(C^{1,1}(\fg_0)) \to \Hom(\wedge^2 D, \fg_{-1}/D))$$ induces a homomorphism
$$\varrho: H^0(M, C^{1,2}(E_0)/\partial(C^{1,1}(E_0))) \to  \Hom(\wedge^2 \sD, TM/\sD).$$
\end{itemize}
\end{notation}

\begin{proposition}\label{p.Gst}
In the setting of Notation \ref{n.Gst}, let $\theta^0$ be the soldering form on $E_0$.
As $(E_0, \theta^0)$ is a $G^0$-frame bundle of length 1, we have the section
$\bar{\gamma} \in H^0(M, C^{1,2}(E_0)/\partial(C^{1,1}(E_0)))$ defined in Lemma \ref{l.equivariance}
with $E=E_0$. Then $\varrho(\bar{\gamma}) = -\varpi$ in $\Hom(\wedge^2 \sD, TM/\sD)$. \end{proposition}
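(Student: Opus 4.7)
The plan is to unwind the degree-$1$ step of the prolongation from Subsection \ref{ss.hat} in the concrete setting $\nu=1$, $\fg_{-}$ abelian, and to recognize the resulting torsion directly as minus the Frobenius tensor of $\sD$ expressed in a moving frame.

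\textit{Step 1: Making $\hat E$ explicit.} Since $\nu=1$, the filtration on $E_0$ collapses to $F^{-k}E_0=TE_0$ for all $k\geq 1$, and $\fg^{-2}/\fg^0=\fg_{-1}$ because $\fg_{-2}=0$. Unwinding Definition \ref{d.hat} with $\ell=1$, a point $\varphi\in\hat E_y$ is determined by $\varphi_{-1}\in\Hom(T_yE_0,\fg^{-1}/\fg^1)=\Hom(T_yE_0,\fg_{-1}\oplus\fg_0)$ whose $\fg_{-1}$-component is $s|_y$ and whose $\fg_0$-component $\omega_y:=\varphi_{-1}-s|_y$ satisfies $\omega_y(\zeta_A)=A$ for $A\in\fg_0$; thus $\omega_y$ is precisely the value at $y$ of a principal connection $1$-form on $E_0$, and local sections $\sigma_\omega$ of $\hat\pi:\hat E\to E_0$ correspond bijectively to local principal connections $\omega$ on $E_0$. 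Moreover, condition (ii) of Definition \ref{d.hat} forces $\varphi_{-2}=s|_y$ under $\fg^{-2}/\fg^0=\fg_{-1}$, so
\begin{equation*}
\sigma_\omega^*\hat\theta_{-2}=s
\end{equation*}
\emph{independently of the choice of $\omega$}. This universality is the key simplification driving the whole argument.

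\textit{Step 2: Reducing $\hat\tau^1$ to ${\rm d}s$.} Fix $y\in E_0$ over $x\in M$, a local section $\mathfrak{s}:U\subset M\to E_0$ with $\mathfrak{s}(x)=y$, and a local connection $\omega$ on $E_0|_U$ whose horizontal space at $y$ equals $({\rm d}\mathfrak{s})(T_xU)$; set $\varphi:=\sigma_\omega(y)$ (by Lemma \ref{l.3.18} the class $\bar\gamma(x)$ does not depend on the choice of $\varphi\in\hat E_y$). For $v,w\in D$, define the frame-constant vector fields $V_v(x'):=\mathfrak{s}(x')\cdot v$ and $V_w(x'):=\mathfrak{s}(x')\cdot w$ on $U$; these are sections of $\sD$ satisfying $\mathfrak{s}^*s(V_v)\equiv v$ and $\mathfrak{s}^*s(V_w)\equiv w$ in $\fg_{-1}$. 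By the choice of $\omega$, the vectors $\xi:=({\rm d}\mathfrak{s})(V_v(x))$ and $\eta:=({\rm d}\mathfrak{s})(V_w(x))$ are horizontal at $y$, hence $\varphi_{-1}(\xi)=v$ and $\varphi_{-1}(\eta)=w$. Combining Definition \ref{d.torsionhat}, Step 1, and Cartan's formula (using the constancy of $\mathfrak{s}^*s(V_v)$ and $\mathfrak{s}^*s(V_w)$) yields
\begin{equation*}
\hat\tau_\varphi(v,w)={\rm d}(\sigma_\omega^*\hat\theta_{-2})(\xi,\eta)={\rm d}(\mathfrak{s}^*s)(V_v,V_w)(x)=-y^{-1}\bigl([V_v,V_w](x)\bigr).
\end{equation*}
Since $V_v,V_w$ are sections of $\sD$ with $V_v(x)=y\cdot v$ and $V_w(x)=y\cdot w$, the definition of $\varpi$ gives $[V_v,V_w](x)\equiv\varpi(y\cdot v,y\cdot w)\pmod{\sD_x}$.

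\textit{Step 3: Conclusion and main obstacle.} Because $\fg_{-2}=0$ forces $\hat\tau^0=0$, the identity of Step 2 identifies $\chi\circ\rho(\hat\tau^1_\varphi)\in\Hom(\wedge^2 D,\fg_{-1}/D)$ with the frame-$y$ expression of $-\varpi$ at $x$. Translating back via the associated-bundle identifications $E_0\times^{G_0}D=\sD$ and $E_0\times^{G_0}(\fg_{-1}/D)=TM/\sD$ yields $\varrho(\bar\gamma)=-\varpi$ in $\Hom(\wedge^2\sD,TM/\sD)$. The only nonobvious point is the observation of Step 1, namely the identity $\sigma_\omega^*\hat\theta_{-2}=s$ \emph{independent of $\omega$}, which collapses the degree-$1$ torsion of the prolongation to the universal $1$-form $s$; once this is in hand, the classical formula expressing ${\rm d}$ of a coframe in terms of the Lie bracket of the dual frame produces $\varpi$ essentially for free.
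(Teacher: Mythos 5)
Your proposal is correct and follows essentially the same route as the paper: both arguments rest on the observation that any section $\sigma$ of $\hat{E}\to E_0$ pulls $\hat{\theta}_{-2}$ back to the soldering form (the paper uses this implicitly when it writes ${\rm d}\,\sigma^*\hat{\theta}_{-2}={\rm d}s_0$), so that the degree-one torsion is computed by ${\rm d}s$, which Cartan's formula converts into the Lie bracket and hence into $-\varpi$. The only cosmetic difference is that you evaluate ${\rm d}(\mathfrak{s}^*s)$ on frame-constant sections of $\sD$, whereas the paper packages the same computation as the identity $\varpi(\xi,\eta)=-\vartheta_x^{-1}\circ{\rm d}\vartheta(\xi,\eta)$ for the quotient $1$-form $\vartheta=\chi\circ\sigma_0^*s_0$ with kernel $\sD$.
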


\begin{proof}
Since the problem is local, we will replace $M$ by the germ of a point $x \in M$.

From Definition \ref{d.torsionhat} applied to $E=E_0,$ if we choose a section $\sigma_0: M \to E$ and a section $\sigma: \sigma_0(M) \to \hat{E},$ then for any $\xi, \eta \in \sD_x$,
 \begin{eqnarray}\label{e.varrho} \varrho(\bar{\gamma}) (\xi, \eta) &=& \sigma_0(x)_D \circ \chi \circ \left( (\sigma_0^* {\rm d} \sigma^*
 \hat{\theta}_{-2}) (\xi, \eta) \right) \end{eqnarray} where $\sigma_0(x)_D$ denotes the
  isomorphism $\fg_{-1}/D \to T_x M/\sD_x$ induced by  $$\sigma_0(x) \in E_x = {\rm Isom}(\fg_{-1}, T_x M), \ \sigma_0(x) (D) = \sD_x.$$

On the other hand,  if we choose a ($\fg_{-1}/D$)-valued 1-form
$\vartheta$ on $M$ such that ${\rm Ker}(\vartheta) = \sD$, then for any $\xi, \eta \in \sD_x$, \begin{eqnarray}\label{e.varpi}  \varpi (\xi, \eta) &=& - \vartheta_x^{-1} \circ {\rm d} \vartheta (\xi, \eta), \end{eqnarray}
where $\vartheta_x^{-1}: \fg_{-1}/D \to T_x M/\sD_x$ is the isomorphism induced by $\vartheta$. Since (\ref{e.varpi}) does not depend on the choice of $\vartheta$, we may choose $\vartheta$ as $\chi \circ \sigma_0^* s_0$ for the soldering form $s_0$ on $E_0=E$.
Then \begin{eqnarray*} {\rm d} \vartheta (\xi, \eta) &=& \chi \circ (\sigma_0^* {\rm d} s_0) (\xi, \eta) \\ &=& \chi \circ (\sigma_0^* {\rm d} \sigma^* \hat{\theta}_{-2}) (\xi, \eta). \end{eqnarray*} By our choice of $\vartheta$, the isomorphism $\vartheta_x^{-1}$ agrees with $\sigma_0(x)_D$ in (\ref{e.varrho}). Thus the image of the above equation
under $\vartheta_x^{-1}$ agrees with (\ref{e.varrho}). Combining it with (\ref{e.varpi}) proves the proposition. \end{proof}

We have the following modification of Theorem \ref{t.Cartan}.

\begin{theorem}\label{t.Cartan'}
In the setting of Notation \ref{n.Gst}, assume that \begin{itemize} \item[(i)] the Frobenius bracket tensor $\varpi$ vanishes, i.e., the distribution $\sD$ is integrable;
\item[(ii)] the homomorphism $\varrho$ is injective; and
\item[(iii)]  $H^0(M, C^{\ell,2}(E_0)/\partial(C^{\ell,1}(E_0))) =0 $ for all $2 \leq \ell \leq \mu + 1$. \end{itemize}
 Then there exists a Cartan connection $(E \to M, \Upsilon)$ of type $(G, G^0)$ on $M$ related to $E_0 \subset {\rm Fr}(M)$ in the way described in Theorem \ref{t.Cartan}.
 \end{theorem}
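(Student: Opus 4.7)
The plan is to run the same inductive prolongation procedure used in the proof of Theorem \ref{t.Cartan} (Subsection \ref{ProofCartan}), and to show that the only hypothesis that is weakened in Theorem \ref{t.Cartan'} compared with Theorem \ref{t.Cartan}, namely the vanishing of $H^0(M, C^{1,2}(E_{0})/\partial(C^{1,1}(E_{0})))$ at the initial stage $\ell=1$, can be replaced by conditions (i) and (ii) by exploiting Proposition \ref{p.Gst}. Concretely, I would start with $(E_0,\theta^0)$, where $\theta^0$ is the soldering form; as observed in Remark \ref{r.Cartan}, this is a $G^0$-frame bundle of length $1$. The conditions (a) and (b) of Definition \ref{d.widetilde} hold for exactly the same reasons as in Subsection \ref{ProofCartan}: (a) is the structure equation, which follows from the filtration on $M$ being of type $\fg_-$, and (b) is vacuous because $\tau^{\geq 1}=\sum_{k=1}^{\ell-1}\tau^k$ is an empty sum when $\ell=1$.

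The key new input occurs in verifying condition (c) of Definition \ref{d.widetilde} at stage $\ell=1$. Here, instead of assuming $\bar\gamma=0$ directly, I would invoke Proposition \ref{p.Gst}, which yields $\varrho(\bar\gamma)=-\varpi$. Hypothesis (i) gives $\varpi=0$, so $\varrho(\bar\gamma)=0$, and hypothesis (ii) then forces $\bar\gamma=0$. With (a), (b), (c) established for $(E_0,\theta^0)$, Proposition \ref{p.widetilde} applies and produces a $G^0$-frame bundle $(E_1,\theta^1)$ of length $2$ over $M$ satisfying the structure equation and $\tau^{\geq 1}=0$, whose quotient by $G^1/G^2$ recovers $E_0$.

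From this point on the argument is identical to the one in Subsection \ref{ProofCartan}. Inductively, assuming we have produced $(E_{k},\theta^{k})$ of length $\ell=k+1$ satisfying (a) and (b) and with $E_0$ recovered as the quotient by $G^1/G^{k+1}$, condition (c) at stage $\ell=k+1$ with $k\geq 1$ is supplied by hypothesis (iii), since $2\leq \ell=k+1\leq \mu+1=\mu+\nu$. Proposition \ref{p.widetilde} then produces $(E_{k+1},\theta^{k+1})$ of length $k+2$ with the required properties. Iterating until $k+1=\mu+\nu=\mu+1$ yields, as in Remark \ref{r.Cartan}, a principal $G^0$-bundle $E:=E_{\mu+\nu}\to M$ together with $\Upsilon:=\theta^{\mu+\nu}_{-\nu}\in\Hom(TE,\fg)$ constituting a Cartan connection of type $(G,G^0)$; by construction, the quotient of $E$ by $G^1\subset G^0$ is $E_0$ and the soldering form on $E_0$ agrees with the $\fg_-$-component of $\Upsilon$.

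Since essentially every step other than the $\ell=1$ case proceeds verbatim as in the proof of Theorem \ref{t.Cartan}, there is no real obstacle; the only delicate point is the bookkeeping needed to see that (i) and (ii), via Proposition \ref{p.Gst}, are a legitimate substitute for the vanishing statement at $\ell=1$ in Theorem \ref{t.Cartan}. I would take care to note explicitly that $\mu+\nu=\mu+1$ under the assumptions of Notation \ref{n.Gst}, so that the range $2\leq\ell\leq\mu+1$ in (iii) together with the $\ell=1$ case handled by (i) and (ii) exhausts the full range $1\leq\ell\leq\mu+\nu$ required by the inductive procedure.
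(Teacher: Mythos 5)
Your proposal is correct and follows essentially the same route as the paper: the paper's own proof likewise uses Proposition \ref{p.Gst} together with hypotheses (i) and (ii) to get $\bar{\gamma}=0$ at the initial stage $\ell=1$, and then runs the inductive construction of the proof of Theorem \ref{t.Cartan} using hypothesis (iii) for $2\leq\ell\leq\mu+1=\mu+\nu$. Your bookkeeping of the lengths (ending at a frame bundle of length $\mu+\nu+1$, which is a Cartan connection by Remark \ref{r.Cartan}) matches the paper's argument.
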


\begin{proof}
By the assumptions (i) and (ii),  we see that $\bar{\gamma}$ in Proposition \ref{p.Gst} vanishes.
Then we can proceed just as in the proof of Theorem \ref{t.Cartan}.
Starting from $E_0 \to M$ and the soldering form $\theta_0$, the vanishing of $\bar{\gamma}$ implies that we can construct a $G^0$-frame bundle $E_1 = \widetilde{E}_0$ of length $2$ as described in Proposition \ref{p.widetilde}. Starting from $E_1$, we can proceed just as in the proof of Theorem \ref{t.Cartan} because of the vanishing assumption (iii).  Thus we can construct inductively a $G^0$-frame bundle of length $\ell$ for all $2 \leq \ell \leq \mu + 2$. This gives a Cartan connection with the desired properties.
\end{proof}

\section{Geometry of presymplectic Grassmannians}\label{s.pre}

\begin{definition}\label{d.spgroup}
A vector space $V$ with a marked nonzero element $\omega \in \wedge^2 V^*$ is called a {\em presymplectic vector space}.
\begin{itemize}
\item[(1)] The {\em presymplectic group} is
$$\Sp_{\omega}(V) := \{ g \in \GL (V), \ \omega( g \cdot v, g \cdot w) = \omega(v, w) \mbox{ for all } v, w \in V\}.$$
\item[(2)] The {\em null space} of $\omega$ is
$${\rm Null}_{\omega} := \{ w \in V, \ \omega(w, v) =0 \mbox{ for all } v \in V\}.$$
The null space is preserved by the action of $\Sp_{\omega}(V)$.
We will write $\no$ for the dimension of ${\rm Null}_{\omega}$.
\item[(3)] The form $\omega$ is symplectic   if $\no =0$ and odd-symplectic  if $\no=1.$ In these two cases, we say that $\omega$ is {\em maximally nondegenerate}.
    \item[(4)] The quotient $V/{\rm Null}_{\omega}$ will be denoted by $V^{\flat}$.
It is equipped with the symplectic form $\omega^{\flat}$ induced by $\omega$.  For $v \in V$, its class in $V^{\flat}$ will be denoted by $v^{\flat}$. For a subspace $W \subset V$, its image in $V^{\flat}$ will be denoted by $W^{\flat}$.
\end{itemize}
\end{definition}
The following is straight-forward.

\begin{lemma}\label{l.Sp}
Choose a subspace $V^{\sharp} \subset V$ complementary to ${\rm Null}_{\omega}$. The restriction $\omega^{\sharp}= \omega|_{V^{\sharp}}$ is symplectic and $(V^{\sharp}, \omega^{\sharp})$ is naturally isomorphic to $(V^{\flat}, \omega^{\flat})$. It induces  natural isomorphisms
\begin{eqnarray*}
\Sp_{\omega}(V) &\cong & \Hom(V^{\sharp}, {\rm Null}_{\omega})
\sd (\Sp_{\omega^{\sharp}}(V^{\sharp})\times \GL({\rm Null}_{\omega})) \\
& \cong & \Hom (V^{\flat}, {\rm Null}_{\omega}) \sd (\Sp_{\omega^{\flat}}(V^{\flat})\times \GL({\rm Null}_{\omega})) \end{eqnarray*} where $\varphi \in \Hom (V^{\flat}, {\rm Null}_{\omega})$ acts on $V$ by
$$ v \in V  \ \mapsto \ v + \varphi ( v^{\flat}).$$
In particular, the algebraic group $\Sp_{\omega}(V)$ is connected with center $\{\pm 1\}$.
\end{lemma}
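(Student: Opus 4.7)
The plan is to verify the assertions in order, using the direct sum decomposition $V = V^{\sharp} \oplus {\rm Null}_{\omega}$ throughout.

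First I would check that $\omega^{\sharp} := \omega|_{V^{\sharp}}$ is symplectic. If $v \in V^{\sharp}$ satisfies $\omega(v, V^{\sharp}) = 0$, then combined with $\omega(v, {\rm Null}_{\omega}) = 0$ (automatic from Definition \ref{d.spgroup} (2)) and the direct sum decomposition, we get $\omega(v, V) = 0$, so $v \in V^{\sharp} \cap {\rm Null}_{\omega} = 0$. The quotient map $V \to V^{\flat}$ restricts to an isomorphism $V^{\sharp} \cong V^{\flat}$ intertwining $\omega^{\sharp}$ with $\omega^{\flat}$; this gives the ``$V^{\sharp}$ versus $V^{\flat}$'' identification needed for the two displayed isomorphisms.

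Second, I would analyze the structure of $\Sp_{\omega}(V)$. Any $g \in \Sp_{\omega}(V)$ preserves ${\rm Null}_{\omega}$, so it induces a pair $(g^{\flat}, g^{\rm null}) \in \Sp_{\omega^{\flat}}(V^{\flat}) \times \GL({\rm Null}_{\omega})$. Using the splitting $V = V^{\sharp} \oplus {\rm Null}_{\omega}$, any pair $(h, k)$ in the product lifts to the symplectic automorphism $h \oplus k$, giving a section of this map. An element $g$ in the kernel fixes ${\rm Null}_{\omega}$ pointwise and acts as the identity on $V^{\flat}$, hence is of the form $v \mapsto v + n(v)$ for some linear $n: V \to {\rm Null}_{\omega}$ vanishing on ${\rm Null}_{\omega}$; equivalently $n$ factors as $\varphi \in \Hom(V^{\flat}, {\rm Null}_{\omega})$. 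A direct computation using $\omega({\rm Null}_{\omega}, \cdot) = 0$ shows every such map automatically preserves $\omega$, so the kernel is precisely $\Hom(V^{\flat}, {\rm Null}_{\omega})$ (canonically identified with $\Hom(V^{\sharp}, {\rm Null}_{\omega})$). This yields the semidirect product decomposition in both forms claimed.

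Third, connectedness of $\Sp_{\omega}(V)$ follows immediately: the symplectic group $\Sp_{\omega^{\flat}}(V^{\flat})$ and $\GL({\rm Null}_{\omega})$ are connected, and the vector group $\Hom(V^{\flat}, {\rm Null}_{\omega})$ is connected. For the center, the main obstacle (modest but the only nonformal point), I would argue as follows. A central element $g$ must act on $V^{\flat}$ as an element of the center of $\Sp_{\omega^{\flat}}$, hence as $\pm 1$, and on ${\rm Null}_{\omega}$ as a scalar $\lambda$ (by centrality in $\GL({\rm Null}_{\omega})$). Conjugating $g$ by an arbitrary $\varphi \in \Hom(V^{\flat}, {\rm Null}_{\omega})$ and requiring the result to equal $g$ forces $\lambda$ to agree with the scalar on $V^{\flat}$, and invariance of $\omega$ then rules out $\lambda \neq \pm 1$. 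Hence the center is $\{\pm 1\}$.
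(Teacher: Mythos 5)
Most of your proposal is sound, and since the paper states this lemma without proof (``straight-forward''), there is nothing to compare against: your verification that $\omega^{\sharp}$ is symplectic, the identification of the kernel of $\Sp_{\omega}(V)\to \Sp_{\omega^{\flat}}(V^{\flat})\times\GL({\rm Null}_{\omega})$ with $\Hom(V^{\flat},{\rm Null}_{\omega})$, the section $h\oplus k$ giving the semidirect product, and the connectedness argument are all correct.

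The gap is in the center computation, which you yourself identify as the only nonformal point. You test a putative central element $g$ only against (a) its image in the quotient $\Sp_{\omega^{\flat}}(V^{\flat})\times\GL({\rm Null}_{\omega})$, which gives that $g$ induces $\varepsilon=\pm 1$ on $V^{\flat}$ and a scalar $\lambda$ on ${\rm Null}_{\omega}$, and (b) conjugation by the vector group $\Hom(V^{\flat},{\rm Null}_{\omega})$, which gives $\lambda=\varepsilon$. These conditions do not force $g=\pm{\rm Id}_V$: for any $\varphi\in\Hom(V^{\flat},{\rm Null}_{\omega})$ the element $g\colon v\mapsto \varepsilon v+\varphi(v^{\flat})$ induces $\varepsilon$ on both $V^{\flat}$ and ${\rm Null}_{\omega}$ and commutes with the whole vector group (in the semidirect product, conjugating $(\varphi,a)$ by $(\psi,1)$ yields $(\varphi+(1-\lambda\varepsilon)\psi,\,a)$, so the $\Hom$-component $\varphi$ is invisible once $\lambda\varepsilon=1$), yet it is not $\pm{\rm Id}_V$ when $\varphi\neq 0$. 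So your argument leaves open the possibility of central elements with nontrivial ``shear'' part; also, the closing clause that invariance of $\omega$ rules out $\lambda\neq\pm 1$ is redundant once $\lambda=\varepsilon\in\{\pm1\}$, and it is the $\Hom$-component, not $\lambda$, that still needs attention. The repair is one line: conjugating $g$ by the lift (through the splitting $V=V^{\sharp}\oplus{\rm Null}_{\omega}$) of $(h,k)\in\Sp_{\omega^{\sharp}}(V^{\sharp})\times\GL({\rm Null}_{\omega})$ replaces the $\Hom$-component $\varphi$ by $k\varphi h^{-1}$, so centrality forces $k\varphi h^{-1}=\varphi$ for all $h,k$; taking $h={\rm Id}$ and $k=2\,{\rm Id}$ (or using that $\Sp_{\omega^{\flat}}(V^{\flat})$ fixes no nonzero vector of $V^{\flat}$) gives $\varphi=0$, and then $g=\varepsilon\,{\rm Id}_V$ as required.
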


\begin{definition}\label{d.spgrass}
Let $(V, \omega)$  be a presymplectic vector space.
\begin{itemize}
\item[(1)] For a natural number $m$, let  $\Gr(m, V)$ be the Grassmannian of subspaces of dimension $m$ in $V$. The subvariety of $\Gr(m, V)$ defined by $$ \Go(m, V) := \{ [W] \in \Gr(m, V), \ \omega (W, W) =0 \} $$ is called a {\em presymplectic Grassmannian}.  If $\no=0$, it is the classical symplectic Grassmannian. If $\no =1$, it is the odd-symplectic Grassmannian studied in
    \cite{Mi}.  \item[(2)] For a nonnegative integer $k$, define the $k$-th {\em stratum}
$$\Go(m, V; k)  :=  \{ [W] \in \Go(m, V), \ \dim (W \cap {\rm Null}_{\omega}) = k \}.$$
    This is empty unless  $k \leq \min \{ m, \no\}$. We have a decomposition into disjoint strata $$\Go(m, V) = \bigcup_{k=0}^{\min \{m, \no\}} \Go(m, V; k).$$
     \end{itemize}
\end{definition}

\begin{assumption}\label{a.no} For the rest of this section, we assume that $$4 \leq 2 m \leq \dim V - \no.$$ \end{assumption}

\begin{proposition}\label{p.cell}
For each $0 \leq k \leq \min \{m, \no\}$, the followings hold.
\begin{itemize} \item[(1)] The morphism $$\psi_k: \Go(m, V; k) \to \Gr(k, {\rm Null}_{\omega}) \times \G_{\omega^{\flat}}(m-k, V^{\flat})$$ defined by sending $[W] \in \Go(m, V;k)$ to
$$\psi_k ([W]) = \left( [W \cap {\rm Null}_{\omega}], [W^{\flat}] \right)$$ has a structure of a vector bundle of rank $(m-k)(\no-k)$. Consequently, the stratum $\Go(m, V; k)$ is a connected and simply-connected nonsingular variety.  It consists of a single point if  $k=m=\no$ and  contains positive-dimensional projective subvarieties otherwise. \item[(2)] The  stratum $\Go(m,V; k)$ is an orbit of the natural action of $\Sp_{\omega}(V)$ on $\Go(m,V)$.
\item[(3)]  $ \dim \Go(m, V;k) = \frac{1}{2} (m-k) ( 2 \dim V - 3m + 3k +1 ) + k (\no -m).$
\end{itemize}
\end{proposition}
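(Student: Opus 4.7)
The plan is to establish (1) by an explicit description of the fibres of $\psi_{k}$ as affine spaces, then derive (2) from the semidirect-product structure of $\Sp_{\omega}(V)$ in Lemma \ref{l.Sp}, and finally obtain (3) by a dimension count.

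For (1), first observe that $\psi_{k}$ is well-defined: $\dim(W \cap {\rm Null}_{\omega}) = k$ by definition of the stratum, and since $\omega$ factors through $\omega^{\flat}$ via the quotient $\pi\colon V \to V^{\flat}$, isotropy of $W$ forces isotropy of $W^{\flat}$. Over $([N],[\bar{W}])$, a member $W$ of the fibre satisfies $W \subset \pi^{-1}(\bar{W})$ and $W \cap {\rm Null}_{\omega} = N$, so $W/N$ is a complement to ${\rm Null}_{\omega}/N$ inside $\pi^{-1}(\bar{W})/N$ projecting isomorphically to $\bar{W}$; such complements form a torsor under $\Hom(\bar{W}, {\rm Null}_{\omega}/N)$, of dimension $(m-k)(\no - k)$. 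Fixing a splitting $V = V^{\sharp} \oplus {\rm Null}_{\omega}$ as in Lemma \ref{l.Sp} trivializes this torsor globally (the zero section sends $([N],[\bar{W}])$ to $[\sigma(\bar{W}) \oplus N]$, where $\sigma\colon V^{\flat} \cong V^{\sharp}$), exhibiting $\psi_{k}$ as the vector bundle $\Hom(q^{\ast}\sU_{m-k}, {\rm Null}_{\omega}/p^{\ast}\sU_{k})$ of the stated rank over the two Grassmannian factors. Connectedness and simple connectedness of both base Grassmannians (each is a quotient of a connected simply connected group by a connected parabolic stabiliser) together with contractibility of the affine fibres yield connectedness and simple connectedness of $\Go(m,V;k)$. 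The zero section is a projective subvariety, and by Assumption \ref{a.no} at least one factor in the base has positive dimension except when $k = m = \no$, giving the single-point / positive-dimensional projective subvariety dichotomy.

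For (2), Lemma \ref{l.Sp} writes $\Sp_{\omega}(V) = \Hom(V^{\flat}, {\rm Null}_{\omega}) \sd (\Sp_{\omega^{\flat}}(V^{\flat}) \times \GL({\rm Null}_{\omega}))$. The Levi factor preserves the splitting and descends to the product of the two homogeneous actions on the base of $\psi_{k}$, which is transitive since $\GL({\rm Null}_{\omega})$ is transitive on $\Gr(k, {\rm Null}_{\omega})$ and $\Sp_{\omega^{\flat}}(V^{\flat})$ is transitive on the classical symplectic Grassmannian $\Go(m-k, V^{\flat})$. The unipotent factor $\Hom(V^{\flat}, {\rm Null}_{\omega})$ acts via $v \mapsto v + \varphi(v^{\flat})$, preserving both $W \cap {\rm Null}_{\omega}$ and $W^{\flat}$; its induced action on an affine fibre of $\psi_{k}$ is the translation by $\varphi|_{\bar{W}} \bmod N \in \Hom(\bar{W}, {\rm Null}_{\omega}/N)$, which exhausts the fibre as $\varphi$ varies. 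Combining these gives transitivity of $\Sp_{\omega}(V)$ on each stratum.

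For (3), summing the dimensions from (1) yields
\begin{align*}
\dim \Go(m,V;k) = k(\no - k) + \tfrac{1}{2}(m-k)\bigl(2(\dim V - \no) - 3(m-k) + 1\bigr) + (m-k)(\no - k),
\end{align*}
where the middle term is the classical symplectic-Grassmannian dimension $\tfrac{\ell}{2}(4n - 3\ell + 1)$ with $\ell = m-k$, $2n = \dim V - \no$. A short regrouping rewrites this as $\tfrac{1}{2}(m-k)(2\dim V - 3m + 3k + 1) + k(\no - m)$, matching the stated formula. The only delicate point in the whole argument is the canonicity needed in (1) to upgrade the torsor structure on the fibres of $\psi_{k}$ to a genuine vector-bundle structure, and this is handled cleanly by the splitting supplied by Lemma \ref{l.Sp}; everything else is routine.
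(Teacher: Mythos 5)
Your argument is correct and is essentially the paper's own proof: you identify the fibres of $\psi_k$ as affine spaces modelled on $\Hom(W^{\flat}, {\rm Null}_{\omega}/(W\cap{\rm Null}_{\omega}))$ with the section coming from the splitting $V = V^{\sharp}\oplus{\rm Null}_{\omega}$, obtain transitivity in (2) from the semidirect decomposition of $\Sp_{\omega}(V)$ in Lemma \ref{l.Sp} (Levi transitive on the base, unipotent part $\Hom(V^{\flat},{\rm Null}_{\omega})$ transitive on each fibre), and do the same dimension count for (3). The only step left implicit is that any $m$-dimensional $W$ with $W^{\flat}$ isotropic and $W\cap{\rm Null}_{\omega}=N$ is automatically $\omega$-isotropic (so the complement-torsor really equals the fibre), but this is immediate since $\omega$ factors through $\omega^{\flat}$.
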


\begin{proof}
Fix a subspace $V^{\sharp} \subset V$ complementary to ${\rm Null}_{\omega}$. For a subspace
$U \subset V^{\flat}$, let $U^{\sharp} \subset V^{\sharp}$ be the corresponding subspace under the natural isomorphism $V^{\sharp} \cong V^{\flat}$.
Given $[E] \in \Gr(k, {\rm Null}_{\omega})$ and $[U] \in \G_{\omega^{\flat}}(m-k, V^{\flat})$, the sum $E \oplus U^{\sharp}$ determines an element $\sigma_k([E],[U]) \in \Go(m, V;k)$. This defines a section $$\sigma_k:  \Gr(k, {\rm Null}_{\omega}) \times \G_{\omega^{\flat}}(m-k, V^{\flat}) \to \Go(m, V; k)$$ of $\psi_k$.

Given $[E] \in \Gr(k, {\rm Null}_{\omega})$ and $[U] \in \G_{\omega^{\flat}}(m-k, V^{\flat})$, fix a subspace  $E^c \subset {\rm Null}_{\omega}$ complementary to $E$
and a subspace $U^c \subset V^{\flat}$ complementary to $U$.
Regard $\Hom(U, E^c)$ as a subspace of $\Hom(V^{\flat}, {\rm Null}_{\omega})$ consisting of homomorphisms whose kernels contain $U^c$ and images lie in $E^c$.

We claim that via the action of $\Hom(U, E^c)$ on $\Go(m,V;k)$ described in Lemma \ref{l.Sp}, the fiber $\psi_k^{-1}([E], [U])$ can be identified with the vector space $\Hom ( U, E^c)$.
In fact, it is easy to check that the orbit map through $\sigma_k([E],[U])$  $$\varphi \in \Hom(U, E^c) \mapsto \varphi \cdot [E \oplus U^{\sharp}]$$
is injective with the image in the fiber  $\psi_k^{-1}([E], [U]).$
Let us check that it is surjective onto the fiber.
Given $[W] \in \psi_k^{-1}([E],[U])$ and $u\in U$,
 pick $u_W \in W$ such that $u_W^{\flat} = u.$ Then $u_W - u^{\sharp} \in {\rm Null}_{\omega}$. Let $\varphi_W (u) \in E^c$ be the element corresponding to  $ u_W - u^{\sharp}$ by the quotient ${\rm Null}_{\omega} \to E^c$ modulo $E.$
As $W \cap {\rm Null}_{\omega}  = E$, this does not depend on the choice of $u_W$. This determines an element $\varphi_W \in \Hom(U, E^c)$. From the definition, we see
that $$\varphi_W \cdot [E \oplus U^{\sharp}] = [W],$$ verifying the surjectivity and proving the claim. The claim implies (1).

 The map $\psi_k$ is equivariant under  naturally induced transitive action of $\Sp_{\omega}(V)$ on $\Gr(k, {\rm Null}_{\omega})\times\G_{\omega^{\flat}}(m-k, V^{\flat})$. We have just showed that
  each fiber of $\psi_k$ belongs to an orbit
of $\Sp_{\omega}(V)$. It follows that $\Go(m,V;k)$ is an orbit of $\Sp_{\omega}(V)$, proving (2).

From (1),  \begin{eqnarray*} \dim \Go(m, V;k) &=& \dim \Gr(k, {\rm Null}_{\omega}) + \dim \G_{\omega^{\flat}}(m-k, V^{\flat}) \\ & & + \dim \Hom ( \C^{m-k}, \C^{\no -k}).\end{eqnarray*}
Using the standard fact (e.g. p. 608 of \cite{HM05}),
\begin{eqnarray*} \dim \Gr(k, {\rm Null}_{\omega}) &=& k (\no -k), \\  \dim \G_{\omega^{\flat}}(m-k, V^{\flat}) &=& \frac{1}{2}(m-k)( 2 \dim V^{\flat} - 3 (m-k) +1), \end{eqnarray*} we
 obtain (3).
 \end{proof}

\begin{proposition}\label{p.closure}
The closure of $\Go(m, V; k)$ in $\Gr(m, V)$ is
$$\bigcup_{j \geq k} \Go(m, V; j).$$
It follows that $\Go(m, V)$ is irreducible and  $\Go(m, V; 0)$ is a Zariski open subset of
$\Go(m, V).$  \end{proposition}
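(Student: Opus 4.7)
The plan is to establish both inclusions of the stratum closure, then harvest the two consequences.

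For the easy inclusion, I would note that the function $[W] \mapsto \dim(W \cap \Null_\omega)$ on $\Gr(m,V)$ is upper semi-continuous in the Zariski topology, because for a fixed subspace $L \subset V$ of dimension $\no$ the condition $\dim(W \cap L) \geq k$ is cut out by the vanishing of appropriate minors of the natural map $W \to V/L$. Hence $\bigcup_{j \geq k} \Go(m,V;j) = \{[W] \in \Go(m,V) : \dim(W \cap \Null_\omega) \geq k\}$ is Zariski closed in $\Go(m,V)$, which contains $\Go(m,V;k)$ and therefore its closure.

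The core of the proof is the reverse inclusion, which by induction on $j-k$ reduces to showing that every $[W] \in \Go(m,V;j)$ with $j \geq 1$ lies in the closure of $\Go(m,V;j-1)$. My plan is to produce an explicit $1$-parameter family $\{[W_t]\}_{t \in \C}$ with $[W_0]=[W]$ and $[W_t] \in \Go(m,V;j-1)$ for $t \neq 0$. Fix a vector space decomposition $W = W' \oplus \C u \oplus U$ with $W' \subset V^{\sharp}$, $\C u \oplus U = W \cap \Null_\omega$, and $u \neq 0$. I would pick a vector $v \in V^{\sharp}$ that is $\omega^{\sharp}$-orthogonal to $W'$ and does not lie in $W'$; such $v$ exists because $\dim (W')^{\perp_{\omega^{\sharp}}} = \dim V^{\sharp} - (m-j) > \dim W' = m-j$, using $j \geq 1$ together with Assumption \ref{a.no} that $2m \leq \dim V - \no$. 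I then set
\begin{equation*}
W_t := W' \oplus U \oplus \C(u + tv).
\end{equation*}
A direct check using $u \in \Null_\omega$, $U \subset \Null_\omega$, $\omega^{\sharp}(v,W') = 0$, and the isotropy of $W'$ shows $\omega(W_t,W_t)=0$ for all $t$, so $[W_t] \in \Go(m,V)$. Writing a general element of $W_t$ as $w' + u_0 + s(u+tv)$ with $w' \in W'$, $u_0 \in U$, $s \in \C$, the $V^{\sharp}$-component is $w' + stv$, so membership in $\Null_\omega$ forces $w'=0$ and, for $t \neq 0$, also $s=0$ (as $v \notin W'$). Hence $W_t \cap \Null_\omega = U$ has dimension $j-1$ for $t \neq 0$, as desired.

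The two consequences are then immediate. Since $\Go(m,V;0)$ is a vector bundle over the product $\Gr(0,\Null_\omega) \times \G_{\omega^{\flat}}(m,V^{\flat})$ by Proposition \ref{p.cell}, and the symplectic Grassmannian $\G_{\omega^{\flat}}(m,V^{\flat})$ is irreducible, the stratum $\Go(m,V;0)$ is irreducible; the closure equality for $k=0$ then shows $\Go(m,V)$ is irreducible. Taking $k=1$ in the closure equality shows that the complement of $\Go(m,V;0)$ in $\Go(m,V)$ is closed, so $\Go(m,V;0)$ is Zariski open.

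The main obstacle is the deformation construction: one must ensure simultaneously that the perturbing direction $v$ keeps the deformed subspace $\omega$-isotropic, remains outside the original $W'$, and lies in the non-null part of $V$. The inequality $2m \leq \dim V - \no$ from Assumption \ref{a.no} is precisely what makes all three constraints compatible, and I would expect verifying this compatibility (rather than the bookkeeping of isotropy) to be the delicate point.
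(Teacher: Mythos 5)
Your proof is correct, but it takes a genuinely different route from the paper's. The paper exploits that each stratum is a single $\Sp_{\omega}(V)$-orbit (Proposition \ref{p.cell} (2)): it is then enough to exhibit \emph{one} point of $\Go(m,V;k+1)$ in the closure of $\Go(m,V;k)$, and this is done by acting on a fixed $[W]\in\Go(m,V;k)$ with the one-parameter unipotent subgroup $\{t\varphi\}\subset\Sp_{\omega}(V)$, $\varphi\in\Hom(V^{\flat},{\rm Null}_{\omega})$, which pushes a non-null basis vector $w$ of $W$ toward a null vector $v\notin W$; isotropy along the family is automatic because one never leaves the group, and the "easy'' inclusion is left implicit. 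You argue in the opposite direction: for \emph{every} point $[W]$ of the deeper stratum you construct by hand the pencil $W_t=W'\oplus U\oplus\C(u+tv)$ of isotropic subspaces lying in the shallower stratum for $t\neq 0$, perturbing a null vector $u\in W\cap{\rm Null}_{\omega}$ by a vector $v\in V^{\sharp}$ with $\omega(v,W')=0$, and you check isotropy and the drop of $\dim(W_t\cap{\rm Null}_{\omega})$ directly, invoking Assumption \ref{a.no} to produce $v$. What this buys is independence from the homogeneity of the strata and an explicit treatment of the semicontinuity inclusion; the price is the hands-on isotropy verification and the dimension count, which the group-theoretic route gets for free. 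Two points you should make explicit: the decomposition $W=W'\oplus\C u\oplus U$ with $W'\subset V^{\sharp}$ requires choosing the complement $V^{\sharp}$ \emph{after} $W$ (for a pre-fixed $V^{\sharp}$ such a complement $W'$ of $W\cap{\rm Null}_{\omega}$ inside $V^{\sharp}$ need not exist), which is harmless since $V^{\sharp}$ is an auxiliary choice; and the same component computation you use to identify $W_t\cap{\rm Null}_{\omega}$ also shows that the sum defining $W_t$ is direct, hence $\dim W_t=m$ for all $t$ --- this deserves a sentence since it is needed for $[W_t]\in\Go(m,V;j-1)$.
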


\begin{proof}
 It suffices to show that for each $0 \leq k \leq \min \{m-1, \no-1\}$, the closure of $\Go(m, V; k)$ contains an element of $\Go(m, V; k+1)$.
 Given $[W] \in \Go(m, V;k)$, we can \begin{itemize}
  \item[(i)] pick  $v \in {\rm Null}_{\omega}$ satisfying $v \not\in W$ from $k \leq \no -1;$  \item[(ii)] pick a nonzero complement $W^{\sharp} \subset W$ to  $W \cap {\rm Null}_{\omega}$ from $k \leq m-1;$ \item[(iii)]  fix a subspace $W_0 \subset W^{\sharp}$ of codimension 1 and
 a nonzero vector $w \in W^{\sharp} \setminus W_0$; and \item[(iv)]  pick
 an  element $\varphi \in \Hom (V^{\flat}, {\rm Null}_{\omega})$ satisfying $\varphi(w^{\flat}) = v$ and $\varphi (W^{\flat}_0) =0$, in terms of the nonzero vector
 $w^{\flat} \in V^{\flat}$ and (possibly zero) $W^{\flat}_0 \subset V^{\flat}$. \end{itemize}
   For  $0 \neq  t \in \C, $  the element $$t \varphi \in \Hom(V^{\flat}, {\rm Null}_{\omega}) \subset \Sp_{\omega}(V)$$ acts on $\frac{1}{t} w$ by  $$\frac{1}{t}w \in W \mapsto \frac{1}{t} w
 + t \varphi(\frac{1}{t}w^{\flat}) = \frac{1}{t}w +  v$$ and moves $W$ to $(W \cap {\rm Null}_{\omega}) + W_0 + \C (\frac{1}{t}w +  v).$ Thus the closure of the orbit of $[W] \in \Go(m, V)$ under the additive subgroup $\{ t \varphi, t \in \C\}$ of $\Sp_{\omega}(V)$ contains an element of $\Go(m, V; k+1).$
  This proves the proposition. \end{proof}

 \begin{corollary}\label{c.codim}
  In Proposition \ref{p.closure}, the complement of $\Go(m,V;0)$ is a subvariety of codimension at least 2 in $\Go(m,V)$. \end{corollary}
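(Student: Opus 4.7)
The plan is to reduce everything to a direct dimension count using the formula in Proposition \ref{p.cell}(3). By Proposition \ref{p.closure}, the complement of $\Go(m,V;0)$ is $\bigcup_{k\geq 1}\Go(m,V;k)$, which is exactly the closure of $\Go(m,V;1)$ inside $\Go(m,V)$. Since $\Go(m,V)$ is irreducible of dimension $\dim\Go(m,V;0)$ (again by Proposition \ref{p.closure} and Proposition \ref{p.cell}(1)), it suffices to show
\[
\dim\Go(m,V;0)-\dim\Go(m,V;1)\ \geq\ 2.
\]

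Using the formula $\dim\Go(m,V;k)=\tfrac{1}{2}(m-k)(2\dim V-3m+3k+1)+k(\no-m)$, I would just expand the difference. For $k=0$ one gets $\tfrac{1}{2}m(2\dim V-3m+1)$, and for $k=1$ one gets $\tfrac{1}{2}(m-1)(2\dim V-3m+4)+(\no-m)$. A straightforward subtraction yields
\[
\dim\Go(m,V;0)-\dim\Go(m,V;1)\ =\ \dim V-2m+2-\no.
\]
By Assumption \ref{a.no} we have $2m\leq \dim V-\no$, hence $\dim V-\no-2m\geq 0$, which gives the required lower bound of $2$.

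There is essentially no obstacle: the only input beyond arithmetic is the containment of closures from Proposition \ref{p.closure}, which makes $\Go(m,V;1)$ the top-dimensional stratum of the complement, together with the fact that $\Go(m,V)$ is irreducible so that codimension is well-defined with respect to the formula from Proposition \ref{p.cell}(3). Hence the corollary follows immediately from the two preceding results and the numerical constraint imposed by Assumption \ref{a.no}.
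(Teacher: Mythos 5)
Your proof is correct and follows essentially the same route as the paper: the paper's proof is exactly the dimension count $\dim \Go(m,V;0) - \dim \Go(m,V;1) = \dim V - 2m - \no + 2 \geq 2$ via Proposition \ref{p.cell}(3) and Assumption \ref{a.no}, with the stratification from Proposition \ref{p.closure} identifying the complement. Your additional remark that the complement is the closure of $\Go(m,V;1)$ is just making explicit what the paper leaves implicit.
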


  \begin{proof}
  From Proposition \ref{p.cell} (3), we have
  $$\dim \Go(m,V;0) - \dim \Go(m,V;1) = \dim V - 2m - \no +2.$$
  By Assumption \ref{a.no}, this is at least 2. \end{proof}

\begin{proposition}\label{p.smooth}
The set $\Go(m,V)^{\rm reg}$ of nonsingular points of  $\Go(m, V)$  contains $\Go(m, V;1)$ and the singular locus of $\Go(m, V)$ has codimension at least $ 7 $.
\end{proposition}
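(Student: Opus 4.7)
The plan is to compute the Zariski tangent space $T_{[W]}\Go(m,V)$ at a point $[W]\in \Go(m,V;k)$ and compare its dimension with $\dim \Go(m,V)=\dim \Go(m,V;0)$. Since $\Go(m,V)$ is cut out of $\Gr(m,V)$ by the vanishing of the $2$-form $\omega|_{W}\in \wedge^{2}W^{*}$, differentiating the defining equations at $[W]$ identifies the tangent space with the kernel of
\[
\tilde\omega_{W}\colon \Hom(W,V/W)\longrightarrow \wedge^{2}W^{*},\qquad \tilde\omega_{W}(\varphi)(w_{1},w_{2})=\omega(\varphi(w_{1}),w_{2})+\omega(w_{1},\varphi(w_{2})).
\]
The right-hand side is well-defined because $W$ is $\omega$-isotropic, so two lifts of $\varphi$ to $\Hom(W,V)$ differing by a map $W\to W$ give the same value.

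Next I would factor $\tilde\omega_{W}$ through the map $\omega_{\sharp}\colon V/W\to W^{*}$, $v\mapsto \omega(v,\cdot)|_{W}$, followed by antisymmetrization $W^{*}\otimes W^{*}\to \wedge^{2}W^{*}$. Set $N:=W\cap {\rm Null}_{\omega}$ and let $N^{\circ}\subset W^{*}$ be its annihilator. The key observation is that $\omega_{\sharp}$ is a surjection onto $N^{\circ}$: the inclusion $\Im(\omega_{\sharp})\subseteq N^{\circ}$ is obvious from $N\subset {\rm Null}_{\omega}$, while surjectivity follows by passing to $V^{\flat}$ and using nondegeneracy of $\omega^{\flat}$. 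Choosing a complement $W=N\oplus W_{1}$ identifies $N^{\circ}$ with $W_{1}^{*}$, and a direct check on the decomposition $\wedge^{2}W^{*}=\wedge^{2}N^{*}\oplus (N^{*}\wedge W_{1}^{*})\oplus \wedge^{2}W_{1}^{*}$ shows that antisymmetrizing $W^{*}\otimes N^{\circ}$ hits exactly the last two summands. Hence $\Im(\tilde\omega_{W})$ is the subspace of $2$-forms on $W$ vanishing on $\wedge^{2}N$, which has codimension $\binom{k}{2}$ in $\wedge^{2}W^{*}$, yielding
\[
\dim T_{[W]}\Go(m,V)=m(\dim V-m)-\binom{m}{2}+\binom{k}{2}=\dim \Go(m,V)+\binom{k}{2}.
\]

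Since $\binom{k}{2}=0$ precisely when $k\leq 1$, the smooth locus of $\Go(m,V)$ contains $\Go(m,V;0)\cup \Go(m,V;1)$, while every point of $\Go(m,V;k)$ with $k\geq 2$ is singular. Proposition \ref{p.closure} then identifies the singular locus with $\overline{\Go(m,V;2)}$, whose codimension, computed from Proposition \ref{p.cell}(3), equals
\[
\dim \Go(m,V;0)-\dim \Go(m,V;2)=2(\dim V-\no-2m)+7,
\]
and this is at least $7$ by Assumption \ref{a.no}. The main technical step is the image analysis for $\tilde\omega_{W}$, i.e.\ pinning down that $\binom{k}{2}$ is precisely the defect between the Zariski tangent space dimension and $\dim \Go(m,V)$; once this is in hand, both the smoothness at $\Go(m,V;1)$ and the codimension estimate follow immediately from the dimension formulas already established.
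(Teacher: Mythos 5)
Your proof is correct, and it takes a genuinely different route from the paper's. You compute the derivative at $[W]$ of the section $\widetilde{\omega}$ of $\wedge^2 \sT^*$ cutting out $\Go(m,V)$ inside $\Gr(m,V)$, identify its image via the surjectivity of $\omega_{\sharp}\colon V/W \to N^{\circ}$ and the antisymmetrization analysis, and conclude that $\ker \tilde\omega_W$ has dimension $\dim \Go(m,V) + \binom{k}{2}$; together with the irreducibility of $\Go(m,V)$ (Proposition \ref{p.closure}) and the dimension formulas of Proposition \ref{p.cell}(3), this yields smoothness along $\Go(m,V;0)\cup\Go(m,V;1)$ and the codimension bound. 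The paper avoids any tangent-space computation: it views $\widetilde{\omega}$ as a member of the linear system $\{\widetilde{\theta},\ \theta \in \wedge^2_F V^*\}$ for a hyperplane $F \subset {\rm Null}_{\omega}$, applies Bertini at points $[W]$ with $W \cap F = 0$, uses that a general $\theta \in \wedge^2_F V^*$ is conjugate to $\omega$ by an element of $\GL(V)$ preserving $F$, and then propagates smoothness at a single point of $\Go(m,V;1)$ to the whole stratum via the $\Sp_{\omega}(V)$-homogeneity from Proposition \ref{p.cell}(2). Your argument is more elementary and self-contained, and it gives quantitative information (the kernel dimension on every stratum); the paper's argument trades the linear-algebra computation for Bertini plus group actions already established.

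One caveat: the "identification" $T_{[W]}\Go(m,V) = \ker \tilde\omega_W$ is, a priori, only the inclusion $T_{[W]}\Go(m,V) \subseteq \ker \tilde\omega_W$, since the reduced variety $\Go(m,V)$ may have a smaller tangent space than the scheme cut out by $\widetilde{\omega}$. This inclusion is all you use for smoothness on the strata with $k \leq 1$, so the proposition itself is unaffected; but your assertions that every point of $\Go(m,V;k)$ with $k \geq 2$ is singular, and that the singular locus \emph{equals} $\overline{\Go(m,V;2)}$, are not justified by this computation. What you actually need, and what does follow, is only the containment of the singular locus in $\bigcup_{j \geq 2}\Go(m,V;j)$, whose codimension is $2(\dim V - \no - 2m) + 7 \geq 7$ by Assumption \ref{a.no}.
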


\begin{proof}
The second statement about the codimension of the singular locus is
a direct consequence of the first statement and
Proposition \ref{p.cell} (3), namely,
$$\dim \Go(m,V;0) - \dim \Go(m, V; 2) = 2 \dim V - 4m -2 \no +7$$
is at least 7 from our assumption $2 m \leq \dim V - \no.$

 Let us prove the first statement.
When $\no =0$, the result is automatic because $\Go(m, V) = \Go(m, V;0)$ is nonsingular.

Assuming $\no \geq 1$, fix a subspace $F \subset {\rm Null}_{\omega}$ with $\dim F = \no -1$. Let $\wedge^2_F V^*$ be the subspace of $\wedge^2 V^*$ comprising elements whose null spaces contain $F$.   Let $\sT$ be the tautological bundle of rank $m$ on $\Gr(m, V)$. An element $\theta \in \wedge^2_F V^*$ can be regarded as a global section $\widetilde{\theta}$ of $\wedge^2 \sT^*$, whose value at $[W] \in \Gr(m, V)$ is the restriction $\theta|_W \in \wedge^2 \sT^*_{[W]}.$
Thus $\Go(m,V)$ is the zero locus of the section $\widetilde{\omega}$.

If $[W] \in \Gr(m, V)$ satisfies $W \cap F =0$, any element of $\wedge^2 W^*$ can be extended to an element
of $\wedge^2_F V^*$. This means that the sections $$\{ \widetilde{\theta}, \ \theta \in \wedge^2_F V^* \}$$ generate the fiber of $\wedge^2 \sT^*$ at the point $[W]$ satisfying $W \cap F =0$. Thus by  Bertini's theorem, the zero locus of $\widetilde{\theta}$ for a general
$\theta \in \wedge^2_F V^*$ is nonsingular at $[W]$ if $W \cap F =0$.
By our choice of $F$, we know that $\dim V - \dim F = \dim V - \no +1$ is odd, which implies that a general element of $\wedge^2_F V^*$ has nullity $\dim F +1 = \no.$
Since any two elements of $\wedge^2_F V^*$ of the same nullity $\no$ are isomorphic by some $a \in \GL(V)$ satisfying $a(F)=F$, we see that
the zero locus of $\widetilde{\omega}$ is nonsingular at $[W]$ if $W \cap F =0$.

Since $F$ is a hyperplane in ${\rm Null}_{\omega}$, there exists $[W] \in \Go(m, V;1)$ satisfying $W \cap F =0$. We conclude that $\Go(m, V;1)$ has nonempty intersection with
$\Go(m,V)^{\rm reg}$. But $\Go(m, V;1)$ is a $\Sp_{\omega}(V)$-orbit by Proposition \ref{p.cell}. It follows that $\Go(m, V;1) \subset \Go(m, V)^{\rm reg}$. \end{proof}

\begin{remark} Proposition \ref{p.smooth} says that $\Go(m,V)$ is nonsingular if $\no =1$, a result proved in \cite{Mi}. In fact, the above argument of ours is a modification of the proof of Proposition 4.1 in \cite{Mi}. \end{remark}

\begin{proposition}\label{p.SpVMRT}
Let us regard $\Go(m,V) \subset \Gr(m,V)$ as subvarieties of $\BP \wedge^m V$ by  the Pl\"ucker embedding. A line on $\Gr(m,V)$ or $\Go(m;V)$ refers to a line in $\BP \wedge^m V$ lying on them. \begin{itemize} \item[(a)] All lines on $\Go(m,V)$ through a point $x \in \Go(m,V;0),$  are contained in $\Go(m, V;0) \cup \Go(m, V;1) \subset \Go(m, V)^{\rm reg}$;  \item[(b)]  For a point $x \in \Go(m,V;0) \subset \Gr(m, V),$ there exists a maximally nondegenerate element $\omega' \in \wedge^2 V^*$ such that $\G_{\omega'}(m,V) \subset \Gr(m, V)$ contains $x$ and the set of lines through $x$ on $\Go(m,V)$ and the set of lines through $x$ on $\G_{\omega'}(m,V)$ coincide. \item[(c)] When $m \leq \no$, all lines on $\Gr(m,V)$ passing  through a point of
$\Go(m, V; m)$ are contained in $\Go(m, V)$. \item[(d)] When $m \leq \no$, the Zariski tangent space of
$\Go(m, V)$ at a point  $x \in \Go(m, V;m)$ is equal to the tangent space $T_x \Gr(m, V).$ In particular, the stratum $\Go(m, V;m)$ is contained in the singular locus of $\Go(m, V)$.
\end{itemize} \end{proposition}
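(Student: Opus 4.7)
My plan is to treat all four parts uniformly via the standard parametrization of lines in $\Gr(m,V)$. A line through $[W]$ corresponds to a flag $W' \subset W \subset W''$ with $\dim W' = m-1$ and $\dim W'' = m+1$; such a line lies inside $\Go(m,V)$ if and only if $\omega(W', W'') = 0$ (the isotropy of each $W_t = W' + \C v$ along the line forces $\omega(W', v) = 0$ as $[v]$ varies over $\BP(W''/W')$, while the quadratic piece $\omega(v,v)$ vanishes automatically by antisymmetry). All four parts then reduce to short calculations about how such flags interact with ${\rm Null}_{\omega}$.

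For (a), I would fix $[W] \in \Go(m,V;0)$, so $W \cap {\rm Null}_{\omega} = 0$. For any line on $\Go(m,V)$ through $[W]$, the associated $W'' \supset W$ of dimension $m+1$ satisfies $\dim(W'' \cap {\rm Null}_{\omega}) \leq 1$, because the composition $W'' \cap {\rm Null}_{\omega} \hookrightarrow W''/W \cong \C$ is injective (its kernel is $W \cap {\rm Null}_{\omega} = 0$). Every $[W_t]$ on the line has $W_t \subset W''$, hence $\dim(W_t \cap {\rm Null}_{\omega}) \leq 1$, so $[W_t] \in \Go(m,V;0) \cup \Go(m,V;1) \subset \Go(m,V)^{\rm reg}$ by Proposition \ref{p.smooth}.

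For (b), I would pick a complement $V^{\sharp}$ of ${\rm Null}_{\omega}$ in $V$ containing $W$ (which exists because $W \cap {\rm Null}_{\omega} = 0$) and build $\omega'$ on $V$ by setting $\omega'|_{V^{\sharp}} := \omega|_{V^{\sharp}}$, $\omega'(V^{\sharp}, {\rm Null}_{\omega}) := 0$, and taking $\omega'|_{{\rm Null}_{\omega}}$ to be as nondegenerate as parity allows---symplectic if $\no$ is even, of one-dimensional null if $\no$ is odd. This exhausts the possibilities since $\dim V - \no = \dim V^{\sharp}$ is automatically even, and $W$ is $\omega'$-isotropic by construction. The key matching identity is that for any $w \in W \subset V^{\sharp}$ and $w'' \in V$, writing $w'' = w''_{\sharp} + w''_{\perp}$ with $w''_{\sharp} \in V^{\sharp}$ and $w''_{\perp} \in {\rm Null}_{\omega}$,
\[
\omega(w, w'') \;=\; \omega(w, w''_{\sharp}) \;=\; \omega'(w, w''_{\sharp}) \;=\; \omega'(w, w''),
\]
so $\omega(W', W'') = 0$ if and only if $\omega'(W', W'') = 0$ for every flag with $W' \subset W$, and the two families of lines through $[W]$ coincide.

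For (c) and (d), the hypothesis $W \subset {\rm Null}_{\omega}$ collapses everything: $W' \subset W \subset {\rm Null}_{\omega}$ yields $\omega(W', V) = 0$, so the line condition $\omega(W', W'') = 0$ holds automatically, proving (c). For (d), I would realize $\Go(m,V) \subset \Gr(m,V)$ as the zero locus of the section of $\wedge^2 \sT^*$ induced by $\omega$ (as in Proposition \ref{p.smooth}) and compute that its linearization at $[W]$ sends $\phi \in \Hom(W, V/W) = T_{[W]} \Gr(m,V)$ to $(w_1, w_2) \mapsto \omega(\tilde\phi(w_1), w_2) + \omega(w_1, \tilde\phi(w_2))$, which vanishes identically when $W \subset {\rm Null}_{\omega}$. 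Hence $T_{[W]} \Go(m,V) = T_{[W]} \Gr(m,V)$, and the strict inequality $\dim \Go(m,V) < \dim \Gr(m,V)$ coming from Proposition \ref{p.cell}(3) (valid whenever $m \geq 2$, which is forced by Assumption \ref{a.no}) forces $[W]$ to be singular on $\Go(m,V)$. I do not expect a genuine obstacle anywhere; the step requiring the most care is (b), where one must juggle the parity-constrained construction of $\omega'$ with its compatibility with $W$ and verify the matching identity at the level of flags.
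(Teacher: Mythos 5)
Your treatment of (a), (b) and (c) is correct and is essentially the paper's own argument: your criterion that the line determined by a flag $W'\subset W\subset W''$ lies in $\Go(m,V)$ if and only if $\omega(W',W'')=0$ is exactly the paper's claim that $\F(W_-,W_+)\subset \Go(m,V)$ if and only if $W_+\subset W_-^{\perp\omega}$, and your block-diagonal construction of $\omega'$ in (b) (copying $\omega$ on a complement $V^{\sharp}\supset W$ of ${\rm Null}_{\omega}$ and placing a maximally nondegenerate form on ${\rm Null}_{\omega}$) is the same construction the paper uses, with the same matching identity $W_-^{\perp\omega}=W_-^{\perp\omega'}$ for hyperplanes $W_-\subset W$.

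In (d), however, your argument has a genuine gap. The linearization you compute is the intrinsic derivative of the section $\widetilde{\omega}$ of $\wedge^2\sT^*$, and its kernel is the Zariski tangent space of the \emph{scheme-theoretic} zero locus $Z(\widetilde{\omega})$, not of the variety $\Go(m,V)$ (the reduced zero set). Since the ideal generated by the components of $\widetilde{\omega}$ is contained in the ideal of the reduced variety, your computation only yields $T_x\Go(m,V)\subseteq \ker(d_x\widetilde{\omega})=T_x\Gr(m,V)$, which is the trivial containment; what must be proved is the reverse, lower bound on the tangent space of the reduced variety, and vanishing of the linearization does not give it (compare $V(x^2)\subset\A^2$: the zero scheme has full tangent space at every point of the reduced line $V(x)$, which is nevertheless smooth). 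To close the gap you can either show $Z(\widetilde{\omega})$ is reduced --- its zero set has pure codimension $\binom{m}{2}$, equal to the rank of $\wedge^2\sT^*$, so the section is regular, the zero scheme is a local complete intersection, hence Cohen--Macaulay, and being generically smooth along $\Go(m,V;0)$ it is reduced --- or, much more simply, argue as the paper does using your own part (c): all lines of $\Gr(m,V)$ through $x\in\Go(m,V;m)$ lie in $\Go(m,V)$, their tangent directions span $T_x\Gr(m,V)$, and each such direction lies in $T_x\Go(m,V)$ because the line is a curve inside the (reduced) variety; this gives the needed lower bound, and your dimension count from Proposition \ref{p.cell}(3) then finishes the proof.
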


\begin{proof}
Recall that for a point $[W] \in \Gr(m, V),$ the choice of a flag $W_{-} \subset W \subset W_+ \subset V$ with $\dim W_-= \dim W -1$ and $\dim W_+ = \dim W +1$ determines a line $\F(W_-, W_+)
\subset \Gr(m, V)$ through $[W]$ defined by $$\F(W_-, W_+):= \{ [W'] \in \Gr(m, V), \ W_- \subset W' \subset W_+\}$$ and all lines on $\Gr(m,V)$ through $[W]$ arise this way.

If $[W] \in \Go(m,V;0)$ and $\F(W_-, W_+) \subset \Go(m, V)$, then
$W \cap {\rm Null}_{\omega} =0$ gives $$\dim \left( W_+ \cap {\rm Null}_{\omega} \right) \  \leq \ 1.$$
Thus any $[W'] \in \F(W_-, W_+)$ satisfies $\dim ( W' \cap {\rm Null}_{\omega})   \leq 1$, which means $$\F(W_-, W_+) \subset \Go(m, V;0) \cup \Go(m, V;1),$$ proving (a).

For a subspace $U \subset V$, define
$$U^{\perp \omega}:= \{ w \in V, \ \omega(w, U) =0\}.$$
We claim that for any point $[W] \in \Go(m, V)$,  the line $\F(W_-, W_+) \subset \Gr(m,V)$ determined by a flag
$W_{-} \subset W \subset W_+ \subset V$ with $\dim W_-= \dim W -1$ and $\dim W_+ = \dim W +1$
 is contained in $\Go(m, V)$  if and only if
$W_+ \subset W_-^{\perp \omega}.$
 In fact, the inclusion $\F(W_-, W_+) \subset \Go(m, V)$ implies that any element $w \in W_+$ is contained in some hyperplane $W' \subset W_+$
satisfying $W_- \subset W'$ and  $\omega(W', W') =0$. This is equivalent to the inclusion $W_+ \subset W_-^{\perp \omega}$. The opposite direction of the claim is straight-forward.

Fix a point $[W] \in \Go(m, V; 0)$.
Pick an element $\sigma \in \wedge^2 {\rm Null}_{\omega}$ which is maximally nondegenerate.
From $W \cap {\rm Null}_{\omega} =0$, we can pick a subspace $ H \subset V$  such that
$H$ is complementary to ${\rm Null}_{\omega}$ and contains $W$. We have a maximally nondegenerate form $\omega' \in \wedge^2 V^*$ defined by \begin{eqnarray*} \omega'(u,v) &=& \omega(u,v) \mbox{ if } u \in H, v \in V \\
\omega'(u, v) &=& \sigma(u, v) \mbox{ if } u, v \in {\rm Null}_{\omega}.\end{eqnarray*}
Then for any hyperplane $W_- \subset W$, we have $W_-^{\perp \omega} = W_-^{\perp \omega'}$.
By the above claim, this shows that the sets of lines through $[W]$ on $\Go(m, V)$
and $\G_{\omega'}(m,V)$ are identical, proving (b).

Now assume that $m \leq \no$ and $[W] \in \Go(m, V;m).$ This means that $W \subset {\rm Null}_{\omega}$ and $W^{\perp \omega} = V$. Thus for any hyperplane $W_- \subset W$, we have $W_-^{\perp \omega} = V$. Then the above claim shows that for any choice of $W_+ \subset V$, the line  $\F(W_-, W_+)$ is contained in
$\Go(m,V)$, proving (c).

The first statement in (d) is a consequence of (c) and the well-known fact that the tangent to lines through a point $x \in \Gr(m, V)$ spans $T_x \Gr(m, V)$. This implies the second statement in (d) because  $$\dim \Go(m,V)  = \frac{m}{2} (2 \dim V - 3m +1) \ < \ \dim \Gr(m, V)$$ from Proposition \ref{p.cell} (3).
\end{proof}

\begin{corollary}\label{c.normal}
The variety $\Go(2, V)$ is normal and its singular locus is nonempty if $\no \geq 2$.
\end{corollary}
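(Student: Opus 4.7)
My plan is to deduce normality of $\Go(2,V)$ from Serre's criterion by realizing it as a Cartier divisor in a smooth ambient variety, and then to exhibit singular points directly when $\no\geq 2$.

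The key observation, specializing the construction in the proof of Proposition \ref{p.smooth} to $m=2$, is that the bundle $\wedge^{2}\sT^{*}$ on $\Gr(2,V)$ has rank $\binom{2}{2}=1$ and is therefore a line bundle. The global section $\widetilde{\omega}$ is not identically zero, since $\omega\neq 0$ guarantees the existence of a $2$-plane $[W]$ on which $\omega|_{W}\neq 0$. Hence the scheme-theoretic zero locus $Z$ of $\widetilde{\omega}$ inside the smooth Grassmannian $\Gr(2,V)$ is an effective Cartier divisor, and therefore a local complete intersection; in particular $Z$ is Cohen--Macaulay and satisfies Serre's conditions $S_{1}$ and $S_{2}$.

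Next I would identify $Z$ with $\Go(2,V)$ scheme-theoretically. The Bertini-style argument in the proof of Proposition \ref{p.smooth} actually shows that $Z$ itself, not merely its reduction, is smooth at every point of $\Go(2,V;1)$ for an appropriate choice of the auxiliary hyperplane $F\subset{\rm Null}_{\omega}$. Combined with the irreducibility of $\Go(2,V)$ from Proposition \ref{p.closure}, this makes $Z$ generically reduced, so the $R_{0}+S_{1}$ criterion forces $Z$ to be reduced and $Z=\Go(2,V)$ as schemes. Proposition \ref{p.smooth} further asserts that the singular locus of $\Go(2,V)$ has codimension at least $7$, hence in particular at least $2$, so $\Go(2,V)$ is $R_{1}$, and Serre's criterion delivers normality.

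For the second assertion I would invoke Proposition \ref{p.SpVMRT}(d) with $m=2\leq\no$: the stratum $\Go(2,V;2)$ is then contained in the singular locus of $\Go(2,V)$, and under the identification $\psi_{2}$ from Proposition \ref{p.cell} it is just the Grassmannian $\Gr(2,{\rm Null}_{\omega})$, which is nonempty as soon as $\no\geq 2$. The only mildly delicate step is the passage from the Cartier divisor $Z$ to the reduced variety $\Go(2,V)$, but this falls out cleanly from the Cohen--Macaulay property combined with the existence of even a single smooth scheme-theoretic point; I do not expect any serious obstacle in carrying out the plan.
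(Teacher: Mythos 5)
Your proposal is correct and follows essentially the same route as the paper: since $\wedge^2\sT^*\cong\det\sT^*$ is the Pl\"ucker line bundle for $m=2$, your Cartier-divisor realization is exactly the paper's observation that $\Go(2,V)$ is a hyperplane section of $\Gr(2,V)$, and both arguments then combine hypersurface (hence Cohen--Macaulay/$S_2$) singularities with the codimension-$\geq 7$ bound from Proposition \ref{p.smooth} and Serre's criterion, and cite Proposition \ref{p.SpVMRT} for singularity along $\Go(2,V;2)$. Your extra care about reducedness of the zero scheme is a harmless refinement that the paper leaves implicit.
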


\begin{proof}
It is easy to see (as in the proof of Proposition \ref{p.smooth}) that $\Go(2, V)$ is a hyperplane section of $\Gr(2, V)$ under the Pl\"ucker embedding.
So its singularities are of  hypersurface type and it must be normal because the singular loci has codimension at least $7$ from Proposition \ref{p.smooth}. By Proposition \ref{p.SpVMRT}, it is singular at points of $\Go(2, V;2)$. \end{proof}

We do not know whether $\Go(m, V)$ is normal for $m \geq 3$.
For us, the following is sufficient.

\begin{proposition}\label{p.normalization}
When $\no \geq 2$ and $m \geq 3$, the singular locus of the normalization of $\Go(m, V)$ is nonempty. \end{proposition}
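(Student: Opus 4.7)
The plan is to analyze the local structure of $\Go(m,V)$ at a general point of the stratum $\Go(m,V;2)$ and to show that $\Go(m,V)$ is simultaneously normal and singular there, so that the normalization map is locally an isomorphism and the singularity is not resolved. Since $\no \geq 2$ and $m \geq 3$, the stratum $\Go(m,V;2)$ is nonempty, and we fix $[W] \in \Go(m,V;2)$ with $W = \langle e_1, e_2, f_1, \ldots, f_{m-2}\rangle$ in a basis adapted to a splitting $V = \Null_\omega \oplus V^\sharp$ and a symplectic basis $f_1,g_1,\dots,f_n,g_n$ of $V^\sharp$. Let $W^c = \langle e_3, \ldots, e_\no, g_1, \ldots, g_{m-2}, f_{m-1}, g_{m-1}, \ldots, f_n, g_n\rangle$ be the chosen complement of $W$. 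The hypothesis $2m \le \dim V - \no$ guarantees $n \ge m$.

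In the standard affine chart of $\Gr(m,V)$ around $[W]$, nearby subspaces are row spans of $(I_m \mid X)$ with $X$ an $m \times (\dim V - m)$ matrix indexed by the above basis of $W^c$, and $\Go(m,V)$ is cut out by the $\binom{m}{2}$ isotropy equations $\omega(\mathrm{row}_i, \mathrm{row}_j)=0$. I would separate these by their linear parts at $[W]$. The pair $(i,j)=(1,2)$, both rows lying in $\Null_\omega$, contributes a single pure quadric $\omega(u_1,u_2)$ where $u_i$ denotes the $W^c$-part of row $i$; the remaining $\binom{m}{2}-1$ equations have nonzero linear parts, namely the independent linear coordinates $x_{i,g_{j-2}}$ for pairs with $i \in \{1,2\}$ and $j \ge 3$, and the independent antisymmetrized forms $x_{j,g_{i-2}}-x_{i,g_{j-2}}$ for pairs with $3 \le i<j \le m$. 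The crucial observation is that none of the variables appearing in these linear forms enters $\omega(u_1,u_2)$: the directions $g_1, \ldots, g_{m-2}$ lie in the null space of $\omega|_{W^c}$ because their symplectic partners $f_1, \ldots, f_{m-2}$ belong to $W$, while the identified variables in the second batch all have row index $\ge 3$.

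Hence by the implicit function theorem, in an analytic neighborhood of $[W]$ the variety $\Go(m,V)$ is isomorphic to $\{Q = 0\} \times \A^r$, where $Q$ is the quadratic form $\omega(u_1,u_2)$ regarded as a function of the $4(n-m+2)$ coefficients of $u_1,u_2$ along the $\omega$-nondegenerate subspace $\langle f_{m-1},g_{m-1},\dots,f_n,g_n\rangle$ of $W^c$. This form is the natural bilinear pairing between two copies of the nondegenerate symplectic space, so is itself nondegenerate of rank $4(n-m+2) \ge 8$. The quadric cone $\{Q=0\}$ in $\C^N$, $N \ge 8$, is a hypersurface with an isolated singularity at the origin of codimension $N-1 \ge 7$, so Serre's criterion $R_1+S_2$ makes it normal; normality is preserved under product with affine space. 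By excellence of algebraic varieties, the local ring of $\Go(m,V)$ at $[W]$ is itself normal, so the normalization map is an isomorphism over a Zariski neighborhood of $[W]$. Yet the quadric cone is genuinely singular at the origin (the Zariski tangent space at $[W]$ exceeds $\dim \Go(m,V)$ by $\binom{2}{2}=1$), so its preimage in the normalization is a singular point, showing that the singular locus of the normalization is nonempty. The main effort is the rank computation for the surviving pure quadric after elimination, which has just been sketched.
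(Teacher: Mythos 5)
Your proof is correct, but it takes a genuinely different route from the paper. You argue directly at a point $[W]\in\Go(m,V;2)$: in the affine chart of $\Gr(m,V)$ at $[W]$ the $\binom{m}{2}$ isotropy equations split into $\binom{m}{2}-1$ equations with independent linear parts in the variables $x_{\cdot,g_k}$, $k\le m-2$, plus the single pure quadric $\omega(u_1,u_2)$, which involves only the coordinates of $u_1,u_2$ along the $\omega$-nondegenerate part of $W^c$ and is a nondegenerate form in $4(n-m+2)\ge 8$ variables (here $n\ge m$ uses Assumption \ref{a.no}); since the eliminated variables do not occur in the quadric, the analytic germ is a rank $\ge 8$ quadric cone times a smooth factor, hence normal by Serre's criterion yet singular, so by excellence the normalization is a local isomorphism there and inherits the singularity. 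The paper instead argues by contradiction without any local coordinate computation: it constructs a one-parameter subgroup ${\bf T}\subset\Sp_\omega(V)$ whose fixed locus in $\Go(m,V)$ has a component $S\cong\G_{\omega_0}(2,V_0)$ meeting $\Go(m,V;0)$, invokes the already-proved $m=2$ case (Corollary \ref{c.normal}: normal with nonempty singular locus), lifts the ${\bf T}$-action to the normalization, and uses that a fixed-point component of a torus action on a smooth variety is smooth together with the fact that a finite birational map onto a normal variety is an isomorphism. Your approach is more computational but more informative: it locates actual singular points of the normalization (the whole stratum $\Go(m,V;2)$, using Proposition \ref{p.cell}(2)), proves in passing that $\Go(m,V)$ is normal along that stratum — so any failure of normality for $m\ge 3$, which the paper leaves open, could only occur along $\bigcup_{k\ge 3}\Go(m,V;k)$ — and it works uniformly for $m\ge 2$, whereas the paper's proof is shorter given Corollary \ref{c.normal}, avoids the rank and independence verifications, but is indirect and only shows nonemptiness of the singular locus without locating it.
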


\begin{proof}
Let $V^{\sharp} \subset V$ be a subspace complementary to ${\rm Null}_{\omega}$. Then
$(V^{\sharp}, \omega|_{V^{\sharp}})$ is a symplectic vector space of dimension $2 \ell := \dim V - \no$.
Let $$\{ \alpha_1, \ldots, \alpha_{\ell}, \beta_1, \ldots, \beta_{\ell} \}$$
be a symplectic basis of $(V^{\sharp}, \omega_{V^{\sharp}})$ satisfying $$\omega(\alpha_i, \beta_j) = \delta_{ij}, \ \omega(\alpha_i, \alpha_j) = \omega(\beta_i, \beta_j) = 0$$ for all $1 \leq i, j \leq \ell.$ Since $3 \leq m \leq \ell$ by Assumption \ref{a.no}, we can define \begin{eqnarray*}
V_0 & := & {\rm Null}_{\omega} + \langle \alpha_{m-1}, \alpha_m, \ldots, \alpha_{\ell}, \beta_{m-1}, \beta_m, \ldots, \beta_{\ell} \rangle \\
V_+ & :=& \langle \alpha_1, \ldots, \alpha_{m-2} \rangle \\
V_- &:= & \langle \beta_1, \ldots, \beta_{m-2} \rangle. \end{eqnarray*}
Let $\omega_0$ be the restriction of $\omega$ to $V_0$.
Define a $\C^*$-action on $V$ such that $t \in \C^*$ acts by
$$v_0 \in V_0 \mapsto v_0, \ v_+ \in V_+ \mapsto t v_+, \ v_- \in V_- \mapsto t^{-1} v_-.$$ This gives a multiplicative subgroup ${\bf T} \subset \Sp_{\omega}(V)$.

The induced ${\bf T}$-action on $\Gr(m, V)$ fixes a point $[W] \in \Gr(m, V)$ if and only if $W$ has a ${\bf T}$-homogeneous basis, or equivalently, we can write
$W = W_0 + W_+ + W_-$ for some subspaces $$W_0 \subset V_0, \ W_+ \subset V_+ \mbox{ and } W_- \subset V_-.$$ From this, it is easy to check that
$$S:= \{ [W] \in \Go(m, V), \ W = W_0 + V_+ \mbox{ for some } [W_0] \in \G_{\omega_0}(2, V_0) \}$$ is an irreducible component of the fixed loci of ${\bf T}$-action on $\Go(m,V).$  Since ${\bf n}_{\omega_0} = \no \geq 2$ and $$\dim V_0 - {\bf n}_{\omega_0} = 2 (\ell -m +2) \geq 4,$$ we can apply Corollary \ref{c.normal} to see that
$S \cong \G_{\omega_0}(2, V_0)$ is a normal variety with nonempty singular loci.
Since the point  $[W] \in S$ given by
$$W = \langle \alpha_{m-1}, \alpha_m\rangle + V_+$$
is contained in $\Go(m,V;0)$, the subvariety $S \subset \Go(m,V)$ has nonempty intersection with the nonsingular locus $\Go(m,V)^{\rm reg}$.

Suppose that the normalization of $\Go(m,V)$ is nonsingular.
 The ${\bf T}$-action lifts to the normalization and its fixed loci must be nonsingular.
 The fixed loci must contain a component $S'$ which is birational over $S$ because
 $S$ has nonempty intersection with $\Go(m, V)^{\rm reg}$.
 Then $S' \to S$ is a finite birational morphism. As $S$ is normal, it has to be biregular. But $S'$ is nonsingular while $S$ has nonempty singular loci, a contradiction.
\end{proof}

  \begin{theorem}\label{t.SpPic}
  Let $X$ be a Fano manifold of Picard number 1. Assume that
  there exists a Zariski open subset in $X$ biholomorphic to $\Go(m,V;0)$ for some presymplectic space
  $(V, \omega)$ satisfying Assumption \ref{a.no}. Then $\omega$ is maximally nondegenerate and $X$ is  biregular to $\Go(m, V)$. \end{theorem}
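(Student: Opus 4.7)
The plan is to realize $X$ and the normalization $\widetilde{\Go(m,V)}$ of $\Go(m,V)$ as the same closed subvariety of a common projective space, which forces them to be biregular, and then to use smoothness of $X$ to rule out $\no \geq 2$. The first step is to show that $X \setminus U$ has codimension at least two in $X$. For $[W] \in \Go(m,V;0)$, pick a hyperplane $W_- \subset W$ and a two-dimensional subspace $\overline W_+ \subset W_-^{\perp\omega}/W_-$ meeting the image $\overline N$ of $\Null_\omega$ in $W_-^{\perp\omega}/W_-$ only at zero; this is possible because $\dim(W_-^{\perp\omega}/W_-) = \dim V - 2m + 2 \geq 2 + \no$ by Assumption \ref{a.no}. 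Lifting $\overline W_+$ to $W_+ \subset W_-^{\perp\omega}$ yields, via the criterion in Proposition \ref{p.SpVMRT}, a projective line $\F(W_-, W_+) \subset \Go(m,V)$ lying entirely in $\Go(m,V;0)$. Its $\varphi$-image is a projective rational curve $C \subset U$. Since $X$ is Fano of Picard number one, every nonzero effective divisor on $X$ is ample and therefore has positive intersection with $C$; but $C \cap (X \setminus U) = \emptyset$, so $X \setminus U$ contains no divisor.

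Next, let $H$ be the ample generator of $\Pic(X)$ and let $\nu: \widetilde{\Go(m,V)} \to \Go(m,V)$ be the normalization, so that $\widetilde L_P := \nu^* L_P$ is ample on $\widetilde{\Go(m,V)}$, where $L_P$ is the Pl\"ucker bundle. Corollary \ref{c.codim} says $\widetilde{\Go(m,V)} \setminus \Go(m,V;0)$ has codimension $\geq 2$ in the normal variety $\widetilde{\Go(m,V)}$, and Step 1 gives the same for $X \setminus U$ in the smooth $X$; Hartogs extension therefore identifies $H^0(X, H^k) = H^0(U, H^k|_U)$ and $H^0(\widetilde{\Go(m,V)}, \widetilde L_P^k) = H^0(\Go(m,V;0), L_P^k|_{\Go(m,V;0)})$ for every $k \geq 0$. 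Writing $\varphi^*(H|_U) = q L_0$ and $L_P|_{\Go(m,V;0)} = p L_0$ for positive integers $p,q$ in the rank-one group $\Pic(\Go(m,V;0)) \cong \Pic(X) \cong \Z$, the induced isomorphism $\varphi^*$ of Picard groups gives $H^0(X, H^{pk}) \cong H^0(\widetilde{\Go(m,V)}, \widetilde L_P^{qk})$ for each $k$. For $k$ large enough that both $H^{pk}$ and $\widetilde L_P^{qk}$ are very ample, these identified section spaces give closed embeddings $f: X \hookrightarrow \BP^N$ and $g: \widetilde{\Go(m,V)} \hookrightarrow \BP^N$ that agree on $U = \varphi(\Go(m,V;0))$; passing to Zariski closures, $f(X) = \overline{f(U)} = \overline{g(\Go(m,V;0))} = g(\widetilde{\Go(m,V)})$ as subvarieties of $\BP^N$, and we obtain a biregular isomorphism $X \cong \widetilde{\Go(m,V)}$.

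Finally, since $X$ is a complex manifold, $\widetilde{\Go(m,V)}$ is smooth. If $\no \geq 2$, this contradicts Corollary \ref{c.normal} (for $m = 2$) or Proposition \ref{p.normalization} (for $m \geq 3$), both of which produce a nonempty singular locus on $\widetilde{\Go(m,V)}$. Hence $\no \in \{0,1\}$, so $\omega$ is maximally nondegenerate, and Proposition \ref{p.smooth} gives that $\Go(m,V)$ itself is smooth, hence normal, so $\widetilde{\Go(m,V)} = \Go(m,V)$ and $X \cong \Go(m,V)$. The main obstacle lies in Step 1: one must exhibit at least one projective line contained entirely in the open stratum $\Go(m,V;0)$, which is precisely where Assumption \ref{a.no} is indispensable (the inequality $2m \leq \dim V - \no$ is the dimension count that allows $\overline W_+$ to avoid $\overline N$); once the codimension estimate is in place, everything downstream is Hartogs extension together with Picard-number-one bookkeeping.
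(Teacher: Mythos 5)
Your proposal is correct and follows essentially the same route as the paper: a complete curve (or positive-dimensional projective subvariety) inside the open chart plus Picard number one forces the complement to have codimension at least two, Hartogs extension identifies the section spaces of powers of the ample bundles on $X$ and on the normalization of $\Go(m,V)$, ampleness then gives $X\cong$ the normalization, and Corollary \ref{c.normal} together with Proposition \ref{p.normalization} excludes $\no\geq 2$. The only differences are cosmetic: you build an explicit line in $\Go(m,V;0)$ where the paper cites Proposition \ref{p.cell}(1), and you match powers $H^{pk}$ with $\widetilde L_P^{qk}$ via the rank-one Picard group where the paper simply extends $h^*\sL$ across the codimension-two complement to a line bundle $L$ on $X$.
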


  \begin{proof}
  Let $X' \subset X$ be a Zariski open subset admitting a biholomorphic map $h: X' \to \Go(m,V;0).$ The complement $X \setminus X'$ has codimension at least 2 in $X$ because
  $X'$ contains positive-dimensional projective subvarieties by Proposition \ref{p.cell} (1) and $X$ has Picard number 1. Let $\sL$ be the line bundle on
  $\Go(m, V)$ inducing  the Pl\"ucker embedding $\Go(m, V) \subset \BP \wedge^m V$. There exists a line bundle $L$  on $X$ such that $L|_{X'} = h^* \sL$ because $X \setminus X'$ has codimension at least $2$. Then $L$ must be ample because $\sL$ is ample and $X$ has Picard number 1.
  Let $\nu: Y \to \Go(m, V)$ be the normalization of $\Go(m,V)$. Since the codimension of the complement of $\Go(m,V;0)$ in $Y$ is at least 2 by Corollary \ref{c.codim}, the Hartogs extension gives a natural identification, for every $\ell \in \N$,
     $$H^0(X, L^{\otimes \ell} ) \cong H^0(X', h^*\sL^{\otimes \ell}) \cong H^0(\Go(m,V;0), \sL^{\otimes \ell}) \cong H^0(Y, \nu^* \sL^{\otimes \ell}).$$ Since $L$ is ample on $X$ and $\nu^* \sL$ is ample on $Y$, this implies that $X$ is biregular to $Y$. Thus $\no \leq 1$ by Corollary \ref{c.normal} and Proposition \ref{p.normalization}. Thus  $\omega$ must be maximally nondegenerate  and
     $X$ is biregular to $\Go(m,V)$. \end{proof}

\section{Presymplectic Lie algebras as prolongations}\label{s.prolong}
\subsection{Prolongation of a nilpotent Lie algebra associated to a presymplectic vector space}
The goal of this subsection is to compute the prolongation of the following graded nilpotent Lie algebra.

\begin{notation}\label{n.positive}
Let $U$ be a vector space with $\dim U \geq 2$. Let $(Q, \omega)$  be a presymplectic vector space, i.e., a vector space $Q$ together with a nonzero anti-symmetric form $\omega \in \wedge^2 Q^*$.
\begin{itemize} \item[(1)] Set $$\fg_{-1} := U \otimes Q, \ \fg_{-2} := \Sym^2 U$$ and provide $\fg_-= \fg_{-1} \oplus \fg_{-2}$ with a graded nilpotent Lie algebra structure with the bracket defined by
$$[ u_1 \otimes q_1, \ u_2 \otimes q_2] := \omega (q_1, q_2) \ u_1 \odot u_2$$ for $q_1, q_2 \in Q$,  $u_1, u_2 \in U$ and their symmetric product $u_1 \odot u_2 \in \Sym^2 U$.
\item[(2)] Recall that the presymplectic Lie algebra of $(Q, \omega)$ is
$$\spo(Q):= \{ \varphi \in \fgl(Q), \ \omega(\varphi \cdot q_1, q_2) = \omega(\varphi \cdot q_2, q_1) \mbox{ for all } q_1, q_2 \in Q\}.$$ The Lie algebra $\fgl(U) \oplus \spo(Q)$ has a faithful representation on $\fg_-$  where $(a, c)  \in \fgl(U) \oplus \spo(Q)$ acts on $\fg_-$ by \begin{eqnarray*}
(a,c) \cdot (u\otimes q) &=& (a \cdot u) \otimes q + u \otimes (c \cdot q) \\
(a, c) \cdot (u_1 \odot u_2) &=& (a \cdot u_1) \odot u_2 + u_1 \odot (a \cdot u_2) \end{eqnarray*}
for $u, u_1, u_2 \in U$ and $q \in Q.$ It preserves the grading and the Lie algebra structure of $\fg_-$.
Let $\fg_0 \subset {\rm End}(\fg_-)$ be the image of this representation of $\fgl(U) \oplus \spo(Q)$.
\end{itemize}
\end{notation}

\begin{remark}
The Lie algebra $\fg_0$ is strictly smaller than ${\rm gr}\aut(\fg_{-})$ if ${\rm Null}_{\omega} \neq 0$. In fact, taking a complement $Q^{\sharp} \subset Q$ of the null space ${\rm Null}_{\omega}$, the action of
 $\fgl(U \otimes {\rm Null}_{\omega})$  on $$
 \fg_- = (U \otimes {\rm Null}_{\omega})  \ \oplus \ (U \otimes Q^{\sharp}) \ \oplus \ \Sym^2 U$$ which annihilates $(U \otimes Q^{\sharp})  \oplus \Sym^2 U$
 preserves the nilpotent Lie algebra structure. But $\fgl( U \otimes {\rm Null}_{\omega}) \not\subset \fg_0$ if ${\rm Null}_{\omega} \neq 0$.
\end{remark}

\begin{definition}\label{d.h}
In Notation \ref{n.positive}, given $h \in \Hom(U,Q)$ and  $B \in \Sym^2 U^*$, we define the following elements for each  $w \otimes p \in U \otimes Q = \fg_{-1},$
$v \odot w \in \Sym^2 U = \fg_{-2}$:
\begin{itemize}
\item[(1)] an element $h_{v \odot w} \in U \otimes Q = \fg_{-1}$ defined by
    $$h_{v \odot w} := \frac{1}{2} w \otimes h(v)  + \frac{1}{2} v \otimes h(w);$$
\item[(2)] an element $h^U_{w\otimes p} \in \fgl(U) \subset \fg_0$  defined by
    $$ u \in U \ \mapsto \ h^U_{w\otimes p} (u) := \frac{1}{2} \omega (h(u), p) w;$$
    \item[(3)] an element $h^Q_{w \otimes p} \in \spo(Q) \subset \fg_0$  defined by $$q \in Q \ \mapsto \ h^Q_{w\otimes p}(q) := \frac{1}{2} \omega(h(w),q) p + \frac{1}{2} \omega(p,q)\ h(w);$$
        \item[(4)] an element $B_{v \odot w} \in \fgl(U) \subset \fg_0$  defined by
$$ u \in U \ \mapsto \ B_{v \odot w} (u) := \frac{1}{2} B (u, v) \ w + \frac{1}{2} B(u, w) \ v ;\mbox{ and }$$
\item[(5)] an element $B_{w \otimes p} \in \Hom(U,Q)$ defined by
$$ u \in U \  \mapsto \ B_{w\otimes p}(u) := B (u, w) p.$$
\end{itemize} \end{definition}

\begin{theorem}\label{t.positive}
For $(\fg_0, \fg_-)$ in Notation \ref{n.positive}  and its universal prolongation $\fg = \oplus_{k \in \Z} \fg_k$  in the sense of Definition \ref{d.fg}, we have   the following identifications of $\fg_0$-modules \begin{eqnarray*}
\fg_1 &=& \Hom (U, Q) \\ \fg_2 &=& \Sym^2 U^* \\ \fg_k &=& 0 \mbox{ for all } k \geq 3 \end{eqnarray*} such that $h \in \Hom(U, Q) = \fg_1$ and $B \in \Sym^2 U^* = \fg_2$ satisfy
for each $v \odot w \in \fg_{-2}$ and $ w \otimes p \in \fg_{-1}$, \begin{eqnarray*} h \cdot ( v \odot w) &=& h_{v \odot w} \in  \fg_{-1} \\
h \cdot( w \otimes p) &=& h^U_{w \otimes p}+ h^Q_{w \otimes p} \in \fg_0 \\
B \cdot( v \odot w) &=& B_{v \odot w} \in \fg_0 \\
B \cdot (w \otimes p) &=& B_{w \otimes p} \in \fg_1
\end{eqnarray*}
\end{theorem}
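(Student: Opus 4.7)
The plan is to work directly from the inductive definition in Definition \ref{d.fg}, in three stages: (A) verify that the formulas of Definition \ref{d.h} define legitimate elements $\phi^h \in \fg_1$ and $\phi^B \in \fg_2$; (B) show that every element of $\fg_1$, resp.\ $\fg_2$, has this form; (C) show $\fg_3 = \fg_4 = 0$, from which $\fg_k = 0$ for $k \geq 3$ cascades. Stage (A) is a routine expansion of the cocycle identity $\phi([v_1, v_2]_{\fg_-}) = \phi(v_1)(v_2) - \phi(v_2)(v_1)$ on the two non-trivial bracket types $\fg_{-1} \times \fg_{-1}$ (where $[w_1 \otimes p_1, w_2 \otimes p_2] = \omega(p_1,p_2)\, w_1 \odot w_2$) and $\fg_{-1} \times \fg_{-2}$ (where the bracket vanishes). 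Along the way one checks that $h^Q_{w\otimes p} \in \spo(Q)$, which is immediate from the manifest symmetry of $(q_1, q_2) \mapsto \omega(h^Q_{w\otimes p}(q_1), q_2)$. Injectivity of $h \mapsto \phi^h$ and $B \mapsto \phi^B$ is visible from $\phi^h_{-2}(u \odot u) = u \otimes h(u)$ and $\phi^B_{-2}(u \odot u)(w) = B(u, w)\, u$.

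The core of the argument is stage (B) for $\fg_1$. Given $\phi \in \fg_1$, I decompose $\phi_{-1}(w \otimes p) = (a(w,p),\, c(w,p)) \in \fgl(U) \oplus \spo(Q)$. The Jacobi identity for $(w \otimes p, u \odot u) \in \fg_{-1} \times \fg_{-2}$ reads
\[
[\phi_{-2}(u \odot u),\, w \otimes p]_{\fg_-} \;=\; 2\, a(w,p)(u) \odot u,
\]
whose right-hand side lies in $u \odot U$. Writing $\phi_{-2}(u \odot u) = \sum_i e_i \otimes q_i$ in a basis $e_1 = u, e_2, \ldots, e_n$ of $U$ and letting $w$ range over $e_2, \ldots, e_n$ will force $q_i \in {\rm Null}_{\omega}$ for every $i \geq 2$, yielding a canonical decomposition
\[
\phi_{-2}(u \odot u) = u \otimes h(u) + \tau(u), \qquad \tau(u) \in U \otimes {\rm Null}_{\omega},
\]
with $h(u) \in Q$. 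I then polarize $\phi_{-2}$ and plug it into the Jacobi identity for $(w_1 \otimes p_1, w_2 \otimes p_2)$ with $w_1, w_2$ linearly independent and $\omega(p_1, p_2) \neq 0$; collecting the resulting tensor components along $w_1$, along $w_2$, and along the residual $U \otimes {\rm Null}_{\omega}$ summand simultaneously gives $a = h^U$, $c = h^Q$, the linearity of $h$, and an identity of the form $\omega(p_1, p_2)\, \tau(w_1, w_2) = 0$, which kills $\tau$. Hence $\phi = \phi^h$. The argument for $\fg_2$ is parallel, invoking the bracket formulas on $\fg_1$ just computed.

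For stage (C), given $\phi \in \fg_3$ with components $\phi_{-1} : \fg_{-1} \to \Sym^2 U^*$ and $\phi_{-2} : \fg_{-2} \to \Hom(U, Q)$, expanding the Jacobi identities against the now-explicit bracket formulas of $\fg_1$ and $\fg_2$ reduces, via a basis calculation of the same flavor as above, to $\phi = 0$. For $\fg_4$: the $\fg_{-1}$-component lands in $\fg_3 = 0$, while the $\fg_{-2}$-component $\phi_{-2} : \fg_{-2} \to \Sym^2 U^*$ must satisfy $[\phi_{-2}(v_2), v_1]_{\fg} = 0$ for every $v_1 \in \fg_{-1}$ by the mixed Jacobi; since $B_{w\otimes p} \equiv 0$ forces $B = 0$, we get $\fg_4 = 0$, after which $\fg_k = 0$ for $k \geq 5$ is trivial because both $\fg_{-1}$- and $\fg_{-2}$-components of any $\phi \in \fg_k$ then land in zero spaces. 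I expect the main obstacle to be the elimination of the null-space component $\tau \in U \otimes {\rm Null}_{\omega}$ in stage (B): in the classical symplectic case ${\rm Null}_{\omega} = 0$ this subtlety is invisible, but in the genuine presymplectic case the $\fg_{-1} \times \fg_{-2}$-Jacobi alone leaves $\tau$ unconstrained, and only the combined use of both Jacobi pairings, together with the non-degeneracy of $\omega$ on a symplectic complement of ${\rm Null}_{\omega}$ (cf.\ Lemma \ref{l.Sp}), pins it down.
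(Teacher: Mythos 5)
Your proposal is correct and follows the same overall skeleton as the paper's proof (Lemmas \ref{l.positive1}--\ref{l.positive5}): determine $\fg_1$ and $\fg_2$ explicitly from the two cocycle identities, check that $h \mapsto \phi^h$ and $B \mapsto \phi^B$ are injective $\fg_0$-module maps onto these spaces, and then kill $\fg_3$ and $\fg_4$ exactly as you describe. The one genuine divergence is at the crux, the containment $\varphi(u^2) \in u \otimes Q$. The paper proves this outright with a case split: for $\dim Q \geq 3$ it evaluates the $\fg_{-1}\wedge\fg_{-1}$ identity at $w_1=w_2=u$ and intersects $u\otimes Q + U\otimes q_1 + U\otimes q_2$ over general $q_1, q_2$, while for $\dim Q = 2$ (where ${\rm Null}_{\omega}=0$) it uses the mixed identity. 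You instead use the mixed $\fg_{-1}\times\fg_{-2}$ identity uniformly to get only $\varphi(u^2) \in u\otimes Q + U\otimes{\rm Null}_{\omega}$, and then kill the residual null component via the $\fg_{-1}\wedge\fg_{-1}$ identity with $q_1, q_2$ chosen in a symplectic complement of ${\rm Null}_{\omega}$ with $\omega(q_1,q_2)\neq 0$: projecting that identity to $U'\otimes Q$ for a complement $U'$ of $\langle w_1,w_2\rangle$, the right-hand side lands in $U'\otimes(\C q_1 + \C q_2)$, which meets $U'\otimes{\rm Null}_{\omega}$ trivially, so the residue vanishes. This is a legitimate alternative that buys you a uniform argument with no case split on $\dim Q$ (the symplectic case ${\rm Null}_{\omega}=0$ is subsumed rather than treated separately), at the cost of carrying the null residue through the polarization; note your ``canonical decomposition'' is not canonical, since $u\otimes{\rm Null}_{\omega}$ lies in both summands, so $\tau$ should be normalized to live in $U'\otimes{\rm Null}_{\omega}$ before the projection argument. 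The remaining differences are cosmetic: the paper obtains linearity of $h$ by viewing $u \mapsto q_u$ as a section in $H^0(\BP U, \sO(1)\otimes Q) = \Hom(U,Q)$, whereas you extract it from component bookkeeping, and in the $\fg_2$ step the paper must additionally rule out a scalar summand $c_{w^2} = \lambda_{w^2}\,{\rm id}_Q$, a small point your ``parallel'' sketch glosses over but which causes no difficulty.
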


    Theorem \ref{t.positive} will be proved in the next five lemmata.

    \begin{lemma}\label{l.positive1} Recall from Definition \ref{d.fg} and Notation \ref{n.positive}, that an  element  $$\varphi \in \fg_1 \subset  \Hom(\fg_{-2}, \fg_{-1}) \oplus
    \Hom(\fg_{-1},  \fg_0)$$ satisfies for all $u, w_1, w_2,  \in U$ and $p, q_1, q_2 \in Q$,
    \begin{itemize} \item[(a)] $\varphi(u \otimes p) \cdot (w_1 \odot w_2) = [\varphi(w_1 \odot w_2), u\otimes p], \mbox{ and }$
    \item[(b)] $\omega(q_1,q_2) \ \varphi(w_1 \odot w_2) = \varphi( w_1 \otimes q_1) \cdot (w_2 \otimes q_2) - \varphi(w_2\otimes q_2) \cdot (w_1 \otimes q_1).$ \end{itemize}
For each $\varphi \in \fg_1$, there exists an element $h \in \Hom(U,Q)$ such that \begin{itemize}
\item[(i)] $\varphi(w_1 \odot w_2) = h_{w_1 \odot w_2}$, and
\item[(ii)] $\varphi(w \otimes q) = h^U_{w \otimes q} + h^Q_{w \otimes q}.$ \end{itemize}
\end{lemma}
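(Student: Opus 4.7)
The plan is to recover $h$ by unpacking relations (a) and (b) in sequence. I decompose $\varphi = (\varphi^{(1)}, \varphi^{(0)})$ with $\varphi^{(1)} \in \Hom(\fg_{-2}, \fg_{-1})$ and $\varphi^{(0)} \in \Hom(\fg_{-1}, \fg_0)$, and write $\varphi^{(0)}(u \otimes p) = (A_{u,p}, C_{u,p})$ using the inclusion $\fg_0 \hookrightarrow \fgl(U) \oplus \spo(Q)$.

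Specializing (a) to $w_1 = w_2 = w$ gives $2(A_{u,p}w) \odot w = [\varphi^{(1)}(w \odot w),\, u \otimes p]$. The right side is of the form $X_p \odot u$ with $X_p \in U$ obtained by contracting $\varphi^{(1)}(w \odot w) \in U \otimes Q$ against $\omega(\cdot,p)$. Since $\dim U \geq 2$, for $u \not\in \C w$ the unique factorization of the rank-two symmetric tensor $X_p \odot u = (2A_{u,p}w) \odot w$ forces $A_{u,p}w \in \C u$ and $X_p \in \C w$; linearity in $u$ then yields $A_{u,p}w = \mu(w,p)\,u$ for a bilinear pairing $\mu: U \times Q \to \C$, and in particular $A_{u,p}$ has rank at most one.

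Inserting this into (b) and using $[w_1 \otimes q_1, w_2 \otimes q_2] = \omega(q_1,q_2)\,w_1 \odot w_2$ gives the key identity
\[
\omega(q_1,q_2)\,\varphi^{(1)}(w_1 \odot w_2) = w_1 \otimes \bigl(\mu(w_2,q_1)q_2 - C_{w_2,q_2}q_1\bigr) + w_2 \otimes \bigl(C_{w_1,q_1}q_2 - \mu(w_1,q_2)q_1\bigr).
\]
Since $\omega \neq 0$, I may pick $q_1, q_2$ with $\omega(q_1,q_2) \neq 0$, which shows immediately that $\varphi^{(1)}(w_1 \odot w_2) \in \mathrm{span}(w_1,w_2)\otimes Q$. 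For linearly independent $w_1, w_2$ the decomposition into $w_1$- and $w_2$-components is unique, and inspection reveals the $w_1$-coefficient depends only on $w_2$; swapping $w_1 \leftrightarrow w_2$ shows both coefficients come from the same map $\tilde h: U \to Q$. Setting $h := 2\tilde h$ yields $\varphi^{(1)}(w_1 \odot w_2) = h_{w_1 \odot w_2}$ for independent $w_1, w_2$, and polarization of the resulting identity $\varphi^{(1)}(w \odot w) = w \otimes h(w)$ both extends the formula to arbitrary arguments and establishes linearity of $h$, yielding (i).

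Reading off the displayed identity with $h = 2\tilde h$ now gives $\mu(w,p) = \tfrac{1}{2}\omega(h(w),p)$ and $C_{w,q}(p) = \tfrac{1}{2}\omega(h(w),p)\,q + \tfrac{1}{2}\omega(q,p)\,h(w)$, which are precisely the formulas defining $h^U_{w \otimes p}$ and $h^Q_{w \otimes q}$; this gives (ii). The main subtlety is that one might expect $\mathrm{Null}_\omega \neq 0$ to cause irreducible ambiguity in recovering $\varphi^{(1)}$ from (a) alone, since contraction against $\omega(\cdot,p)$ is blind to the null directions. The clean use of (b) with $\omega(q_1,q_2)\neq 0$ sidesteps this: the $w_1\otimes Q \oplus w_2 \otimes Q$ localization of $\varphi^{(1)}(w_1 \odot w_2)$ is obtained in one stroke without any reference to the null space.
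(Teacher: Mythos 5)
Your route to (i) is sound and is a genuine improvement in economy over the paper's: by first extracting from (a) with $w_1=w_2$ that $A_{u,p}w=\mu(w,p)u$, and then feeding this into (b) for a pair $q_1,q_2$ with $\omega(q_1,q_2)\neq 0$, you get $\varphi^{(1)}(w_1\odot w_2)\in(\C w_1+\C w_2)\otimes Q$ together with the explicit coefficient $\tilde h(w)=\omega(q_1,q_2)^{-1}\bigl(\mu(w,q_1)q_2-C_{w,q_2}q_1\bigr)$, which is visibly linear in $w$; this bypasses both the paper's case split ($\dim Q\geq 3$ versus $\dim Q=2$ symplectic) used to prove $\varphi(u^2)\in u\otimes Q$ and its $H^0(\BP U,\sO(1)\otimes Q)$ argument for linearity, and, as you note, it is insensitive to ${\rm Null}_\omega$. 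Your appeal to polarization for the diagonal is also fine: both sides of $\varphi^{(1)}(w_1\odot w_2)=w_1\otimes\tilde h(w_2)+w_2\otimes\tilde h(w_1)$ are bilinear and agree on the dense set of independent pairs, hence everywhere.

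The last step, however, has a real gap. Substituting (i) into your displayed identity yields only the single relation $C_{w,q}(p)=\mu(w,p)\,q+\tfrac12\omega(q,p)\,h(w)$: it expresses $C$ in terms of $\mu$ and $h$ but imposes no identification of $\mu(w,p)$ with $\tfrac12\omega(h(w),p)$, which is precisely what both halves of (ii) need. So this cannot be ``read off'' the display; as written, (ii) is unproved. The missing input is relation (a) used again, now with the full strength of (i): your diagonal computation actually determines the proportionality constant, $[\varphi^{(1)}(w^2),u\otimes p]=2\mu(w,p)\,w\odot u$, while $\varphi^{(1)}(w^2)=w\otimes h(w)$ makes the same bracket equal to $\omega(h(w),p)\,w\odot u$, whence $\mu(w,p)=\tfrac12\omega(h(w),p)$ — this is exactly how the paper pins down the $\fgl(U)$-component. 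Alternatively, you can invoke the constraint $C_{w,q}\in\spo(Q)$, which you set up but never use: the symmetry $\omega(C_{w,q}p_1,p_2)=\omega(C_{w,q}p_2,p_1)$ applied to the relation above gives $\omega(q,p_2)\delta(p_1)=\omega(q,p_1)\delta(p_2)$ for $\delta(p):=\mu(w,p)-\tfrac12\omega(h(w),p)$, and $\delta\not\equiv 0$ would force the nonzero alternating form $\omega$ to have rank at most one, a contradiction. Either repair is short, but one of them is required.
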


\begin{proof}
To start with, we claim \begin{eqnarray} \label{e.u2} \varphi(u^2) \in   u \otimes Q \mbox{ for all } u \in U. \end{eqnarray}
First, assume that $\dim Q \geq 3$. Putting $ 0 \neq u = w_1=w_2$ in (b), we have
   $$\omega(q_1, q_2) \ \varphi(u^2)   \ \in \  u \otimes Q + U \otimes q_1 + U \otimes q_2. $$ This implies \begin{eqnarray*} \varphi(u^2)\in \bigcap\limits_{q_1, q_2 \text{ general }}(u \otimes Q + U \otimes q_1 + U \otimes q_2). \end{eqnarray*} Fixing a complement
  $U^{\sharp} \subset U$ of $\C u$, we can write
  $$u \otimes Q + U \otimes q_1 + U \otimes q_2 = (u \otimes Q) \oplus (U^{\sharp} \otimes (\C q_1 + \C q_2)).$$ Then the above intersection is equal to
  $$(u \otimes Q) \oplus (U^{\sharp} \otimes \bigcap\limits_{q_1, q_2 \text{ general }}
  (\C q_1 + \C q_2)).$$ But $\dim Q \geq 3$ implies
  $$ \bigcap\limits_{q_1, q_2 \text{ general }}
  (\C q_1 + \C q_2) =0$$ and we obtain $\varphi(u^2) \in u \otimes Q,$ proving the claim when
  $\dim Q \geq 3$.
If $\dim Q <3$,  then $\dim Q = 2$ and ${\rm Null}_{\omega}=0.$
 Pick two linearly independent vectors $u, v \in U$ and put $v= w_1= w_2$ in (a) to obtain $$ v \odot U \ \ni \   2 v  \odot ( \varphi(u \otimes p) \cdot v )  =  [\varphi(v^2), u \otimes p]\in u \odot U.$$
   As $v$ and $u$ are linearly independent and ${\rm Null}_{\omega} =0$,
   we have $$[\varphi(v^2), u \otimes p] \in \C u \odot v,$$ which  implies
$\varphi(v^2) \in v \otimes Q,$ proving the claim.

   From (\ref{e.u2}),  for each $0 \neq u \in U$, there exists a unique element $q_u \in Q$ such that
$\varphi(u^2) = u \otimes q_u$. Since $q_{tu} = t q_u$ for $t \in \C$, the association $u \mapsto q_u \in Q$ defines an element of $$H^0( \BP U, \sO(1) \otimes Q) = \Hom(U, Q).$$
Thus we have an element $h \in \Hom(U,Q)$ such that
$\varphi(u^2) = u \otimes h(u).$ By polarizing it, we see that $h$ satisfies (i).

Let us write $\varphi(u \otimes q) = (a_{u\otimes q}, c_{u \otimes q}) \in \fgl(U) \oplus \spo(Q)$. Notation \ref{n.positive} (2) says
$$\varphi(u \otimes q) \cdot (w^2) = 2 a_{u \otimes q}(w) \odot w \mbox{ for all } w \in U. $$
By (a) and (i), this is equal to \begin{eqnarray*} [\varphi(w^2), u \otimes q]   &=& [w \otimes h(w), u \otimes q] \\ &=&  \omega(h(w), q) u \odot w. \end{eqnarray*}
Thus $$ 2 a_{u \otimes q}(w)  =  \omega(h(w), q ) u = 2 h^U_{u \otimes q} (w),$$ checking the first half of (ii).

From (i), the left hand side of (b) is
\begin{eqnarray}\label{e.bl} \omega(q_1, q_2) \left( \frac{1}{2} w_1 \otimes h(w_2) + \frac{1}{2} w_2 \otimes h(w_1) \right).\end{eqnarray}
The right hand side of (b) is
$$ a_{w_1 \otimes q_1}(w_2) \otimes q_2 +
w_2 \otimes c_{w_1 \otimes q_1}(q_2) - a_{w_2 \otimes q_2}(w_1) \otimes q_1 - w_1 \otimes c_{w_2 \otimes q_2} (q_1).$$ From the first half of (ii) checked above, this is equal to
\begin{eqnarray*}\label{e.br}\frac{1}{2} \omega(h(w_2), q_1) w_1 \otimes q_2 + w_2 \otimes c_{w_1 \otimes q_1}(q_2) \\ - \frac{1}{2} \omega(h(w_1), q_2) w_2 \otimes q_1 - w_1 \otimes c_{w_2 \otimes q_2}(q_1). \end{eqnarray*}
For this to be equal to (\ref{e.bl}) when $w_1$ and $w_2$ are linearly independent (using $\dim U \geq 2$), we should have
 \begin{eqnarray*} \frac{1}{2} \omega(q_1, q_2) h(w_2) &=& \frac{1}{2} \omega( h(w_2), q_1) q_2 - c_{w_2 \otimes q_2} (q_1) \\ \frac{1}{2} \omega(q_1, q_2) h(w_1) &=& -\frac{1}{2} \omega(h(w_1), q_2) q_1 + c_{w_1 \otimes q_1}(q_2). \end{eqnarray*}
Thus we obtain for general (hence all ) $u \in U$ and $q_1, q_2 \in Q$,
$$c_{u \otimes q_1} (q_2) = \frac{1}{2} \omega(q_1, q_2) h(u) + \frac{1}{2} \omega(h(u), q_2) q_1,$$ proving the second half of (ii). \end{proof}

\begin{lemma}\label{l.positive2}
The homomorphism $$\pi_1: \Hom(U,Q) \mapsto \Hom(\fg_{-2}, \fg_{-1}) \oplus \Hom(\fg_{-1}, \fg_0)$$ defined by $$h \in \Hom (U, Q) \mapsto \pi_1(h) := (v \odot w \mapsto h_{v \odot w}, w \otimes p \mapsto h^U_{w \otimes p} + h^Q_{w \otimes p})$$ gives a $\fg_{0}$-module isomorphism $\Hom(U, Q) \cong \fg_1.$
\end{lemma}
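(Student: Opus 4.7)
The plan is to prove Lemma \ref{l.positive2} by splitting it into four claims: (A) that $\pi_1(h)$ actually lies in $\fg_1$; (B) injectivity of $\pi_1$; (C) surjectivity of $\pi_1$; and (D) $\fg_0$-equivariance.

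For (C), this is essentially a restatement of Lemma \ref{l.positive1}: that lemma already produces, for each $\varphi \in \fg_1$, an $h \in \Hom(U,Q)$ with $\varphi(v \odot w) = h_{v \odot w}$ and $\varphi(w \otimes p) = h^U_{w \otimes p} + h^Q_{w \otimes p}$, which is exactly $\varphi = \pi_1(h)$. For (B), if $\pi_1(h) = 0$ then in particular $h_{w \odot w} = w \otimes h(w) = 0$ for every $w \in U$, which forces $h(w) = 0$; thus $h = 0$. This is where I expect to spend essentially no effort.

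For (A), I will plug $\varphi = \pi_1(h)$ into the two conditions of Lemma \ref{l.positive1} and compute. Condition (a) reads
\[
(h^U_{u \otimes p} + h^Q_{u \otimes p}) \cdot (w_1 \odot w_2) \;=\; [h_{w_1 \odot w_2}, \, u \otimes p].
\]
The left-hand side, by the $\fg_0$-action on $\Sym^2 U$, equals $h^U_{u\otimes p}(w_1) \odot w_2 + w_1 \odot h^U_{u \otimes p}(w_2) = \tfrac{1}{2}\omega(h(w_1),p)\, u \odot w_2 + \tfrac{1}{2}\omega(h(w_2),p)\, u \odot w_1$, and the right-hand side expands using $[w \otimes q, u \otimes p] = \omega(q,p)\, w \odot u$ to the same expression. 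Condition (b) is the slightly longer identity
\[
\omega(q_1,q_2)\, h_{w_1 \odot w_2} \;=\; (h^U_{w_1 \otimes q_1} + h^Q_{w_1 \otimes q_1})\cdot(w_2 \otimes q_2) \;-\; (h^U_{w_2 \otimes q_2} + h^Q_{w_2 \otimes q_2})\cdot(w_1 \otimes q_1),
\]
and I will verify it by expanding the right-hand side via the $\fg_0$-action on $U \otimes Q$: the four $h^U$ terms pair off and cancel, and the two $h^Q$ terms yield $\tfrac{1}{2}\omega(q_1,q_2)(w_2 \otimes h(w_1) + w_1 \otimes h(w_2))$, matching the left-hand side.

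For (D), the natural $\fg_0 \cong \fgl(U) \oplus \spo(Q)$ action on $\Hom(U,Q)$ is $((a,c) \cdot h)(u) = c \cdot h(u) - h(a \cdot u)$, while the action on $\fg_1$ is given, via Lemma \ref{l.fg}(iii), by $[(a,c), \varphi](v) = (a,c) \cdot \varphi(v) - \varphi((a,c) \cdot v)$ for $v \in \fg_-$. I will check the required identity $\pi_1((a,c) \cdot h) = [(a,c), \pi_1(h)]$ separately on $\fg_{-2}$ and $\fg_{-1}$: on $\fg_{-2}$, both sides come out to $\tfrac{1}{2} w \otimes (c \cdot h(v) - h(av)) + \tfrac{1}{2} v \otimes (c \cdot h(w) - h(aw))$ after using the Leibniz expansion, and on $\fg_{-1}$ the same kind of bookkeeping works, using the definitions of $h^U_{w \otimes p}$ and $h^Q_{w \otimes p}$ and the fact that $c \in \spo(Q)$ satisfies $\omega(c \cdot q, p) + \omega(q, c \cdot p) = 0$.

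The main obstacle is purely bookkeeping in step (A), specifically condition (b), and the $\fg_{-1}$-half of step (D): both are multi-term identities whose verification relies on the definition of $\spo(Q)$ and a correct handling of the $\tfrac{1}{2}$ factors in the definitions of $h^U_{w\otimes p}$ and $h^Q_{w\otimes p}$. There is no conceptual difficulty beyond careful bracket expansion, so the whole proof can be written compactly once these identities are recorded.
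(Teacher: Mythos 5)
Your proposal is correct and takes essentially the same route as the paper: the paper's proof likewise consists of the ``straightforward'' verification that $\pi_1$ is an injective $\fg_0$-module homomorphism with image in $\fg_1$ (your steps (A), (B), (D)) together with Lemma~\ref{l.positive1} for surjectivity (your step (C)), and your expanded computations for conditions (a), (b) and for equivariance all check out. One tiny bookkeeping remark: in condition (b) the cancellation is between the two $h^U$ terms and the $\omega(h(\cdot),\cdot)$-parts of the two $h^Q$ terms (there are not four $h^U$ terms), but the surviving terms are exactly the $\tfrac12\omega(q_1,q_2)\bigl(w_2\otimes h(w_1)+w_1\otimes h(w_2)\bigr)$ you state, so the verification goes through as planned.
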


\begin{proof}
It is straight-forward to check that the image of $\pi_1$ is contained in $\fg_1$, and $\pi_1$ is an injective $\fg_{0}$-module homomorphism. Lemma \ref{l.positive1} says that the image of $\pi_1$ contains $\fg_1$. This proves the lemma. \end{proof}

    \begin{lemma}\label{l.positive3}
    Recall from Definition \ref{d.fg} and Notation \ref{n.positive}, that an  element
    of $\fg_2$ is given by a pair $$(\phi \in \Hom(\fg_{-2}, \fg_{0}), \Phi \in
    \Hom(\fg_{-1},  \fg_1))$$  satisfying for all $u_1, u_2, w_1, w_2 \in U$ and $q_1, q_2 \in Q$,
    \begin{itemize} \item[(a)] $\phi(u_1 \odot u_2) \cdot (w_1 \odot w_2) = \phi(w_1 \odot w_2) \cdot  (u_1 \odot u_2)$ \item[(b)] $\Phi(u_1 \otimes q_1) \cdot (w_1 \odot w_2) = \phi(w_1 \odot w_2) \cdot (u_1 \otimes q_1)$
    \item[(c)] $\omega(q_1,q_2) \ \phi(u_1 \odot u_2) = \Phi( u_1 \otimes q_1) \cdot (u_2 \otimes q_2) - \Phi(u_2\otimes q_2) \cdot (u_1 \otimes q_1).$ \end{itemize}
        For each $(\phi, \Phi) \in \fg_2$,
there exists an element $B \in \Sym^2 U^*$ such that \begin{itemize}
\item[(i)] $\phi(v \odot w) = B_{v \odot w}$ and
\item[(ii)] $\Phi(w \otimes q) = B_{w\otimes q}.$ \end{itemize}
\end{lemma}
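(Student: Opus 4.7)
The plan is to follow the template of Lemma \ref{l.positive1}: use condition (a) to extract a bilinear symmetric form $B$ from the $\fgl(U)$-component of $\phi$, then exploit condition (b) to both identify $\Phi$ and force the $\spo(Q)$-component of $\phi$ to vanish. Condition (c) will serve only as a consistency check.

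Decompose $\phi(u_1 \odot u_2) = (a_{u_1 \odot u_2}, c_{u_1 \odot u_2}) \in \fgl(U) \oplus \spo(Q)$, and note that the $\spo(Q)$-factor acts trivially on $\fg_{-2} = \Sym^2 U$. Specializing (a) to $u_1 = u_2 = u$ and $w_1 = w_2 = w$ with $u, w$ linearly independent gives the identity
$$ a_{u^2}(w) \odot w \;=\; a_{w^2}(u) \odot u $$
in $\Sym^3 U$. Expanding $a_{u^2}(w) = \alpha u + \beta w + v'$ relative to a splitting $U = \C u \oplus \C w \oplus U''$, the vanishing of the $w^2$- and $U'' \odot w$-components on the left forces $\beta = 0$ and $v' = 0$, so $a_{u^2}(w) \in \C u$ and we may write $a_{u^2}(w) = B(u,w)\, u$ for a unique scalar $B(u,w)$. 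Matching the $u \odot w$-coefficient in the displayed identity then gives $B(u,w) = B(w,u)$. Since $a_{u^2}(w)$ is linear in $w$ and polynomial of degree two in $u$, the function $B$ is forced to be bilinear, so $B \in \Sym^2 U^*$; polarizing $a_{u^2}(w) = B(u,w)\,u$ in $u$ yields $a_{u_1 \odot u_2} = B_{u_1 \odot u_2}$, which is (i) modulo the vanishing of $c_{u_1 \odot u_2}$ shown below.

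Next, setting $w_1 = w_2 = w$ in (b) and viewing $\Phi(u_1 \otimes q_1) \in \fg_1$ as an element $H(u_1 \otimes q_1) \in \Hom(U, Q)$ via Lemma \ref{l.positive2}, I obtain
$$ w \otimes H(u_1 \otimes q_1)(w) \;=\; B(u_1, w)\, w \otimes q_1 \;+\; u_1 \otimes c_{w^2}(q_1). $$
For $u_1$ linearly independent of $w$, separating the $w \otimes Q$- and $u_1 \otimes Q$-components yields both $c_{w^2}(q_1) = 0$ (hence $c_{v \odot w} \equiv 0$ by polarization) and $H(u_1 \otimes q_1)(w) = B(u_1, w)\, q_1 = B_{u_1 \otimes q_1}(w)$. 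This simultaneously establishes (ii) and confirms that $\phi$ has no $\spo(Q)$-component, completing (i) and (ii).

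Finally, substituting the formulas $\phi(u_1 \odot u_2) = B_{u_1 \odot u_2}$ and $\Phi(u \otimes q) = B_{u \otimes q}$ into (c) and expanding the $\fg_1$-action on $\fg_{-1}$ using the formulas for $h^U_{w \otimes p}$ and $h^Q_{w \otimes p}$ from Lemma \ref{l.positive1}, the $\fgl(U)$-parts on both sides collapse to $\omega(q_1, q_2)\, B_{u_1 \odot u_2}$ while the $\spo(Q)$-parts on the right cancel exactly because of the symmetry $B(u_1, u_2) = B(u_2, u_1)$. The main obstacle will be the linear-algebra bookkeeping in $\Sym^3 U$ that isolates the scalar $B(u,w)$ and promotes it to a bilinear symmetric form; once that is in hand, the remaining manipulations are routine polarizations and direct substitutions.
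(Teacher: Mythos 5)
Your argument is correct and is essentially the paper's own proof with conditions (a) and (b) exploited in the opposite order: the paper first uses (b) to see that $\Phi$ is given by a bilinear form $B$ and that $c_{w^2}$ is a scalar multiple of $\mathrm{id}_Q$, then invokes (a) to force the symmetry of $B$ and kill that scalar, whereas you read off the symmetric $B$ directly from (a) and then use (b) to kill the $\spo(Q)$-component and identify $\Phi$ — the ingredients (the $\fgl(U)\oplus\spo(Q)$ decomposition, the identification $\fg_1\cong\Hom(U,Q)$ from Lemma \ref{l.positive2}, and linear-independence arguments using $\dim U\geq 2$) are identical, and, like the paper, you never need (c). Only a cosmetic slip: the identity you extract from (a) lives in $\Sym^2 U=\fg_{-2}$, not $\Sym^3 U$.
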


\begin{proof}
From Lemma \ref{l.positive2}, we can view $\Phi$ as a homomorphism from $U \otimes Q$ to $\Hom (U, Q) = U^* \otimes Q$.
 Given $u, w \in U$ and $q \in Q$, define  $$h:= \Phi (u \otimes q) \in \Hom (U, Q).$$ Then
\begin{eqnarray}\label{e.h(b)}  \Phi(u \otimes q) \cdot w^2 = h_{w^2} = w  \otimes h(w) \in w \otimes Q. \end{eqnarray}
Setting $\phi(w^2) = (a_{w^2}, c_{w^2}) \in \fgl(U) \oplus \spo(Q)$, we have
\begin{eqnarray}\label{e.h(a)} \phi(w^2) \cdot (u \otimes q) = a_{w^2}(u) \otimes q + u \otimes c_{w^2}(q). \end{eqnarray}
Since $\dim U \geq 2$, we can choose $u$ independent of $w$ and obtain from the equality of (\ref{e.h(b)}) and (\ref{e.h(a)}) by (b),
\begin{eqnarray}\label{e.cw1} c_{w^2} &= &\lambda_{w^2}id_{Q} \mbox{ for some } \lambda_{w^2}\in\mathbb{C},  \mbox{ and } \end{eqnarray}
\begin{eqnarray}\label{e.Phiu1} \Phi(u\otimes q) &\in & U^* \otimes q. \end{eqnarray}
(\ref{e.Phiu1}) implies that the homomorphism $\Phi: U \otimes Q \to U^* \otimes Q$ must be given by a bilinear form
$B \in U^* \otimes U^*$ such that \begin{eqnarray}\label{e.hw1} h(w) &=& \Phi(u \otimes q) (w) = B(u, w) q. \end{eqnarray} Putting it into (\ref{e.h(b)}),
we obtain
$$h_{w^2}= B(u, w) w \otimes q.$$
Equating this with (\ref{e.h(a)}) gives
\begin{eqnarray}\label{e.aw} a_{w^2} (u) =  B(u, w) w-\lambda_{w^2}u.\end{eqnarray}
(\ref{e.cw1}), (\ref{e.hw1}) and (\ref{e.aw})  prove (i) and (ii), once we  verify that $B \in \Sym^2 U^*$ and $\lambda_{w^2}=0$ for all $w\in U$. From (a), $$  2w \odot a_{u^2} (w) = \phi(u^2) \cdot (w^2) = \phi(w^2) \cdot (u^2) = 2u \odot a_{w^2}(u). $$ By (\ref{e.aw}), this implies that
 $$  B(w,u) w \odot u - \lambda_{u^2}w^2 = B(u,w) u \odot w -\lambda_{w^2}u^2.$$ Since $\dim U\geq 2$, we can choose $u$ independent of $w$ and obtain $B \in \Sym^2 U^*$, and $\lambda_{w^2}=0$ for all $w\in U$.
  \end{proof}

  \begin{lemma}\label{l.positive4}
The homomorphism $\pi_2: \Sym^2 U^* \mapsto \Hom(\fg_{-2}, \fg_{0}) \oplus \Hom(\fg_{-1}, \fg_1)$ defined by $$B \in \Sym^2 U^* \mapsto \pi_1(B) := (v \odot w \mapsto B_{v \odot w}, w \otimes p \mapsto B_{w \otimes p})$$ gives a $\fg_{0}$-module isomorphism $\Sym^2U^* \cong \fg_2.$
\end{lemma}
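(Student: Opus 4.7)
The proof plan mirrors the one used in Lemma \ref{l.positive2} for $\fg_1$, now leveraging Lemma \ref{l.positive3} in place of Lemma \ref{l.positive1}. The map $\pi_2$ has an explicit candidate formula on both summands, so the proof splits into four routine verifications.

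First I would check that $\pi_2$ actually lands in $\fg_2$. By Definition \ref{d.fg}, this amounts to verifying the three identities (a), (b), (c) listed in Lemma \ref{l.positive3} for $\phi(v\odot w) = B_{v\odot w}$ and $\Phi(w\otimes p) = B_{w\otimes p}$. Each identity reduces to a short direct calculation using the defining formulas in Definition \ref{d.h} together with the symmetry $B \in \Sym^2 U^*$ and the bilinearity of $\omega$: e.g., for (c) one evaluates $\Phi(u_1\otimes q_1)\cdot(u_2\otimes q_2)$ using the formula $h_{w\otimes p} = h^U_{w\otimes p} + h^Q_{w\otimes p}$ from Lemma \ref{l.positive2} applied to $h = B_{u_1\otimes q_1}$, and symmetrically subtracts the swapped version; the $\omega(q_1,q_2)$ factor drops out and what remains matches $B_{u_1\odot u_2}$ on $U$. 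None of these checks present a real obstacle; they are the symmetric counterpart of the calculations already performed inside the proof of Lemma \ref{l.positive3}.

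Next I would establish injectivity of $\pi_2$. If $B \neq 0$, pick $u, w \in U$ with $B(u,w) \neq 0$ and any $0 \neq p \in Q$; then $B_{w\otimes p}(u) = B(u,w)p \neq 0$, so $\pi_2(B) \neq 0$. Surjectivity is precisely the content of Lemma \ref{l.positive3}: given any $(\phi,\Phi) \in \fg_2$, that lemma produces $B \in \Sym^2 U^*$ with $\phi = (v\odot w \mapsto B_{v\odot w})$ and $\Phi = (w\otimes q \mapsto B_{w\otimes q})$, which is exactly $\pi_2(B)$. Together with injectivity this yields the vector space isomorphism $\Sym^2 U^* \cong \fg_2$.

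Finally, I would verify that $\pi_2$ intertwines the natural $\fg_0$-actions. The $\fg_0$-module structure on $\Sym^2 U^*$ is inherited from the dual of the $\fg_0$-action on $\Sym^2 U$ from Notation \ref{n.positive}, i.e.\ $\fgl(U)$ acts in the standard way and $\spo(Q)$ acts trivially. The $\fg_0$-action on $\fg_2 \subset \Hom(\fg_-, \fg)$ is induced by the graded bracket from Lemma \ref{l.fg}. Picking $(a,c) \in \fg_0$ and $B \in \Sym^2 U^*$, one computes both $\pi_2(a\cdot B)_{v\odot w}$ and $[(a,c), B_{v\odot w}]_{\fg}$ (and similarly on $\fg_{-1}$), and observes that they coincide — the $c$-part contributes nothing on the $\Sym^2 U$ factor, while on $U\otimes Q$ the $c$-dependence on both sides cancels via the $\omega$-equivariance built into $\spo(Q)$. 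This completes the proof.

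The only potentially delicate step is the $\fg_0$-equivariance computation on $\fg_{-1}$, since one must track how $\spo(Q)$ acts on $B_{w\otimes p}$ through its action on the $Q$-factor; but this is a direct bracket calculation and not a conceptual obstruction. Everything else is forced by Lemmas \ref{l.positive2} and \ref{l.positive3}.
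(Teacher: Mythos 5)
Your proposal is correct and follows essentially the same route as the paper: the paper likewise notes that the containment of the image in $\fg_2$, the injectivity, and the $\fg_0$-equivariance of $\pi_2$ are straightforward direct checks, and then invokes Lemma \ref{l.positive3} for surjectivity. You have simply spelled out the verifications that the paper leaves as routine, and your explicit injectivity argument via $B_{w\otimes p}(u)=B(u,w)p$ is fine.
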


\begin{proof}
It is straight-forward to check that the image of $\pi_2$ is contained in $\fg_2$, and $\pi_2$ is an injective $\fg_{0}$-module homomorphism. Lemma \ref{l.positive3} says that the image of $\pi_2$ contains $\fg_2$. This proves the lemma. \end{proof}

\begin{lemma}\label{l.positive5}
For every $k \geq 3$, the prolongation $\fg_k$ vanishes.
\end{lemma}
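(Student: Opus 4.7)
The plan is to establish $\fg_3 = 0$ by direct analysis of the Spencer cocycle conditions, and then deduce $\fg_k = 0$ for all $k \geq 4$ by a standard inductive argument using that $\fg_-$ is generated by $\fg_{-1}$.

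First I would handle the base case $k=3$. An element $\psi \in \fg_3$ consists of a pair
$$ \psi_{-2} : \fg_{-2} \to \fg_1 = \Hom(U,Q), \qquad \psi_{-1} : \fg_{-1} \to \fg_2 = \Sym^2 U^*, $$
subject to the Spencer cocycle identity $\psi([v_1,v_2]_{\fg_-}) = [\psi(v_1), v_2]_{\fg} - [\psi(v_2), v_1]_{\fg}$. Applying this to $v_1 = u \otimes q \in \fg_{-1}$ and $v_2 = v \odot w \in \fg_{-2}$, whose bracket is zero, yields
$$ [\psi_{-2}(v \odot w), u \otimes q]_{\fg} \;=\; [\psi_{-1}(u \otimes q), v \odot w]_{\fg} \quad \in \fg_0. $$
By Theorem \ref{t.positive}, setting $h := \psi_{-2}(v \odot w)$ and $B := \psi_{-1}(u \otimes q)$, the left-hand side equals $h^U_{u \otimes q} + h^Q_{u \otimes q}$ while the right-hand side equals $B_{v \odot w}$, which lies purely in the $\fgl(U)$-component of $\fg_0 = \fgl(U) \oplus \spo(Q)$. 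Comparing $\spo(Q)$-components forces $h^Q_{u \otimes q} = 0$ for every $u \in U,\ q \in Q$.

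The crux of the argument is then to show that this vanishing forces $h = 0$. Unwinding Definition \ref{d.h}(3), we must have $\tfrac{1}{2}\omega(h(u), q')\, q + \tfrac{1}{2}\omega(q,q')\, h(u) = 0$ for all $u \in U$ and $q, q' \in Q$. Assuming for contradiction that $h(u_0) \neq 0$ for some $u_0$, split into two cases: if $h(u_0) \notin {\rm Null}_\omega$, take $q = h(u_0)$ and choose $q'$ with $\omega(h(u_0), q') \neq 0$; both terms reinforce to give $\omega(h(u_0),q')\, h(u_0) \neq 0$. If $h(u_0) \in {\rm Null}_\omega \setminus \{0\}$, the first term vanishes identically, and choosing $q, q'$ with $\omega(q,q') \neq 0$ (available since $\omega \neq 0$) makes the second term nonzero. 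Either way we reach a contradiction, so $\psi_{-2} = 0$. Returning to the identity with $\psi_{-2} = 0$ gives $B_{v \odot w} = 0$ for every $v, w$; but $B_{v \odot w}(u') = \tfrac{1}{2} B(u', v)\, w + \tfrac{1}{2} B(u',w)\, v$, and specializing $v = w \neq 0$ forces $B(\cdot, v) = 0$, hence $B = 0$ and $\psi_{-1} = 0$.

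Finally, for the inductive step $k \geq 4$, observe that $\fg_- = \fg_{-1} \oplus \fg_{-2}$ is generated by $\fg_{-1}$: since $\omega \neq 0$, picking $q_1, q_2 \in Q$ with $\omega(q_1,q_2) \neq 0$ shows $[u_1 \otimes q_1, u_2 \otimes q_2]_{\fg_-} = \omega(q_1,q_2)\, u_1 \odot u_2$ spans $\fg_{-2} = \Sym^2 U$. Consequently, for any $\psi \in \fg_k$ the cocycle identity on $\fg_{-1} \times \fg_{-1}$ expresses $\psi_{-2}$ in terms of $\psi_{-1}$, so the restriction $\fg_k \to \Hom(\fg_{-1}, \fg_{k-1})$, $\psi \mapsto \psi_{-1}$, is injective. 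Hence $\fg_{k-1} = 0$ implies $\fg_k = 0$, and induction starting from $\fg_3 = 0$ gives $\fg_k = 0$ for all $k \geq 3$. The main obstacle is the case analysis in the middle paragraph, where the nonvanishing of $h^Q_{u \otimes q}$ must be extracted carefully, treating images in and out of $\mathrm{Null}_\omega$ separately; everything else is formal manipulation of the cocycle conditions already set up in Theorem \ref{t.positive}.
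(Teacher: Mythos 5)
Your argument is correct, but the crucial step $\fg_3=0$ is carried out by a different route than the paper's. The paper first kills the component $\varphi|_{\fg_{-2}}$ using the Spencer condition on $\fg_{-2}\wedge\fg_{-2}$ (its condition (a)): writing $h^u=\varphi(u^2)$, the two sides $w\otimes h^u(w)$ and $u\otimes h^w(u)$ lie in $w\otimes Q$ and $u\otimes Q$ respectively, so linear independence of $u,w$ (using $\dim U\geq 2$) forces them to vanish; only afterwards does it invoke the mixed condition (b) to kill $\varphi|_{\fg_{-1}}$. You instead extract everything from the mixed condition alone: since $[\fg_{-1},\fg_{-2}]=0$, it equates $h^U_{u\otimes q}+h^Q_{u\otimes q}$ with $B_{v\odot w}\in\fgl(U)$, and projecting to the $\spo(Q)$-summand of $\fg_0=\fgl(U)\oplus\spo(Q)$ (a legitimate comparison, since the representation of $\fgl(U)\oplus\spo(Q)$ on $\fg_-$ is faithful and the two summands meet trivially) gives $h^Q_{u\otimes q}=0$; your two-case analysis, according to whether $h(u_0)$ lies in ${\rm Null}_{\omega}$, then correctly forces $h=0$, and the remaining identity $B_{v\odot w}=0$ kills $B$ exactly as in the paper. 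What each approach buys: the paper's use of condition (a) needs no decomposition of $\fg_0$ and no discussion of the null space, only $\dim U\geq 2$, while yours uses only $\omega\neq 0$ and a single Spencer condition at the cost of the component comparison and the case split. For $k\geq 4$ your argument is also slightly more uniform: you observe that $\fg_-$ is generated by $\fg_{-1}$ (pick $q_1,q_2$ with $\omega(q_1,q_2)\neq 0$), so $\psi\mapsto\psi|_{\fg_{-1}}\in\Hom(\fg_{-1},\fg_{k-1})$ is injective and induction from $\fg_3=0$ finishes, whereas the paper does an explicit computation for $\fg_4$ and then notes that $k\geq 5$ is automatic from $\fg_3=\fg_4=0$. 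Both are valid; no gaps.
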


\begin{proof}
  Recall from Definition \ref{d.fg} and Notation \ref{n.positive}, that an  element  $\varphi \in \fg_3 \subset (\Hom(\fg_{-2}, \fg_1) \oplus \Hom(\fg_{-1}, \fg_2))$ satisfies  \begin{itemize} \item[(a)] $\varphi(u^2) \cdot w^2 = \varphi(w^2) \cdot u^2$ for any $u, w \in U$; and  \item[(b)] $\varphi(u^2) \cdot (w \otimes q) =\varphi(w \otimes q) \cdot u^2$  for any $w \otimes q \in \fg_{-1}$ and $u^2 \in \fg_{-2}$. \end{itemize}
   For $h^u := \varphi(u^2)$ and $ h^w:= \varphi(w^2) \in \Hom(U,Q)=\fg_1,$ Lemma \ref{l.positive2} gives
$$\varphi(u^2) \cdot w^2 = h^u_{w^2} = w \otimes h^u(w) \in w \otimes Q$$
 $$\varphi(w^2) \cdot u^2 =h^w_{u^2} = u \otimes h^w(u)  \in u \otimes Q.$$
As these two are equal for all choices of $u, w \in U$ by (a) and $\dim U \geq 2$, we see that $\varphi(u^2) = \varphi(w^2) =0$. This means \begin{eqnarray}\label{e.varphi} \varphi(\fg_{-2}) =0. \end{eqnarray}
Pick $w \otimes q \in \fg_{-1}$ and write $B:= \varphi(w \otimes q) \in \Sym^2 U^* = \fg_2$ from Lemma \ref{l.positive4}.
Then $B_{u^2} \in \fg_0$ satisfies \begin{eqnarray}\label{e.Bu2} B_{u^2} \cdot v = B(u, v) u \mbox{ for any } v \in U.\end{eqnarray}
From (b),
$$B_{u^2} = \varphi(w \otimes q) \cdot u^2 = \varphi(u^2) \cdot (w \otimes q)$$
which vanishes by (\ref{e.varphi}).
 Thus (\ref{e.Bu2}) must vanish for all $u, v \in U$, implying $B=0$. It follows that
 $\varphi(\fg_{-1}) =0$. This proves that $\fg_3=0$.

   An element $\varphi \in \fg_4 \subset (\Hom(\fg_{-2}, \fg_2) \oplus \Hom(\fg_{-1}, \fg_3))$ satisfies
   $$\varphi(u^2) \cdot (w \otimes q) = \varphi(w \otimes q) \cdot u^2$$  for any
   $u^2 \in \fg_{-2}$ and $w \otimes q \in \fg_{-1}.$ But the right hand side vanishes because $\fg_3=0$.
 It follows that $\varphi(\fg_{-2}) =0.$ Thus $\fg_4=0$.

 The vanishing of $\fg_k$ for $k \geq 5$ is automatic from $\fg_3= \fg_4=0$. \end{proof}

\subsection{A gradation of the presymplectic Lie algebra}\label{ss.positive}
We will relate the result in the previous subsection to a gradation of a presymplectic Lie algebra.

 \begin{definition}\label{d.VUQ}
 Let $U$ be a vector space of dimension at least 2 and let $(Q, \omega \neq 0)$ be a presymplectic vector space.
 Set $V:= Q \oplus U \oplus U^*$ and extend $\omega \in \wedge^2 Q^*$ to the presymplectic form $\widetilde{\omega} \in \wedge^2 V^*$  defined by \begin{itemize}
 \item[(1)] $\widetilde{\omega}|_{\wedge^2 Q} = \omega$; \item[(2)]
$\widetilde{\omega}(Q, U \oplus U^*) =0$; \item[(3)] $ \widetilde{\omega}(f,v) = f(v) \mbox{ for all } f \in U^*, v \in U$; \item[(4)] $\widetilde{\omega}(U,U) = \widetilde{\omega}(U^*, U^*) =0.$ \end{itemize} Then ${\rm Null}_{\widetilde{\omega}} = {\rm Null}_{\omega} \subset Q$. For simplicity, we will  write $\widetilde{\omega}$ as $\omega$. Define a grading on $V$ by $V_{-1} = U, V_0 = Q$ and $V_1 = U^*$. It induces a natural grading  $$\spo(V) = \spo(V)_{-2} \oplus \spo(V)_{-1} \oplus \spo(V)_0 \oplus \spo(V)_1 \oplus \spo(V)_2$$ defined by
  $$\spo(V)_k = \spo(V) \cap  \Hom(V, V)_k.$$
\end{definition}

The following lemma is straight-forward from the definition
$$\spo(V) = \{ \varphi \in \fgl(V),
 \omega(\varphi(v_1), v_2) + \omega( v_1, \varphi(v_2)) = 0 \mbox{ for all } v_1, v_2 \in V\}.$$

\begin{lemma}\label{l.spgrade}
\begin{itemize} \item[(1)]
$\spo(V)_2$ consists of elements $\alpha \in \Hom(U, U^*)$ satisfying  $\omega( \alpha(u), v) = \omega (\alpha(v), u) \mbox{ for all } u, v \in U.$
\item[(2)] $\spo(V)_1$ consists of pairs $$(\beta, \beta') \in \Hom(U,Q) \oplus \Hom(Q,U^*)$$
such that $\beta$ is arbitrary and $\beta'$  is uniquely determined by  $$\omega(\beta'(q), u) = \omega(\beta(u), q) \mbox{ for all } u \in U \mbox{ and } q \in Q.$$
\item[(3)] $\spo(V)_{-1}$ consists of pairs $$(\gamma, \gamma')  \in \Hom(U^*, Q) \oplus\Hom(Q, U)$$ such that $\gamma$ is arbitrary and  $\gamma'$ is uniquely determined by $$\omega(\gamma'(q), \lambda) = \omega(\gamma(\lambda), q) \mbox{ for all }\lambda \in U^* \mbox{ and } q \in Q.$$
    \item[(4)] $\spo(V)_{-2}$ consists of elements $\eta \in \Hom(U^*, U)$ satisfying $$\omega(\eta(\lambda), \mu) = \omega(\eta(\mu), \lambda) \mbox{ for all } \lambda, \mu \in U^*.$$
        \end{itemize} \end{lemma}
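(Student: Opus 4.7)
The plan is to work purely from the defining identity
\[
\omega(\varphi(v_1), v_2) + \omega(v_1, \varphi(v_2)) = 0 \quad \text{for all } v_1, v_2 \in V,
\]
combined with the grading $V_{-1} = U$, $V_0 = Q$, $V_1 = U^*$. For each $k \in \{-2,-1,1,2\}$ I would first record that $\varphi \in \spo(V)_k$ is determined by its components $\varphi_i : V_i \to V_{i+k}$, most of which are forced to vanish by the cutoff $V_j = 0$ for $|j| \geq 2$. Then I would test the skew-adjointness condition on each pair of homogeneous vectors $(v_1, v_2) \in V_i \times V_j$ and use the four orthogonality relations of Definition \ref{d.VUQ}(1)--(4) to discard the automatically satisfied pairs.

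For $\spo(V)_2$: only the component $\alpha : U \to U^*$ survives. Testing $(u,v) \in U\times U$ and using $\omega(\alpha(u),v) = -\omega(v,\alpha(u))$, the condition becomes $\omega(\alpha(u),v) = \omega(\alpha(v),u)$. All other pairs land in $V_j$ with $j \geq 2$, so are automatic. This yields (1). For $\spo(V)_{-2}$: symmetrically, only $\eta : U^* \to U$ survives, and testing on $U^* \times U^*$ gives the symmetry $\omega(\eta(\lambda),\mu) = \omega(\eta(\mu),\lambda)$, proving (4).

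For $\spo(V)_1$: two components survive, $\beta : U \to Q$ and $\beta' : Q \to U^*$. The pair $(u,q) \in U \times Q$ produces the nontrivial identity
\[
\omega(\beta(u),q) + \omega(u,\beta'(q)) = 0,
\]
which rearranges to $\omega(\beta'(q),u) = \omega(\beta(u),q)$. Since the pairing $U \times U^*$ is perfect and $u$ is arbitrary, this uniquely determines $\beta'$ from $\beta$, while $\beta$ itself is unconstrained. The remaining pairs $(u_1,u_2)$, $(q_1,q_2)$, $(u,\lambda)$ all fall into components of $V$ that are mutually $\omega$-orthogonal by Definition \ref{d.VUQ}, so give no condition. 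This proves (2). Part (3) is the completely parallel computation with $\gamma : U^* \to Q$ and $\gamma' : Q \to U$, testing on $(\lambda,q) \in U^* \times Q$.

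There is no real obstacle; the whole argument is a bookkeeping exercise organized by bigrading, and the only thing to watch is sign conventions when transferring $\omega(v_1,\varphi(v_2)) = -\omega(\varphi(v_2),v_1)$ so that the defining identities come out in the asymmetric form stated in the lemma.
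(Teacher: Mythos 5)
Your verification is correct and is exactly what the paper intends: the paper gives no proof beyond remarking that the lemma is ``straight-forward from the definition'' of $\spo(V)$, and your degree-by-degree bookkeeping with the orthogonality relations of Definition \ref{d.VUQ} is precisely that straightforward check, with all the asserted identities and the uniqueness of $\beta'$ and $\gamma'$ coming out correctly via the perfect pairing between $U$ and $U^*$. The only slight imprecision is the phrase ``all other pairs land in $V_j$ with $j\geq 2$'' in part (1) — the cleaner reason is that $\omega$ only pairs graded components of opposite degree, so mixed pairs vanish by Definition \ref{d.VUQ}(1)--(4) — but this does not affect the argument.
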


\begin{lemma}\label{l.spTanaka} Let us use the notation of Theorem \ref{t.positive} and Lemma \ref{l.spgrade}.
\begin{itemize}
\item[(a)]
If $\alpha \in \spo(V)_2$ satisfies $[\alpha, \spo(V)_{-1}] =0,$ then $\alpha=0.$
\item[(b)] If $(\beta, \beta') \in \spo(V)_1$ satisfies $[(\beta, \beta'), \spo(V)_{-2}]=0$, then $\beta= \beta'=0$.
        \item[(c)] Define $\psi_{-1}: U \otimes Q \to \spo(V)_{-1}$ by sending
 $$\gamma \in U \otimes Q = \Hom(U^*, Q) \mbox{ to }(\gamma, \gamma') \in \spo(V)_{-1}.$$
Define $\psi_{-2}: \Sym^2 U \to \spo(V)_{-2}$ by sending $$u \odot v \in \Sym^2 U \mbox{ to } \eta_{u \odot v} \in \Hom(U^*,U)$$ defined by $$\eta_{u \odot v} (\lambda) := \lambda(u)v + \lambda(v) u \mbox{ for } \lambda \in U^*.$$
Then $\psi_{-1}+\psi_{-2}: \fg_- \to \spo(V)_{-1} + \spo(V)_{-2}$ is an isomorphism of graded Lie algebras, i.e., an isomorphism satisfying
$$[\psi_{-1}(w_1\otimes q_1), \psi_{-1}(w_2 \otimes q_2)]
= \omega(q_1, q_2) \psi_{-2} (w_1 \odot w_2).$$
\item[(d)]
The natural action of $\fg_0= \fgl(U) \oplus \spo(Q)$ on $V= U^* \oplus Q \oplus U$ gives an injective homomorphism $\psi_0: \fg_0 \to \spo(V)_0$. This is an isomorphism satisfying the commuting diagram $$
\begin{array}{ccc} \fg_0 & \stackrel{\psi_0}{\longrightarrow} & \spo(V)_0 \\
\downarrow  & & \downarrow \\
\End(\fg_-) & \stackrel{\psi_{-1} + \psi_{-2}}{\longrightarrow} & \End(\spo(V)_{-1}+\spo(V)_{-2}) \end{array} $$ where the first vertical arrow is the inclusion from Notation \ref{n.positive} (2),  the second vertical arrow is
induced by the adjoint representation of $\spo(V)$ and the lower horizontal arrow is induced by the isomorphism
$\psi_{-1}+\psi_{-2}: \fg_- \cong \spo(V)_{-1} + \spo(V)_{-2}$ in (c).
    \end{itemize} \end{lemma}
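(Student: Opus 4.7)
The plan is to verify all four statements by direct calculation, viewing each element of $\spo(V)$ as a graded endomorphism of $V = U \oplus Q \oplus U^*$ and computing brackets as commutators in $\End(V)$. Throughout I will exploit the pairing identity $\omega(\lambda, u) = \lambda(u)$ for $\lambda \in U^*$, $u \in U$, together with the vanishing $\omega(U,U) = \omega(U^*,U^*) = \omega(Q,U) = \omega(Q,U^*) = 0$ from Definition \ref{d.VUQ}, to rewrite each $\omega$-compatibility condition in terms of the duality pairing.

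For (a) and (b), I would expand the commutators on the single nontrivial graded piece and show that vanishing on that piece alone forces the conclusion. For (a), an element $\alpha \in \spo(V)_2$ acts only as $U \to U^*$, and for $(\gamma,\gamma') \in \spo(V)_{-1}$, the component of $[\alpha,\gamma]$ mapping $V_{-1} \to V_0$ is $-\gamma \circ \alpha \in \Hom(U, Q)$. Since by Lemma \ref{l.spgrade}(3) the first component $\gamma$ is an arbitrary element of $\Hom(U^*, Q)$, vanishing forces $\alpha = 0$. For (b), the $V_1 \to V_0$ component of $[(\beta,\beta'), \eta]$ is $\beta \circ \eta \in \Hom(U^*, Q)$; since $\eta$ ranges over the symmetric subspace of $\Hom(U^*, U)$, whose images exhaust $U$, this forces $\beta = 0$, and then $\beta' = 0$ follows from the uniqueness clause of Lemma \ref{l.spgrade}(2).

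For (c), I would first check that $\psi_{-1}$ and $\psi_{-2}$ are well-defined by verifying the $\omega$-compatibility conditions: the $\gamma'$ associated to $\gamma = w \otimes q \in \Hom(U^*,Q)$ is computed from Lemma \ref{l.spgrade}(3) as $q' \mapsto \omega(q', q)\, w$, and the formula $\eta_{u \odot v}(\lambda) = \lambda(u) v + \lambda(v) u$ manifestly satisfies $\mu(\eta_{u \odot v}(\lambda)) = \lambda(\eta_{u \odot v}(\mu))$. Both maps are vector-space isomorphisms because $\spo(V)_{-1}$ is parametrized by its first component in $\Hom(U^*, Q) = U \otimes Q$ and $\spo(V)_{-2}$ is precisely the symmetric part of $\Hom(U^*, U) \cong U \otimes U$. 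The bracket identity is checked on $V_1 = U^*$: the $V_1 \to V_{-1}$ component of the commutator $\psi_{-1}(w_1 \otimes q_1)\psi_{-1}(w_2 \otimes q_2) - \psi_{-1}(w_2 \otimes q_2)\psi_{-1}(w_1 \otimes q_1)$ evaluated on $\lambda$ reduces, after substituting the explicit formulas for $\gamma_i$ and $\gamma_i'$, to $\omega(q_1, q_2)\bigl(\lambda(w_1) w_2 + \lambda(w_2) w_1\bigr) = \omega(q_1, q_2)\, \psi_{-2}(w_1 \odot w_2)(\lambda)$, up to the overall sign fixed by the commutator convention on $\End(V)$.

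For (d), any $A \in \spo(V)_0$ preserves the grading of $V$, and the relations $\omega(Au, \lambda) + \omega(u, A\lambda) = 0$ and $\omega(Aq, q') + \omega(q, Aq') = 0$ show that $A$ is uniquely determined by the pair $(A|_U, A|_Q) \in \fgl(U) \oplus \spo(Q)$ with $A|_{U^*} = -(A|_U)^*$; this simultaneously produces the inverse of $\psi_0$ and confirms that $\psi_0$ is well-defined. The commuting diagram is then a direct check: the adjoint action of $\psi_0(a,c)$ on $\spo(V)_{-1}$, expressed as a commutator in $\End(V)$, matches under $\psi_{-1}$ the action of $(a,c) \in \fgl(U) \oplus \spo(Q)$ on $U \otimes Q$ given in Notation \ref{n.positive}(2), and analogously for $\spo(V)_{-2}$ and $\Sym^2 U$. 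The computations throughout are elementary, and the main obstacle is purely bookkeeping: keeping sign conventions consistent in the antisymmetry $\omega(\lambda, u) = -\omega(u, \lambda)$ and in the commutator bracket on $\End(V)$; part (c) is where this accounting is most delicate.
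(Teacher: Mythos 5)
Your proposal is correct and takes essentially the same route as the paper: all four parts are verified by direct commutator computations in $\End(V)$ using the graded description of $\spo(V)$ from Lemma \ref{l.spgrade}, the only (harmless) deviations being that in (a) you evaluate the bracket on $U$ via the arbitrary component $\gamma$ where the paper evaluates on $Q$ via $\gamma'$ (your variant even avoids needing $q \notin \Null_{\omega}$), and in (d) you identify $\spo(V)_0$ directly as the triples $(a,c,-a^{*})$ instead of the paper's dimension count. The one point you should settle rather than leave to ``the commutator convention'' is the sign in (c): with $\omega(\lambda,u)=\lambda(u)$ as in Definition \ref{d.VUQ} (3), your (correct) formula $\gamma'(q')=\omega(q',q)\,w$ gives $[\psi_{-1}(w_1\otimes q_1),\psi_{-1}(w_2\otimes q_2)]=-\,\omega(q_1,q_2)\,\psi_{-2}(w_1\odot w_2)$, so the displayed identity holds only after fixing the convention (e.g.\ negating $\psi_{-2}$ or taking the opposite pairing $\omega(u,\lambda)=\lambda(u)$) --- the same convention tension that makes the paper's own formula (\ref{e.psi2}) disagree in sign with Definition \ref{d.VUQ} (3), and which is immaterial for the later applications of the lemma.
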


\begin{proof}
Let $\alpha$ be as in (a). For any $(\gamma, \gamma') \in \spo(V)_{-1}$ in the notation of Lemma \ref{l.spgrade}, the assumption for $\alpha$ reads
$$\alpha \circ (\gamma, \gamma') = (\gamma, \gamma') \circ \alpha$$ as elements of ${\rm End}(V)$. Then for any $q \in Q\backslash {\rm Null}_{\omega}$, $$\alpha(\gamma'(q)) = \alpha \circ (\gamma, \gamma') (q) =
(\gamma, \gamma') \circ \alpha(q) =0.$$ But for any $u \in U$, we can choose $\gamma'$ so that $\gamma'(q)=u. $ Thus $\alpha =0.$

Let $(\beta, \beta')$ be as in (b). For any $\eta \in \spo(V)_{-2}$, we have
$$(\beta, \beta') \circ \eta = \eta \circ (\beta, \beta') .$$
Since the right hand side annihilates  any $\lambda \in U^*$,
we have $$0= (\beta, \beta')\circ \eta(\lambda) = \beta(\eta(\lambda)).$$
This implies $\beta=0$ because for any $u \in U$, we can choose $\eta$ and $\lambda$ such that $u= \eta(\lambda)$ by Lemma \ref{l.spgrade} (4). Then $\beta'=0$ from Lemma \ref{l.spgrade} (2).

From Lemma \ref{l.spgrade} (3) and (4), it is immediate that $\psi_{-1}$ and $\psi_{-2}$ in (c) are isomorphisms.
The Lie bracket in (c) is the commutator in ${\rm End}(V)$
\begin{eqnarray}\label{e.comm} \quad\quad\quad \psi_{-1}(w_1\otimes q_1) \circ \psi_{-1}(w_2 \otimes q_2) - \psi_{-1}(w_2 \otimes q_2) \circ \psi_{-1}(w_1 \otimes q_1).\end{eqnarray}
Note that \begin{eqnarray}\label{e.psi-1} \psi_{-1}(w \otimes q) (\lambda) &=& \lambda(w) q
\mbox{ for all } \lambda \in U^*.\end{eqnarray}
 From $\psi_{-1}(w \otimes q) \in \spo(V)$ and (\ref{e.psi-1}), we have
$$\omega( \psi_{-1}(w \otimes q)(p), \lambda) = \omega(\psi_{-1}(w \otimes q)(\lambda), p) =
\omega(\lambda(w) q, p) = \omega(q,p) \lambda(w)$$ for all $\lambda \in U^*$, which gives
\begin{eqnarray}\label{e.psi2} \psi_{-1}(w \otimes q) (p) &=& \omega(q, p) w. \end{eqnarray}
Evaluating the first term of (\ref{e.comm}) at $\lambda \in U^*$  applying (\ref{e.psi-1})
and (\ref{e.psi2}),
$$\psi_{-1}(w_1 \otimes q_1) (\lambda(w_2) q_2) = \lambda(w_2) \omega(q_1, q_2) w_1.$$
By the same argument, the second term in (\ref{e.comm}) evaluated at $\lambda$ is
$\lambda(w_1) \omega(q_2, q_1) w_2$.
Consequently,  (\ref{e.comm}) evaluated at $\lambda$ is
$$\omega(q_1, q_2) \left( \lambda(w_1) w_2 + \lambda(w_2) w_1 \right) = \omega(q_1, q_2) \psi_{-2}(w_1 \odot w_2) (\lambda).$$ This proves (c).

The injective homomorphism $\psi_0$ in (d) must be surjective because $\dim \fg_0 = \dim \spo(V)_0$ by  Lemma \ref{l.spgrade} and our knowledge of
$\dim \spo(V)$.   To prove (d), it remains to check the commuting diagram.

 For $(a,c) \in \fg_0= \fgl(U) \times \spo(Q),$ the element $\psi_0(a, c) \in \spo(V)_0 \subset \fgl(V)$ acts on $V = U^* \oplus Q \oplus U$ by $$\psi_0 (a,c)(\lambda + q + u) = a^* \lambda + c(q) + a(u)$$ where $a^*\lambda (u) = - \lambda (a \cdot u).$ For $v^2 \in \fg_{-2} = \Sym^2 U$, The definition of $\psi_{-2}$ in $(c)$ implies that \begin{eqnarray*}
 &&[\psi_0 (a, c), \psi_{-2}(v^2)](\lambda + q + u) \\
&=& \psi_0(a, c)\circ\psi_{-2}(v^2)(\lambda + q +u) - \psi_{-2}(v^2)\circ \psi_0 (a, c)(\lambda + q + u) \\
&=& \psi_0 (a, c)(2\lambda(v)v) - \psi_{-2}(v^2)(a^* \lambda + c(q) + a(u)) \\
&=& 2\lambda(v)a(v) + 2\lambda(a(v))v \\
&=& \psi_{-2}(2v\odot a(v))(\lambda + q + u) \\
&=& \psi_{-2}((a, c)\cdot (v^2))(\lambda + q +u). \end{eqnarray*}
It follows that \begin{eqnarray} \label{e.ad-2}\psi_{-2}((a, c)\cdot (v^2))=[\psi_0(a, c), \psi_{-2}(v^2)]. \end{eqnarray} For $w \otimes p \in \fg_{-1}=U \otimes Q$,
the formulae \eqref{e.psi-1} and \eqref{e.psi2} imply that \begin{eqnarray*}
 &&[\psi_0(a, c), \psi_{-1}(w \otimes p)](\lambda + q + u) \\
&=& \psi_0(a, c)\circ\psi_{-1}(w \otimes p)(\lambda + q +u) - \psi_{-1} (w \otimes p)\circ \psi_0(a, c)(\lambda + q + u) \\
&=& j(a, c)(\lambda(w)p + \omega(p, q) w) - \psi_{-1}(w \otimes p)(a^* \lambda + c(q) + a(u)) \\
&=& \lambda(w)c(p) + \omega(p, q)a(w) + \lambda(a(w))p - \omega(p, c(q)) w \\
&=& \psi_{-1}(a(w) \otimes p + w \otimes c(p))(\lambda + q + u) \\
&=& \psi_{-1}((a, c)\cdot (w \otimes p))(\lambda + q +u). \end{eqnarray*}
It follows that \begin{eqnarray} \label{e.ad-1} \psi_{-1}((a, c)\cdot (w \otimes p))=[\psi_0(a, c), \psi_{-1}(w \otimes p)]. \end{eqnarray}
\eqref{e.ad-2} and \eqref{e.ad-1} imply the commuting diagram in (d).
\end{proof}

\begin{theorem}\label{t.gradesp}
The isomorphism $$\psi_{-1}+ \psi_{-2}: \fg_- \to \spo(V)_{-1} + \spo(V)_{-2}$$ in Lemma \ref{l.spTanaka} extends to a graded Lie algebra isomorphism $\psi: \fg \to \spo(V)$. \end{theorem}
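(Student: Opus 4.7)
The plan is to invoke the universal property of the Tanaka prolongation $\fg = \bigoplus_{k\in\Z} \fg_k$.

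By Lemma~\ref{l.spTanaka}(c)--(d), the nonpositive-degree components of $\spo(V)$ are canonically identified as graded Lie algebras with those of $\fg$ via $\psi_{-2}, \psi_{-1}$ and $\psi_0$. Since $\spo(V)$ is itself graded and satisfies the Jacobi identity, for each $x \in \spo(V)_k$ with $k \geq 1$ the adjoint action $[x,\cdot\,]_{\spo(V)}$ restricted to $\spo(V)_-$ and transported through the identifications defines an element $\phi_k(x) \in \Hom(\fg_-, \bigoplus_{j<k} \fg_j)_k$ satisfying the defining Jacobi relation
\[
\phi_k(x)\bigl([v_1,v_2]_{\fg_-}\bigr) = \phi_k(x)(v_1)(v_2) - \phi_k(x)(v_2)(v_1), \qquad v_1, v_2 \in \fg_-,
\]
and therefore lies in $\fg_k$ by Definition~\ref{d.fg}. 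Constructing $\phi_1$ first, then $\phi_2$ (which uses $\phi_1$ to identify the $\fg_1$-component appearing in its target), yields a graded linear map
\[
\phi := \phi_2 + \phi_1 + \psi_0^{-1} + (\psi_{-1}+\psi_{-2})^{-1}\colon \spo(V) \to \fg.
\]
Bracket preservation follows from Lemma~\ref{l.fg}, which characterizes the bracket on positive-degree components of $\fg$ through the action on $\fg_-$, combined with the Jacobi identity in $\spo(V)$.

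It remains to show $\phi$ is bijective. Injectivity is Lemma~\ref{l.spTanaka}(a)--(b): if $\phi(x) = 0$ for $x \in \spo(V)_k$ with $k \geq 1$, then $x$ acts trivially on $\spo(V)_-$, forcing $x = 0$. Surjectivity follows from the dimension equalities
\[
\dim \spo(V)_1 = \dim \Hom(U,Q) = \dim \fg_1,
\qquad
\dim \spo(V)_2 = \dim \Sym^2 U^* = \dim \fg_2,
\]
via the parametrizations in Lemma~\ref{l.spgrade}(1)--(2) and Theorem~\ref{t.positive}. The length-three grading on $V$ moreover forces $\spo(V)_k = 0$ for $|k| \geq 3$, matching $\fg_k = 0$ for $k \geq 3$ from Theorem~\ref{t.positive}. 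Setting $\psi := \phi^{-1}$ then produces the desired graded Lie algebra isomorphism, which by construction restricts to $\psi_{-1}+\psi_{-2}$ on $\fg_-$ and to $\psi_0$ on $\fg_0$.

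The main conceptual input is already packaged in Lemma~\ref{l.spTanaka}: the identifications (c)--(d) of the non-positive parts launch the induction, while the effectiveness statements (a)--(b) close it by ensuring that the adjoint-action map is injective on positive components. Once these are available, the remaining verifications—that the image of the adjoint action actually lies in $\fg_k$ and that $\phi$ respects the bracket—are the standard formalism of Tanaka prolongations, requiring no calculations specific to the presymplectic structure beyond those already carried out in Lemma~\ref{l.spTanaka} and Theorem~\ref{t.positive}.
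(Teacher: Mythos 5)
Your proposal is correct and follows essentially the same route as the paper: transport the adjoint action of $\spo(V)_k$ ($k\geq 1$) on the negative part through the identifications of Lemma \ref{l.spTanaka}, note the image lies in the prolongation $\fg_k$ by the Jacobi relation, get injectivity from Lemma \ref{l.spTanaka} (a)--(b), and conclude surjectivity by the dimension counts from Lemma \ref{l.spgrade} and Theorem \ref{t.positive}. The only differences are cosmetic (you build the map $\spo(V)\to\fg$ and invert it, and you make explicit the two-step order in which degree $1$ must be identified before degree $2$, which the paper handles with ``the same argument'').
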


\begin{proof}
By Lemma \ref{l.spTanaka} (d), we obtain
 $$\psi_0 + \psi_{-1} + \psi_{-2}: \fg_0 + \fg_- \to \spo(V)_0 + \spo(V)_{-1} + \spo(V)_{-2},$$ a graded Lie algebra isomorphism.
 The adjoint representation of $\spo(V)$ and the above isomorphism induce a homomorphism $$ \chi: \spo(V)_1 \to \Hom(\fg_-, \fg)_1.$$ The fact $\chi$ is given by the adjoint representation of $\spo(V)$ implies that its image  is contained  in the first prolongation $\fg_1 \subset \Hom(\fg_-, \fg)_1$. Lemma \ref{l.spTanaka}  (b) implies that $\chi$ is injective.  But $\dim \spo(V)_1 = \dim \fg_1$ from
 Lemma \ref{l.spgrade} (2). Thus the image of $\chi$ is $\fg_1$ inducing an isomorphism
 $\psi_1: \fg_1 \to \spo(V)_1$. The same argument using Lemma \ref{l.spTanaka} (a) and Lemma \ref{l.spgrade} (1), gives an isomorphism
 $\psi_2: \fg_2 \to \spo(V)_2$. As these isomorphisms are compatible with the Lie algebra structures,
 we obtain a graded Lie algebra isomorphism $$\psi = \psi_2 + \psi_1 + \psi_0 + \psi_{-1} + \psi_{-2}:\fg \to \spo(V).$$
\end{proof}

\begin{proposition}\label{p.spgrade}
In Theorem \ref{t.gradesp}, let $m= \dim U \geq 2$ and consider the presymplectic Grassmannian
$\Go(m, V)$. Note that Assumption \ref{a.no} holds. Let $[U^*] \in \Go(m, V;0)$ be the point corresponding to $U^* \subset V$.
Then the connected subgroup of $\Sp_{\omega}(V)$ corresponding to the Lie subalgebra
$\spo(V)_2 + \spo(V)_1 + \spo(V)_0$ of $\spo(V)$ is the isotropy subgroup $P_{[U^*]}$ of the point
$[U^*]$ under the natural action of $\Sp_{\omega}(V)$ on $\Go(m,V)$. In particular, the homogeneous space $G/G^0$ associated to $(\fg_0, \fg_-)$ in the sense of Proposition \ref{p.algebraic} is biregular to $\Go(m, V; 0)$.
\end{proposition}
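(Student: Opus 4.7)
The plan is to identify $P_{[U^*]}$ with a graded subgroup of $\Sp_\omega(V)$ by a direct Lie algebra calculation, and then transport this identification through the graded isomorphism $\psi$ of Theorem \ref{t.gradesp}.

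First I would verify that $[U^*]$ really lies in the open stratum. Since $\omega(U^*,U^*)=0$ by Definition \ref{d.VUQ}~(4), the subspace $U^*$ is $\omega$-isotropic, so $[U^*]\in\Go(m,V)$; moreover ${\rm Null}_\omega\subset Q$ by Definition \ref{d.VUQ}, so $U^*\cap{\rm Null}_\omega=0$ and hence $[U^*]\in\Go(m,V;0)$.

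Next I would compute the Lie algebra of the isotropy group $P_{[U^*]}$, namely
\[
\{\varphi\in\spo(V)\;:\;\varphi(U^*)\subset U^*\},
\]
by going through the gradation components described in Lemma \ref{l.spgrade}. The terms $\spo(V)_0$, $\spo(V)_1$, $\spo(V)_2$ all preserve $U^*$ (the first preserves each graded piece; the second sends $U^*$ to $0$ since the only nonzero components land in $Q$ or $U^*$ from $Q$; the third lands in $U^*$ from $U$ and annihilates $U^*$). Conversely, the components $\spo(V)_{-1}$ and $\spo(V)_{-2}$ send some vector in $U^*$ out of $U^*$ by Lemma \ref{l.spgrade}~(3)--(4), so elements with nonzero negative-degree part cannot lie in the isotropy Lie algebra. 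Hence the isotropy Lie algebra is exactly $\spo(V)_0+\spo(V)_1+\spo(V)_2$.

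It then remains to upgrade this Lie algebra equality to a group equality, i.e.\ to show that $P_{[U^*]}$ is connected. By Proposition \ref{p.cell}, $\Go(m,V;0)$ is connected and simply connected and is a single orbit of the connected group $\Sp_\omega(V)$ (Lemma \ref{l.Sp}), so $\Go(m,V;0)=\Sp_\omega(V)/P_{[U^*]}$; the long exact sequence of homotopy groups for this fibration together with simple-connectedness of the base forces $P_{[U^*]}$ to be connected. Thus $P_{[U^*]}$ is the connected subgroup of $\Sp_\omega(V)$ integrating $\spo(V)_0+\spo(V)_1+\spo(V)_2$.

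For the final assertion, I would use the graded isomorphism $\psi:\fg\to\spo(V)$ from Theorem \ref{t.gradesp}, under which $\fg^0=\fg_0+\fg_1+\fg_2$ maps onto $\spo(V)_0+\spo(V)_1+\spo(V)_2$. Passing to the connected algebraic groups in $\GL(\fg)$ described in Proposition \ref{p.algebraic} and comparing with the adjoint image of $\Sp_\omega(V)$ (whose center is $\{\pm 1\}$ by Lemma \ref{l.Sp} and which acts trivially on $\Go(m,V)$), we get a natural isomorphism $G/G^0\cong\Sp_\omega(V)/P_{[U^*]}\cong\Go(m,V;0)$. The only delicate point I anticipate is bookkeeping the $\{\pm 1\}$-center when passing from $\spo(V)$ to the adjoint group; everything else is a direct consequence of the preceding lemmata.
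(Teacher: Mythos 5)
Your proof is correct and takes essentially the same route as the paper: you identify the isotropy Lie algebra with $\spo(V)_0+\spo(V)_1+\spo(V)_2$, deduce connectedness of $P_{[U^*]}$ from the simple connectedness of $\Go(m,V;0)$ given by Proposition \ref{p.cell}, and obtain $G/G^0\cong\Go(m,V;0)$ by noting that $G$ is the adjoint group of $\Sp_{\omega}(V)$ and that the center $\{\pm 1\}$ acts trivially. The only (minor) divergence is in the first step: you verify directly from Lemma \ref{l.spgrade} that any element with nonzero negative-degree component moves $U^*$ (using that $\gamma'$ is determined by $\gamma$, so $\gamma=0$ forces $\gamma'=0$), whereas the paper combines the containment $\spo(V)_2+\spo(V)_1+\spo(V)_0\subseteq{\rm Lie}(P_{[U^*]})$ with the dimension count $\dim\spo(V)_{-1}+\dim\spo(V)_{-2}=\dim\Go(m,V;0)$ and the orbit statement of Proposition \ref{p.cell}.
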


\begin{proof}
Since the action of $\spo(V)_2 + \spo(V)_1 + \spo(V)_0$ on $V$ preserves $U^*$, it is contained in the Lie algebra of the isotropy subgroup $P_{[U^*]}$ of $[U^*]$.
From Lemma \ref{l.spTanaka}, we can check $$\dim \spo(V)_{-1} + \dim \spo(V)_{-2} =
\dim \Go(m,V;0).$$ Thus $\spo(V)_2 + \spo(V)_1 + \spo(V)_0$ coincides with the isotropy subalgebra of $[U^*]$. The variety $\Go(m, V; 0)=\Sp_{\omega}(V)/P_{[U^*]}$ is simply connected by Proposition \ref{p.cell}. Thus the isotropy subgroup $P_{[U^*]}$ must be connected.

  Theorem \ref{t.gradesp} implies that $G$ is the adjoint group of ${\rm Sp}_{\omega}(V)$. But the center $\{\pm1\}$ of ${\rm Sp}_{\omega}(V)$ from Lemma \ref{l.Sp} acts trivially on $\Go(m,V)$. Thus $G/G^0 \cong \Go(m,V;0)$.
\end{proof}

%
%

\subsection{The case when $\omega \in \wedge^2 Q^*$ is zero}\label{ss.rank0}
In the previous subsections, we have studied certain Lie algebras associated to a presymplectic form $0 \neq \omega \in \wedge^2 Q^*.$ In this subsection, we look at the case
 when $\omega \in \wedge^2 Q^*$ is zero.
In this case, we need to modify Notation \ref{n.positive} as follows.

\begin{notation}\label{n.rank0}
Let $U$ and $Q$  be two vector spaces with $\dim U \geq 2$ and $\dim Q \geq 1$.
\begin{itemize} \item[(1)] Let $\fg_- = \fg_{-1} $ be the sum $(U \otimes Q) \oplus \Sym^2 U$.
\item[(2)] The Lie algebra
 $\Hom (U, Q) \sd (\fgl(U) \oplus \fgl(Q))$ has a faithful representation on $\fg_-$ where $\fgl(U) \oplus \fgl(Q)$ acts as in Notation \ref{n.positive} (2) with $\omega =0$ and  an element $\varphi \in \Hom(U,Q)$ annihilates  $U \otimes Q$ and sends $u \odot v \in \Sym^2 U$ to
$$  \varphi \cdot (u \odot v) = \frac{1}{2} u \otimes \varphi(v) + \frac{1}{2} v \otimes \varphi(u) \ \in U \otimes Q.$$ Let  $\fg_0 \subset {\rm End}(\fg_{-1})$ be the image of this faithful representation.
\end{itemize}
\end{notation}

\begin{definition}\label{d.B0}
In Notation \ref{n.rank0}, given $B \in \Sym^2 U^*$, $v \odot w \in \Sym^2 U \subset \fg_{-1}$, and $w \otimes p \in U \otimes Q \subset \fg_{-1},$ we define $B_{v \odot w}\in\fgl(U)\subset\fg_0$ and $B_{w \otimes p}\in\Hom(U, Q) \subset \fg_0$ in the same way as Definition \ref{d.h} (4) and (5) respectively.
\end{definition}

The following  analog of Theorem \ref{t.positive} in the setting of Notation \ref{n.rank0} is a refinement of Proposition 3.8 of \cite{FH12} and Theorem 1.1.2 of \cite{HM05}.

\begin{theorem}\label{t.FH}
The universal prolongation of $(\fg_0, \fg_-)$ in Notation \ref{n.rank0} satisfies $\fg_k =0$ for $k \geq 2$ and we can identify $\fg_1$ with $ \Sym^2 U^*$ as $\fg_0$-modules such that $B \in \Sym^2 U^* = \fg_1$ acts on $v \odot w \in \Sym^2 U \subset \fg_{-1}$ and $ w \otimes p \in U \otimes Q \subset \fg_{-1}$ by  \begin{eqnarray*}
B \cdot( v \odot w) &=& B_{v \odot w} \in \fgl(U) \subset  \fg_0 \\
B \cdot (w \otimes p) &=& B_{w \otimes p} \in \Hom(U, Q) \subset \fg_0
\end{eqnarray*} where $B_{v \odot w}$ and $B_{w \otimes p}$ are as in Definition \ref{d.h} (4) and (5).
\end{theorem}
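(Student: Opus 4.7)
Since $\omega=0$, the graded nilpotent Lie algebra $\fg_{-}=\fg_{-1}$ is abelian, so the defining condition for $\fg_1$ collapses to the classical first-prolongation symmetry: $\varphi \in \fg_1$ iff $\varphi \in \Hom(\fg_{-1}, \fg_0)$ and $\varphi(x)(y)=\varphi(y)(x)$ for all $x,y\in\fg_{-1}$. The theorem therefore reduces to computing the first prolongation of the specific inclusion $\fg_0 \subset \fgl(\fg_{-1})$ recorded in Notation \ref{n.rank0}, and then running the standard Tanaka recursion to show the higher prolongations vanish.

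The plan for $\fg_1$ is to decompose $\fg_{-1}=(U\otimes Q)\oplus \Sym^2 U$ and $\fg_0 \cong \Hom(U,Q)\oplus \fgl(U)\oplus \fgl(Q)$ and impose the symmetry on the three types of pairs. First, because $\Hom(U,Q)\subset \fg_0$ annihilates $U\otimes Q$, the symmetry on a pair $(w\otimes p,\,w'\otimes p')$ only constrains the $\fgl(U)\oplus\fgl(Q)$-components of $\varphi|_{U\otimes Q}$; mimicking the argument of Lemma \ref{l.positive3} (compare the derivation of \eqref{e.cw1} and \eqref{e.Phiu1}), the $\fgl(Q)$-component must vanish and the $\fgl(U)$-component of $\varphi(w\otimes p)$ is forced into the form $B_{w\otimes p}$ for some $B\in U^*\otimes U^*$. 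Second, the cross symmetry on $(w\otimes p,\,v\odot w')$, combined with the fact that $\Hom(U,Q)\subset \fg_0$ sends $\Sym^2 U$ into $U\otimes Q$ as in Notation \ref{n.rank0}(2), forces the $\Hom(U,Q)$-component of $\varphi(w\otimes p)$ to equal $B_{w\otimes p}$ and the $\fgl(U)$-component of $\varphi(v\odot w)$ to equal $B_{v\odot w}$ for the same $B$, while the symmetry on $(v\odot w,\,v'\odot w')$ pins down the symmetry of $B$. Conversely, a direct computation shows that the prescription $B\mapsto \varphi_B$ defined by the formulas in the statement gives a well-defined $\fg_0$-equivariant map $\Sym^2 U^*\to \fg_1$, so the two sides are identified as $\fg_0$-modules.

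For the vanishing $\fg_k=0$, $k\geq 2$: since $\nu=1$, one has $\fg_k \subset \Hom(\fg_{-1}, \fg_{k-1})$, so it suffices to prove $\fg_2=0$. For $\varphi\in \fg_2$, write $\varphi(x)=B^x\in \Sym^2 U^*$ and impose $\varphi(x)(y)=\varphi(y)(x)$. Applied to $(v^2,(v')^2)$ with $v,v'$ linearly independent (possible since $\dim U\geq 2$), comparison inside $\fgl(U)$ yields $B^{v^2}(\,\cdot\,,v')\,v' = B^{(v')^2}(\,\cdot\,,v)\,v$, which by linear independence forces $B^{v^2}(\,\cdot\,,v')=0$; a further comparison against $(v^2,\,v\odot w)$ with $w$ independent of $v$ eliminates the remaining diagonal and gives $B^{v\odot w}=0$ by polarization. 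The cross condition on $(w'\otimes p,\,v\odot w)$, together with $B^{v\odot w}=0$, then forces $B^{w'\otimes p}_{v\odot w}=0$, which with $v,w$ independent yields $B^{w'\otimes p}=0$. Hence $\fg_2=0$, and all higher prolongations vanish by the containment above.

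The main technical obstacle is the careful bookkeeping forced by the $\Hom(U,Q)$-summand of $\fg_0$, which is absent from Notation \ref{n.positive}. Its presence here has the effect of absorbing what was the first prolongation $\fg_1=\Hom(U,Q)$ of Theorem \ref{t.positive} into the degree-zero part, thereby shifting the first nontrivial prolongation up to what had been $\fg_2$ in the presymplectic case, namely $\Sym^2 U^*$. Once this structural shift is kept track of, the computation is a streamlined adaptation of the chain Lemmas \ref{l.positive1}--\ref{l.positive5}, which explains why the statement is described as a refinement of Proposition 3.8 of \cite{FH12} and Theorem 1.1.2 of \cite{HM05}.
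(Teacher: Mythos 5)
Your strategy---redoing the prolongation computation by hand, in parallel with the chain of Lemmas \ref{l.positive1}--\ref{l.positive5}---is genuinely different from the paper's proof, which simply quotes Proposition 3.8 of \cite{FH12} and Theorem 1.1.2 of \cite{HM05} for the facts $\dim\fg_1=\dim\Sym^2U^*$ and $\fg_k=0$ for $k\geq 2$, writes down the explicit map $B\mapsto (v\odot w\mapsto B_{v\odot w},\ w\otimes p\mapsto B_{w\otimes p})$, checks it is an injective $\fg_0$-module homomorphism into $\fg_1$, and concludes by a dimension count. Your third paragraph (the vanishing of $\fg_2$, hence of all $\fg_k$, $k\geq 2$, granting the description of $\fg_1$) is correct and is more self-contained than the paper's citation. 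The problem is in your determination of $\fg_1$ itself.

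The first deduction in your second paragraph is false. The symmetry condition on pairs $(w\otimes p, w'\otimes p')$ says exactly that the $\fgl(U)\oplus\fgl(Q)$-part of $\varphi|_{U\otimes Q}$ lies in the classical first prolongation of the image of $\fgl(U)\oplus\fgl(Q)$ in $\fgl(U\otimes Q)$, and that prolongation is \emph{nonzero} (it is the degree $+1$ piece of the Grassmannian grading of $\fsl(U^*\oplus Q)$; for $\dim Q=1$ it is the even larger first prolongation of $\fgl(U)$). So nothing vanishes at this stage, and no shape of the form $B_{w\otimes p}$ is forced---note also that $B_{w\otimes p}$ lies in $\Hom(U,Q)$, not $\fgl(U)$, so the sentence as written is a type mismatch. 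The analogues of \eqref{e.cw1} and \eqref{e.Phiu1} that you invoke come, both in Lemma \ref{l.positive3} and here, from the \emph{cross} symmetry between $U\otimes Q$ and $\Sym^2U$: writing $\varphi=\rho+a+c$ according to the splitting $\fg_0=\Hom(U,Q)\oplus\fgl(U)\oplus\fgl(Q)$, the $\Sym^2U$-valued part of the cross condition forces $a_{w\otimes p}$ to annihilate $\Sym^2U$, hence $a_{w\otimes p}=0$; only then does the $U\otimes Q$--$U\otimes Q$ symmetry give $c_{w\otimes p}=0$; and the $U\otimes Q$-valued part of the cross condition yields $\rho_{w\otimes p}=B_{w\otimes p}$ together with $a_{v^2}=B_{v^2}-\lambda_{v^2}\,{\rm id}_U$ and $c_{v^2}=\lambda_{v^2}\,{\rm id}_Q$ for some scalar $\lambda_{v^2}$, a scalar ambiguity exactly as in Lemma \ref{l.positive3}.

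Moreover, your sketch never eliminates two components of $\varphi(v\odot w)$ that are still unconstrained at that point: the scalar $\lambda_{v\odot w}$ in $\fgl(Q)$, and the $\Hom(U,Q)$-component $\rho_{v\odot w}$, which the cross condition cannot see because $\Hom(U,Q)$ annihilates $U\otimes Q$. Both require the $\Sym^2U$--$\Sym^2U$ symmetry: its $\Sym^2U$-valued part kills $\lambda$ and gives the symmetry of $B$ (you only extract the latter), while its $U\otimes Q$-valued part, applied to $(v^2,w^2)$ with $v,w$ independent, gives $w\otimes\rho_{v^2}(w)=v\otimes\rho_{w^2}(v)$ and hence $\rho|_{\Sym^2U}=0$. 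This last step has no counterpart in Lemmas \ref{l.positive1}--\ref{l.positive5}, precisely because the summand $\Hom(U,Q)\subset\fg_0$ is the new feature of Notation \ref{n.rank0} that you yourself single out, so it cannot be absorbed into ``mimicking'' those lemmas. With these corrections the direct computation does close up and gives the theorem; as written, however, the argument misassigns which symmetry yields which constraint and leaves open the possibility that $\fg_1$ is strictly larger than $\Sym^2U^*$.
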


\begin{proof}
By Proposition 3.8 of \cite{FH12} and Theorem 1.1.2 of \cite{HM05} (equivalently, Theorem 2.3 of \cite{FH12}), there is an isomorphism $\fg_1\cong\Sym^2 U^*$ as vector spaces and $\fg_k=0$ for all $k \geq 2$. Consider the linear map $$  \Sym^2 U^* \rightarrow \Hom(\fg_{-1}, \fg_0) $$ which sends $ B \in \Sym^2 U^*$ to $$ (v \odot w \mapsto B_{v \odot w}, \  w \otimes p \mapsto B_{w \otimes p}). $$  It is straight-forward to check that this is an injective $\fg_0$-module homomorphism and its image is contained in $\fg_1$.   Then the fact $\dim\fg_1=\dim\Sym^2 U^*$ implies that this is a $\fg_0$-module isomorphism between $\Sym^2 U^*$ and $\fg_1$.
\end{proof}

The following definition is an analog of Definition \ref{d.VUQ} in the setting of Notation \ref{n.rank0}.

\begin{definition}\label{d.Vrank0}
Define $V := U^* \oplus Q \oplus U$ and a presymplectic form $\omega \in \wedge^2 V^*$ by
\begin{itemize} \item[(1)] $Q = {\rm Null}_{\omega};$ \item[(2)] $\omega(\lambda, u) = \lambda(u)$ for $\lambda \in U^*$ and $u \in U$; \item[(3)] $\omega (U, U) =0$; and
\item[(4)] $\omega(U^*, U^*) =0$. \end{itemize}
Define a grading on $V$,  $$V= V_{\frac{1}{2}} \oplus V_{-\frac{1}{2}}  \mbox{ by }
V_{-\frac{1}{2}} = U \oplus Q\mbox{  and  }V_{\frac{1}{2}} = U^*.$$
This induces a grading $\spo(V) = \spo(V)_1 \oplus \spo(V)_0 \oplus \spo(V)_{-1}$. \end{definition}

The following is an analog of Lemma \ref{l.spgrade} in the setting of Notation \ref{n.rank0}.

\begin{lemma}\label{l.rank0grade}
In the setting of Notation \ref{n.rank0} and Definition \ref{d.Vrank0}, we have the following. \begin{itemize}
\item[(1)]
$\spo(V)_1$ consists of elements $\gamma \in \Hom(U, U^*) \subset \Hom(V_{-\frac{1}{2}}, V_{\frac{1}{2}})$ satisfying
$$(\gamma (u) ) (v) = (\gamma(v))(u) \mbox{ for all } u, v \in U.$$
\item[(2)] $\spo(V)_{-1}$ consists of pairs $$(\alpha, \beta) \in \Hom(U^*, Q) \oplus  \Hom(U^*, U)$$ where $\alpha$ is arbitrary and $\beta$ satisfies $$\omega(\beta(\lambda), \mu) = \omega(\beta(\mu), \lambda) \mbox{ for all } \lambda, \mu \in U^*.$$ \end{itemize} \end{lemma}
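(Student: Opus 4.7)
The plan is to unpack the defining relation
$$\omega(\varphi(v_1),v_2) + \omega(v_1,\varphi(v_2)) = 0 \quad \text{for all } v_1, v_2 \in V$$
in each graded piece, exactly as in the derivation of Lemma \ref{l.spgrade}. The grading $V = V_{\frac{1}{2}} \oplus V_{-\frac{1}{2}}$ together with the explicit description of $\omega$ in Definition \ref{d.Vrank0} will reduce each component $\spo(V)_k$ to a short calculation, since an element of $\spo(V)_k$ annihilates whichever of $V_{\pm \frac{1}{2}}$ is not shifted into the other by degree $k$.

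For part (1), I would take $\gamma \in \spo(V)_1$ and note that $\gamma$ vanishes on $V_{\frac{1}{2}}$ and sends $V_{-\frac{1}{2}} = U \oplus Q$ into $V_{\frac{1}{2}} = U^*$; write $\gamma = (\gamma_U, \gamma_Q)$ with $\gamma_U \colon U \to U^*$ and $\gamma_Q \colon Q \to U^*$. The only nontrivial instance of the defining relation is for $v_1, v_2 \in V_{-\frac{1}{2}}$; writing $v_i = u_i + q_i$ and using $Q = {\rm Null}_\omega$ together with $\omega(\lambda, u) = \lambda(u)$, the relation becomes
$$\gamma_U(u_1)(u_2) + \gamma_Q(q_1)(u_2) = \gamma_U(u_2)(u_1) + \gamma_Q(q_2)(u_1).$$
Specialising to $q_1 = q_2 = 0$ gives the symmetry of $\gamma_U$, while specialising to $u_1 = q_2 = 0$ forces $\gamma_Q(q_1)(u_2) = 0$ for all $u_2, q_1$, hence $\gamma_Q = 0$. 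All other choices of grading of $(v_1,v_2)$ produce identities that are tautological, so this exhausts the conditions.

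For part (2), I would proceed symmetrically: an element of $\spo(V)_{-1}$ annihilates $V_{-\frac{1}{2}}$ and sends $V_{\frac{1}{2}} = U^*$ into $V_{-\frac{1}{2}} = U \oplus Q$; write it as $\alpha + \beta$ with $\alpha \colon U^* \to Q$ and $\beta \colon U^* \to U$. The only nonvacuous test is for $v_1, v_2 \in V_{\frac{1}{2}}$. Because $\alpha(U^*) \subset Q = {\rm Null}_\omega$, the $\alpha$-contribution drops out entirely, leaving $\beta$ to satisfy
$$\omega(\beta(v_1), v_2) = \omega(\beta(v_2), v_1),$$
which is the stated symmetry. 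Thus $\alpha$ is unconstrained, as claimed. The only real bookkeeping issue will be the antisymmetry signs when converting $\omega(u,\lambda)$ to $-\lambda(u)$; I expect no substantive obstacle since, unlike Lemma \ref{l.spgrade}, there is no nontrivial symplectic form on $Q$ to couple the components.
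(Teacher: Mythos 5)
Your proposal is correct and follows essentially the same route as the paper: for (1) the paper writes the degree-$1$ component as a pair $(\gamma,\eta)\in\Hom(U,U^*)\oplus\Hom(Q,U^*)$ and evaluates the defining relation on $u+q$ and $v$, obtaining exactly your two conclusions ($\eta=0$ and symmetry of $\gamma$), while (2) is dismissed there as straightforward and your spelled-out check (the $\alpha$-component drops out since it lands in ${\rm Null}_\omega$) is the intended argument. No gaps.
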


\begin{proof}
 Let $$(\gamma, \eta) \in \Hom(U, U^*) \oplus  \Hom(Q, U^*)$$ be an element of $\spo(V)_1$. Then for any $u, v \in U$ and $q \in Q$,
$$\omega(\gamma(u) + \eta(q), v) = \omega(\gamma(v), u+ q) = \omega(\gamma(v), u).$$
This shows that $\eta=0$ and $\omega(\gamma (u), v) = \omega(\gamma(v), u)$, proving (1).
(2) is straight-forward.  \end{proof}

The following is an analog of Lemma \ref{l.spTanaka}  in the setting of Notation \ref{n.rank0}.

\begin{lemma}\label{l.sprank0}
In the setting of Definition \ref{d.Vrank0} and  Lemma \ref{l.rank0grade}, we have the following. \begin{itemize} \item[(a)] If $\gamma \in \spo(V)_1$ satisfies $[\gamma, \spo(V)_{-1}]=0$, then $\gamma =0$. \item[(b)] Define $\psi_{-1}: \fg_{-1}=  (U \otimes Q) \oplus \Sym^2 U \to \spo(V)_{-1}$ by sending $w \otimes q \in U \otimes Q$ and $u \odot v \in \Sym^2 U$ to  $$\psi_{-1}(w\otimes q + u \odot v)
= (\alpha_{w \otimes q}, \beta_{u \odot v}) \in \Hom(U^*, Q) \oplus \Hom(U^*, U)$$
 where $$\alpha_{w \otimes q}(\lambda) := \lambda(w) q \mbox{ and } \beta_{u \odot v}(\lambda) := \frac{1}{2} \lambda(u) v + \frac{1}{2} \lambda(v) u \mbox{ all } \lambda \in U^*.$$ Then $\psi_{-1}$ is an isomorphism.
 \item[(c)]
    The natural action of $\fg_0 = \Hom(U,Q) \sd (\fgl(U) \oplus \fgl(Q))$ on $V = U^* \oplus Q \oplus U$ gives an injective homomorphism $\psi_0: \fg_0 \to \spo(V)_0$. This is an isomorphism and satisfies the commuting diagram
    $$\begin{array}{ccc} \fg_0 & \stackrel{\psi_0}{\longrightarrow} & \spo(V)_0 \\ \downarrow & & \downarrow \\ \End(\fg_-) & \stackrel{\psi_{-1}}{\longrightarrow} & \End(\spo(V)_{-1}) \end{array} $$ where the first vertical arrow is the inclusion in
    Notation \ref{n.rank0}, the second vertical arrow is the adjoint action and the lower horizontal arrow is the isomorphism induced by $\psi_{-1}$ in (b).
\end{itemize}    \end{lemma}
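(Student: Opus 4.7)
My plan is to proceed in parallel with the proof of Lemma \ref{l.spTanaka}, adapting each part to the more degenerate grading $V = V_{\frac{1}{2}} \oplus V_{-\frac{1}{2}}$ with $V_{\frac{1}{2}} = U^*$ and $V_{-\frac{1}{2}} = Q \oplus U$. The key new feature is that because $Q = \Null_\omega$, the algebra $\spo(V)_0$ is strictly larger than $\fgl(U) \oplus \fgl(Q)$: it acquires a semidirect summand $\Hom(U, Q)$ corresponding to endomorphisms that shear $U$ into the null subspace $Q$. This matches precisely the summand appearing in $\fg_0$ from Notation \ref{n.rank0}.

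For (a), I would write $\gamma \in \spo(V)_1$ as a symmetric map $U \to U^*$ annihilating $Q$ and $U^*$, and $(\alpha, \beta) \in \spo(V)_{-1}$ as in Lemma \ref{l.rank0grade}(2), and compute the commutator applied to $\lambda \in U^*$: it reduces to $\gamma(\beta(\lambda)) \in U^*$. Varying $\beta$ among the symmetric maps, the vectors $\beta(\lambda)$ exhaust $U$ (any $u \in U$ arises as $\beta_{u \odot u}(\lambda)$ for any $\lambda$ with $\lambda(u) \neq 0$), forcing $\gamma = 0$. For (b), I would check that $\beta_{u \odot v}$ lies in $\spo(V)_{-1}$ by the identity $\omega(\beta_{u \odot v}(\lambda), \mu) = \mu(\beta_{u \odot v}(\lambda)) = \tfrac{1}{2}\lambda(u)\mu(v) + \tfrac{1}{2}\lambda(v)\mu(u)$, which is manifestly symmetric in $\lambda$ and $\mu$. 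Then the two components of $\psi_{-1}$ are the canonical identifications $U \otimes Q \cong \Hom(U^*, Q)$ and $\Sym^2 U \cong \Hom(U^*, U)^{\mathrm{sym}}$, so $\psi_{-1}$ is an isomorphism in view of Lemma \ref{l.rank0grade}(2).

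The main work is in part (c). I would first compute $\spo(V)_0$ explicitly by writing an element as a pair $(A \in \End(U^*), B \in \End(Q \oplus U))$. Because $Q = \Null_\omega$, the invariance $\omega(Ax, y) + \omega(x, By) = 0$ collapses to two conditions: the $Q \to U$ block of $B$ must vanish, and $A$ is determined by $A = -C_4^*$ where $C_4 \in \End(U)$ is the $U \to U$ block. The remaining free data --- $C_3 \colon U \to Q$, $C_4 \in \End(U)$, and $C_1 \in \End(Q)$ --- match $\Hom(U, Q) \sd (\fgl(U) \oplus \fgl(Q)) = \fg_0$ as a vector space, and a direct bracket calculation in $\End(V)$ shows this is an isomorphism of Lie algebras coinciding with the natural action of $\fg_0$ on $V$; this gives $\psi_0$.

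For the commuting diagram there are six cases (the three subalgebras of $\fg_0$ each acting on $U \otimes Q$ or $\Sym^2 U$); four reduce either to trivial vanishings (e.g.\ $\fgl(Q)$ acting on $\Sym^2 U$, or $\Hom(U,Q)$ acting on $U \otimes Q$) or to standard naturality identities. The only essential verification is that $[\psi_0(\varphi), \beta_{u \odot v}](\lambda) = \tfrac{1}{2}\lambda(u)\varphi(v) + \tfrac{1}{2}\lambda(v)\varphi(u)$ for $\varphi \in \Hom(U, Q)$, which matches $\psi_{-1}(\varphi \cdot (u \odot v))$ directly from Notation \ref{n.rank0}(2). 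The main obstacle is nothing conceptual but rather keeping the bookkeeping of signs and grading conventions straight through part (c).
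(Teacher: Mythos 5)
Your proposal is correct and takes essentially the same route as the paper: part (a) by evaluating the commutator on $U^*$ and using that the vectors $\beta(\lambda)$ sweep out $U$, part (b) directly from Lemma \ref{l.rank0grade}(2), and part (c) by reducing the commuting diagram to the single essential computation $[\psi_0(\varphi), \beta_{u\odot v}](\lambda) = \tfrac{1}{2}\lambda(u)\varphi(v) + \tfrac{1}{2}\lambda(v)\varphi(u)$ for $\varphi \in \Hom(U,Q)$, with the $\fgl(U)\oplus\fgl(Q)$ cases handled as in Lemma \ref{l.spTanaka}(d). The only small variation is that you obtain surjectivity of $\psi_0$ from an explicit block description of $\spo(V)_0$ (the $Q\to U$ block vanishes and the $\End(U^*)$ block is determined by the $\End(U)$ block), where the paper instead quotes the dimension count $\dim\fg_0=\dim\spo(V)_0$; also note the harmless sign slip $\omega(\beta_{u\odot v}(\lambda),\mu)=\mu(\beta_{u\odot v}(\lambda))$, which with the paper's conventions should be $-\mu(\beta_{u\odot v}(\lambda))$, but only the symmetry in $\lambda$ and $\mu$ is used.
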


\begin{proof}
The condition for $\gamma$ in (a) says that  for any $(\alpha, \beta) \in \spo(V)_{-1}$,
$$\gamma \circ (\alpha, \beta) = (\alpha, \beta) \circ \gamma.$$
Evaluating it at $\lambda \in U^*$, we obtain
$\gamma (\beta(\lambda)) =0.$ Given any $u \in U$, we can choose some $\beta$ as in Lemma \ref{l.rank0grade} (2)  and
$\lambda \in U^*$ such that $u = \beta(\lambda)$. Thus $\gamma =0$, proving (a).

(b) is immediate from Lemma \ref{l.rank0grade} (2).

The injective homomorphism $\psi_0$ in  (c) is surjective because
  we can check $\dim \fg_0 = \dim \spo(V)_0$ from Lemma \ref{l.rank0grade} and our knowledge of  $\dim \spo(V).$ To prove (c), it remains to check the commuting diagram.

That the action of  $\fgl(U) \times \fgl(Q) \subset \fg_0$
on $\fg_{-1}$ corresponds via $\psi_{-1}$ to the one given by the adjoint representation of $\spo(V)$
can be proved in  the same way as Lemma \ref{l.spTanaka} (d). So let us just consider the action of an element $\varphi \in \Hom(U,Q) \subset \fg_0.$ The element $\psi_0(\varphi) \in \spo(V)_0 \subset \fgl(V)$ acts on $(\lambda + p + t) \in U^* \oplus Q \oplus U = V$  by
$$\psi_0(\varphi) (\lambda+p +t ) =  \varphi(t).$$
For $ w \otimes q + u \odot v \in  \fg_{-1}$, the definition of $\psi_{-1}$ in (b) implies that
\begin{eqnarray*}
 & & \psi_0(\varphi) \circ \psi_{-1}(w\otimes q + u \odot v) (\lambda + p + t) \\
 & = &
 \psi_0(\varphi) \left( \alpha_{w \otimes q} (\lambda) + \beta_{u \odot v} (\lambda) \right) \\
 & =& \psi_0(\varphi) ( \lambda(w) q + \frac{1}{2} \lambda(u) v + \frac{1}{2} \lambda(v) u ) \\
 &=& \frac{1}{2} \lambda(u) \varphi(v) + \frac{1}{2} \lambda(v) \varphi(u), \end{eqnarray*}
 and \begin{eqnarray*} & &
 \psi_{-1}(w \otimes q + u \odot v) \circ \psi_0(\varphi) (\lambda + p + t) \\
 &=&  \psi_{-1}(w \otimes q + u \odot v)  (\varphi (t)) \\
 &=& (\alpha_{w \otimes q} + \beta_{u \odot v}) (\varphi(t))\ = \ 0, \end{eqnarray*} which gives
 \begin{equation}\label{e.com}
 [\psi_0(\varphi), \psi_{-1}(w\otimes q + u \odot v)] (\lambda + p + t) =
 \frac{1}{2} \lambda(u) \varphi(v) + \frac{1}{2} \lambda(v) \varphi(u).\end{equation}
 On the other hand,
 \begin{eqnarray*} & &
 \psi_{-1}\left( \varphi \cdot ( w \otimes q + u \odot v) \right) (\lambda + p + t) \\ & =&
 \psi_{-1} \left( \frac{1}{2} u \otimes \varphi(v) + \frac{1}{2} v \otimes \varphi(u) \right) (\lambda + p + t) \\ &=&
 \left(\frac{1}{2} \alpha_{u \otimes \varphi(v)} + \frac{1}{2} \alpha_{v \otimes \varphi(u)} \right) (\lambda + p + t) \\ & =& \frac{1}{2} \alpha_{u \otimes \varphi(v)}(\lambda) + \frac{1}{2} \alpha_{v \otimes \varphi(u)}(\lambda) \\ &=& \frac{1}{2} \lambda(u) \varphi(v) + \frac{1}{2} \lambda(v) \varphi(u).\end{eqnarray*}
Since this is equal to (\ref{e.com}), we have
 $$ [\psi_0(\varphi), \psi_{-1}(w\otimes q + u \odot v)] = \psi_{-1}\left( \varphi \cdot ( w \otimes q + u \odot v) \right), $$
 which   implies the commuting diagram in (c).
  \end{proof}

The analog of Theorem \ref{t.gradesp} is the following.

\begin{theorem}\label{t.sprank0}
By Lemma \ref{l.sprank0}, we have an isomorphism of graded Lie algebras
$\spo(V)_0 + \spo(V)_{-1} \cong \fg_0 + \fg_{-1}$. It extends to an isomorphism of graded Lie algebras $\spo(V) \cong \fg$, realizing $\spo(V)$ as the universal prolongation of $(\fg_0, \fg_{-1})$. \end{theorem}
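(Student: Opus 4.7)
The plan is to follow exactly the template of the proof of Theorem \ref{t.gradesp}, but with only one ``missing'' positive degree to construct, since by Theorem \ref{t.FH} we have $\fg_k = 0$ for $k \geq 2$, and by the grading $V = V_{1/2} \oplus V_{-1/2}$ of Definition \ref{d.Vrank0} we also have $\spo(V)_k = 0$ for $|k| \geq 2$. Thus the only thing to produce is a graded Lie algebra isomorphism $\psi_1 : \fg_1 \to \spo(V)_1$ compatible with the isomorphism $\psi_0 + \psi_{-1}$ of Lemma \ref{l.sprank0}(b,c).

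I would construct $\psi_1$ as follows. The adjoint representation of $\spo(V)$ together with the isomorphism $\psi_0 + \psi_{-1}$ defines a $\fg_0$-module map
\[
\chi : \spo(V)_1 \longrightarrow \Hom(\fg_{-1}, \fg_0)_1 = \Hom(\fg_{-1}, \fg_0), \qquad \chi(\gamma)(v) := \psi_0^{-1}\bigl([\gamma, \psi_{-1}(v)]\bigr),
\]
which makes sense because $[\spo(V)_1, \spo(V)_{-1}] \subset \spo(V)_0$ and $\psi_0$ is an isomorphism. The Jacobi identity in $\spo(V)$, combined with the commuting diagram of Lemma \ref{l.sprank0}(c) (which encodes the fact that $\psi_0, \psi_{-1}$ intertwine the $\spo(V)$ action with the $\fg_0$ action), implies that $\chi(\gamma)$ satisfies the derivation condition defining Definition \ref{d.fg}, so the image of $\chi$ lies in the first prolongation $\fg_1$.

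Injectivity of $\chi$ is precisely Lemma \ref{l.sprank0}(a): if $[\gamma, \spo(V)_{-1}] = 0$ then $\gamma = 0$. For surjectivity onto $\fg_1$, we compare dimensions: by Theorem \ref{t.FH}, $\fg_1 \cong \Sym^2 U^*$, while Lemma \ref{l.rank0grade}(1) identifies $\spo(V)_1$ with the space of symmetric maps in $\Hom(U, U^*)$, also isomorphic to $\Sym^2 U^*$. Hence $\dim \spo(V)_1 = \dim \fg_1$, and $\chi$ is an isomorphism onto $\fg_1$; we set $\psi_1 := \chi^{-1}$.

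Finally, combining $\psi := \psi_1 + \psi_0 + \psi_{-1}$ gives a graded vector space isomorphism $\fg \to \spo(V)$. Compatibility with Lie brackets holds in degrees $\leq 0$ by Lemma \ref{l.sprank0}(b,c); in the $[\spo(V)_1, \spo(V)_{-1}]$-component it is exactly the definition of $\chi$; in the $[\spo(V)_1, \spo(V)_0]$-component it follows from the $\fg_0$-equivariance of $\chi$; and the bracket $[\spo(V)_1, \spo(V)_1]$ vanishes in both $\fg$ (since $\fg_2 = 0$) and $\spo(V)$ (since $\spo(V)_2 = 0$). No step presents a genuine obstacle — the only real check is that $\chi(\gamma) \in \fg_1$, which is automatic from the Jacobi identity exactly as in the proof of Theorem \ref{t.gradesp}.
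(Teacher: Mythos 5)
Your proposal is correct and follows essentially the same route as the paper: the map $\chi:\spo(V)_1 \to \Hom(\fg_{-1},\fg_0)$ induced by the adjoint representation lands in $\fg_1$, is injective by Lemma \ref{l.sprank0}(a), and is an isomorphism by the dimension count $\dim\spo(V)_1=\dim\Sym^2 U^*=\dim\fg_1$ from Lemma \ref{l.rank0grade}(1) and Theorem \ref{t.FH}. The extra bracket-compatibility checks you spell out (including the vanishing of degree-$2$ pieces on both sides) are exactly what the paper leaves implicit in its "direct modification of the proof of Theorem \ref{t.gradesp}."
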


\begin{proof}
The proof is a direct modification of the proof of Theorem \ref{t.gradesp}.
The adjoint representation induces a homomorphism
$\chi: \spo(V)_1 \to \Hom(\fg_{-1}, \fg_0)$ whose image lies in $\fg_1$.
Lemma \ref{l.sprank0} (a) implies that $\chi$ is injective and then $\dim \spo(V)_1 = \dim \fg_1$ by Theorem \ref{t.FH} shows that $\chi$ is an isomorphism. This gives the extended isomorphism of Lie algebras $\spo(V) \cong \fg$. \end{proof}

We omit the proof of the following proposition, which is essentially the same as the proof of Proposition \ref{p.spgrade}, just applying  Theorem \ref{t.sprank0} instead of Theorem \ref{t.gradesp}.

\begin{proposition}\label{p.spgraderank0}
In Theorem \ref{t.sprank0}, let $m= \dim U$ and consider the presymplectic Grassmannian
$\Go(m, V)$. Note that Assumption \ref{a.no} holds. Let $[U^*] \in \Go(m, V;0)$ be the point corresponding to $U^* \subset V$.
Then the connected subgroup of $\Sp_{\omega}(V)$ corresponding to the Lie subalgebra
$\spo(V)_1 + \spo(V)_0$ of $\spo(V)$ is the isotropy subgroup $P_{[U^{*}]}$ of the point
$[U^*]$ under the natural action of $\Sp_{\omega}(V)$ on $\Go(m,V)$. In particular, the homogeneous space $G/G^0$ associated to $(\fg_-, \fg_0)$ in the sense of Proposition \ref{p.algebraic} is biregular to $\Go(m, V; 0)$.
\end{proposition}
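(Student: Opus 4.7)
The plan is to mirror the proof of Proposition \ref{p.spgrade} step by step, substituting the rank-zero structural results (Lemma \ref{l.rank0grade}, Lemma \ref{l.sprank0}, Theorem \ref{t.sprank0}) for their positive-rank counterparts. The authors already hint that the argument is essentially identical, so the task is mostly to verify that the bookkeeping goes through with the simpler two-step grading $V = V_{-\frac12} \oplus V_{\frac12}$.

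First, I would observe that an element of $\spo(V)_0 + \spo(V)_1$ maps $V_{\frac12} = U^*$ into $V_{\frac12} + V_{\frac32} = V_{\frac12}$, so it preserves the subspace $U^* \subset V$. Hence it lies in the Lie algebra $\mathfrak{p}_{[U^*]}$ of the isotropy subgroup $P_{[U^*]}$ of the point $[U^*] \in \Go(m,V;0)$. To get equality I would match dimensions: by Lemma \ref{l.sprank0}(b) and (c) we have $\dim \spo(V)_{-1} = \dim \fg_{-1}$ and $\dim \spo(V)_0 = \dim \fg_0$; combining these with Lemma \ref{l.rank0grade} to compute $\dim \spo(V)$ explicitly, one checks
\begin{equation*}
\dim \spo(V)_{-1} = \dim(U \otimes Q) + \dim \Sym^2 U = mn + \tfrac{m(m+1)}{2},
\end{equation*}
which equals $\dim \Go(m,V;0)$ as computed from Proposition \ref{p.cell}(3) with $k=0$ and $\dim V = 2m + n$. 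Thus the codimension of $\spo(V)_0 + \spo(V)_1$ in $\spo(V)$ equals $\dim \Go(m,V;0)$, forcing $\mathfrak{p}_{[U^*]} = \spo(V)_0 + \spo(V)_1$.

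Next I would invoke Proposition \ref{p.cell}(1) to see that $\Go(m,V;0) = \Sp_\omega(V)/P_{[U^*]}$ is simply connected, and hence $P_{[U^*]}$ is connected. This identifies $P_{[U^*]}$ with the connected subgroup of $\Sp_\omega(V)$ corresponding to $\spo(V)_0 + \spo(V)_1$, proving the first claim of the proposition. For the second claim, Theorem \ref{t.sprank0} gives an isomorphism $\fg \cong \spo(V)$ under which $\fg^0 = \fg_0 \oplus \fg_1$ corresponds to $\spo(V)_0 \oplus \spo(V)_1$, so $G$ is the adjoint group of $\Sp_\omega(V)$. Since by Lemma \ref{l.Sp} the center $\{\pm 1\}$ of $\Sp_\omega(V)$ acts trivially on the Grassmannian $\Go(m,V)$, the quotient $G/G^0$ is biregular to $\Sp_\omega(V)/P_{[U^*]} = \Go(m,V;0)$.

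There is no serious obstacle here; the one place that requires any care is the dimension count, since the grading in the rank-zero case is only $\pm\frac12$, so one must be sure that $\spo(V)_{-1}$ is indeed the full complement (no $\spo(V)_{-2}$ contribution as in Proposition \ref{p.spgrade}). This is already guaranteed by Definition \ref{d.Vrank0} and Lemma \ref{l.rank0grade}, which describe $\spo(V)$ with only three graded pieces. The rest of the argument is essentially formal.
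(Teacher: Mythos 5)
Your proposal is correct and follows essentially the same route as the paper, which deliberately omits this proof because it is the argument of Proposition \ref{p.spgrade} with Theorem \ref{t.sprank0} in place of Theorem \ref{t.gradesp}: containment of $\spo(V)_1+\spo(V)_0$ in the isotropy algebra, the dimension count $\dim\spo(V)_{-1}=\dim\Go(m,V;0)$ via Lemma \ref{l.sprank0} and Proposition \ref{p.cell}, simple connectedness to get connectedness of $P_{[U^*]}$, and the adjoint-group/trivial-center observation for the final identification $G/G^0\cong\Go(m,V;0)$.
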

%

\section{Existence of Cartan connections}\label{s.SAF}

\subsection{Preliminaries on multi-filtrations}

\begin{definition}\label{d.multifil}
Let $A$ be a complex vector space. A collection of subspaces $$\{ A_{i_1, \ldots, i_k}, \ i_1, \ldots, i_k \in \Z\}$$ is a {\em $k$-tuple filtration} of $A$ if \begin{itemize}
\item[(1)] $A_{-n_1, \ldots, -n_k} = 0, \ A_{n_1, \ldots, n_k} =A$ for some $n_1, \ldots, n_k \in \N$; and
\item[(2)] $A_{i_1, \ldots, i_k} \subset A_{j_1, \ldots, j_k}$ when $i_1 \leq j_1, \ldots, i_k \leq j_k$. \end{itemize}
    If we do not need to specify the number $k$, we will call it a {\em multi-filtration} of $A$. If $k=1$, it is a filtration of $A$.
    The quotient space
    $${\rm gr}(A)_{i_1, \ldots, i_k} := A_{i_1, \ldots, i_k} / (\sum_{\ell=1}^k A_{i_1, \ldots, i_{\ell-1}, i_{\ell} -1, i_{\ell+1}, \ldots, i_k} )$$
is called a {\em graded object} associated  to the multi-filtration.
 \end{definition}

 \begin{definition}\label{d.reductive}
 Let $G_0$ be a connected algebraic group with Lie algebra $\fg_0$.  Let $R \subset G_0$ be the nilradical with the Lie algebra  ${\mathfrak r} \subset \fg_0$ such that
  $G_0/R$ and $\fg_0/{\mathfrak r}$ are reductive. For a $G_0$-module $A$,
  a multi-filtration $\{A_{i_1, \ldots, i_k} \}$ is said to be {\em $\fg_0$-reductive} if \begin{itemize} \item[(i)] each $A_{i_1, \ldots, i_k}$ is a $\fg_0$-submodule of $A$; and \item[(ii)] the ${\mathfrak r}$-action on each graded object ${\rm gr}(A)_{i_1, \ldots, i_k}$ is trivial, i.e., $${\mathfrak r} \cdot A_{i_1, \ldots, i_k}
  \subset \sum_{\ell =1}^k A_{i_1, \ldots, i_{\ell-1}, i_{\ell} -1, i_{\ell +1}, \ldots, i_k}.$$ \end{itemize}
  This implies that each graded object is a representation of the reductive group $G_0/R$.
  \end{definition}

  The following  lemma is straight-forward.

  \begin{lemma}\label{l.reductive}
Let $\{A_{i_1, \ldots, i_k} \}$  be a $\fg_0$-reductive $k$-tuple filtration of a $G_0$-module $A$.  \begin{itemize}
\item[(1)] For any $G_0$-submodule $B \subset A$, the intersection
$$B_{i_1, \ldots, i_k} := B \cap A_{i_1, \ldots, i_k}$$
gives a $\fg_0$-reductive $k$-tuple filtration of $B$ with a natural inclusion of  graded objects $${\rm gr}(B)_{i_1, \ldots, i_k} \subset {\rm gr}(A)_{i_1, \ldots, i_k}$$ as $G_0/R$-modules.
\item[(2)] For any surjective $G_0$-module homomorphism $f: A \to B$, the image
$$B_{i_1, \ldots, i_k} := f(A_{i_1, \ldots, i_k})$$
gives a $\fg_0$-reductive $k$-tuple filtration of $B$ with a natural surjection between graded objects $${\rm gr}(A)_{i_1, \ldots, i_k} \to {\rm gr}(B)_{i_1, \ldots, i_k}$$ as $G_0/R$-modules.
\item[(3)] If $\{ B_{j_1, \ldots, j_{\ell}}\}$ is a $\fg_0$-reductive $\ell$-tuple filtration of a $G_0$-module $B$, then the tensor product
    $$(A \otimes B)_{i_1, \ldots, i_k, i_{k+1}, \ldots, i_{k+\ell}}:= A_{i_1, \ldots, i_k} \otimes B_{i_{k+1}, \ldots, i_{k+\ell}}$$ gives a $\fg_0$-reductive $(k+ \ell)$-tuple filtration of $A \otimes B$ with a natural isomorphism of graded objects $${\rm gr}(A \otimes B)_{i_1, \ldots, i_{k + \ell}} \cong {\rm gr}(A)_{i_1, \ldots, i_k} \otimes {\rm gr}(B)_{i_{k+1}, \ldots, i_{k + \ell}}$$ as $G_0/R$-modules.
\item[(4)] The collection of subspaces
$$(A^*)_{i_1, \ldots, i_k}:=\{f \in A^* \mid f(A_{-i_1, \ldots, -i_k})=0\}$$
gives a $k$-tuple filtration of $A^*$.
\item[(5)] When $k=1$,  the filtration of $A^*$  in (4) is $\fg_0$-reductive and there is a natural isomorphism of graded objects $${\rm gr}(A^*)_{i} \cong ({\rm gr}(A)_{-i+1})^{*}$$ as $G_0/R$-modules.
    \end{itemize} \end{lemma}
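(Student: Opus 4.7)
This lemma collects five module-theoretic statements about multi-filtrations, each amounting to a direct verification of the axioms of Definitions \ref{d.multifil} and \ref{d.reductive}; the author's description ``straight-forward'' is accurate. My plan is to handle them in the order (2), (3), (4), (5), (1), saving the one subtle item for last.

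Part (2) is the cleanest: setting $B_{i_1,\ldots,i_k} := f(A_{i_1,\ldots,i_k})$, the axioms of a multi-filtration are immediate from those for $A$ and surjectivity of $f$; $G_0$-invariance follows from $G_0$-equivariance of $f$; and the reductivity condition $\mathfrak{r}\cdot B_{i_1,\ldots,i_k}\subset \sum_\ell B_{i_1,\ldots,i_\ell-1,\ldots,i_k}$ follows by pushing the corresponding inclusion for $A$ through $f$. The natural surjection of graded objects is then induced by $f$ itself. Part (3) reduces to the Leibniz rule for the $\mathfrak{r}$-action on tensor products, together with the standard tensor-product quotient identity $(A_1\otimes B_1)/(A_0\otimes B_1+A_1\otimes B_0)\cong (A_1/A_0)\otimes(B_1/B_0)$, which delivers both the reductive condition and the identification of graded objects. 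Part (4) is a one-line check that the annihilator construction reverses inclusions, so the order conditions in Definition \ref{d.multifil} are satisfied and the boundary conditions $(A^*)_{-n_*}=0$, $(A^*)_{n_*}=A^*$ hold for $n_*$ large.

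Part (5) is slightly more involved. The key computation is the contragredient formula $(r\cdot f)(v)=-f(r\cdot v)$: for $f\in (A^*)_i$, $r\in\mathfrak{r}$, and $v\in A_{-i+1}$, the reductivity of the filtration on $A$ gives $r\cdot v\in A_{-i}$, forcing $r\cdot f\in (A^*)_{i-1}$. The resulting natural $G_0$-equivariant isomorphism ${\rm gr}(A^*)_i\cong ({\rm gr}(A)_{-i+1})^*$ is then the standard duality pairing between quotients and annihilators, and its $G_0$-equivariance is a routine check from the definition of the contragredient representation.

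The only point requiring real thought is (1). Setting $B_{i_1,\ldots,i_k}:=B\cap A_{i_1,\ldots,i_k}$, the multi-filtration axioms, $G_0$-invariance, and the inclusion $\mathfrak{r}\cdot B_{i_1,\ldots,i_k}\subset B\cap\sum_\ell A_{i_1,\ldots,i_\ell-1,\ldots,i_k}$ are formal. What must actually be established is the identity
$$B\cap\textstyle\sum_\ell A_{i_1,\ldots,i_\ell-1,\ldots,i_k}\ =\ \sum_\ell B_{i_1,\ldots,i_\ell-1,\ldots,i_k},$$
which simultaneously implies the reductive condition for the induced filtration on $B$ and the injectivity of the induced map on graded objects. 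This is the main obstacle, since the inclusion $\supseteq$ is trivial but the reverse inclusion is not a formal fact. I would prove it by induction on the total length of the filtration of $A$, at each stage invoking complete reducibility of the reductive group $G_0/R$ acting on a graded piece of $A$ to produce a $G_0/R$-invariant splitting compatible with the submodule $B$. Once this identity is in place, the natural inclusion ${\rm gr}(B)_{i_1,\ldots,i_k}\hookrightarrow {\rm gr}(A)_{i_1,\ldots,i_k}$ is immediate, completing the proof.
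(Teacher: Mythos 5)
Your handling of (2), (3), (4) and (5) is correct, and it supplies exactly the routine verifications the paper has in mind (the paper states the lemma without proof, calling it straightforward): the image filtration in (2), the quotient identity $(A_1\otimes B_1)/(A_0\otimes B_1+A_1\otimes B_0)\cong (A_1/A_0)\otimes (B_1/B_0)$ in (3), and the contragredient computation $(r\cdot f)(v)=-f(r\cdot v)$ together with the duality between annihilators and quotients in (5) are all that is needed.

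The gap is in (1), and it is more serious than a missing step: the identity you isolate, $B\cap\sum_\ell A_{i_1,\ldots,i_\ell-1,\ldots,i_k}=\sum_\ell B_{i_1,\ldots,i_\ell-1,\ldots,i_k}$, is false under the stated hypotheses once $k\ge 2$, so no induction invoking complete reducibility of $G_0/R$ can establish it. Take $A=\C^2$ with basis $e_1,e_2$ and $G_0=\C^*$ acting by scalars (reductive, $R$ trivial, so every subspace is a submodule and condition (ii) of Definition \ref{d.reductive} is vacuous). Define a double filtration by $A_{i,j}=A$ for $i,j\ge 1$, $A_{i,0}=\C e_1$ for $i\ge 1$, $A_{0,j}=\C e_2$ for $j\ge 1$, and $A_{i,j}=0$ otherwise; this satisfies Definition \ref{d.multifil} and is $\fg_0$-reductive. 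For the $G_0$-submodule $B=\C(e_1+e_2)$ one has ${\rm gr}(B)_{1,1}=B\neq 0$ while ${\rm gr}(A)_{1,1}=A/(\C e_1+\C e_2)=0$, so no inclusion of graded objects can exist. Note that the group here is already reductive and acts by scalars, so every splitting is invariant; the failure is pure linear algebra (a subspace need not meet a sum of filtration subspaces in the sum of the intersections), and no representation-theoretic input repairs it — this is where your proposed induction must break down. For $k=1$ the statement is true and trivial, since the denominator is a single subspace and $(B\cap A_i)\cap A_{i-1}=B\cap A_{i-1}$. To salvage (1) for $k\ge 2$ one must either add a compatibility hypothesis (for instance that $B$ has a $G_0$-stable complement $C$ with $A_{i_1,\ldots,i_k}=(B\cap A_{i_1,\ldots,i_k})\oplus(C\cap A_{i_1,\ldots,i_k})$) or check the distributivity identity directly in the concrete cases where the lemma is used; alternatively, observe that in the paper's applications (Theorems \ref{t.Cpositive} and \ref{t.Crank0}) the relevant modules such as $\wedge^2\fg_{-1}^*$ are also quotients of the multi-filtered tensor products, so the surjection statement (2), which is sound, already suffices for the vanishing arguments through Lemma \ref{l.trivial}.
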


\begin{remark}
Both assertions in Lemma \ref{l.reductive} (5) may fail when $k \geq 2$.
\end{remark}

 \begin{lemma}\label{l.trivial}
Let $M$ be a complex manifold and let $E_0 \to M$ be a $G_0$-principal bundle. For a $G_0$-module $A$, denote by $E_0 \times^{G_0}\!A$ the vector bundle associated to $A$.
 Let $\{ A_{i_1, \ldots, i_k} \}$ be a $\fg_0$-reductive multi-filtration.
\begin{itemize}
\item[(i)] If $H^0(M, E_0 \times^{G_0} {\rm gr}(A)_{i_1, \ldots, i_k}) =0$ for any
$i_1, \ldots, i_k \in \Z$, then $H^0(M, E_0 \times^{G_0} B) =0$ for any surjective $G_0$-module homomorphism $A \to B$.
\item[(ii)] If there exists $(j_1, \ldots, j_k) \in \Z^k$ such that $$H^0(M, E_0 \times^{G_0} {\rm gr}(A)_{i_1, \ldots, i_k}) =0 \mbox{ for all }
(i_1, \ldots, i_k) \in \Z^k \setminus \{(j_1, \ldots, j_k)\},$$
then for any surjective $G_0$-module homomorphism $A \to B$, there exists a natural injective homomorphism $$H^0(M, E_0 \times^{G_0}\! B) \rightarrow H^0(M, E_0 \times^{G_0} {\rm gr}(B)_{j_1, \ldots, j_k})$$
in terms of the $\fg_0$-reductive multi-filtration on $B$ from Lemma \ref{l.reductive} (2), which arises from the composition
$$H^0(M, E_0 \times^{G_0} \! B) \leftarrow H^0(M,E_0 \times^{G_0} B_{j_1, \ldots, j_k}) \to H^0(M, E_0 \times^{G_0} {\rm gr}(B)_{j_1, \ldots, j_k})$$
where the first arrow is an isomorphism induced by the inclusion $B \supset B_{j_1, \ldots, j_k}$ and the second arrow is an injective homomorphism induced by the quotient $B_{j_1, \ldots, j_k}\to {\rm gr}(B)_{j_1,\ldots, j_k}$.
 \end{itemize}
 \end{lemma}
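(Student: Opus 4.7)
The plan is to linearize the multi-filtration and induct on the filtration level, exploiting left-exactness of $H^{0}$ and the reductivity hypothesis. Since the multi-filtration takes only finitely many distinct values, first choose a linear extension $\mathbf{i}^{(1)}, \ldots, \mathbf{i}^{(N)}$ of the partial order on the relevant multi-indices of $\Z^{k}$, so that $\mathbf{i}^{(m)} \leq \mathbf{i}^{(n)}$ componentwise implies $m \leq n$. For any $G_{0}$-module $C$ equipped with a multi-filtration $\{C_{\mathbf{i}}\}$, set $\hat{C}^{(n)} := \sum_{m \leq n} C_{\mathbf{i}^{(m)}}$; this defines a 1-filtration $0 = \hat{C}^{(0)} \subseteq \hat{C}^{(1)} \subseteq \cdots \subseteq \hat{C}^{(N)} = C$ whose successive quotient $\hat{C}^{(n)}/\hat{C}^{(n-1)}$ is a natural $G_{0}$-module quotient of ${\rm gr}(C)_{\mathbf{i}^{(n)}}$, by the choice of linear extension.

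For part (i), I would first observe that $B$ inherits a $\fg_{0}$-reductive multi-filtration (Lemma \ref{l.reductive}(2)) whose graded pieces ${\rm gr}(B)_{\mathbf{i}}$ are $G_{0}$-quotients of ${\rm gr}(A)_{\mathbf{i}}$. Since these are modules over the reductive quotient $G_{0}/R$, the surjections ${\rm gr}(A)_{\mathbf{i}} \twoheadrightarrow {\rm gr}(B)_{\mathbf{i}}$ admit $G_{0}$-equivariant splittings, so each $E_{0} \times^{G_{0}} {\rm gr}(B)_{\mathbf{i}}$ is a direct summand of $E_{0} \times^{G_{0}} {\rm gr}(A)_{\mathbf{i}}$ and has vanishing $H^{0}$. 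Now apply the linearization to $B$ and induct on $n$: the short exact sequence $0 \to \hat{B}^{(n-1)} \to \hat{B}^{(n)} \to \hat{B}^{(n)}/\hat{B}^{(n-1)} \to 0$ gives a left-exact sequence of $H^{0}$'s, and $\hat{B}^{(n)}/\hat{B}^{(n-1)}$ is a $G_{0}$-quotient of ${\rm gr}(B)_{\mathbf{i}^{(n)}}$, hence a direct summand by reductivity, so has vanishing $H^{0}$. Inductively $H^{0}(M, E_{0} \times^{G_{0}} \hat{B}^{(n)}) = 0$ for all $n$, and taking $n = N$ yields the claim.

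For part (ii), I would build the injective homomorphism as the composition stated. The short exact sequence $0 \to B_{\mathbf{j}} \to B \to B/B_{\mathbf{j}} \to 0$ gives that $H^{0}(E_{0} \times^{G_{0}} B_{\mathbf{j}}) \hookrightarrow H^{0}(E_{0} \times^{G_{0}} B)$ is always injective, and it is an isomorphism provided $H^{0}(E_{0} \times^{G_{0}} (B/B_{\mathbf{j}})) = 0$. The induced $\fg_{0}$-reductive multi-filtration on $B/B_{\mathbf{j}}$ has graded pieces that vanish when $\mathbf{i} \leq \mathbf{j}$ (in particular at $\mathbf{i}=\mathbf{j}$) and are $G_{0}$-quotients of ${\rm gr}(B)_{\mathbf{i}}$ otherwise; for such $\mathbf{i} \neq \mathbf{j}$ these have vanishing $H^{0}$ by hypothesis, so part (i) applies and gives the desired isomorphism. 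For the second arrow, the short exact sequence $0 \to K \to B_{\mathbf{j}} \to {\rm gr}(B)_{\mathbf{j}} \to 0$ with $K := \sum_{\ell} B_{\mathbf{j} - e_{\ell}}$ reduces the claim to showing $H^{0}(E_{0} \times^{G_{0}} K) = 0$. By Lemma \ref{l.reductive}(1), $K$ inherits a $\fg_{0}$-reductive multi-filtration whose graded pieces are $G_{0}$-submodules of the ${\rm gr}(B)_{\mathbf{i}}$, nonzero only when $\mathbf{i} \leq \mathbf{j} - e_{\ell}$ for some $\ell$ (hence $\mathbf{i} \neq \mathbf{j}$); by reductivity these submodules are direct summands, so have vanishing $H^{0}$, and part (i) applies.

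The main technical point is bookkeeping: one must verify that each induced multi-filtration (on $B$, on $B/B_{\mathbf{j}}$, on $K$, and on the linearizations $\hat{B}^{(n)}$) is $\fg_{0}$-reductive and that its graded pieces are related to ${\rm gr}(B)_{\mathbf{i}}$, and thence to ${\rm gr}(A)_{\mathbf{i}}$, in the claimed way. All of this follows directly from Lemma \ref{l.reductive}, but requires care since graded pieces of multi-filtrations behave less rigidly than those of single filtrations. The reductivity of $G_{0}/R$ is essential, as it converts $G_{0}$-module surjections and inclusions between graded pieces into direct-summand decompositions, allowing $H^{0}$-vanishing to propagate to the relevant subquotients.
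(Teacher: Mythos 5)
Your argument is correct and is essentially the paper's proof with its omitted steps made explicit: the paper likewise reduces everything to the graded pieces, using reductivity of $G_0/R$ to compare ${\rm gr}(B)_{\mathbf i}$ with ${\rm gr}(A)_{\mathbf i}$, and then appeals to the ``standard exact sequence argument'' that your linearization of the multi-filtration spells out, while your treatment of $B/B_{\mathbf j}$ and of $K=\ker\bigl(B_{\mathbf j}\to{\rm gr}(B)_{\mathbf j}\bigr)$ fills in what the paper dismisses as straightforward in case (ii). One intermediate claim is overstated: for an abstract multi-filtration it is not true that ${\rm gr}(K)_{\mathbf i}\neq 0$ forces $\mathbf i\le \mathbf j-e_\ell$ for some $\ell$, where $K=\sum_\ell B_{\mathbf j-e_\ell}$ (the graded pieces of the submodule $K$ can be nonzero at indices incomparable to all $\mathbf j-e_\ell$; a rank-one module whose filtration is the up-set generated by two incomparable indices already shows this). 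However, all your argument needs is ${\rm gr}(K)_{\mathbf j}=0$, which is immediate: $K_{\mathbf j}=K$ while $\sum_\ell K_{\mathbf j-e_\ell}\supseteq\sum_\ell B_{\mathbf j-e_\ell}=K$, so every possibly nonzero graded piece of $K$ sits at some $\mathbf i\neq\mathbf j$, where it embeds into ${\rm gr}(B)_{\mathbf i}$ and the vanishing hypothesis applies. With that one-line repair the proof is complete and matches the paper's route.
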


\begin{proof}
For (i), if $A= B$, the result follows from the standard exact sequence argument.
So to prove (i), it suffices to show $$H^0(M, E_0 \times^{G_0} {\rm gr}(B)_{i_1, \ldots, i_k}) =0 \mbox{ for any }
i_1, \ldots, i_k \in \Z$$ in terms of the $\fg_0$-reductive multi-filtration on $B$ from Lemma \ref{l.reductive} (2). But the $G_0$-module surjection $${\rm gr}(A)_{i_1, \ldots, i_k}
\to {\rm gr}(B)_{i_1, \ldots, i_k}$$ is a $G_0/R$-module surjection.
As $G_0/R$ is reductive, we have an injection as $G_0/R$-modules $${\rm gr}(B)_{i_1, \ldots, i_k}
\to {\rm gr}(A)_{i_1, \ldots, i_k}$$ which induces an injection
$$H^0(M, E_0 \times^{G_0} {\rm gr}(B)_{i_1, \ldots, i_k} )
\subset H^0(M, E_0 \times^{G_0} {\rm gr}(A)_{i_1, \ldots, i_k}).$$ Since the right hand side vanishes by the assumption, so does the left hand side. This proves (i).

(ii) is also straight-forward if $A=B$. But  the  argument
we just used in the proof of (i) shows that the assumption in (ii) implies
$$H^0(M, E_0 \times^{G_0} {\rm gr}(B)_{i_1, \ldots, i_k}) =0 \mbox{ for all }
(i_1, \ldots, i_k) \in \Z^k \setminus \{(j_1, \ldots, j_k)\}.$$ Thus we are done. \end{proof}

\subsection{SAF-condition and vanishing results}

\begin{definition}\label{d.MU}
Fix a base point $o \in \BP^1$.
Let $M$ be a complex manifold and let $\sU$ be a vector bundle on $M$ of rank $m \geq 2$.
 We say that  $(M, \sU)$ {\em satisfies SAF-condition}   (SAF standing for Single Ample Factor) if  there exist \begin{itemize} \item[(1)] a nonempty Zariski-open subset $M' \subset M$; \item[(2)]  a nonempty Euclid-open subset $\sU^o_x \subset \sU_x \setminus \{0\}$  for each point $x \in M';$ \item[(3)]  a holomorphic map $f_u: \BP^1 \to M$ for each $u \in \sU^o_x$ such that
\begin{itemize}
\item[(i)] $f_u(o) = x$;
\item[(ii)] the vector bundle $f_u^* \sU$ on $\BP^1$ has a unique ample line subbundle
 $\sL_u$ with a  splitting $ f_u^*\sU \cong \sL_u \oplus \sO_{\BP^1}^{m-1}$; and
 \item[(iii)] the fiber of $\sL_u$ at $o$ is
$\C u \subset \sU_x$.
 \end{itemize} \end{itemize}
Given such $(M, \sU)$, a vector bundle $\sV$ on $M$ is said to be {\em of trivial type} (resp. {\em of nonpositive type}) if we can choose $M'$ in (1)  and $\sU^o_x$ for each $x \in M'$ in (2) such that for any $f_u: \BP^1 \to M$ in (3), the pull-back bundle $f_u^* \sV$  on $\BP^1$ has a splitting $\sO(a_1) \oplus \cdots \sO(a_d), d = {\rm rank}(\sV)$ with $a_1 = \cdots = a_d =0$ (resp. $0 \geq a_1 \geq \cdots \geq a_d$).
For example, the dual bundle $\sU^*$ is of nonpositive type.
\end{definition}

\begin{proposition}\label{p.MU}
Let $(M, \sU)$ be a pair satisfying SAF-condition. Let $\sV$ be a vector bundle of nonpositive type on $M$. Then
\begin{itemize}
\item[(1)] $\Hom(\bigotimes^k \sU, \sV \otimes \Sym^{\ell} \sU) =0$ for $0 \leq \ell < k$; and   \item[(2)] $\Hom(\wedge^2 \sU, \sV \otimes  \Sym^2 \sU) =0. $\end{itemize}
             \end{proposition}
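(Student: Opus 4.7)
Let $\sigma$ denote a hypothetical nonzero global section of the Hom-bundle in question; it suffices to show $\sigma_{x}=0$ for every $x$ in the Zariski-dense open $M'\subset M$ supplied by the SAF-condition, since this forces $\sigma\equiv 0$ on $M$.

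\emph{Pull-back and degree analysis.} For each $u\in\sU^{o}_{x}$, pull $\sigma$ back along $f_{u}:\BP^{1}\to M$. The splittings $f_{u}^{*}\sU\cong\sL_{u}\oplus\sO^{m-1}$ (with $d:=\deg\sL_{u}\geq 1$) and $f_{u}^{*}\sV\cong\bigoplus_{i}\sO(a_{i})$ (with $a_{i}\leq 0$) decompose both source and target into direct sums of line bundles on $\BP^{1}$. A source summand of $\bigotimes^{k}f_{u}^{*}\sU$ (or of $\wedge^{2}f_{u}^{*}\sU$ for part (2)) containing $j$ factors of $\sL_{u}$ has degree $jd$, while a target summand of $f_{u}^{*}\sV\otimes\Sym^{\ell}f_{u}^{*}\sU$ (resp.\ $\otimes\Sym^{2}$) has degree $a_{i}+j'd\leq j'd$. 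Since $\Hom_{\BP^{1}}(\sO(s),\sO(t))=0$ for $s>t$, the $(j\to j')$-block of $f_{u}^{*}\sigma$ is zero whenever $j>j'$. Evaluating at the marked point $o$ and writing $\sU_{x}=\C u\oplus C_{u}$ for $C_{u}$ the fibre of $\sO^{m-1}$ at $o$, this fibrewise constraint says $\sigma_{x}$ annihilates $\bigoplus_{|I|>\ell}(\C u)_{I}\otimes(C_{u})_{I^{c}}$; specializing to $|I|=k$ yields $\sigma_{x}(u^{\otimes k})=0$ for all $u\in\sU^{o}_{x}$, hence by polarization and the Zariski-density of $\sU^{o}_{x}$ in $\sU_{x}$, the symmetric vanishing $\sigma_{x}|_{\Sym^{k}\sU_{x}}=0$.

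\emph{Inductive reduction and multi-swap.} Part (1) is reduced by induction on $k$ via the adjunction
\[
\Hom\bigl(\bigotimes^{k}\sU,\,\sV\otimes\Sym^{\ell}\sU\bigr)\;\cong\;\Hom\bigl(\bigotimes^{k-1}\sU,\,\sU^{*}\otimes\sV\otimes\Sym^{\ell}\sU\bigr),
\]
noting that $\sU^{*}\otimes\sV$ is again of nonpositive type, so the inductive hypothesis applies whenever $\ell<k-1$; this reduces part (1) to the base case $k=\ell+1$. For this base case (and for part (2)), exploit the stronger fact that the bigrading constraint $\sigma_{x}^{j\to j'}=0$ for $j>j'$ holds simultaneously with respect to \emph{every} $u\in\sU^{o}_{x}$. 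For $k$ pairwise linearly independent vectors $u_{1},\ldots,u_{k}\in\sU^{o}_{x}$ in the generic mutually-complementary configuration $u_{j}\in C_{u_{i}}$ for $i\neq j$, the constraints from each $u_{i}$-bigrading force $\sigma_{x}(u_{1}\wedge\cdots\wedge u_{k})$ to lie in $\bigcap_{i}\sV_{x}\otimes\C u_{i}\cdot\Sym^{\ell-1}\sU_{x}$. An element of $\Sym^{\ell}\sU_{x}$ divisible by each $u_{i}$ must be divisible by $u_{1}\cdots u_{k}$, a polynomial of degree $k=\ell+1>\ell$, so this intersection vanishes. Since decomposable $k$-vectors span $\wedge^{k}\sU_{x}$, we get $\sigma_{x}|_{\wedge^{k}\sU_{x}}=0$. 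A parallel double-bigrading argument (applied to pairs $(u,w)$ with $w\in C_{u}$ and $u\in C_{w}$) yields $\sigma_{x}(u^{\otimes\ell}\otimes w)=0$ for all $u,w$, and polarization slot-by-slot forces $\sigma_{x}$ to factor through the quotient of $\bigotimes^{k}\sU_{x}$ by its partially-symmetric subspaces---leaving only the antisymmetric component $\wedge^{k}\sU_{x}$---so $\sigma_{x}=0$. Part (2) is the case $k=2,\,\ell=2$, handled identically with a single pair $(u,w)$ playing the role of the $k$-tuple and the built-in antisymmetry of $\wedge^{2}\sU$ replacing one half of the swap.

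\emph{Main obstacle.} The elementary pull-back and degree comparison only forces vanishing on the symmetric diagonal $\Sym^{k}\sU_{x}$, because the pure powers $u^{\otimes k}$ span precisely that subspace. The real technical work is the multi-swap of the second step: combining the bigrading constraints for several mutually-complementary $u_{i}$ (whose simultaneous existence is built into the SAF-condition via the Zariski-density of $\sU^{o}_{x}$) in order to pin the image of a decomposable $k$-vector into a subspace of $\Sym^{\ell}\sU_{x}$ that is forced to vanish by a simple divisibility count, and then verifying that the remaining partially-symmetric components are likewise killed by polarization.
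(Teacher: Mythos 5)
Your core mechanism --- pulling $\sigma$ back along the SAF curves $f_u$, splitting against $f_u^*\sU\cong\sL_u\oplus\sO^{m-1}$ and the nonpositive splitting of $f_u^*\sV$, and reading off at $o$ that the image of a tensor carrying $u$ in at least $j$ slots is divisible by $u^{j}$ --- is exactly the engine of the paper's proof, and your handling of the symmetric diagonal is fine. But the way you reassemble $\bigotimes^k\sU_x$ at the end of part (1) has a genuine gap. After reducing to $k=\ell+1$, you only establish vanishing on $\Sym^k\sU_x$, on $\wedge^k\sU_x$, and (after polarization of $u^{\otimes\ell}\otimes w$) on the subspaces of tensors symmetric in a block of $\ell$ slots; you then assert that killing these ``partially-symmetric'' subspaces leaves only the antisymmetric component. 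That is false once $k=\ell+1\geq 4$: by Schur--Weyl, $\bigotimes^k\sU_x$ contains mixed-symmetry components (for $k=4$ the $S_{(2,2)}$ piece, already nonzero when $m=2$) which lie neither in $\wedge^k\sU_x$ nor in the span of $\Sym^k\sU_x$ and the ``symmetric-in-$\ell$-slots'' subspaces, so the final step does not close. Moreover your ``mutually-complementary configuration'' $u_j\in C_{u_i}$ for all $i\neq j$ forces $u_1,\dots,u_k$ to be linearly independent, hence $k\leq m$, a restriction the statement does not grant.

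The fix is already contained in your bigrading constraint, and it is what the paper does: no complementarity is needed. If one slot of a tensor equals $u_i\in\sU^o_x$ and the remaining slots hold \emph{arbitrary} vectors, decompose those vectors as a multiple of $u_i$ plus an element of $C_{u_i}$; every resulting monomial has at least one $\C u_i$-factor, so its image is divisible by $u_i$. Hence for any pairwise linearly independent $u_1,\dots,u_k\in\sU^o_x$ one gets directly
$$\sigma_x(u_1\otimes\cdots\otimes u_k)\ \in\ \bigcap_{i=1}^{k}\sV_x\otimes\bigl(u_i\odot\Sym^{\ell-1}\sU_x\bigr)=0,$$
since the product $u_1\cdots u_k$ has degree $k>\ell$; and decomposable tensors with entries ranging over the open set $\sU^o_x$ span $\bigotimes^k\sU_x$, so $\sigma_x=0$. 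This one-stroke argument (the paper's) handles all $\ell<k$ with no induction, no symmetric/antisymmetric bookkeeping, and no bound on $k$ relative to $m$. For part (2), note also that the degree count alone only pins $\sigma_x(u\wedge w)$ into $\sV_x\otimes\C(u\odot w)$; your appeal to ``built-in antisymmetry'' must be expanded into the paper's closing step: bilinearity forces $\sigma_x(u\wedge w)=c\otimes(u\odot w)$ for a fixed $c\in\sV_x$, and antisymmetry of the source against symmetry of the target then gives $c=0$.
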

\begin{proof}
 Given $\varphi \in \Hom(\bigotimes^k \sU, \sV \otimes \Sym^{\ell} \sU)$,
let $u_1, \ldots, u_k \in \sU^o_x$ be $k$ general vectors  at a point $x \in M'.$ We may assume that $u_1, \ldots, u_k$ are
pairwise linearly independent.  Let $f_{u_i}: \BP^1 \to M$ and $\sL_{u_i} \subset f_{u_i}^* \sU$ be as in Definition \ref{d.MU}. From the splitting type $f_{u_i}^* \sU \cong \sL_{u_i} \otimes \sO^{m-1}$, we can find sections $$s^1_{i}, \cdots, s^k_{i} \in H^0(\BP^1, f_{u_i}^* \sU)$$ such that $s^j_{i}(o) = u_j$ for $1 \leq j \leq k$ and
$s^i_{i}$ is a section of $\sL_{u_i}$. Since $\sV$ is of nonpositive type and  $f_{u_i}^* \sU \cong \sL_{u_i} \otimes \sO^{m-1},$ we see  that
$\varphi(s^1_{i}, \ldots, s^k_{i})$ is a section of the subbundle $$ f_{u_i}^*\sV \otimes (\sL_{u_i} \odot \Sym^{\ell-1} f_{u_i}^* \sU) \subset f_{u_i}^*(\sV \otimes \Sym^{\ell} \sU).$$ Thus for each $1 \leq i \leq k$, $$\varphi_x(u_1, \cdots, u_k) \in  \sV_x \otimes (u_i \odot \Sym^{\ell-1} \sU_x).$$ It follows that $$\varphi_x(u_1, \cdots, u_k) \in  \bigcap_{i=1}^k \left( \sV_x \otimes (u_i \odot \Sym^{\ell-1} \sU_x) \right).$$
Since $u_1, \ldots, u_k$ are pairwise linearly independent with $k > \ell$, the intersection on the right must be zero. This shows $\varphi =0$, proving (1).

Given $\varphi \in \Hom(\wedge^2 \sU, \sV \otimes \Sym^2 \sU), $ choose general two (hence linearly independent) vectors $u, v \in \sU^o_x$ at $x \in M'$.  As above, we can choose two sections $s_u \in H^0(\BP^1, \sL_u)$ and $s_v \in H^0(\BP^1, f_u^* \sU)$ such that $s_u(o) = u$ and $s_v(o) = v$.
Then $f_u^* \sU \cong \sL_u \oplus \sO_{\BP^1}^{m-1}$ implies that $\varphi(s_u \wedge s_v)$ must be a section of the subbundle $$ f_u^* \sV \otimes (\sL_u \odot f_u^* \sU) \subset f_u^* \sV \otimes \Sym^2 f_u^* \sU.$$
Thus $$\varphi_x(u \wedge v) \in \sV_x \otimes (u \odot \sU_x).$$
Since this must hold with $u$ and $v$ switched, we have
$$\varphi_x(u \wedge v) \in (\sV_x \otimes (u \odot \sU_x)) \cap (\sV_x \otimes (v \odot \sU_x)) = \sV_x \otimes (u \odot v).$$  By continuity, this holds for all $u, v \in \sU_x$. But
a linear homomorphism $\varphi_x \in \Hom( \wedge^2 \sU_x,
\sV_x \otimes \Sym^2 \sU_x)$ satisfying $$\varphi_x(u \wedge v) \in \sV_x \otimes (u \odot v)$$ for all $u, v \in \sU_x$ must be zero.
In fact, by the linearity in $u$ and $v$, such a homomorphism should be given by a fixed element $c \in \sV_x$ as $\varphi_x(u \wedge v) = c \otimes (u \odot v)$. Then the anti-symmetry on the
left hand side and the symmetry on the right hand side imply $\varphi_x = c =0$. This proves (2).
\end{proof}

 \begin{proposition}\label{p.symbol}
 Let $(M,\sU)$ be a pair satisfying SAF-condition and let $\sQ$ be a vector bundle of trivial type on $M$.
 Let $\varphi\in \Hom(\wedge^2( \sU \otimes \sQ), \Sym^2 \sU)$ be a homomorphism of vector bundles. Then there exist  a nonnegative integer $k$ and a section $\omega \in H^0(M, \wedge^2 \sQ^*)$ such
 that \begin{itemize}
  \item[(i)] for each $x \in M$,
 $u, v \in \sU_x$, and $p,q \in \sQ_x$, we have $$\varphi_x (u \otimes p, v \otimes q)
 = \omega_x(p,q) u \odot v;$$
 \item[(ii)] the subset $M_o := \{ x \in  M, \ {\bf n}_{\omega_x} = k \},$ where ${\bf n}_{\omega_x}$ is the nullity of the presymplectic vector space $(\sQ_x, \omega_x)$,  is nonempty and Zariski-open in $M$;
 \item[(iii)]  the pair $(M_o, \sU|_{M_o})$ satisfies SAF-condition; and
          \item[(iv)] both  the subbundle $\sN \subset \sU|_{M_o}$ defined by the null-spaces of $\omega_x, x \in M_o,$ and the quotient bundle $\sQ^{\flat}:= (\sQ|_{M_o})/\sN$ are of trivial type.
               \end{itemize}
 \end{proposition}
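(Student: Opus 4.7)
The plan is to split $\wedge^2(\sU \otimes \sQ)$ into its $S_2 \times S_2$-isotypic components and treat each by a different technique. Write
$$ \wedge^2(\sU \otimes \sQ) \cong (\wedge^2 \sU \otimes \Sym^2 \sQ) \oplus (\Sym^2 \sU \otimes \wedge^2 \sQ) $$
and decompose $\varphi = \varphi_1 + \varphi_2$ accordingly, with $\varphi_1 \in \Hom(\wedge^2 \sU \otimes \Sym^2 \sQ, \Sym^2 \sU)$ and $\varphi_2 \in \Hom(\Sym^2 \sU \otimes \wedge^2 \sQ, \Sym^2 \sU)$. The desired $\omega$ will emerge from $\varphi_2$, while $\varphi_1$ will turn out to vanish.

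To dispose of $\varphi_1$, I will regard it as an element of $\Hom(\wedge^2 \sU, \Sym^2 \sQ^* \otimes \Sym^2 \sU)$. Since $\sQ$ is of trivial type, so is $\Sym^2 \sQ^*$, which is therefore of nonpositive type, and Proposition \ref{p.MU}(2) applied with $\sV = \Sym^2 \sQ^*$ gives $\varphi_1 = 0$. For $\varphi_2$, viewed as $\tilde\varphi_2 \colon \Sym^2 \sU \to \Sym^2 \sU \otimes \wedge^2 \sQ^*$, I will fix $x \in M'$ and $u \in \sU^o_x$ and pull back along $f_u$. Using $f_u^* \sU \cong \sL_u \oplus \sO_{\BP^1}^{m-1}$, the unique ample summand of $f_u^*\Sym^2 \sU$ is $\sL_u^{\otimes 2}$; triviality of $f_u^* \wedge^2 \sQ^*$ therefore forces this summand to map into $\sL_u^{\otimes 2} \otimes f_u^* \wedge^2 \sQ^*$. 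Evaluation at $o$ yields $\tilde\varphi_{2,x}(u \odot u) \in (u \odot u) \otimes \wedge^2 \sQ^*_x$ for every $u \in \sU^o_x$. A polarization argument (any endomorphism of $\Sym^2 \sU_x$ that preserves every line $\C \cdot u^2$ is a scalar multiple of the identity) then produces a unique $\omega_x \in \wedge^2 \sQ^*_x$ with $\tilde\varphi_{2,x} = \mathrm{id} \otimes \omega_x$. Holomorphicity in $x$ gives a global section $\omega \in H^0(M, \wedge^2 \sQ^*)$, and unravelling the identity $u \otimes p \wedge v \otimes q = (u \odot v) \otimes (p \wedge q) + (u \wedge v) \otimes (p \odot q)$ inside $\wedge^2(\sU \otimes \sQ)$ yields the formula in (i).

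Statement (ii) is standard: ${\bf n}_{\omega_x}$ is upper semicontinuous in $x$, so taking $k$ to be its generic (minimum) value produces a nonempty Zariski-open locus $M_o \subset M$ on which the nullity is constantly $k$; then $\sN := \ker(\omega|_{M_o} \colon \sQ|_{M_o} \to \sQ^*|_{M_o})$ is a rank-$k$ subbundle and $\sQ^\flat := (\sQ|_{M_o})/\sN$ inherits a fiberwise nondegenerate form $\omega^\flat$. Granted that the curves $f_u$ can be chosen to lie inside $M_o$ for $x$ in some Zariski-open subset of $M_o$, both (iii) and (iv) follow at once: pulling back the exact sequence $0 \to \sN \to \sQ|_{M_o} \to \sQ^\flat \to 0$ along such an $f_u$, triviality of $f_u^* \sQ$ makes $f_u^* \omega$ a constant antisymmetric form on a trivial bundle, so $f_u^* \sN$ is the kernel of a constant linear map between trivial bundles and is itself trivial; the quotient $f_u^* \sQ^\flat$ is then also trivial (a quotient of a trivial bundle by a trivial subbundle, using $H^1(\BP^1, \sO) = 0$).

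The main obstacle is justifying the parenthetical condition in the previous paragraph: for $x$ in some nonempty Zariski-open subset of $M_o$, I must exhibit at least one $u \in \sU^o_x$ with $f_u(\BP^1) \subset M_o$. Since $M_o$ is Zariski-open, the condition $f_u(\BP^1) \subset M_o$ is Euclid-open in $u$, so the real issue is nonemptyness. My plan is to exploit the irreducibility of the total family $\{f_u : x \in M', u \in \sU^o_x\}$ together with the fact that $M \setminus M_o$ is a proper Zariski-closed subvariety, arguing that the sub-family of curves meeting $M \setminus M_o$ cannot cover $M_o$ fiber-wise densely over a Zariski-open sub-locus, and shrinking $M_o$ if necessary to a Zariski-open sub-locus where the nullity remains constantly $k$ and the conclusion persists.
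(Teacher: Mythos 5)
Your treatment of (i) and (ii) is essentially the paper's argument: the same decomposition of $\wedge^2(\sU\otimes\sQ)$, the same use of Proposition \ref{p.MU}(2) with $\sV=\Sym^2\sQ^*$ to kill the $\wedge^2\sU\otimes\Sym^2\sQ$ part, and the same exploitation of the single ample factor $\sL_u\subset f_u^*\sU$ to pin the image down to $\C\,u\odot u$ (the paper evaluates $\varphi$ on sections along two curves $f_u,f_v$ and switches $u,v$, while you evaluate on the Veronese cone and polarize; both work, modulo the harmless normalization constant in the identification of $\wedge^2(\sU\otimes\sQ)$).

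The genuine gap is in (iii). You leave unproved the claim that for $x$ in a nonempty Zariski-open subset of $M_o$ there is some $u\in\sU^o_x$ with $f_u(\BP^1)\subset M_o$, and the plan you sketch to fill it — irreducibility of ``the total family'' plus a covering/dimension count on the subfamily of curves meeting $M\setminus M_o$ — is not available in this setting: Definition \ref{d.MU} gives no algebraic family at all. Here $M$ is merely a complex manifold (in the application, a Euclidean neighborhood of a rational curve), $\sU^o_x$ is only a Euclidean-open set of vectors, and the $f_u$ are arbitrary holomorphic maps, so there is no parameter space on which an irreducibility or dimension argument could run, and a priori every curve through a given point of $M_o$ could meet $M\setminus M_o$. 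The fix is an observation you already make, but only after assuming the curve lies in $M_o$: for any $x\in M'$ and $u\in\sU^o_x$, triviality of $f_u^*\sQ$, hence of $f_u^*\wedge^2\sQ^*$, forces $f_u^*\omega$ to be a constant antisymmetric form on a trivial bundle, so the nullity ${\bf n}_{\omega_y}$ is constant along $y\in f_u(\BP^1)$. Consequently, if the base point $x$ lies in $M'\cap M_o$ (nonempty, since both complements are proper analytic subsets of the connected $M$), the whole curve $f_u(\BP^1)$ lies in $M_o$, and the SAF-condition for $(M_o,\sU|_{M_o})$ holds with the Zariski-open set $M'\cap M_o$ and the unchanged $\sU^o_x$ and $f_u$; your argument for (iv) then goes through as written. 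Note also that you may not ``shrink $M_o$'': $M_o$ is fixed by (ii) as the exact locus of nullity $k$; only the auxiliary open subset inside it entering the SAF-condition may be shrunk.
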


 \begin{proof}
 Since $\Sym^2 \sQ^*$ is of trivial type, Proposition \ref{p.MU} (2) gives $$\Hom(\wedge^2 \sU \otimes \Sym^2 \sQ, \Sym^2 \sU) = \Hom(\wedge^2 \sU, \Sym^2 \sQ^* \otimes \Sym^2 \sU) =0.$$ This implies $\varphi \in \Hom(\Sym^2 \sU \otimes \wedge^2 \sQ, \Sym^2 \sU).$

For two general $u, v \in \sU^o_x$ and any $q, p \in Q_x$ at a  point $x \in M'$, we have $f_u : \BP^1 \to M$ and
$\sL_u \subset f_u^* \sU$ described in Definition \ref{d.MU} such that $f_u^* \sQ$ is a trivial bundle. Choose sections $$s_u \in H^0(\BP^1, \sL_u), \  s_v \in H^0(\BP^1, f_u^* \sU) \mbox{ and } s_p, s_q \in H^0(\BP^1, f_u^* \sQ)$$ such that $s_u(o) = u, s_v(o) =v, s_p(o) = p, \mbox{ and } s_q(o) = q$. Then
 $$\varphi((s_u \otimes s_p) \wedge (s_v \otimes s_q)) \in H^0( \BP^1, \sL_u \odot f_u^* \sU),$$ which implies $\varphi_x( (u \otimes p) \wedge (v \otimes q)) \in u \otimes \sU_x.$
Applying the same argument  with $u$ and $v$ switched, we have
 $$\varphi_x((u\otimes p) \wedge (v \otimes q)) \in \C u \odot v.$$
By continuity, this must hold for any $u, v \in \sU_x$ and any $x \in M$.
 It follows that for each $x \in M$, there exists a unique $\omega_x \in \wedge^2 \sQ_x^*$ such that
 $$\varphi_x((u\otimes p) \wedge (v \otimes q)) = \omega_x(p,q) \ u \odot v.$$
 This defines $\omega \in H^0(M, \wedge^2 \sQ^*)$ satisfying (i).

The existence of $k \geq 0$ satisfying (ii) is automatic.

As $\sQ$ is of trivial type, if
${\rm \bf n}_{\omega_x} = k$ at a point $x \in M'$, then it holds at every point of $f_u(\BP^1) \subset M$
for  $u \in \sU^o_x$, implying $f_u(\BP^1) \subset M_o$. (iii) follows.

As $\sQ$ is of trivial type, so is $\wedge^2 \sQ^*$. Thus
$f_u^* \sN$ is a trivial subbundle of $f_u^* \sQ$, which proves (iv). \end{proof}

\begin{proposition}\label{p.grvanish} In the setting of Proposition \ref{p.symbol}, let us assume that
$M = M_o$. Denote by $sp_{\omega^{\flat}}(\sQ^{\flat})$  the vector bundle whose fiber at $x \in M$ is the symplectic Lie algebra $\fsp_{\omega^{\flat}_x}(\sQ^{\flat}_x)$.  Define
$$ \begin{array}{lll}
&  & \sB_0 := \Sym^2 \sU \\ & & \sB_1:= \sU \otimes \sQ^{\flat} \\
\sA_1:= \wedge^2 \Sym^2 \sU &  & \sB_2:= \sU \otimes \sN \\ \sA_2:= \Sym^2 \sU \otimes \sU \otimes \sN  &  & \sB_3 := \sU^* \otimes \sU
 \\ \sA_3:= \Sym^2 \sU \otimes \sU \otimes \sQ^{\flat} & \mbox{ and } & \sB_4 := sp_{\omega^{\flat}}(\sQ^{\flat})\\
\sA_4:= \wedge^2 (\sU \otimes \sN) &  & \sB_5:= \sN^* \otimes \sN \\
 \sA_5:= \sU \otimes \sN \otimes \sU \otimes \sQ^{\flat} &  & \sB_6:= Hom(\sQ^{\flat}, \sN)
  \\ \sA_6:= \wedge^2 (\sU \otimes \sQ^{\flat}) &  & \sB_7:= \sU^* \otimes \sQ^{\flat}\\ &  &
\sB_8:= \sU^* \otimes \sN   \\ &  &  \sB_9:= \Sym^2 \sU^*. \end{array} $$
Then $\Hom( \sA_i, \sB_j) = 0$ in one of the following two cases.
\begin{itemize} \item[(a)] $1 \leq i \leq 3$ and $0 \leq j \leq 9;$ \item[(b)] $4 \leq i \leq 6$ and $1 \leq j \leq 9$. \end{itemize}
\end{proposition}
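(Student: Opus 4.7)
The plan is to derive everything from Proposition \ref{p.MU}(1) by a bookkeeping of $\sU$-factors. First I observe that each $\sA_i$ is a $\GL(\sU)$-equivariant direct summand of $\sU^{\otimes k_i} \otimes \sT_i$, where $\sT_i$ is a vector bundle of trivial type built from tensor products of $\sN$ and $\sQ^{\flat}$ (both of trivial type by Proposition \ref{p.symbol}(iv)): explicitly $k_1=4$ with $\sT_1$ trivial, $k_2=3$ with $\sT_2=\sN$, $k_3=3$ with $\sT_3=\sQ^{\flat}$, $k_4=2$ with $\sT_4=\sN^{\otimes 2}$, $k_5=2$ with $\sT_5=\sN \otimes \sQ^{\flat}$, and $k_6=2$ with $\sT_6=(\sQ^{\flat})^{\otimes 2}$. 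Because $\GL(\sU)$ is reductive, this summand structure yields an inclusion
\[ \Hom(\sA_i, \sB_j) \ \hookrightarrow\ \Hom(\sU^{\otimes k_i}\otimes \sT_i, \sB_j) \ = \ \Hom\bigl(\sU^{\otimes k_i},\ \sB_j \otimes \sT_i^*\bigr), \]
and $\sT_i^*$ is again of trivial type.

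Next I would rewrite each $\sB_j$ in the form $\sV_j \otimes \Sym^{\ell_j} \sU$ with $\sV_j$ of nonpositive type, namely $\ell_0=2$ with $\sV_0$ trivial; $\ell_j=1$ for $j=1,2,3$ with $\sV_j \in \{\sQ^{\flat}, \sN, \sU^*\}$; and $\ell_j=0$ for $4 \leq j \leq 9$ with the remaining factor built from $\sU^*$, $\sN$, $\sN^*$, $\sQ^{\flat}$, $(\sQ^{\flat})^*$, and the subbundle $sp_{\omega^{\flat}}(\sQ^{\flat}) \subset \End(\sQ^{\flat})$. Using that tensor products and subbundles of nonpositive-type bundles along the rational curves $f_u$ remain of nonpositive type, each $\sV_j$ is nonpositive, and hence so is $\sV_j \otimes \sT_i^*$ (because $\sT_i^*$ is trivial).

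Applying Proposition \ref{p.MU}(1) with $k=k_i$, $\ell=\ell_j$, and $\sV=\sV_j \otimes \sT_i^*$ forces the $\Hom$-group on the right to vanish whenever $\ell_j < k_i$. In case (a) we have $k_i \geq 3$ and $\ell_j \leq 2$ for every $j$, while in case (b) we have $k_i = 2$ and $\ell_j \leq 1$ for every $j \geq 1$; so the inequality $\ell_j < k_i$ holds throughout the listed ranges, yielding $\Hom(\sA_i, \sB_j)=0$ in both parts. The main obstacle is the bookkeeping in the second step, and in particular verifying that $sp_{\omega^{\flat}}(\sQ^{\flat})$ is of nonpositive type; for this one uses that $\omega^{\flat}$ is a global section of $\wedge^2(\sQ^{\flat})^*$ and that $f_u^* \sQ^{\flat}$ is trivial, so $f_u^* sp_{\omega^{\flat}}(\sQ^{\flat})$ is cut out of a trivial bundle by a constant tensorial condition and is itself trivial. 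With this approach no appeal to Proposition \ref{p.MU}(2) appears to be needed.
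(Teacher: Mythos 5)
Your proposal is correct and follows essentially the same route as the paper: the paper's proof likewise rewrites each $\Hom(\sA_i,\sB_j)$ as $\Hom(\sA', \sV\otimes \Sym^{\ell}\sU)$ with $\sA'$ a quotient (summand) of $\sU^{\otimes k}$, $\sV$ of nonpositive type absorbing the trivial-type factors $\sN$, $\sQ^{\flat}$ and their duals, and $\ell<k$, and then invokes Proposition \ref{p.MU}(1) only, just as you do. Your explicit tabulation of $k_i$ and $\ell_j$ and the verification that $sp_{\omega^{\flat}}(\sQ^{\flat})$ is of nonpositive type simply spell out the bookkeeping the paper leaves implicit after its single worked example $\Hom(\sA_3,\sB_3)$.
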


\begin{proof}
In each of the cases listed in (a) or (b), we can rewrite
$\Hom(\sA_i, \sB_j)$ in the form $\Hom(\sA', \sV \otimes \sB')$ for a quotient bundle $\sA'$ of
$\bigotimes^k \sU$, a vector bundle $\sV$ of nonpositive type and  a vector bundle $\sB'= \Sym^{\ell} \sU$ with $\ell < k$.
For example,
$$\Hom(\sA_3, \sB_3) = \Hom(\Sym^2 \sU \otimes \sU \otimes \sQ^{\flat}, \sU^* \otimes \sU)
= \Hom(\sA', \sV \otimes \sB')$$ with $\sA' = \Sym^2 \sU \otimes \sU,$
$\sV = (\sQ^{\flat})^* \otimes \sU^*,$ and $\sB' = \sU$.
Thus the vanishing follows from Proposition \ref{p.MU} (1). \end{proof}

\subsection{Cartan connection in the setting of Subsection \ref{ss.positive}}

\begin{notation}\label{n.Cpositive}
Let $(\fg_0, \fg_-)$ be as in Notation \ref{n.positive} and let $G_0 \subset {\rm gr}\Aut(\fg_-)$ be the connected algebraic group whose Lie algebra is $\fg_0 \subset {\rm gr}\aut(\fg_-)$. The kernel of the natural surjective homomorphism
$$ \iota:  \GL(U) \times \Sp_{\omega}(Q) \to G_0$$ arising from $\fgl(U) \oplus \spo(Q) \cong \fg_0$ is $\{ \pm 1\}$. Let $\fg$ be the universal prolongation of $(\fg_0, \fg_-)$ and let $G^0 \subset G \subset \GL(\fg)$ be the connected algebraic subgroups in Proposition \ref{p.spgrade}.

Let $M$ be a complex manifold with a filtration $(F^{-k}M, k \in \Z)$ of type $\fg_-$.
Assume that we are given a $G_0$-structure subordinate to the filtration, i.e., a $G_0$-principal subbundle $E_o \subset {\rm gr}{\rm Fr}(M)$. By the natural representation of $G_0$ on
$\fg_{-1}$, we have  the associated vector bundle $\sD:= E_0 \times^{G_0} \fg_{-1}$ with a natural inclusion $\sD \subset TM$.\end{notation}

\begin{remark}\label{r.kernel}
Because of the kernel of $\iota$, the vector spaces $U$ and $Q$ do not have natural $G_0$-module structures. This is why we need to require the condition (1) in the next theorem. On the other hand, vector spaces like $U \otimes Q$ and $\Sym^2 Q$ have natural $G_0$-module structures because ${\rm Ker}(\iota)$ acts trivially on them. \end{remark}

\begin{theorem}\label{t.Cpositive}
In the setting of Notation \ref{n.Cpositive}, assume that there exist vector bundles
$\sU$ and $\sQ$ on $M$ such that
 \begin{itemize} \item[(1)] there are  isomorphisms  $\sD \cong \sU \otimes \sQ$ and  $TM/\sD \cong \Sym^2 \sU$ as vector bundles on $M$; \item[(2)] the pair $(M, \sU)$ satisfies SAF-condition and $\sQ$ is of trivial type. \end{itemize} Then there exists a  Cartan connection $(E \to M, \omega)$ of type $(G, G^0)$ on $M$ related to $E_0$ in the way described in Theorem \ref{t.Cartan}. \end{theorem}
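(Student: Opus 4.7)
The strategy is to verify the cohomological hypothesis of Theorem \ref{t.Cartan}. By Theorem \ref{t.positive}, we have $\nu = 2$ and $\mu = 2$, so the task is to show
\[ H^0(M,\, C^{\ell,2}(E_0)/\partial(C^{\ell,1}(E_0))) \;=\; 0 \quad \text{for every } 1 \leq \ell \leq 4. \]
Once this is established, Theorem \ref{t.Cartan} produces the required Cartan connection $(E \to M, \Upsilon)$ of type $(G, G^0)$, with the stipulated compatibility between $E_0$ and the quotient of $E$ by $G^1 \subset G^0$.

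First I would extract the auxiliary geometric data. The symbol algebra of the filtration at every $x \in M$ is, by hypothesis, isomorphic to $\fg_-$, so the Frobenius bracket $\wedge^2 \sD \to TM/\sD$ translates, under $\sD \cong \sU \otimes \sQ$ and $TM/\sD \cong \Sym^2 \sU$, to a section $\varphi$ of $\Hom(\wedge^2(\sU \otimes \sQ),\, \Sym^2 \sU)$. Proposition \ref{p.symbol} then produces a global presymplectic form $\omega \in H^0(M, \wedge^2 \sQ^*)$, a subbundle $\sN \subset \sQ$ (the pointwise null space), and a quotient $\sQ^{\flat} = \sQ/\sN$; the assumption that the symbol type is constant (built into the $G_0$-structure) forces the nullity to be constant on all of $M$, so that $\sN$ and $\sQ^{\flat}$ are of trivial type and $(M, \sU)$ still satisfies SAF. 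This gives the reductive filtration $0 \subset N \subset Q$ of the $G_0$-module $Q$ (stable because $G_0 = (\GL(U) \times \Sp_\omega(Q))/\{\pm 1\}$), which by Lemma \ref{l.reductive} induces $\fg_0$-reductive multi-filtrations on $\fg_{-1} = U\otimes Q$, on $\fg_0 = \fgl(U) \oplus \spo(Q)$, and (via the structure maps $\pi_1, \pi_2$ of Theorem \ref{t.positive}) on $\fg_1 = \Hom(U,Q)$ and $\fg_2 = \Sym^2 U^*$; $\Sym^2 U$ carries only the trivial filtration.

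Having these in place, I would form the induced $\fg_0$-reductive multi-filtration on the $G_0$-module $C^{\ell,2}(\fg_0) = \Hom(\wedge^2 \fg_-, \fg)_\ell$ for each $\ell \in \{1,2,3,4\}$ (by Lemma \ref{l.reductive}(3)(5)), and apply Lemma \ref{l.trivial}(i) to the surjection $C^{\ell,2}(\fg_0) \to C^{\ell,2}(\fg_0)/\partial C^{\ell,1}(\fg_0)$: it suffices to show $H^0(M, E_0 \times^{G_0} {\rm gr}\,C^{\ell,2}(\fg_0)) = 0$ for every graded piece. Decomposing $\wedge^2 \fg_-$ as $\wedge^2 \fg_{-1} \oplus (\fg_{-1} \otimes \fg_{-2}) \oplus \wedge^2 \fg_{-2}$ and refining by the filtration on $Q$, every graded piece of $C^{\ell,2}(E_0)$ becomes a subquotient of some $\Hom(\sA_i, \sB_j)$ with $\sA_i, \sB_j$ precisely in the lists $\sA_1,\dots,\sA_6$ and $\sB_0,\dots,\sB_9$ of Proposition \ref{p.grvanish}; the few residual cases $\Hom(\bigotimes^k \sU,\, \sV \otimes \Sym^\ell \sU)$ with $\ell < k$ and $\sV$ of nonpositive type are directly killed by Proposition \ref{p.MU}(1).

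The main obstacle will be the combinatorial bookkeeping, especially for $\ell = 3$ and $\ell = 4$, where the targets $\fg_0, \fg_1, \fg_2$ themselves split into several graded pieces indexed by the filtration of $Q$, leading to a large but finite tree of cases $\Hom({\rm gr}\wedge^2\fg_{-a,-b},\,{\rm gr}\fg_{-a-b+\ell})$ that must each be matched to an $(\sA_i, \sB_j)$-pair in Proposition \ref{p.grvanish} (with $\sA_i$ always a quotient of $\bigotimes^k \sU$ for the appropriate $k$ and $\sB_j$ packaged from tensor combinations of $\sU^*, \sQ^{\flat}, \sN, (\sQ^{\flat})^*, \sN^*, \Sym^2 \sU^*$, all of nonpositive type). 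The list $\sA_1,\dots,\sA_6$ and $\sB_0,\dots,\sB_9$ of Proposition \ref{p.grvanish} is evidently calibrated to cover exactly this enumeration, so no new vanishing input will be required; it is only a matter of verifying that every graded piece that actually appears has the expected form. Once all four vanishings are in hand, Theorem \ref{t.Cartan} delivers the Cartan connection.
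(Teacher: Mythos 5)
Your proposal follows essentially the same route as the paper's proof: apply Proposition \ref{p.symbol} to the Frobenius bracket tensor (giving $M=M_o$), use the $\fg_0$-reductive filtrations coming from $0\subset{\rm Null}_{\omega}\subset Q$ to break $C^{\ell,2}(\fg_0)$ into graded pieces of the form $\Hom(A_i,B_j)$, and conclude via Proposition \ref{p.grvanish}, Lemma \ref{l.trivial}(i) and Theorem \ref{t.Cartan}. The only bookkeeping point you leave implicit, and which the paper verifies, is that the pairs $\Hom(A_i,B_0)$ with $4\le i\le 6$ (the one family not covered by Proposition \ref{p.grvanish}, and genuinely non-vanishing in general) never occur, since a target in $\fg_{-2}$ coming from $\wedge^2\fg_{-1}^*$ would force $\ell=0$.
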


\begin{proof}
 By the condition (2), we can apply Proposition \ref{p.symbol} with $\varphi$ given by the Frobenius bracket tensor of the distribution $\sD \subset TM$. Since we are assuming that the symbol of $\sD$ is isomorphic to $\fg_-$ at every point of $M$, we have $M=M_o$. Consequently, we can apply Proposition \ref{p.grvanish} to  the vector bundles arising from $\sU$ and $\sQ$ on $M$.

We use this to check
$$H^0(M, C^{\ell, 2}(E_0) / \partial (C^{\ell,1}(E_0)) ) =0 \mbox{ for any }  \ell \geq 1,$$ the condition in Theorem \ref{t.Cartan}, from which the existence of the desired Cartan connection follows.

Note that the nilradical of $\fg_0$ is ${\mathfrak r} = \Hom(Q^{\flat}, {\rm Null}_{\omega})$.
We will use the following $\fg_0$-reductive filtrations
\begin{eqnarray*} \fg_{-2}&=& \Sym^2 U  \supset 0 \\
 \fg_{-1}&=&  U \otimes Q \supset U \otimes {\rm Null}_{\omega} \supset 0\\
\fg_0&=& \spo(Q) \oplus \fgl(U) \supset \spo(Q) = {\mathfrak r} \sd (\fgl({\rm Null}_{\omega}) \oplus \fsp_{\omega^{\flat}}(Q^{\flat}))
\\ & & \supset {\mathfrak r} \sd \fgl({\rm Null}_{\omega}) \supset {\mathfrak r} \supset 0\\
\fg_1 &=&\Hom(U, Q) \supset  \Hom(U, {\rm Null}_{\omega})  \supset 0 \\
\fg_2&=& \Sym^2U^* \supset 0.\end{eqnarray*}
Denoting the unipotent radical of $G_0$ by $R$,
the associated graded objects are the following $G_0/R$-modules
\begin{eqnarray*} \fg_{-2} &:& B_0:= \Sym^2 U, \\
\fg_{-1} &:& B_1:=U \otimes Q^{\flat}, \ B_2:= U \otimes {\rm Null}_{\omega}, \\
\fg_0 &:& B_3:= \fgl(U), \ B_4:= \fsp_{\omega^{\flat}}(Q^{\flat}), \\ & &  B_5:= \fgl({\rm Null}_{\omega}), \ B_6:= \Hom(Q^{\flat}, {\rm Null}_{\omega}), \\
\fg_1 &:& B_7:= \Hom(U, Q^{\flat}), \ B_8:=  \Hom(U, {\rm Null}_{\omega}),\\
\fg_2 &:& B_9:= \Sym^2 U^*.\end{eqnarray*}
By Lemma \ref{l.reductive} (4) and (5), they induce $\fg_0$-reductive filtrations of $\fg_{-1}^*$ and $\fg_{-2}^*$ whose graded objects are dual $G_0/R$-modules of the graded objects of $\fg_{-1}$ and $\fg_{-2}$ respectively. By Lemma \ref{l.reductive} (1), (2) and (3), they induce a $\fg_0$-reductive multi-filtration on $\wedge^2 \fg_-^*$ whose graded objects are one of the dual $G_0/R$-modules of the following
$$A_1:= \wedge^2 \Sym^2 U, \ A_2:= \Sym^2 U \otimes U \otimes {\rm Null}_{\omega}, \ A_3:= \Sym^2 U \otimes U \otimes Q^{\flat}, $$ $$A_4:= \wedge^2(U \otimes {\rm Null}_{\omega}), \ A_5:= U \otimes {\rm Null}_{\omega} \otimes U \otimes Q^{\flat}, \ A_6:= \wedge^2(U \otimes Q^{\flat}).$$
For each $\ell \geq 1$, they induce $\fg_0$-reductive multi-filtrations on factors of
$$C^{\ell,2}(\fg_0) = \bigoplus_{i,j\in \N} (\fg_{-i}^* \wedge \fg_{-j}^*) \otimes \fg_{-i-j+\ell}$$
whose graded objects are of the form $\Hom(A_i, B_j)$ with $ 1 \leq i \leq 6$ and $1 \leq j \leq 9$, or $\Hom(A_1, B_0), \Hom(A_2, B_0), \Hom(A_3, B_0)$.
Using Notation \ref{n.Cpositive}, we see that
 $$H^0(M, E_0 \times^{G_0} \Hom(A_i, B_j)) = \Hom(\sA_i, \sB_j)$$
where the bundles on the right hand side are as defined in Proposition \ref{p.grvanish}.
Thus the vanishing results in Proposition \ref{p.grvanish} and Lemma \ref{l.trivial} (i) verify
the condition $H^0(M, C^{\ell, 2}(E_0)/\partial(C^{\ell, 1}(E_0))) =0$ for all $\ell \geq 1$. \end{proof}

\subsection{Cartan connection in the setting of Subsection \ref{ss.rank0}}

\begin{notation}\label{n.Crank0}
Let $(\fg_0, \fg_-)$ be as in Notation \ref{n.rank0} and let $G_0 \subset \GL(\fg_-)$ be the connected algebraic group whose Lie algebra is $\fg_0 \subset \fgl(\fg_-)$. The kernel of the natural surjective homomorphism
$$ \iota:  \Hom(U,Q) \sd (\GL(U) \times \GL(Q)) \to G_0$$ arising from $\Hom(U,Q) \sd (\fgl(U) \oplus \fgl(Q)) \cong \fg_0$ is $${\rm Ker}(\iota) = \{ \pm 1 \in \GL(U) \times \GL(Q)\}.$$ Let $\fg$ be the universal prolongation of $(\fg_0, \fg_-)$ and let $G^0 \subset G \subset \GL(\fg)$ be the connected algebraic subgroups in Proposition \ref{p.spgraderank0}.

Let $M$ be a complex manifold of dimension equal to $\dim \fg_{-1}$ equipped with a $G_0$-structure, i.e., a $G_0$-principal subbundle $E_0 \subset {\rm Fr}(M)$.
 By the natural representation of $G_0$ on
$U \otimes Q$, we have the associated vector bundle
$\sD:= E_0 \times^{G_0} (U \otimes Q)$ with
a natural inclusion $\sD \subset TM$.
\end{notation}

\begin{theorem}\label{t.Crank0}
In the setting of Notation \ref{n.Crank0}, assume that there exist vector bundles
$\sU$ and $\sQ$ on $M$ such that
 \begin{itemize} \item[(1)] there are  isomorphisms  $\sD \cong \sU \otimes \sQ$ and  $TM/\sD \cong \Sym^2 \sU$ as vector bundles on $M$; \item[(2)] the pair $(M, \sU)$ satisfies SAF-condition and $\sQ$ is of trivial type. \end{itemize}
 Assume furthermore that the distribution $\sD$ is integrable. Then there exists a  Cartan connection $(E \to M, \omega)$ of type $(G, G^0)$ on $M$ related to $E_0$ in the way described in Theorem \ref{t.Cartan}. \end{theorem}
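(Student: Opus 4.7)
The plan is to deduce Theorem~\ref{t.Crank0} from Theorem~\ref{t.Cartan'} applied in the setting of Notation~\ref{n.Gst}, taking the $G_0$-invariant subspace to be $D = U \otimes Q \subset \fg_{-1}$, which corresponds under the associated bundle construction to the distribution $\sD \cong \sU \otimes \sQ \subset TM$. By Theorem~\ref{t.FH} the pair $(\fg_0, \fg_{-1})$ satisfies $\nu = 1$ and $\mu = 1$ with $\fg_{-1}$ abelian, so Notation~\ref{n.Gst} indeed applies. Accordingly, the three hypotheses of Theorem~\ref{t.Cartan'} reduce to: (i) the integrability of $\sD$, which is assumed; (ii) injectivity of $\varrho$; and (iii) the vanishing $H^0(M, C^{2,2}(E_0)/\partial C^{2,1}(E_0)) = 0$ (the only value of $\ell$ in the range $[2, \mu + 1]$). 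Unlike the proof of Theorem~\ref{t.Cpositive}, no preliminary step via Proposition~\ref{p.symbol} is needed, because the Frobenius tensor of $\sD$ vanishes by hypothesis.

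For (ii) and (iii) the strategy mirrors the proof of Theorem~\ref{t.Cpositive}: equip the relevant $G_0$-modules with $\fg_0$-reductive filtrations and reduce the desired vanishings to applications of Proposition~\ref{p.MU} via SAF. The nilradical of $\fg_0$ is $\mathfrak{r} = \Hom(U, Q)$, with reductive quotient $\fgl(U) \oplus \fgl(Q)$. Using Lemma~\ref{l.reductive} I would filter $\fg_{-1}$ as $0 \subset U \otimes Q \subset \fg_{-1}$, with graded pieces $U \otimes Q$ and $\Sym^2 U$; and $\fg_0$ as $0 \subset \mathfrak{r} \subset \mathfrak{r} + \fgl(U) \subset \fg_0$, with graded pieces $\Hom(U, Q)$, $\fgl(U)$, $\fgl(Q)$. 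These induce $\fg_0$-reductive multi-filtrations on $C^{\ell, 2}(\fg_0)$, and by Lemma~\ref{l.trivial}(i) the relevant vanishings reduce to showing $H^0(M, \Hom(\sA, \sB)) = 0$ for each graded bundle $\Hom(\sA, \sB)$ that appears.

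For (iii), the pairs $(\sA, \sB)$ arising from $C^{2,2}(\fg_0) = \Hom(\wedge^2 \fg_{-1}, \fg_0)$ have $\sA \in \{\wedge^2 \Sym^2 \sU,\ \Sym^2 \sU \otimes \sU \otimes \sQ,\ \wedge^2(\sU \otimes \sQ)\}$ and $\sB \in \{\Hom(\sU, \sQ), \fgl(\sU), \fgl(\sQ)\}$. Since $\sQ$ and $\sQ^*$ are of trivial type and $\sU^*$ is of nonpositive type, each such $\Hom(\sA, \sB)$ embeds in a bundle of the form $\Hom(\bigotimes^k \sU, \sV \otimes \Sym^\ell \sU)$ with $\ell < k$ and $\sV$ of nonpositive type, and Proposition~\ref{p.MU}(1) then yields the vanishing. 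For (ii), I would perform the analogous analysis on the kernel $K$ of the map $C^{1,2}(\fg_0)/\partial C^{1,1}(\fg_0) \to \Hom(\wedge^2 D, \fg_{-1}/D)$: its preimage $K_0 \subset C^{1,2}(\fg_0) = \Hom(\wedge^2 \fg_{-1}, \fg_{-1})$ is the sum of the five components of the natural decomposition other than $\Hom(\wedge^2 D, \Sym^2 U)$, and each associated graded bundle again rewrites as $\Hom(\bigotimes^k \sU, \sV \otimes \Sym^\ell \sU)$ with $\ell < k$, so the vanishing propagates to $K$ via the surjection $K_0 \twoheadrightarrow K$ and Lemma~\ref{l.trivial}(i).

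The main obstacle will be the bookkeeping for (ii): enumerating the graded pieces of $K_0$ and, for each, converting $\Hom(\sA, \sB)$ into the normal form required by Proposition~\ref{p.MU}(1) so that the strict inequality $\ell < k$ is visible. Routing through $K_0$ avoids any need to describe $\partial C^{1,1}(\fg_0)$ explicitly. The calculation is analogous to, though somewhat simpler than, the one carried out in Proposition~\ref{p.grvanish} for the presymplectic case, since in the rank-$0$ setting there is no null-space decomposition of $\sQ$ to track. Once complete, Theorem~\ref{t.Cartan'} produces the desired Cartan connection $(E, \Upsilon)$ of type $(G, G^0)$ on $M$, related to $E_0$ in the manner described in Theorem~\ref{t.Cartan}.
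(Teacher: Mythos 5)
Your proposal is correct and follows essentially the same route as the paper: reduce to Theorem \ref{t.Cartan'} with $D = U \otimes Q$, use $\mu = 1$ so that (iii) only concerns $\ell = 2$, and verify the vanishings through $\fg_0$-reductive filtrations whose graded pieces ($\wedge^2 \Sym^2 U$, $\Sym^2 U \otimes U \otimes Q$, $\wedge^2(U\otimes Q)$ paired with $\Hom(U,Q)$, $\fgl(U)$, $\fgl(Q)$, etc.) reduce to Proposition \ref{p.MU}(1) via the SAF-condition, exactly as the paper does (there routed through Proposition \ref{p.grvanish} with $\sN = \sQ$). The only, essentially equivalent, variation is your treatment of (ii): the paper uses Lemma \ref{l.trivial}(ii) to inject $H^0(M, C^{1,2}(E_0)/\partial C^{1,1}(E_0))$ into $\Hom(\wedge^2(\sU\otimes\sQ), \Sym^2\sU)$ and identifies that injection with $\varrho$, whereas you kill the sections of the bundle associated to the kernel of the module map defining $\varrho$, which rests on the same five vanishing statements (including $\Hom(\wedge^2(\sU\otimes\sQ), \sU\otimes\sQ)=0$) and is sound.
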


\begin{proof}
 By the condition (2), we can apply Proposition \ref{p.symbol} with $\varphi$ given by the Frobenius bracket tensor of the distribution $\sD \subset TM$. Since we are assuming that $\sD$ is integrable, we have $\varphi =0$ and $M=M_o$. Consequently, we can apply Proposition \ref{p.grvanish} to the vector bundles arising from $\sU$ and $\sQ$ on $M$.

To obtain the Cartan connection, we will use Theorem \ref{t.Cartan'}. The condition (i) of Theorem \ref{t.Cartan'} is just our assumption that $\sD$ is integrable. Let us check the conditions (ii) and (iii) of Theorem \ref{t.Cartan'}.

Note that the nilradical of $\fg_0$ is ${\mathfrak r} = \Hom(U, Q)$.
We will use the following $\fg_0$-reductive  filtrations
\begin{eqnarray*}
\fg_{-1} &=& (U \otimes Q) \oplus \Sym^2 U \supset U \otimes Q \supset 0
\\ \fg_0 & = & {\mathfrak r} \sd (\fgl(Q) \oplus \fgl(U)) \supset {\mathfrak r} \sd \fgl(Q) \supset {\mathfrak r}  \supset 0 \\ \fg_1 &=& \Sym^2U^* \supset 0. \end{eqnarray*}
Denoting the unipotent radical of $G_0$ by $R$, the associated graded objects are the following $G_0/R$-modules,
\begin{eqnarray*} \fg_{-1} &:& B_0:= \Sym^2 U, \ B_2:=U \otimes Q, \\
 \fg_0 &:& B_3:= \fgl(U),  \ B_5:= \fgl(Q), \  B_8:= \Hom(U, Q),  \\
\fg_1 &:& B_9:= \Sym^2 U^*.\end{eqnarray*}
where we have borrowed the notation from the proof of Theorem \ref{t.Cpositive} by putting
${\rm Null}_{\omega} = Q$.
The filtration on $\fg_{-1}$ induces a $\fg_0$-reductive filtration on $\fg_{-1}^*$. The latter induces a $\fg_0$-reductive multi-filtration on $\wedge^2 \fg_{-1}^*$ whose graded objects are one of the dual $G_0/R$-modules of the followings
$$A_1:= \wedge^2 \Sym^2 U, \ A_2:= \Sym^2 U \otimes U \otimes Q, \ A_4:= \wedge^2(U \otimes Q).$$ For each $\ell \geq 1$, these induce a $\fg_0$-reductive multi-filtration on
$$C^{\ell,2}(\fg_0) = (\wedge^2 \fg_{-1}^*) \otimes \fg_{\ell-2}$$
whose graded objects are of the form $\Hom(A_i, B_j)$ with $$ i=1,2,4 \mbox{ and } j =0,2,3,5,8, 9.$$
Using Notation \ref{n.Crank0}, we see that
 $$H^0(M, E_0 \times^{G_0} \Hom(A_i, B_j)) = \Hom(\sA_i, \sB_j)$$
where the bundles on the right hand side are as defined in Proposition \ref{p.grvanish}.
Thus the vanishing results in Proposition \ref{p.grvanish} show that all of them vanish
except $$\Hom(\sA_4, \sB_0) = \Hom(\wedge^2(\sU \otimes \sQ), \Sym^2 \sU).$$

  Lemma \ref{l.trivial} (ii) gives an injective homomorphism
$$H^0(M, C^{1, 2}(E_0)/\partial(C^{1, 1}(E_0))) \to  \Hom(\sA_4, \sB_0).$$  It is easy to see that this homomorphism arises from the homomorphism $$\varrho: H^0(M, C^{1, 2}(E_0)/\partial(C^{1, 1}(E_0))) \to \Hom(\wedge^2 \sD, TM/\sD) = \Hom(\sA_4, \sB_0)$$
from Notation \ref{n.Gst} with $D = U \otimes Q \subset \fg_{-1}$.
Thus the condition (ii) of Theorem \ref{t.Cartan'} is satisfied.

  Lemma \ref{l.trivial} (i) gives $H^0(M,  C^{\ell, 2}(E_0)/\partial(C^{\ell, 1}(E_0))) =0$ for all $\ell \geq 2$. Thus the condition (iii) of Theorem \ref{t.Cartan'} is satisfied.
\end{proof}

\section{Geometry of the projective variety $Z \subset \BP W$}\label{s.Z}

Throughout Section \ref{s.Z} and Section \ref{s.vmrt}, we will use the terminology  of Definition \ref{d.Z} below.

\subsection{Basic properties of $Z \subset \BP W$}

\begin{definition}\label{d.Z}
Let $U$ and $Q$ be  vector spaces with $\dim U = m \geq 2$ and $\dim Q = n \geq 1$.
Write $W := (U \otimes Q) \oplus \Sym^2 U$.
 \begin{itemize}
 \item[(1)] Define $\widehat{Z} \subset W$ by  $$\widehat{Z} :=\{ u \otimes q + t u^2 \in (U \otimes Q) \oplus \Sym^2 U, \ u \in U, q \in Q, t \in \C \}$$ and let $Z \subset \BP W$ be the corresponding projective variety. Denote by $\sO_Z(1)$ the restriction of the hyperplane line bundle $\sO_{\BP W}(1)$. It is easy to see that the embedding $Z \subset \BP W$ is linearly normal, i.e.,
      $$ W^* = H^0(Z, \sO_Z(1)).$$  \item[(2)]  Let $E \subset Z$ be the divisor $Z \cap \BP (U \otimes Q)$. Note that the embedding $E \subset \BP (U \otimes Q)$ is isomorphic to the Segre embedding of $\BP U \times \BP Q$. \item[(3)] Let $\widehat{S} \subset \Sym^2 U$ be the set of pure symmetric tensors, which is the affine cone of $S \subset \BP \Sym^2 U$, the 2nd Veronese image of $\BP U$. \item[(4)]  Let $\epsilon: W \to \Sym^2 U$ be the projection modulo $U \otimes Q$. It gives rise to a $\BP^n$-bundle structure $\alpha: Z \to \BP U \cong S$ such that $$\alpha|_E : E \cong \BP U \times \BP Q \to \BP U$$ is the trivial $\BP^{n-1}$-bundle.
      \item[(5)] Let ${\bf G} \subset {\rm GL}(W)$ be the group of linear automorphisms of $\widehat{Z}$, in other words,  $${\bf G} := \{ \varphi \in \GL(W), \ \varphi(\widehat{Z}) = \widehat{Z} \}.$$ \end{itemize} \end{definition}

We skip the proof of the following  elementary lemma ((i) is essentially from Proposition 4.2.1 of \cite{HM05} or Proposition 3.8 of \cite{FH12}).

\begin{lemma}\label{l.bfG}
\begin{itemize} \item[(i)] The group ${\bf G} \subset {\rm GL}(W)$ is equal to $G_0$ in Notation \ref{n.Crank0}, i.e., the image of a natural homomorphism
$$\iota: \Hom(U,Q) \sd ({\rm GL}(U) \times {\rm GL}(Q)) \to \GL(W)$$ with ${\rm Ker}(\iota) = \{ \pm 1 \in \GL(U) \times \GL(Q)\}.$
%
\item[(ii)] The subspace $U \otimes Q \subset W$ is preserved by the ${\bf G}$-action and
the projection $\epsilon: W \to \Sym^2 U$ is ${\bf G}$-equivariant.
\item[(iii)] The ${\bf G}$-action on $Z$ has two orbits: the divisor $E$ and its complement $Z \setminus E$. \end{itemize} \end{lemma}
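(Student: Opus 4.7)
The plan is to establish (i) first and then deduce (ii) and (iii) as short consequences; the main obstacle will be the reverse inclusion ${\bf G} \subset \mathrm{Im}(\iota)$ in (i). For the easy containment $\mathrm{Im}(\iota) \subset {\bf G}$, I will verify directly on generators that a typical element $u \otimes q + t u^2 \in \widehat{Z}$ stays in $\widehat{Z}$. For $(a,c) \in \GL(U) \times \GL(Q)$, the image is $a(u) \otimes c(q) + t\,a(u)^2 \in \widehat{Z}$. For $\varphi \in \Hom(U,Q)$, realized as an element of $\fg_0$ through the representation of Notation~\ref{n.rank0} in which $\varphi$ annihilates $U \otimes Q$ and sends $u^2$ to $u \otimes \varphi(u)$, the image is $u \otimes (q + t\varphi(u)) + t u^2 \in \widehat{Z}$. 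The kernel calculation is equally elementary: triviality on $U \otimes Q$ forces $a$ and $c$ to be mutually inverse scalars, and triviality on $\Sym^2 U$ then pins those scalars to $\pm 1$ and forces $\varphi = 0$.

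The reverse inclusion ${\bf G} \subset \mathrm{Im}(\iota)$ is the main obstacle and I will invoke the cited results, Proposition~4.2.1 of \cite{HM05} and Proposition~3.8 of \cite{FH12}, which together identify the linear automorphism group of $\widehat{Z}$ with $\mathrm{Im}(\iota)$. Conceptually the argument rests on the fact that the $\BP^n$-bundle structure $\alpha: Z \to S \cong \BP U$ is intrinsic to the embedded variety $Z \subset \BP W$: its fibers can be characterized as the maximal linear subspaces of $Z$ meeting the Veronese section $S \subset Z$ in exactly one point. Any $\varphi \in {\bf G}$ must therefore preserve this fibration, yielding a homomorphism ${\bf G} \to \mathrm{PGL}(U)$; analyzing the induced action on fibers recovers the $\GL(Q)$-factor and the unipotent $\Hom(U,Q)$-factor, and a dimension count against $\dim \mathrm{Im}(\iota) = m^2 + n^2 + mn$ closes the argument.

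Assertion (ii) then follows immediately from the explicit description of $\iota$: the subgroup $\GL(U) \times \GL(Q) \subset \mathrm{Im}(\iota)$ preserves $U \otimes Q$ by its standard tensor-product action, while $\Hom(U,Q)$ annihilates $U \otimes Q$ and sends $\Sym^2 U$ into $U \otimes Q$. Hence every element of ${\bf G}$ preserves the subspace $U \otimes Q \subset W$, and the quotient projection $\epsilon: W \to \Sym^2 U$ is ${\bf G}$-equivariant for the induced action.

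For (iii), I will verify transitivity on each of the two candidate orbits of $\mathrm{Im}(\iota)$, which by (i) coincide with the orbits of ${\bf G}$. On $E = \BP U \times \BP Q$, the Segre action of $\GL(U) \times \GL(Q)$ is already transitive, and the $\Hom(U,Q)$-factor acts trivially on $E$. For $Z \setminus E$, every point is represented by $u \otimes q + u^2$ with $u \neq 0$; given two such representatives $u_i \otimes q_i + u_i^2$ for $i = 1, 2$, I will choose $a \in \GL(U)$ with $a(u_1) = u_2$ and $c \in \GL(Q)$ arbitrary, then choose $\varphi \in \Hom(U,Q)$ with $\varphi(u_2) = q_2 - c(q_1)$, so that the successive actions of $(a,c)$ and $\varphi$ carry the first representative to the second. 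Consequently $E$ and $Z \setminus E$ are precisely the ${\bf G}$-orbits on $Z$.
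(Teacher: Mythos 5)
Your proposal is essentially the paper's treatment: the paper offers no proof of this lemma at all, declaring it elementary and referring for (i) to Proposition 4.2.1 of \cite{HM05} and Proposition 3.8 of \cite{FH12} --- precisely the sources you invoke for the hard inclusion ${\bf G}\subset \Im(\iota)$ --- and your direct verifications of $\Im(\iota)\subset{\bf G}$, of $\Ker(\iota)=\{\pm 1\}$, and of (ii) and (iii) are all correct. One caveat concerns your conceptual gloss on the cited results: the Veronese section $S\subset Z$ is \emph{not} ${\bf G}$-stable (the unipotent element corresponding to $\varphi\in\Hom(U,Q)$ sends $[u^2]$ to $[u\otimes\varphi(u)+u^2]$), so ``maximal linear subspaces of $Z$ meeting $S$ in exactly one point'' is not a ${\bf G}$-invariant characterization and cannot by itself show that ${\bf G}$ preserves the fibration $\alpha$. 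The repair is easy: the linear subspaces of $W$ contained in $\widehat{Z}$ are exactly the subspaces of the Segre cone (maximal ones being $u_0\otimes Q$ and $U\otimes q_0$) and the spaces $u_0\otimes Q+\C u_0^2$, since $\epsilon$ maps any such subspace into the cone of pure squares, which contains no $2$-planes; consequently through every point of $Z\setminus E$ there passes a unique maximal linear subspace of $Z$, namely the closure of the $\alpha$-fiber, so the family of $\alpha$-fibers (and incidentally $E$, as the locus where this uniqueness fails) is intrinsic to $Z\subset\BP W$ and is permuted by ${\bf G}$. With that adjustment --- or simply resting on the citations, as the paper itself does --- your argument is complete.
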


The following is straight-forward to check.

 \begin{lemma}\label{l.blowup}
 Define a morphism  $\beta: Z \to \BP (Q \oplus U)$ by
  $$ \beta([ u \otimes q + t u^2]) = [(q, tu)] \mbox{ for }  u \otimes q + t u^2 \in \widehat{Z}.$$   \begin{itemize}
  \item[(a)] The morphism $\beta$ is the blowup of $\BP (Q \oplus U)$ along the subspace
  $\BP Q$ and $E$ is the exceptional divisor of the blowup.
  \item[(b)] There is a natural ${\bf G}$-action on $\BP (Q \oplus U)$ where $\varphi \in \Hom(U, Q)$ acts by
  $$\varphi\cdot [q + u] = [q + \varphi(u) + u].$$ The morphism $\beta$ is ${\bf G}$-equivariant under this action.
  \item[(c)] There is an isomorphism of line bundles on $Z$ $$\sO_Z(1) \ \cong \ \sO_Z(-E) \otimes   \beta^*\sO_{\BP(Q \oplus U)}(2).$$ In particular, the morphism $\beta$ sends conics on $Z \subset \BP W$ disjoint from $E$  to lines in $\BP(Q \oplus U)$
      disjoint from $\BP Q$ and $\alpha$-fibers to $n$-dimensional linear subspaces in
      $\BP (Q \oplus U)$ containing $\BP Q$. \end{itemize} \end{lemma}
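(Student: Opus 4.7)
The plan is to verify well-definedness of $\beta$, establish (a) by identifying $Z$ with a standard model of the blowup, (b) by a short equivariance check on generators of ${\bf G}$, and (c) by intersecting with two test curves. I expect (c) to require the most care, since one must pin down two integer constants rather than just match one. For well-definedness, any nonzero element of $\widehat{Z}$ has $u \neq 0$, and the residual scaling $(u, q, t) \mapsto (\mu u, (\lambda/\mu) q, (\lambda/\mu^2) t)$ rescales $(q, tu)$ by $\lambda/\mu$, so $[(q, tu)]$ is intrinsic; since $(q, tu) = 0$ would force $u \otimes q + tu^2 = 0$, the map is defined on all of $Z$.

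For (a), I will identify $Z$ with the incidence variety
$$\widetilde{X} = \{([q + v], [u]) \in \BP(Q \oplus U) \times \BP U : v \wedge u = 0\},$$
the standard resolution of the linear projection $\BP(Q \oplus U) \dashrightarrow \BP U$ centered at $\BP Q$, and hence the blowup ${\rm Bl}_{\BP Q} \BP(Q \oplus U)$. The morphism $(\beta, \alpha): Z \to \BP(Q \oplus U) \times \BP U$ lands in $\widetilde{X}$ because the $U$-component $tu$ of $\beta$ is parallel to $u$; bijectivity on closed points is immediate from the normal form $u \otimes q + tu^2$ of elements of $\widehat{Z}$, and with both sides smooth this yields the desired isomorphism. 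The exceptional fiber over $[q] \in \BP Q$ is $\{[u \otimes q] : [u] \in \BP U\} \cong \BP U$, identifying $E$ as the exceptional divisor.

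For (b), it suffices to check equivariance on the three generating subgroups of ${\bf G}$ from Lemma \ref{l.bfG}. The actions of $\GL(U)$ and $\GL(Q)$ on $W$ preserve the $U$- and $Q$-factors separately, so equivariance is obvious. For $\varphi \in \Hom(U, Q)$, the Lie algebra description in Notation \ref{n.rank0} gives $\varphi^2 = 0$ on $\fg_- = W$, whence $\exp(\varphi)$ acts on $W$ by $1 + \varphi$; applied to $u \otimes q + tu^2$ this yields $u \otimes (q + t\varphi(u)) + tu^2$, and $\beta$ sends the result to $[(q + t\varphi(u), tu)] = \varphi \cdot [(q, tu)]$, as required.

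For (c), $Z$ being the blowup of a projective space along a linear subvariety gives $\Pic(Z) = \Z \beta^* \sO(1) \oplus \Z [E]$, so $\sO_Z(1) = \beta^* \sO(a) \otimes \sO_Z(-bE)$ for some integers $a, b$. I will pin these down by testing on two curves: the proper transform of a generic line $L \subset \BP(Q \oplus U)$ disjoint from $\BP Q$, parameterized by $s \mapsto (u_0 + s u_1) \otimes (q_0 + s q_1) + (u_0 + s u_1)^2$, is quadratic in $s$, hence a conic in $\BP W$ disjoint from $E$, forcing $a = 2$; a ruling $\{[u \otimes q] : [u] \in \BP U\}$ of $E$ over a point $[q] \in \BP Q$ is a line in $\BP W$ contracted by $\beta$, which combined with $a = 2$ forces $E \cdot (\text{ruling}) = -1$ and $b = 1$. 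The two \emph{in particular} assertions then follow at once: a conic $C$ disjoint from $E$ has $\beta^*\sO(2) \cdot C = 2$, so $\beta(C)$ is a line, and it avoids $\BP Q$ because $\beta$ restricts to an isomorphism $Z \setminus E \to \BP(Q \oplus U) \setminus \BP Q$; the $\alpha$-fiber over $[u] \in \BP U$ has image $\{[(q, tu)] : q \in Q, t \in \C\} = \BP(Q \oplus \C u)$, a linear $\BP^n$ containing $\BP Q$.
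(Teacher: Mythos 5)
Your proof is correct, and in fact the paper offers no argument at all here (it states the lemma as ``straight-forward to check''), so your write-up supplies details the authors omitted. The structure is sound: the identification of $Z$ with the incidence variety $\widetilde{X}=\{([q+v],[u]) : v\wedge u=0\}$ is exactly the standard model of ${\rm Bl}_{\BP Q}\BP(Q\oplus U)$, the equivariance check on the generators of ${\bf G}$ via $\exp(\varphi)=1+\varphi$ (using $\varphi^2=0$ on $W$ from Notation \ref{n.rank0}) is right, and the two test curves correctly pin down $\sO_Z(1)=\beta^*\sO(2)\otimes\sO_Z(-E)$.

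Two small points to tighten. First, in (a) you justify that the bijective morphism $(\beta,\alpha):Z\to\widetilde X$ is an isomorphism by saying ``both sides are smooth,'' but smoothness of $Z$ is not yet known at that stage (it is a consequence of the lemma); what the argument actually needs is only that the target $\widetilde X$ is smooth, hence normal, so that a bijective (hence finite and, in characteristic zero, birational) morphism onto it is an isomorphism -- or, alternatively, exhibit the inverse $([q+tu],[u])\mapsto[u\otimes q+tu^2]$ directly as a morphism. Second, in (c) the equation from the contracted ruling line $\ell\subset E$ only gives $b\,(E\cdot\ell)=-1$, which does not by itself ``force'' $b=1$: you must also invoke the standard fact that for a blowup along a smooth center $\sO_E(E)$ restricts to $\sO(-1)$ on lines in the fibers of $E\to\BP Q$ (legitimate, since (a) is already proved), or instead test against a line such as $s\mapsto[u\otimes q+su^2]$, which is not contained in $E$, meets it positively, and maps to a line through $[q]$, yielding $b\,(E\cdot\ell')=1$ with both factors positive integers and hence $b=1$ without any appeal to the sign convention.
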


The following is from Lemma 3.4.1 of \cite{HM05}.

\begin{lemma}\label{l.tanZ}
For a point $w \in \widehat{Z}\setminus (U \otimes Q),$ let $u \in U$ be a vector such that
$u^2 = \epsilon (w) \in \widehat{S} \subset \Sym^2 U$. Then the affine  tangent space $T_w \widehat{Z} \subset W$ of $\widehat{Z}$ at $w$ satisfies
\begin{eqnarray*} (U \otimes Q) \cap T_w \widehat{Z} &=& u \otimes Q \ \subset U \otimes Q \\   \zeta(T_w \widehat{Z}) &=& u \odot U \ \subset \Sym^2 U \end{eqnarray*} yielding a short exact sequence $$0 \to u \otimes Q \to T_w \widehat{Z} \stackrel{\epsilon}{\to} u \odot U \to 0.$$
\end{lemma}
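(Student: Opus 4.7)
The plan is to parametrize $\widehat{Z}$ by the map
\[
\phi : U \times Q \times \C \longrightarrow W, \qquad \phi(v,q,t) = v \otimes q + t v^{2},
\]
whose image is $\widehat{Z}$. Since $w \in \widehat{Z} \setminus (U \otimes Q)$ has $\epsilon(w) = u^{2} \neq 0$, I can write $w = u \otimes q_{0} + u^{2} = \phi(u, q_{0}, 1)$ for some $q_{0} \in Q$. A direct differentiation then gives
\[
d\phi_{(u,q_{0},1)}(\dot v, \dot q, \dot t) \;=\; \bigl(\dot v \otimes q_{0} + u \otimes \dot q\bigr) \;+\; \bigl(\dot t\, u^{2} + 2\,u \odot \dot v\bigr) \;\in\; (U \otimes Q) \oplus \Sym^{2} U.
\]

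Next I would promote the inclusion $\operatorname{Im}(d\phi_{(u,q_{0},1)}) \subseteq T_{w}\widehat{Z}$ to an equality. This is where Lemma \ref{l.blowup} enters: via the morphism $\beta$, the projective variety $Z$ is the blow-up of the smooth variety $\BP(Q \oplus U)$ along the smooth center $\BP Q$, hence $Z$ is smooth, and so $\widehat{Z} \setminus \{0\}$ is smooth of dimension $m+n$. On the other hand, the kernel of $d\phi_{(u,q_{0},1)}$ is computed by imposing that both components of the formula above vanish: from $\dot t\, u^{2} + 2\, u \odot \dot v = 0$ and the injectivity of $v \mapsto u \odot v$ on $U$ for $u \neq 0$, one obtains $\dot v = -\tfrac{\dot t}{2} u$, and then the first equation forces $\dot q = \tfrac{\dot t}{2} q_{0}$; so $\ker d\phi_{(u,q_{0},1)}$ is one-dimensional, giving $\dim \operatorname{Im}(d\phi_{(u,q_{0},1)}) = m+n = \dim T_{w}\widehat{Z}$, whence equality.

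Finally I would read the two asserted identities off the explicit formula for $\operatorname{Im}(d\phi_{(u,q_{0},1)})$. The $\Sym^{2} U$-projection yields all expressions $\dot t\, u^{2} + 2\, u \odot \dot v$ as $(\dot t, \dot v)$ ranges over $\C \times U$, which is exactly $u \odot U$; this gives $\epsilon(T_{w}\widehat{Z}) = u \odot U$. For $(U \otimes Q) \cap T_{w}\widehat{Z}$, the vanishing of the $\Sym^{2} U$-part forces $\dot v = -\tfrac{\dot t}{2} u$ as above, and the resulting $U \otimes Q$-component becomes $u \otimes (\dot q - \tfrac{\dot t}{2} q_{0})$, which sweeps out $u \otimes Q$ as $\dot q$ varies in $Q$. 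The short exact sequence is then immediate.

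The only step that is not routine bookkeeping is the appeal to smoothness at $w$: without it, $\operatorname{Im}(d\phi)$ would only be known to sit inside $T_{w}\widehat{Z}$, and the two claimed equalities could fail. Invoking Lemma \ref{l.blowup} is the clean way to bypass a separate local analysis of $\widehat{Z}$.
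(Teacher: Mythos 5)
Your argument is correct. Note that the paper does not actually prove this lemma — it simply cites Lemma 3.4.1 of [HM05] — so there is no in-paper proof to compare against; your write-up is a legitimate self-contained substitute. The two ingredients you use are both sound: (1) writing $w=u\otimes q_0+u^2=\phi(u,q_0,1)$ for the parametrization $\phi(v,q,t)=v\otimes q+tv^2$ is justified because $\epsilon(w)=tv^2=u^2\neq 0$ lets you absorb $\sqrt{t}$ into $v$ and $q$; and (2) the smoothness of $\widehat{Z}\setminus\{0\}$ follows from Lemma \ref{l.blowup} (which precedes the statement, so no circularity), since $Z={\rm Bl}_{\BP Q}\,\BP(Q\oplus U)$ is smooth and the punctured affine cone is a $\C^*$-bundle over $Z$, hence smooth of dimension $m+n$. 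With $\dim\ker d\phi_{(u,q_0,1)}=1$ you get $\dim\operatorname{Im} d\phi=m+n=\dim T_w\widehat{Z}$, so the inclusion $\operatorname{Im} d\phi\subseteq T_w\widehat{Z}$ is an equality, and reading off the $\Sym^2U$-projection and the elements with vanishing $\Sym^2 U$-component gives exactly $\epsilon(T_w\widehat{Z})=u\odot U$ and $(U\otimes Q)\cap T_w\widehat{Z}=u\otimes Q$ (the symbol $\zeta$ in the statement is indeed the projection $\epsilon$), from which the exact sequence is immediate. You correctly flag that the smoothness input is the one non-routine point; without it the dimension count would only bound $\operatorname{Im} d\phi$ from inside $T_w\widehat{Z}$.
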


\begin{definition}\label{d.II}
Let $Y \subset \BP V$ be a projective variety and let  $y \in Y$ be a nonsingular point.
 Let $\rho_Y: V^* \to H^0(Y, \sO_Y(1))$ be the restriction homomorphism on $Y$. Denote by $H^0(Y, {\bf m}^k_y(1)) \subset H^0(Y, \sO_Y(1))$ the subspace consisting of sections vanishing at $y$ to order $k \in \N$.  \begin{itemize}
\item[(1)] The  {\em second normal space} $N^{(2)}_{Y,y} \subset N_{Y,y}$ is the subspace of the normal space of $Y \subset \BP V$ at $y$ annihilated by $\rho_Y^{-1}(H^0(Y, {\bf m}^3_y (1))).$
    \item[(2)] The $2$-jets of elements of $H^0(Y, {\bf m}^2_y (1))$ defines an injection
    $${\rm II}_{Y,y}: N^{(2)}_{Y,y} \to \Sym^2 T^*_y Y$$ called the {\em 2nd fundamental form } of $Y$ at $y$.
    \item[(3)] If $N^{(2)}_{Y,y} = N_{Y,y}$, or equivalently $H^0(Y, {\bf m}^3_y (1)) =0$, we say that the second fundamental form of $Y$ {\em surjects} at $y$. \end{itemize}
    \end{definition}

    \begin{lemma}\label{l.IIZ} \begin{itemize}
\item[(1)]    For any point $z \in Z \setminus E$, the second fundamental form of $Z$ surjects at $z.$ \item[(2)] The image
    of ${\rm II}_{Z,z}$ in $\Sym^2 T^*_z Z$ is the system of quadrics vanishing on
     the subspace $T^{\alpha}_z \subset T_z Z$  tangent to the fiber through $z$ of the morphism   $\alpha: Z \to \BP U$ in Definition \ref{d.Z} (4). \item[(3)] For a point $e =[u \otimes q] \in E$, the base locus of
     the system of quadrics ${\rm II}_{Z,e}$ on $T_e(Z)$ includes
     both $T^{\alpha}_e$ and $T^{\beta}_e$, where $\beta$ is the morphism in
     Lemma \ref{l.blowup}. Consequently, the second fundamental form
     ${\rm II}_{Z,e}$ is not isomorphic to ${\rm II}_{Z,z}$ in (2) as systems of quadrics
     on a vector space of dimension $n+m-1 = \dim Z$. \end{itemize}    \end{lemma}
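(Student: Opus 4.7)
The plan is a direct local computation of $\mathrm{II}_Z$ at representative points of each of the two $\mathbf{G}$-orbits on $Z$ (Lemma \ref{l.bfG}(iii)). For (1) and (2) I will work at $z = [u_0^2]$ with $u_0 \in U \setminus \{0\}$; for (3), at $e = [u_0 \otimes q_0]$ with $u_0, q_0$ nonzero. In each case the strategy is to parametrize $\widehat{Z}$ near the chosen lift, extract the linear part (recovering $T_w \widehat{Z}$ --- matching Lemma \ref{l.tanZ} at $z$ and requiring a direct computation at $e$), and project the quadratic part of the parametrization to $N_{Z,\cdot} = W/T_w\widehat{Z}$ to obtain $\mathrm{II}$ as a symmetric bilinear form $T_z Z \times T_z Z \to N_{Z,z}$.

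For (1) and (2), take $\Phi(u,q) = u \otimes q + u^2$ near $(u_0, 0)$. The linear part at $(u_0,0)$ is $u_0 \otimes q' + 2\, u_0 \odot u'$ and the quadratic part is $u' \otimes q' + u' \odot u'$. Setting $\bar U := U/\C u_0$, the identifications $N_{Z,z} \cong (\bar U \otimes Q) \oplus \Sym^2 \bar U$ and $T_z Z \cong \bar U \oplus Q$ yield the tensorial formula $\mathrm{II}((v,p),(v,p)) = (2\, v \otimes p,\ 2\, v \odot v)$. Both factors of the image are visibly full, giving (1); the resulting family of quadrics on $\bar U \oplus Q$ is spanned by those bilinear in $(v,p)$ and those symmetric in $v$ alone, which is exactly the family vanishing on $T^\alpha_z = \{0\} \oplus Q$, giving (2).

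For (3), take $\Phi(u,q,t) = u \otimes q + t u^2$ near $(u_0, q_0, 0)$. The linear part is $u_0 \otimes q' + u' \otimes q_0 + t u_0^2$, so $T_w \widehat{Z} = (u_0 \otimes Q + U \otimes q_0) \oplus \C u_0^2$; modding out $\C w = \C(u_0 \otimes q_0)$ identifies $T_z Z \cong (u_0 \otimes \bar Q) \oplus (\bar U \otimes q_0) \oplus \C u_0^2$ with $\bar Q := Q/\C q_0$. The quadratic part is $u' \otimes q' + 2t\, u_0 \odot u'$. From the $\alpha$- and $\beta$-fiber descriptions (Lemma \ref{l.blowup}), one reads off $T^\alpha_e = u_0 \otimes \bar Q \oplus \C u_0^2$ (the $u' = 0$ directions) and $T^\beta_e = \bar U \otimes q_0$ (the $q' = 0,\ t = 0$ directions). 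Both consist of vectors making the quadratic part vanish, proving the inclusion. For the non-isomorphism claim, observe that the base locus of $\mathrm{II}_{Z,z}$ from (2) is exactly the single irreducible linear subspace $T^\alpha_z$, whereas the base locus of $\mathrm{II}_{Z,e}$ contains the union $T^\alpha_e \cup T^\beta_e$ of two linear subspaces with trivial intersection --- a reducible set for $m \geq 2$ --- and no linear isomorphism of tangent spaces can send a reducible locus to an irreducible linear subspace.

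The substantive difficulty here is not computational --- every expansion is low-degree polynomial arithmetic --- but keeping the identifications straight: $W \twoheadrightarrow N_{Z,\cdot}$, $T_w \widehat{Z} \twoheadrightarrow T_z Z$, and the correspondence between parameter-space directions in $U \oplus Q\,(\oplus \C)$ and the intrinsic subspaces $T^\alpha, T^\beta$. This is especially delicate at $e \in E$, where Lemma \ref{l.tanZ} does not directly apply and one must work out the tangent and normal decompositions from scratch.
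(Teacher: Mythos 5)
Your computation is correct, but it takes a genuinely different route from the paper. The paper argues synthetically: for (1) it uses that conics on $Z$ through $z$ cover $Z \setminus E$ (Lemma \ref{l.blowup} (c)), so a hyperplane section vanishing to order $3$ at $z$ restricts to a section of $\sO(2)$ on each such conic with a triple zero and hence vanishes identically; for (2) it gets $T^{\alpha}_z$ into the base locus from lines through $z$ and then concludes by a dimension count against $\dim N_{Z,z}$; for (3) it again uses lines through $e$. You instead parametrize the affine cone $\widehat{Z}$ near a lift of $z$ (resp.\ $e$) and read off the vector-valued second fundamental form from the quadratic part, which gives (1) and (2) by inspection and, at $e$, even the exact base locus $T^{\alpha}_e \cup T^{\beta}_e$ rather than just the containment. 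What the paper's route buys is brevity and reuse of the curve geometry already established; what yours buys is a self-contained, explicit description of ${\rm II}$ at both orbit representatives (which suffices by ${\bf G}$-homogeneity, Lemma \ref{l.bfG} (iii)), including the tangent space computation at $e \in E$ where Lemma \ref{l.tanZ} is unavailable. Note that your argument tacitly invokes the standard identification of the $2$-jet definition of ${\rm II}$ (Definition \ref{d.II}, via linear normality of $Z \subset \BP W$) with the cone-parametrization computation, i.e.\ that the quadrics of the system are exactly $\lambda \circ {\rm II}^{\rm vec}$ for $\lambda \in N^*_{Z,z}$; this is standard but worth stating since (1) is literally the assertion $H^0(Z, {\bf m}^3_z(1)) = 0$.

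One small imprecision in the last step of (3): you only establish that the base locus at $e$ \emph{contains} $T^{\alpha}_e \cup T^{\beta}_e$, so the phrase ``no linear isomorphism can send a reducible locus to an irreducible linear subspace'' is not quite what you have proved (a priori the base locus at $e$ could be a larger, even irreducible, set). The clean fix is immediate: if the two systems were linearly isomorphic, the base locus at $e$ would be carried onto the base locus at $z$, which by (2) is exactly the $n$-dimensional linear subspace $T^{\alpha}_z$; hence the base locus at $e$ would be an $n$-dimensional linear subspace $L$, forcing $L = T^{\alpha}_e$ and then $T^{\beta}_e \subset T^{\alpha}_e$, contradicting $T^{\alpha}_e \cap T^{\beta}_e = 0$ and $T^{\beta}_e \neq 0$ for $m \geq 2$. (Alternatively, your explicit formula at $e$ shows the base locus there equals the union, which also settles it.)
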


\begin{proof}
Suppose there exists a section $s \in H^0(Z, {\bf m}^3_z(1))$.
Then $s$ vanishes along any conic on $Z$ through $z$. But Lemma \ref{l.blowup} (c) says that such conics cover $Z \setminus E$. Thus $s=0$. This proves (1).

If $s \in H^0(Z, {\bf m}_z^2(1))$, then $s$ vanishes along all lines through $z$ on $Z$.
Thus the base locus of ${\rm II}_{Z,z}$ contains $T^{\alpha}_z$. But the space of
all quadrics on $T_z Z$ vanishing on $T^{\alpha}_z$ has the same dimension as $\dim N_{Z,z}$,
which is equal to the dimension of ${\rm II}_{Z,z}$ by (1). This proves (2).

If $s \in H^0(Z, {\bf m}_e^2(1))$, then $s$ vanishes along all lines through $e$ on $Z$.
Thus the base locus of ${\rm II}_{Z,e}$ contains both $T^{\alpha}_e$ and $T^{\beta}_e$, proving (3). \end{proof}

\subsection{A rigidity property of  $Z \subset \BP W$}

The goal of this subsection is to prove the following  rigidity property of $Z \subset \BP W$ under a projective degeneration.

\begin{theorem}\label{t.rigid}
Let $\sW$ be a vector bundle of rank equal to $\dim W$ over the unit disc $\Delta :=\{ t \in \C, |t| <1\}$ and
let $W_t$ be its fiber at $t \in \Delta$.
Let $\sZ_{\Delta} \subset \BP \sW$ be an irreducible closed subvariety in the projectivized bundle $\BP \sW \to \Delta$ and let $p: \sZ_{\Delta} \to \Delta$ be the natural projection. Denote by $Z_t$ the set-theoretic fiber of $p$ over $t \in \Delta.$ Assume that there exists a closed algebraic subset $A \subset Z_0, A \neq Z_0,$ with  the following properties.
\begin{itemize} \item[(i)] The complement $Z'_0:= Z_0 \setminus A$ is an irreducible nonsingular subvariety of $\BP W_0$ and the morphism $p$ is smooth (i.e. submersive) on $\sZ_{\Delta} \setminus A$. \item[(ii)]
 For each $t \in \Delta, t\neq 0$, the fiber $Z_t \subset \BP W_t$ is projectively equivalent to $Z \subset \BP W$. 
 \item[(iii)]
 There exists a smooth morphism $\alpha^0: Z'_0 \to B$ to a nonsingular variety $B$ such that the closure of each fiber of $\alpha^0$ is a linear subspace of dimension $n = \dim Q$ in $\BP W_0$. \item[(iv)] For each point $z \in Z'_0$, if there exists a line $\ell \subset \BP W$ satisfying $z \in \ell$ and $\ell \subset Z_0$, then the germ of $\ell $ at $z$ is contained in the fiber of $\alpha^0$.
 \item[(v)] The submanifold $Z'_0 \subset \BP W_0$ is linearly nondegenerate, i.e., not contained in any hyperplane in $\BP W_0$.
 \end{itemize}
 Then $Z_0 \subset \BP V_0$ is projectively equivalent to $Z \subset \BP W$.
 \end{theorem}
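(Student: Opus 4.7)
The plan is to recognize $Z_{0}\subset\BP W_{0}$ as the specific projective bundle over $\BP U$ realized by $Z\subset\BP W$ in Definition~\ref{d.Z}, by degenerating the $\BP^{n}$-bundle structure $\alpha:Z_{t}\to\BP U$ of the generic fibers $Z_{t}\cong Z$ to the central fiber and using conditions (iii)--(v) to control the limit.

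First I would extend the $\BP^{n}$-bundle structure across $t=0$.  Let $\sH:=\mathrm{Hilb}^{P}(\sZ_{\Delta}/\Delta)\to\Delta$ be the relative Hilbert scheme parametrizing $\BP^{n}$'s contained in fibers of $p$ with the Hilbert polynomial of an $\alpha$-fiber; it is proper over $\Delta$.  For $t\neq 0$ the $\alpha_{t}$-fibers define a component of $\sH_{t}$ isomorphic to $\BP U$; let $\sH^{\circ}\subset\sH$ be the irreducible component containing these.  Then $\sH^{\circ}\to\Delta$ is proper with generic fiber $\BP U$, and its central fiber $\sH_{0}^{\circ}$ is projective of dimension $m-1$.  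By condition (iii) the smooth family $B$ embeds as an open subset of $\sH_{0}^{\circ}$, and by condition (iv) any $\BP^{n}$ in $Z_{0}$ meeting $Z'_{0}$ in a dense open subset is the closure of an $\alpha^{0}$-fiber.  A rigidity argument (deformations of $\BP^{m-1}$ are trivial) combined with the smoothness of $B$ then gives $\sH_{0}^{\circ}\cong\BP U$, so $\bar{B}\cong\BP U$ and $\alpha^{0}$ extends to a morphism $\bar\alpha:Z_{0}\to\BP U$ whose fibers are linearly embedded $\BP^{n}$'s filling out all of $Z_{0}$.

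Next I would recognize $Z_{0}$ as a projective bundle and identify it with $Z$.  Since $\bar\alpha$ has linearly embedded $\BP^{n}$-fibers, $\sF:=\bar\alpha_{*}\sO_{Z_{0}}(1)$ is locally free of rank $n+1$ on $\BP U$, $Z_{0}\cong\BP(\sF^{\vee})$, and $\sO_{Z_{0}}(1)\cong\sO_{\BP(\sF^{\vee})}(1)$.  Condition (v) combined with upper semi-continuity of $h^{0}$ and the flatness of the family yields $W_{0}^{*}=H^{0}(\BP U,\sF)$, and moreover $\{\sF_{t}\}$ assembles into a flat family of coherent sheaves on $\BP U$ over $\Delta$ whose generic fiber is $\sF_{Z}:=Q^{*}\otimes\sO(1)\oplus\sO(2)$ (the bundle for $Z$).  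A direct computation (using that $H^{1}(\BP U,\sO(\ell))=0$ for $\ell\geq -1$) gives $\mathrm{Ext}^{1}(\sF_{Z},\sF_{Z})=0$, so $\sF_{Z}$ is rigid and $\sF\cong\sF_{Z}$.  The resulting embedding $Z_{0}=\BP(\sF^{\vee})\hookrightarrow\BP H^{0}(\BP U,\sF)^{\vee}\cong\BP W_{0}$ then agrees with $Z\subset\BP W$ up to the action of $\GL(W_{0})$, giving the projective equivalence.

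The main obstacle is the first step: establishing $\sH_{0}^{\circ}\cong\BP U$.  Although conditions (iii) and (iv) give an open dense copy of $B$ in $\sH_{0}^{\circ}$ without extraneous $\BP^{n}$'s meeting $Z'_{0}$, one must still rule out extra components of $\sH_{0}^{\circ}$ supported entirely in the singular locus $A$, and then show that the natural specialization map $\BP U\to\sH_{0}^{\circ}$ (the limit of the identities $\BP U\to\sH_{t}^{\circ}$ for $t\neq 0$) is an isomorphism --- the latter using the rigidity of $\BP^{m-1}$ and the smoothness of $B$ from (iii).  Once $\bar\alpha$ is in place, the remaining steps are a fairly formal combination of the projective-bundle formalism, semi-continuity of cohomology, and the rigidity of the model bundle $\sF_{Z}$ on $\BP^{m-1}$.
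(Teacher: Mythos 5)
Your plan concentrates all of the difficulty into two assertions that are not proved, and one of them is false as stated. The clearest problem is the final rigidity step: $\mathrm{Ext}^1(\sF_Z,\sF_Z)=0$ says that $\sF_Z$ admits no nontrivial first-order deformations, which controls the nearby fibers of a family \emph{whose special fiber is} $\sF_Z$; it does not prevent the special fiber of a flat family whose general fibers are $\sF_Z$ from jumping to a non-isomorphic sheaf. Already for $m=2$ such a jump occurs: on $\BP^1$ there is a flat family with $\sF_t\cong \sO(1)^{\oplus n}\oplus\sO(2)$ for $t\neq 0$ and $\sF_0\cong \sO(1)^{\oplus(n-1)}\oplus\sO\oplus\sO(3)$ (scale a nonsplit extension of $\sO(3)$ by $\sO$ to the split one). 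For $n=1$ this is exactly the classical degeneration of the cubic scroll $Z\subset\BP^4$ to the cone over a twisted cubic. That limit surface is irreducible, nondegenerate, smooth away from the vertex, its smooth locus carries a smooth fibration whose fiber closures are lines, and the only lines through its smooth points are the rulings; in other words it satisfies everything your argument actually uses about the central fiber, yet it is not projectively equivalent to $Z$. So your bundle-rigidity step cannot distinguish the true limit from a cone-type jump, and any correct proof must use the hypotheses to exclude precisely such jumps, which your proposal never confronts.

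The first step has the same difficulty in another guise. Even granting that the closures of general $\alpha^0$-fibers lie in the limit component $\sH^\circ_0$ (this already needs an argument, e.g.\ applying (iv) to the lines inside a limiting $\BP^n$ through a general point), the real issue is over $A$: you must show that through \emph{every} point of $Z_0$, including points of $A$, there passes exactly one member of the family, so that the universal family yields a morphism $\bar\alpha$ on all of $Z_0$ and $Z_0$ is a $\BP^n$-bundle. In the cone-type limit the Hilbert scheme of lines is indeed $\BP^1\cong\BP U$, but infinitely many members pass through the vertex and no such morphism exists; so ``deformations of $\BP^{m-1}$ are trivial plus smoothness of $B$'' is not the missing ingredient — what is missing is an argument that the fibration does not collapse over $A$, and that is essentially equivalent to the theorem itself (one would also have to control possible extra or nonreduced pieces of $\sH^\circ_0$). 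For comparison, the paper does this exclusion work analytically rather than through Hilbert schemes: it first shows, using (iii) and (iv), that limits of conics of $Z_t$ through a general point in a general direction remain irreducible (Lemma \ref{l.rigid}), then uses the blow-down maps $\beta_t$ to produce a flat projective connection on $Z'_0$ via the Cartan--Thomas curvature together with a linear foliation (Lemma \ref{l.Weyl}), develops this structure to recover the identifications $Z_t\cong Z$ compatibly in $t$ (Lemma \ref{l.fundamental}), and concludes by Hartogs extension and Mok's Lemma \ref{l.Mok2.3}, where linear normality of $Z$ and condition (v) enter. If you want an algebro-geometric proof along your lines, you must first supply the analogue of that step — ruling out the collapse of the fibration over $A$ and the associated jump of the bundle — before the projective-bundle formalism can be brought to bear.
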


Our proof of Theorem \ref{t.rigid} is an adaptation of Mok's proof of Proposition 2.3 in \cite{Mo}, replacing G-structures of Hermitian symmetric spaces by projective connections.

\begin{definition}\label{d.projective}
Let $Y$ be a complex manifold of dimension $N$ and let $\pi: \BP TY \to Y$ be the projectivization of the tangent bundle of $Y$. For each $\eta \in \BP T_y Y$, let $\widehat{\eta} \subset T_y Y$ be the 1-dimensional subspace corresponding to $\eta $ and let $${\rm d}_{\eta} \pi: T_{\eta} (\BP TY) \to T_y Y$$ be the derivative of $\pi$.
\begin{itemize} \item[(1)]  Setting   $\sJ_{\eta} \subset T_{\eta}( \BP TY )$ to be
$({\rm d}_{\eta} \pi)^{-1} (\widehat{\eta})$, we obtain a subbundle $\sJ \subset T (\BP TY)$, called the {\em tautological distribution } on $\BP TY$.
\item[(2)] A {\em projective connection} on $Y$ is a line subbundle $\sF \subset \sJ$ splitting the exact sequence $$ 0 \to T^{\pi} \to \sJ \to \sJ/T^{\pi} \to 0$$ where $T^{\pi}$ is the vertical tangent bundle of $\pi$ and $\sJ/T^{\pi}$ is isomorphic to  the tautological line bundle of the projectivization $\BP TY$. The curves on $Y$ that are the images of the  leaves of $\sF$ under $\pi$ are called the {\em geodesics} of the projective connection.
    \item[(3)] Given a projective connection $\sF$ on $Y$ and an open subset $O \subset Y$, the restriction $\sF|_O$ is the projective connection on $O$ induced by
        $\sF$ on the open subset $\BP TO \subset \BP TY$.
   \item[(4)]  When $Y = \BP^N$, there is a natural
    projective connection $\sF^{\rm flat}$  called the {\em flat projective connection} whose geodesics are exactly lines on $\BP^N$.
    \item[(5)]
        A projective connection $\sF$ on $Y$ is {\em locally flat} if each point $y \in Y$  has an open neighborhood $O \subset Y$ equipped with an open immersion
        $\varphi: O \to \BP^N$   which sends germs of geodesics of $\sF$ to germs of lines on $\BP^N$.
        \item[(6)] For a connected open subset $O' \subset \BP^N$,
       a foliation $\sA' \subset TO'$ of rank $k <N$ is said to be {\em linear} if there exists a linear subspace $\BP^{k-1} \subset \BP^N$ such that leaves of $\sA'$ are contained in linear subspaces of dimension $k$ in $\BP^N$ containing $\BP^{k-1}$.
\item[(7)]        Given a locally flat projective connection $\sF$ on a complex manifold $Y$, a foliation $\sA \subset T Y$ is said to be {\em linear} if the immersion $\varphi: O \to \BP^N$ on an open neighborhood of a point in $Y$ in (5) sends $\sA$ to a linear foliation $\sA'$ on $O' = \varphi(O) \subset \BP^N$. It is easy to see that this property  does not depend on the choice of $\varphi$.
\end{itemize}        \end{definition}

\begin{example}\label{e.Z}
The map $\beta: Z \to \BP(Q \oplus U)$ in Lemma \ref{l.blowup} induces a locally flat projective connection on
$Z \setminus E.$ The fibers of  $\alpha$ define a linear foliation on $Z \setminus E$. \end{example}

\begin{lemma}\label{l.fundamental}
Let $\sF$ be a locally flat projective connection  on a simply-connected complex manifold $M$ with $\dim M = m+n-1 = \dim Z$ and let $\sA \subset TM$ be a linear foliation of rank $n$. Let $O \subset M$ be a connected open subset and let $\varphi: O \to \BP^{m+n-1}$ be an open immersion sending $\sF$ to $\sF^{\rm flat}|_{\varphi(O)}$.
\begin{itemize} \item[(i)] There exist a unique linear subspace $\BP^{n-1}_{\varphi} \subset \BP^{m+n-1}$ of dimension $n-1$ and an open immersion
$\widetilde{\varphi}: M \to \BP^{m+n-1} \setminus \BP^{n-1}_{\varphi}$ such that
$\varphi = \widetilde{\varphi}|_O$ and leaves of $\sA$ are sent into linear spaces of dimension $n$ containing $\BP^{n-1}_{\varphi}$.
\item[(ii)] If $M = Z \setminus E$ and $(\sF, \sA)$ is as in Example \ref{e.Z},
  there is a biregular morphism to the blowup of $\BP^{m+n-1}$ along $\BP_{\varphi}^{n-1}$ $$\Phi: Z \to {\rm Bl}_{\BP_{\varphi}^{n-1}} \BP^{m+n-1} $$ such that $\varphi= \Phi|_O$. \end{itemize} \end{lemma}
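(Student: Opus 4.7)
The plan for (i) is to invoke the developing-map machinery for locally flat projective connections on a simply-connected manifold, which is entirely parallel to Proposition \ref{p.develop} (projective connections being Cartan connections of type $(G,G^0)$ with $G = \text{PGL}(m+n)$ and $G^0$ the stabilizer of a point of $\BP^{m+n-1}$). Analytic continuation of $\varphi$ along paths in $M$, made single-valued by simple-connectedness, produces a global holomorphic local biholomorphism $\widetilde{\varphi}: M \to \BP^{m+n-1}$ sending germs of geodesics of $\sF$ to germs of lines, unique subject to $\widetilde{\varphi}|_O = \varphi$.

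To identify $\BP^{n-1}_\varphi$, I would use that linearity of $\sA$ (Definition \ref{d.projective} (7)) guarantees a linear subspace $\BP^{n-1}_\varphi \subset \BP^{m+n-1}$ whose $n$-dimensional linear subspaces containing it carry the $\varphi$-images of leaves of $\sA|_O$; equivalently, $\varphi$ intertwines $\sA|_O$ with the tangent foliation of the linear projection $\pi: \BP^{m+n-1} \setminus \BP^{n-1}_\varphi \to \BP^{m-1}$. Since $\BP^{n-1}_\varphi$ is uniquely determined by that foliation as the common intersection of any two distinct $n$-planes tangent to it, and chart patchings along analytic continuation preserve foliations, the same $\BP^{n-1}_\varphi$ is associated with every local chart of $\widetilde{\varphi}$. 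Consequently each leaf of $\sA$ on $M$ is carried by $\widetilde{\varphi}$ into an $n$-plane through $\BP^{n-1}_\varphi$ and $\pi \circ \widetilde{\varphi}: M \to \BP^{m-1}$ is a globally defined holomorphic submersion whose fibers are exactly the leaves of $\sA$.

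The main obstacle is showing that $\widetilde{\varphi}(M) \cap \BP^{n-1}_\varphi = \emptyset$ and that $\widetilde{\varphi}$ is injective. For disjointness, if $\widetilde{\varphi}(x) \in \BP^{n-1}_\varphi$ for some $x \in M$, then the leaf of $\sA$ through $x$ maps to an open piece of a single $n$-plane $L$ with $L \supset \BP^{n-1}_\varphi$; local biholomorphism at $x$ forces any transverse $(m-1)$-dim slice at $x$ to map to a transverse slice at $\widetilde{\varphi}(x)$, but that transverse image is constrained to lie in $n$-planes through $\BP^{n-1}_\varphi$ all sharing $\BP^{n-1}_\varphi$ with $L$, which produces a Jacobian degeneracy at $x$, a contradiction. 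Injectivity is then handled by combining leaf-space and leaf-wise arguments: the factored map $M/\sA \to \BP^{m-1}$ coming from $\pi \circ \widetilde{\varphi}$ is an open immersion (fibers being precisely the leaves and the map being submersive), while on each leaf $F$ the restriction $\widetilde{\varphi}|_F$ is a geodesic-preserving local biholomorphism into the affine $n$-space $L \setminus \BP^{n-1}_\varphi$, for which injectivity follows from the standard affine developing map argument.

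For (ii), the explicit blow-down $\beta|_{Z \setminus E}: Z \setminus E \xrightarrow{\cong} \BP(Q \oplus U) \setminus \BP Q$ of Lemma \ref{l.blowup} is itself a developing map for the pair $(\sF, \sA)$ of Example \ref{e.Z}: it sends conics on $Z$ disjoint from $E$ to lines and $\alpha$-fibers to the $n$-dimensional linear subspaces through $\BP Q$. Uniqueness from (i) identifies $\widetilde{\varphi}$ with $\beta|_{Z \setminus E}$ up to the unique projective automorphism of $\BP^{m+n-1}$ carrying $\BP Q$ to $\BP^{n-1}_\varphi$, and Lemma \ref{l.blowup} (a) lets us extend $\widetilde{\varphi}$ across $E$ by blowing up $\BP^{n-1}_\varphi$ to obtain the required biregular morphism $\Phi: Z \to \Bl_{\BP^{n-1}_\varphi} \BP^{m+n-1}$ with $\Phi|_O = \varphi$.
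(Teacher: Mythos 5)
Your proposal follows essentially the same route as the paper's proof: develop the chart $\varphi$ over the simply-connected $M$, use linearity of $\sA$ to single out one center $\BP^{n-1}_{\varphi}$ avoided by the image, and in (ii) compare $\widetilde{\varphi}$ with the model map $\beta$ of Lemma \ref{l.blowup} and lift to the blowup. Two caveats: the identification of $\widetilde{\varphi}$ with a projective transform of $\beta|_{Z\setminus E}$ rests not on ``uniqueness from (i)'' but on the classical rigidity that a local biholomorphism between open subsets of $\BP^{m+n-1}$ sending germs of lines to germs of lines is projective linear (the same fact, via Theorem 5.2 in Chapter 5 of \cite{Sh}, that the paper uses when it extends $\widetilde{\varphi}\circ\beta^{-1}$ to a biregular map of $\BP(Q\oplus U)$); and your injectivity discussion is both unnecessary --- ``open immersion'' is used here as in Proposition \ref{p.develop}, i.e.\ a local biholomorphism, and your part (ii) never uses it --- and not sound as stated (the leaf-space map need not be injective, and affine developing maps of leaves need not be injective), so it should simply be dropped.
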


\begin{proof}
Since $\sA$ is linear, there exists $\BP^{n-1}_{\varphi} \subset \BP^{m+n-1}$ such that
leaves of $\sA$ are sent by $\varphi$ into linear subspaces of dimension $n$ containing $\BP^{n-1}_{\varphi}$.
As $M$ is simply-connected,  the developing map construction as in the proof of Proposition \ref{p.develop} gives an extension of $\varphi$ to an open immersion $\widetilde{\varphi}: M \to \BP^{m+n-1}$ which sends $\sF$ to $\sF^{\rm flat}|_{\widetilde{\varphi}(M)}.$ Since  $\widetilde{\varphi}$ sends leaves of $\sA$ into
 into linear subspaces of dimension $n$ containing $\BP^{n-1}_{\varphi}$, its image lies in $\BP^{m+n-1} \setminus \BP^{n-1}_{\varphi}$, proving (i).

  When $M= Z \setminus E$, the map $\widetilde{\varphi}$
  induces  a holomorphic map $$\BP (Q \oplus U) \setminus \BP Q \to \BP^{m+n-1} \setminus \BP^{n-1}_{\varphi}$$ which sends germs of lines to germs of lines.
  Thus it must extend to a biregular morphism $\BP (Q \oplus U) \to \BP^{m+n-1}$ which sends $\BP Q$ to $\BP_{\varphi}^{n-1}$.
 Then it lifts to a biregular morphism $$\Phi: Z = {\rm Bl}_{\BP Q} \BP (Q \oplus U) \to {\rm Bl}_{\BP^{n-1}} \BP^{m+n-1},$$  proving (ii). \end{proof}

\begin{lemma}\label{l.Weyl}
Let $\sY$ be a complex manifold and $q: \sY \to \Delta$ be a smooth morphism (submersion).
Assume that we have a holomorphic family of projective connections $\sF^t$  and foliations of fixed rank $\sA^t$ on the fiber $Y_t:= q^{-1}(t)$ for all $t \in \Delta$. \begin{itemize}
\item[(i)] If $\sF^t$ is locally flat for all $t \neq 0$, then so is $\sF^0.$
\item[(ii)] In the setting of (i), if $\sA^t$ is linear for all $t \neq 0$, then so is $\sA^0$. \end{itemize}
\end{lemma}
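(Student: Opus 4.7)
The plan is, for both parts, to realize the conclusion as vanishing of a holomorphic tensor (or analytic extension of a holomorphic section across $t = 0$) on the total space $\sY$, using the hypothesis at $t \ne 0$ together with the identity theorem. First I would handle (i) via the curvature of the normal Cartan connection attached to $\sF^t$, and then bootstrap to (ii) via a holomorphic family of developing maps together with the rigidity of linear projections in $\BP^N$.

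For part (i), pass from each projective connection $\sF^t$ on $Y_t$ to its normal Cartan connection of type $(\mathrm{PGL}(N+1), P)$, where $N = \dim Y_t$ and $P \subset \mathrm{PGL}(N+1)$ is the stabilizer of a point in $\BP^N$; its curvature $\Omega^t$ is a holomorphic tensor on $Y_t$ whose identical vanishing is equivalent to local flatness of $\sF^t$ (this is the projective analogue of the vanishing criterion recalled just before Proposition~\ref{p.Biswas}). Since $\{\sF^t\}$ varies holomorphically in $t$, so does its normalization, and the $\Omega^t$ assemble into a relative holomorphic tensor $\Omega$ on $\sY / \Delta$ vanishing on $\sY \setminus Y_0$; the identity theorem then forces $\Omega \equiv 0$ on $\sY$, hence $\Omega|_{Y_0} = 0$, so $\sF^0$ is locally flat.

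For part (ii), fix $y_0 \in Y_0$. By (i) and the family version of the developing-map construction (Proposition~\ref{p.develop} applied to the holomorphic family of Cartan connections), there exists a holomorphic family of open immersions $\varphi_t : O_t \to \BP^N$ on neighborhoods $O_t$ of $y_0$ in $Y_t$ sending $\sF^t|_{O_t}$ to the flat projective connection. Set $\sA^t_\flat := (\varphi_t)_* \sA^t$. After shrinking $\sY$, we may assume each $\varphi_t(O_t)$ contains a common open subset $U \subset \BP^N$ meeting at least two leaves of $\sA^t_\flat$ in general position; then for $t \ne 0$ linearity of $\sA^t_\flat$ uniquely determines a linear center $\BP^{k-1}_t \subset \BP^N$ together with a linear projection $\pi_t : \BP^N \dashrightarrow \BP^{N-k}$ whose vertical foliation restricts to $\sA^t_\flat$ on $U$. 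Uniqueness makes $t \mapsto (\BP^{k-1}_t, \pi_t)$ holomorphic on $\Delta \setminus \{0\}$, with values in the compact variety parametrizing such pairs; it therefore extends holomorphically across $t = 0$, and continuity of $\sA^t_\flat$ in $t$ identifies $\sA^0_\flat$ with the vertical foliation of the limit $\pi_0$ on a neighborhood of $\varphi_0(y_0)$, so $\sA^0_\flat$ is linear. Transporting back by $\varphi_0^{-1}$ gives linearity of $\sA^0$ near $y_0$, and since $y_0$ was arbitrary, $\sA^0$ is linear. The main obstacle is justifying that the various family-of-objects arguments above really admit holomorphic $t$-dependence (normalization of the Cartan connection, integration of the Maurer--Cartan equation to obtain $\varphi_t$, and holomorphic extraction of the linear data $(\BP^{k-1}_t, \pi_t)$ from $\sA^t_\flat$); each reduces to holomorphic dependence of unique solutions to holomorphic equations, after which Hartogs-type extension across $t = 0$ concludes the argument.
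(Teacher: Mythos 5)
Your part (i) is essentially the paper's own argument: the paper invokes the classical Cartan--Thomas curvature tensor of a projective connection (equivalently, the curvature of the normal projective Cartan connection), whose identical vanishing characterizes local flatness, and lets the vanishing pass from $t\neq 0$ to $t=0$ by holomorphic dependence; your formulation via the normal Cartan connection is the same proof.

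For part (ii), however, the decisive step in your argument is not justified. You define $t \mapsto (\BP^{k-1}_t, \pi_t)$ on $\Delta \setminus \{0\}$ and assert that it ``extends holomorphically across $t=0$'' because it takes values in a compact parameter variety. That reasoning is false: a holomorphic map from the punctured disc to a compact projective variety can have an essential singularity (e.g. $t \mapsto [e^{1/t}:1] \in \BP^1$), so neither holomorphic nor meromorphic extension is automatic, and your closing appeal to a ``Hartogs-type extension across $t=0$'' does not apply since $\{t=0\}$ is a hypersurface. Any repair must use the holomorphicity of the family $\sA^t$ up to and including $t=0$: for example, recover the center for $t\neq 0$ as the intersection of the linear spans of the leaves through two suitably general points; these spans are determined by the $1$-jet of the foliation, hence vary holomorphically across $t=0$, and after checking that the two limiting $k$-planes are distinct (otherwise an open piece of the $N$-dimensional fiber would lie in a single $k$-plane) one gets convergence of the centers and then holomorphic extension by Riemann's removable singularity theorem, after which continuity identifies $\sA^0$ with the vertical foliation of the limiting projection. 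The paper sidesteps this bookkeeping entirely: after flattening the family (your developing-map step, or equivalently regarding $\sY$ as an open subset of $\BP^N \times \Delta$), it notes that the leaves of $\sA^0$ are open pieces of $k$-planes (a closed condition inherited from $t\neq 0$), defines the holomorphic map $\mu: \sY \to \Gr(k+1, \C^{N+1})$ sending a point to the span of its leaf, and observes that linearity of $\sA^t$ for $t\neq 0$ says exactly that $\mu(Y_t)$ lies in a linear subspace under the Pl\"ucker embedding; linear degeneracy of the image is a closed holomorphic condition, so it persists at $t=0$, yielding the common center without ever following the centers through the limit. So your strategy for (ii) can be made to work, but as written the extension across $t=0$ --- which is precisely the content of the lemma --- is a genuine gap.
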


\begin{proof}
A classical result due to E. Cartan and T. Y. Thomas (see e.g. Theorem 15 in \cite{KN} or Theorem 4.4.3 of \cite{CSl}),
 says that a projective connection has a naturally defined holomorphic curvature tensor (i.e.  a 2-form with values in a naturally defined  vector bundle: the type of the vector bundle is different depending on whether the dimension of the base manifold is 2 or bigger than 2)  such that the curvature tensor vanishes if and only if the projective connection is locally flat.  This proves (i).

 To prove (ii), using (i) and shrinking $\sY$ if necessary, we can regard $\sY$ as an open subset in
  $\BP^N \times \Delta, N = \dim Y,$ such that $\sF^t$ come from the flat projective connection on $\BP^N$.
 As the leaves of $\sA^t, t \neq 0$ are open subsets of linear subspaces in $\BP^N$, so are leaves of $\sA^0.$
  Let $k$ be the rank of $\sA$ and define a holomorphic map $\mu: \sY \to {\rm Gr}(k+1, \C^{N+1})$ sending $y \in \sY$ to  the linear space in $\BP^N$ containing the leaf of $\sA^t, t = q(y),$ through $y \in Y^t.$
 Since $\sA^t, t \neq 0,$ is linear, the image  $\mu (Y_t)$ is an open subset in a linear subspace of ${\rm Gr}(k+1, \C^{N+1})$ under the Pl\"ucker embedding. Thus so is the image $\mu(Y_0)$, which implies  that $\sA^0$ is linear.
   \end{proof}

Now we turn to the proof of Theorem \ref{t.rigid}. First, note that
the assumption in Theorem \ref{t.rigid} has the following consequence on the conic curves of $Z_0$

\begin{lemma}\label{l.rigid}
In the setting of Theorem \ref{t.rigid},
let $z_0 \in Z'_0$ be a general point and let $v_0 \in T_z Z_0$ be a general tangent vector, not tangent to the fiber of $\alpha^0$. Using the property (i), choose a holomorphic section $$\{ z_t \in Z_t, \ t \in \Delta\}$$ of $p: \sZ_{\Delta} \to \Delta$ passing through $z_0$ and a family of tangent vectors $$\{v_t \in T_{z_t} Z_t, \ t \in \Delta\}$$ converging to $v_0$. By Lemma \ref{l.blowup} (c), we have a unique conic curve $C_t \subset Z_t$ tangent to $v_t$ for each $t \neq 0$. Then their limit $C_0
\subset Z_0$ as algebraic cycles is a nonsingular conic curve tangent to $v_0$. \end{lemma}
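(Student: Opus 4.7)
The plan is to use properness of the Chow variety of $1$-cycles in $\BP\sW/\Delta$ to produce a flat limit $C_0$ of the family $\{C_t\}_{t\neq 0}$, extract the support and tangent cone of this limit, and then invoke hypotheses (iii) and (iv) to exclude each degenerate possibility, concluding that $C_0$ is a smooth conic tangent to $v_0$.

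First, each $C_t$ with $t\neq 0$ is a smooth plane conic lying in a unique $2$-plane $\Pi_t \subset \BP W_t$; viewing $t\mapsto \Pi_t$ as a morphism from $\Delta\setminus\{0\}$ into the relative Grassmannian $\Gr(3,\sW)\to \Delta$, properness of that Grassmannian bundle provides a unique extension $\Pi_0 \subset \BP W_0$. Inside $\Pi_t$ the conic $C_t$ is cut out by a ternary quadratic form, well-defined up to scalar, giving a point in the $\BP^5$ of plane conics in $\Pi_t$, and this morphism likewise extends uniquely over $t=0$. Hence $C_0\subset \Pi_0$ is a plane curve of degree $2$, so $C_0$ is either a smooth conic, a pair of (possibly coincident) lines, or a double line. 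Since $C_t\subset \sZ_\Delta$ for $t\neq 0$ and $\sZ_\Delta$ is closed in $\BP\sW$, one also obtains $C_0\subset Z_0$.

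The next step is to pass the incidence data $(z_t,v_t)$ to the limit. The closed subvariety of $\BP\sW \times_\Delta (\text{conics})$ cut out by ``the marked point lies on the conic'' is proper over $\Delta$, and the section $t\mapsto (z_t,[C_t])$ extends over $0$, giving $z_0 \in C_0$. Enlarging the incidence to also record a projective tangent direction at the marked point, that is closing up the locus $\{(p,[v],[C]):p\in C,\ [v]\in \BP T_pC\}$ in the natural ambient space (built from the relative $\BP^2$-bundle $\Pi_t$), the section $t\mapsto (z_t,[v_t],[C_t])$ extends as well; its limit at $t=0$ certifies that $v_0$ lies in the tangent cone of $C_0$ at $z_0$. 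This is the step that will demand the most care, because one must interpret ``tangent direction at a point'' correctly on reducible and non-reduced plane conics; once $\Pi_0$ has been identified, however, this reduces to a standard incidence-closure assertion in a fixed projective plane.

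With these ingredients in place, I would conclude by elimination. If $C_0 = 2\ell$ is a double line, then $z_0\in\ell$ and the tangent cone of $C_0$ at $z_0$ equals $\ell$, so $v_0$ is the direction of the line $\ell \subset Z_0$ through $z_0\in Z'_0$; hypothesis (iv) then forces the germ of $\ell$ at $z_0$ to lie in the fiber of $\alpha^0$ through $z_0$, making $v_0$ tangent to that fiber and contradicting the choice of $v_0$. If instead $C_0 = \ell_1+\ell_2$ with $\ell_1 \neq \ell_2$, then $z_0$ lies on some component $\ell_i$, the tangent cone of $C_0$ at $z_0$ is the union of the tangent directions of the $\ell_i$ passing through $z_0$, and $v_0$ must be one of these directions; applying (iv) to the corresponding $\ell_i$ yields the same contradiction. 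The only remaining possibility is that $C_0$ is a smooth irreducible conic, and by construction it passes through $z_0$ with tangent line $\C v_0$, as claimed.
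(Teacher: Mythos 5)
Your reduction of $C_0$ to a plane conic inside a limit plane $\Pi_0$ through $z_0$ is fine, but the step you yourself flag as delicate is in fact false, and it carries the whole weight of the lemma. The closure of the incidence locus $\{(p,[v],[C]):\ p\in C,\ [v]\in \BP T_pC\}$ over the locus of degenerate conics does \emph{not} force the limiting direction to lie in the tangent cone of the limit cycle. Concretely, in $\Pi_0=\BP^2$ with coordinates $[x:y:z]$ take $C_t=\{y^2+t\,xz=0\}$ with constant marked point $z_t=[0:0:1]$: for $t\neq 0$ this is a smooth conic through $z_t$ whose tangent direction there is that of the line $\{x=0\}$, while the cycle limit is the double line $2\{y=0\}$, whose tangent cone at $z_0$ is the direction of $\{y=0\}$. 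So from the limit of $(z_t,[v_t],[C_t])$ you cannot conclude that $v_0$ is tangent to $\ell$ when $C_0=2\ell$, and your contradiction via hypothesis (iv) in the double-line case evaporates. The same phenomenon occurs at the node of a line pair: for $C_t=\{xy+tz^2=0\}$ with $z_t=[\sqrt{t}:-\sqrt{t}:1]$ the limiting tangent direction is that of $\{x=y\}$, which is not in the tangent cone $\{xy=0\}$; so your two-line case is also incomplete when $z_0$ is the intersection point. That sub-case, however, can be repaired with data you already have: $v_0\in T_{z_0}\Pi_0$, both lines pass through $z_0\in Z'_0$ and hence lie, by (iv) and (iii), in the linear closure of the $\alpha^0$-fiber through $z_0$, so $\Pi_0$ lies there too and $v_0$ is tangent to the fiber, a contradiction. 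This span-of-the-two-lines argument is essentially how the paper disposes of the line-pair case.

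For the double-line case no incidence or span argument of this sort suffices: knowing $C_0=2\ell$ with $\ell$ in the $\alpha^0$-fiber only places the line $\ell$, not the plane $\Pi_0$, inside the fiber, and $v_0$ merely lies in $T_{z_0}\Pi_0$. The paper uses a genuinely different idea here, which is absent from your proposal: by the generality of $(z_t,v_t)$ one may assume that limits of all conics on $Z_t$ through $z_t$ are double lines; one then picks a second point $z'_0$ near $z_0$ in $Z'_0$ but off the $\alpha^0$-fiber through $z_0$, a section $z'_t\to z'_0$, and the unique conic $C'_t\subset Z_t$ through $z_t$ and $z'_t$; its limit contains $z_0$ and $z'_0$, is by assumption a double line, hence by (iv) lies in the $\alpha^0$-fiber through $z_0$, contradicting the choice of $z'_0$. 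Without this (or some substitute for the false tangent-cone claim), your proof does not go through.
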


\begin{proof}
Suppose not. Then $C_0$ is the union of two distinct lines or a double line passing through $z_0.$ The limit of the planes $\langle C_t \rangle$ spanned by $C_t, t \neq 0$, gives a plane $\langle C_0 \rangle$ in $\BP W_0$ which contains $C_0$ and
includes $v_0$ as its tangent vector.

If $C_0$ is the union of two distinct lines, both lines are contained in the fiber of $\alpha^0$ by the condition (iv) of Theorem \ref{t.rigid}. Thus $\langle C_0\rangle$ lies in the fiber of $\alpha_0$ and cannot have $v_0$ as its tangent vector, a contradiction.

Thus we can assume that  $C_0$ is a double line. By the generality of the choice of $z_t$ and $v_t$, we may assume that the limits of conic curves on $Z_t$ through $z_t$ are double lines in  $Z_0$. Choose a point $z'_0 \neq z_0$ in a neighborhood of $z_0$ in $Z'_0$ such that
$z'_0$ is not in the fiber of $\alpha^0$ through $z_0$. Choose a section $$\{z'_t \in Z_t, \ t \in \Delta\}$$ of $p$ through $z'_0 \in Z'_0$. By Lemma \ref{l.blowup} (c), there exists  a unique conic $C'_t \subset Z_t$ joining $z_t$ and $z'_t$
for each $t \neq 0$. Their limit $C'_0$ is a curve joining $z_0$ to $z'_0$ and, by our assumption, it is a double line contained in the $\alpha^0$-fiber through $z_0$. This is a contradiction to our choice of $z'_0$. \end{proof}

  We borrow the following   lemma from  Lemma 2.3 in \cite{Mo}.

\begin{lemma}\label{l.Mok2.3}
Let $\sW$ be a vector bundle over $\Delta$. Let $Z$ be a nonsingular projective variety
and let $\Psi$ be a meromorphic map from $Z \times \Delta$ into $\BP \sW$ respecting the natural projections to $\Delta$. Assume that \begin{itemize}
\item[(a)] the restriction of $\Psi$ at $t \in \Delta\setminus \{0\}$ gives a linearly normal embedding
$Z \times \{ t \} \to \BP W_t$; and
\item[(b)] there is a point $z \in Z \times \{ 0 \}$ such that $\Psi$ is holomorphic at $z$ and $\Psi (O_z) \subset \BP W_0$ is linearly nondegenerate for some open neighborhood $O_z$ of $z$ in $Z \times \{0 \}.$\end{itemize}
    Then $\Psi$ is a holomorphic embedding. \end{lemma}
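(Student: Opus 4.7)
The plan is to extract a coherent family of line bundles from the family of embeddings, use it to reconstruct $\Psi$ via global sections, and then argue that the resulting holomorphic map is automatically an embedding.

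First, I would resolve the indeterminacy of $\Psi$ by a finite sequence of blow-ups $\sigma: \widetilde{\sZ}\to Z\times\Delta$, with center contained in the indeterminacy locus (which lies in the central fiber $Z\times\{0\}$), obtaining a holomorphic morphism $\widetilde{\Psi}:\widetilde{\sZ}\to\BP\sW$. The pullback $\widetilde{L}:=\widetilde{\Psi}^{*}\sO_{\BP\sW}(1)$ is a line bundle on $\widetilde{\sZ}$, and for $t\neq 0$ its restriction via $\sigma$ to $Z\times\{t\}$ equals the very ample line bundle $L_{t}:=\Psi_{t}^{*}\sO_{\BP W_{t}}(1)$ inducing the linearly normal embedding $\Psi_{t}$. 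The strict transform of $Z\times\{0\}$ in $\widetilde{\sZ}$, together with the exceptional divisors, determines a line bundle $L_{0}$ on $Z$ as the $\sigma$-direct image restricted to the central fiber after twisting by a suitable combination of exceptional divisors; by properness of $\Pic^{\tau}(Z)$ the holomorphic map $\Delta\setminus\{0\}\to\Pic(Z)$, $t\mapsto [L_{t}]$, extends across $0$, so these extensions assemble into a holomorphic line bundle $\sL$ on $Z\times\Delta$ with $\sL|_{Z\times\{t\}}=L_{t}$ for $t\neq 0$.

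Second, with $p:Z\times\Delta\to\Delta$ the projection, the Hilbert polynomial of $L_{t}$ is constant in $t\neq 0$ and (by flat extension) in $t=0$. Grauert's theorem together with the vanishing of $H^{1}(Z,L_{t})$ for $t\neq 0$ (a consequence of very ampleness and the fact that this vanishing is open) implies that $p_{*}\sL$ is a vector bundle of rank $\dim W$ on a neighborhood of $0\in\Delta$ and commutes with base change. Linear normality at $t\neq 0$ provides a holomorphic bundle map $\sW^{*}\to p_{*}\sL$ over $\Delta\setminus\{0\}$ which is a fibrewise isomorphism; by Hartogs this extends to a bundle map over all of $\Delta$, and the extension is an isomorphism at $t=0$ iff it is injective there.

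Third, injectivity at $t=0$ is exactly where I would invoke the linear non-degeneracy hypothesis (b). Any element in the kernel at $0$ is a linear form on $W_{0}$ whose associated section of $L_{0}$ vanishes identically on $Z$; in particular the corresponding hyperplane in $\BP W_{0}$ contains $\Psi(O_{z})$, since $\Psi$ is holomorphic on $O_{z}$ and coincides there with the morphism defined by the complete linear system of $L_{0}$. This contradicts nondegeneracy of $\Psi(O_{z})$. Consequently $W_{0}^{*}\cong H^{0}(Z,L_{0})$, and the complete linear system of $L_{0}$ defines a holomorphic morphism $Z\to\BP W_{0}$ that agrees with $\Psi_{0}$ on $O_{z}$, hence everywhere on $Z\times\{0\}$ by the identity principle; thus $\Psi$ is holomorphic across the central fiber.

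It remains to promote the holomorphic extension $\Psi$ to an embedding. The morphism $\Psi_{0}$ is birational onto its image, because it coincides with the embedding given by $\Psi$ on the open set $O_{z}$ on which the latter is a local isomorphism. The images $\Psi(Z\times\{t\})$ form a flat family of closed subschemes of $\BP\sW$ over $\Delta$ with Hilbert polynomial constant and equal to that of $Z$ for $t\neq 0$, so the same Hilbert polynomial persists at $t=0$; combined with birationality this forces $\Psi_{0}$ to be a closed immersion. The main obstacle, as I see it, is step one: controlling the line bundle across the blown-up central fiber and verifying that the Picard-scheme extension yields a genuine line bundle on $Z\times\Delta$ with the claimed restrictions. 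Once that is in place, the flat base change and non-degeneracy arguments are formal.
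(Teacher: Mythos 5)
A preliminary remark on comparison: the paper contains no proof of this lemma; it is quoted from Lemma 2.3 of \cite{Mo}, so your argument can only be judged on its own terms, and on those terms the decisive step is missing. After deducing from hypothesis (b) that $W_0^*\to H^0(Z,L_0)$ is injective, you assert that the limiting linear system ``defines a holomorphic morphism'' and conclude that $\Psi$ is holomorphic across the central fiber. Injectivity (i.e.\ nondegeneracy) does not exclude base points of the limiting system, that is, points of $Z\times\{0\}$ at which all limiting sections vanish; these are exactly the potential indeterminacy points of $\Psi$, and removing them is the content of the lemma, not a formality. The closing step has the same problem: ``flat family with constant Hilbert polynomial plus birational onto the image'' does not give a closed immersion as stated, because the flat limit of the subschemes $\Psi_t(Z\times\{t\})$ need not be reduced nor coincide with the closure of $\Psi_0(Z)$ (extra components and embedded points appear precisely when the maps degenerate, as for twisted cubics in $\BP^3$ limiting to planar cubics or to a conic plus a line), and finiteness of $\Psi_0$ is never addressed. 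Hypothesis (b) has to carry far more weight than the single injectivity statement where you use it.

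Several supporting claims are also incorrectly justified, though their conclusions are reachable. Properness of $\Pic^{\tau}(Z)$ does not extend the holomorphic classifying map $\Delta\setminus\{0\}\to\Pic(Z)$ across the puncture: holomorphic maps from a punctured disc to an abelian variety can have essential singularities; the bundle $\sL$ should instead be produced from the resolution, writing $\widetilde{\Psi}^*\sO_{\BP\sW}(1)$ as the pullback of a line bundle on $Z\times\Delta$ twisted by exceptional divisors. Very ampleness of $L_t$ does not imply $H^1(Z,L_t)=0$ (a canonically embedded curve is a counterexample), so base change at $t=0$, and with it your identification $W_0^*\cong H^0(Z,L_0)$, is unjustified; what is actually available is local freeness of $p_*\sL$ of rank $\dim W$, from linear normality on $\Delta\setminus\{0\}$ and torsion-freeness over the disc. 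Finally, Hartogs extension does not apply to a bundle map over $\Delta\setminus\{0\}$: the puncture has codimension one, and a fibrewise isomorphism there can acquire a pole or worse at $0$; the extension must be performed on the total space, by extending the evaluation morphism from the pullback of $\sW^*$ into $\sL$ across the codimension-two indeterminacy locus of $\Psi$, where Hartogs genuinely applies. Even with these repairs, holomorphicity along the central fiber and the embedding property remain unproved, so the proposal does not yet establish the lemma.
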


\begin{proof}[Proof of Theorem \ref{t.rigid}]
For $t\neq 0$, let $E_t \subset Z_t$ be the hypersurface corresponding to $E \subset Z$ under the biholomorphism $Z_t \cong Z$. Then $E_t$ is the exceptional divisor of a contraction $\beta_t: Z_t \to \BP^{m+n-1}$. We have the projective connection $\sF^t$ on $Z_t \setminus E_t$ arising from the flat projective connection on $\BP^{m+n-1}$ via $\beta_t: Z_t \to \BP^{m+n-1}$ as in Example \ref{e.Z}. Taking limits of leaves of $\sF^t$ on $\BP T (Z_t \setminus E_t)$ as $t \rightarrow 0$, we obtain a holomorphic foliation $\sF^0$ of rank 1 on a Zariski open subset
in $\BP T Z'_0$. The leaves of $\sF^0$ are tangent to the tautological distribution $\sJ^0$ of $\BP T Z'_0$.
By Lemma \ref{l.rigid}, general leaves of $\sF^0$ are obtained by the tangent vectors to conics on $Z_0$. Thus $\sF^0$ is transversal to $T^{\pi_0}$ where $\pi_0: \BP T Z'_0 \to Z'_0$ is the natural projection. By Hartogs extension, we see that $\sF^0$ gives a projective connection on $Z'_0$. By Lemma \ref{l.Weyl}, the projective connection is locally flat.

By the property (iii), the foliations $\sA^t$ on $Z_t \setminus E_t, t \neq 0,$ given by $\alpha$ on $Z \setminus E$ converges to a foliation $\sA^0$ on $Z'_0$. By Lemma \ref{l.Weyl}, this is a family of linear foliations with respect to the locally flat projective connections $\sF^t$, including $t=0$.

By the local flatness of the projective connections, we can find an open subset
 $O \subset \sZ_{\Delta} \setminus A$ with $O_t := O \cap p^{-1} (t)$ connected and a family of open immersions $\varphi_t: O_t \to \BP^{m+n-1}$ preserving the projective connections for all $t \in \Delta$.
By Lemma \ref{l.fundamental}, the  holomorphic map
$\varphi_t$ can be extended to a family of biregular morphisms $Z_t \cong Z$ for each $t \neq 0$. Their  inverse maps give  a biholomorphic map $$ \psi: Z \times (\Delta \setminus \{0\})  \to \sZ_{\Delta} \setminus Z_0$$ which has a  holomorphic extension   across a nonempty open subset of
$Z \times \{0\}$.
By Hartogs extension of meromorphic maps, we can extend $\psi$ to a meromorphic map
$$\Psi: Z \times \Delta \dasharrow \sZ_{\Delta}$$ which satisfies the conditions of Lemma  \ref{l.Mok2.3}.  Theorem \ref{t.rigid} follows by Lemma \ref{l.Mok2.3}.
\end{proof}

We have the following corollary of Theorem \ref{t.rigid}.

\begin{corollary}\label{c.rigid}
Let $\sW$ be a vector bundle of rank equal to $\dim W$ over the unit disc $\Delta :=\{ t \in \C, |t| <1\}$ and
let $W_t$ be its fiber at $t \in \Delta$.
Let $\sZ_{\Delta} \subset \BP \sV$ be an irreducible closed subvariety and let $p: \sZ_{\Delta} \to \Delta$ be the natural projection. Denote by $Z_t$ the set-theoretic fiber of $p$ over $t \in \Delta.$ Assume that there exists a closed algebraic subset $A \subset Z_0, A \neq Z_0,$ with  the following properties.
\begin{itemize} \item[(a)] The morphism $p$ is smooth on $\sZ_{\Delta} \setminus A$. \item[(b)]
 For each $t \in \Delta, t\neq 0$, the fiber $Z_t \subset \BP W_t$ is projectively equivalent to $Z \subset \BP W$. \item[(c)] At each point $z \in Z_0 \setminus A$, the second fundamental form ${\rm II}_{Z_0, z}$ as a system of quadrics is isomorphic  to ${\rm II}_{Z, x}$ at a point $x \in Z\setminus E$.  \end{itemize}
 Then $Z_0 \subset \BP V_0$ is projectively equivalent to $Z \subset \BP W$.
 \end{corollary}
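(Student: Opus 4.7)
The plan is to reduce Corollary \ref{c.rigid} to Theorem \ref{t.rigid} by verifying the remaining conditions (iii)--(v), since (i) and (ii) follow directly from (a) and (b). Linear nondegeneracy (v) comes for free: by Lemma \ref{l.IIZ}(1) the form ${\rm II}_{Z,x}$ surjects at $x \in Z \setminus E$, so hypothesis (c) forces ${\rm II}_{Z_0,z}$ to surject at every $z \in Z'_0$, whence the projective second osculating space of $Z'_0$ at any such $z$ fills $\BP W_0$, ruling out any hyperplane containing $Z'_0$.

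The heart of the argument is constructing $\alpha^0$. Using Lemma \ref{l.IIZ}(2) and (c), at each $z \in Z'_0$ the base locus of the quadrics in ${\rm II}_{Z_0,z}$ is a canonical $n$-dimensional subspace $\sA_z \subset T_z Z'_0$, defining a rank $n$ holomorphic distribution $\sA$ on $Z'_0$. For $z \in Z'_0$, I would pick a holomorphic section $z_t$ of $p$ through $z$ and take $L_z \subset \BP W_0$ to be the limit of the $\alpha_t$-fibers through $z_t$ inside the relative Grassmannian $\Gr(n+1,\sW)$, which is proper over $\Delta$; this $L_z$ is a linear $n$-plane through $z$ contained in $Z_0$. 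The standard observation that every projective linear subvariety of $Z_0$ through a smooth point $z$ has its tangent space inside the base locus of ${\rm II}_{Z_0,z}$ (since any hyperplane second-order tangent to $Z_0$ at $z$ must contain such a subvariety) then gives $T_z L_z \subseteq \sA_z$, and dimensional equality yields $T_z L_z = \sA_z$.

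Since a linear $n$-plane through $z$ is determined by its tangent direction at $z$, $L_z$ is the unique such plane and in particular is independent of the chosen section. Applying the same base-locus argument at any $z' \in L_z \cap Z'_0$ gives $T_{z'} L_z = \sA_{z'}$, so $L_{z'} = L_z$ by uniqueness, and the assignment $z \mapsto L_z$ factors through a morphism $\alpha^0 : Z'_0 \to B \subseteq \Gr(n+1,W_0)$ whose fibers are the open pieces of the $L_z$ lying in $Z'_0$; after possibly enlarging $A$ by a proper algebraic subset so that $B$ is nonsingular and $\alpha^0$ is smooth, condition (iii) holds. For (iv), any line $\ell \subset \BP W_0$ with $z \in \ell \subset Z_0$ satisfies $T_z \ell \subseteq \sA_z = T_z L_z$ by the same argument, and linearity forces $\ell \subset L_z$, which is the fiber of $\alpha^0$ through $z$.

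The main obstacle I anticipate is the global regularity of $\alpha^0$: while the pointwise base-locus data uniquely determine $L_z$ and thus the map to the Grassmannian, controlling the degenerations of $\alpha_t$-fibers near $A$ so that $B$ is nonsingular and $\alpha^0$ is genuinely smooth (rather than merely generically so) may force further enlargement of $A$. With all five hypotheses of Theorem \ref{t.rigid} verified, that theorem delivers the desired projective equivalence between $Z_0$ and $Z$.
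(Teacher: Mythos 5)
Your proposal is correct and follows the same overall route as the paper: reduce to Theorem \ref{t.rigid}, reading (i) and (ii) off from (a) and (b), getting condition (iv) from the observation (Definition \ref{d.II} together with Lemma \ref{l.IIZ}(2)) that the tangent direction of any line through $z$ lies in the base locus of ${\rm II}_{Z_0,z}$, and condition (v) from the surjectivity of the second fundamental form. The one place where you genuinely diverge is condition (iii): the paper dispatches it in a single sentence, saying that by (c) the base loci of the second fundamental forms give the fibration ``by enlarging $A$ if necessary,'' whereas you construct $\alpha^0$ explicitly by taking the limit $L_z$ of the (linear) closures of the $\alpha_t$-fibers through a holomorphic section $z_t$ and then identifying $T_zL_z$ with the base locus $\sA_z$. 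This elaboration is worthwhile, since the pointwise data of ${\rm II}$ alone does not obviously force the base-locus distribution to integrate to a fibration with \emph{linear} fiber closures; it is precisely your use of the degenerating family that supplies the linearity, a point the paper leaves implicit. One caveat: properness of the relative Grassmannian over $\Delta$ does not by itself guarantee that $t \mapsto [\text{fiber closure through } z_t]$ extends across $t=0$, because a holomorphic map from the punctured disc to a compact space may have an essential singularity. This is easily repaired with ingredients you already have: for $t\neq 0$ the fiber closure is the unique $n$-plane through $z_t$ whose tangent space at $z_t$ is the base locus of ${\rm II}_{Z_t,z_t}$, and since $p$ is smooth along the section and these base loci have constant dimension $n$, that datum --- hence the plane --- varies holomorphically across $t=0$; alternatively, compactness of the space of degree-one $n$-dimensional cycles plus the uniqueness of the $n$-plane through $z$ tangent to $\sA_z$ pins down the limit. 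With that adjustment, your verification of (iii), including the final enlargement of $A$ so that $B$ is nonsingular and $\alpha^0$ smooth, matches what the paper intends, and Theorem \ref{t.rigid} then gives the conclusion.
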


 \begin{proof}
 It suffices to verify the conditions (i)-(v) of Theorem \ref{t.rigid}.

 The conditions (i) and (ii) of Theorem \ref{t.rigid} are identical to the conditions (a) and (b).

The base locus of ${\rm II}_{Z,x}$ defines the fibers of $\alpha: Z \to \BP U$ by Lemma \ref{l.IIZ} (2). By the condition (c), we have corresponding fibration on a nonempty Zariski open subset of $Z_0 \setminus A$. Thus by enlarging $A$ if necessary, we can assume that the condition (iii) of Theorem \ref{t.rigid} holds, i.e., we have a fibration $\alpha^0: Z_0 \setminus A \to B$ the closure of whose fibers are linear subspaces of dimension $n$ in $\BP W_0$.

To check the condition (iv), note that for any line on $Z_0$ passing through $z \in Z'_0$, its tangent vector must be in the base locus of the system of quadrics ${\rm II}_{Z_0,z}$ by Definition \ref{d.II}. By Lemma \ref{l.IIZ} (2) and the assumption (c), this must be inside $T^{\alpha^0}$, implying the condition (iv).

 The condition (c) implies that $N_{Z_0, z}^{(2)} = N_{Z_0, z}$, which shows that the germ of $Z_0$ at $z$ is linearly nondegenerate in $\BP W_0$. Thus the condition (v) of Theorem \ref{t.rigid} holds.

\end{proof}

\section{Proof of Theorems \ref{t.Moksp}, \ref{t.recog} and \ref{t.deform}}\label{s.vmrt}

 We start with recalling briefly the basic terminology of the VMRT theory. For more details,  we refer the reader to Section 1 of \cite{Hw01} or Section 3 of \cite{HH} (or any of the sections introducing VMRT in the surveys \cite{Hw12}, \cite{Hw14} or \cite{M16}).

\begin{definition}\label{d.vmrt}
Let $X$ be a uniruled projective manifold. We fix a family $\sK$ of minimal rational curves, i.e. an irreducible component of the space of rational curves on $X$ such that for a general point $x \in X$, the subscheme $\sK_x \subset \sK$ parametrizing  members of $\sK$ passing through $x$ is nonempty and projective.
 For a general point $x \in X$, the {\em variety of minimal rational tangents} (VMRT) at $x$, to be denoted by $\sC_x \subset \BP T_x X,$ is  the closure of the union of the tangent directions to members of $\sK_x$ which are nonsingular at $x.$ Denote by
$\widehat{\sC}_x \subset T_x X$ the affine cone of $\sC_x$.
Let $\sC \subset \BP TX$ be the {\em VMRT-structure}, i.e., the closure of the union of VMRT's $$\{ \sC_x \subset \BP T_x X, \ \mbox{ general } x \in X\}$$ determined by $\sK$.
\end{definition}

The following results are standard.

\begin{theorem}\label{t.standard}
In the setting of \ref{d.vmrt}, assume that  $p: = \dim \sC_x \geq 2$.  Let $C \subset X$ be a general member of $\sK$ passing through a general point $x \in X$. \begin{itemize}
\item[(1)] The rational curve $C$ is nonsingular.
\item[(2)] The natural projection $\sC \to X$ is smooth (submersive)  in a neighborhood the curve $\BP TC \subset \sC.$
\item[(3)] For each $y \in C$, let $y':= \BP T_y C \in \sC_y \subset \BP T_y X$ be the corresponding point.
Then the second fundamental form ${\rm II}_{\sC_y, y'}$ of the projective variety
$\sC_y \subset \BP T_y X$, which is nonsingular at $y'$ by (2), is isomorphic to ${\rm II}_{\sC_x, x'}$.

\item[(4)] The restriction of the tangent bundle to $C$ splits as
$$TX|_C \cong \sO(2) \oplus \sO(1)^{\oplus p} \oplus \sO^{\oplus (\dim X -p-1)}.$$
In particular, the degree of $T(X)|_C$ is $p +2$.

\item[(5)] For each $y \in C$, the affine tangent space $T_{y'} \widehat{\sC}_y \subset T_y X$
is equal to the factor $(\sO(2) \oplus \sO(1)^{\oplus p})_y \subset T_y X$ in the splitting of $TX|_C$ in (4). \end{itemize} \end{theorem}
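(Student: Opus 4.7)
All five items are well-known consequences of the deformation theory of minimal rational curves, originating with Mori and developed by Kebekus and Hwang--Mok. My plan is to establish the splitting (4) first, then deduce (1), (2) and (5) essentially formally from it, and finally treat (3), which is the most subtle point.

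I will begin with (4). Since $C$ realizes the minimum of $(-K_X)\cdot C'$ among rational curves $C'$ through a general point of $X$, bend-and-break shows that $C$ is free. Writing $TX|_C = \bigoplus_{i=0}^{\dim X - 1} \sO(a_i)$ with $a_0 \geq a_1 \geq \cdots$, freeness gives $a_i \geq 0$ for all $i$, and the inclusion $TC \cong \sO(2) \hookrightarrow TX|_C$ forces $a_0 \geq 2$. Minimality of the anticanonical degree forces $a_i \leq 1$ for $i \geq 1$: if some $a_i \geq 2$ with $i \geq 1$, one could bend $C$ to a reducible curve and smooth one of its components to produce a rational curve through $x$ of strictly smaller $(-K_X)$-degree. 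Denoting by $p'$ the number of $\sO(1)$-summands, the identification $T_{[C]} \sK_x \cong H^0(C, N_{C/X}(-x)) \cong \C^{p'}$ together with Kebekus's birationality of the tangent map $\sK_x \dashrightarrow \sC_x$ yields $p' = \dim \sC_x = p$.

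Items (1), (2), (5) should then follow. For (1), global generation of $TX|_C$ implies that deformations of $C$ in $\sK$ sweep out a neighborhood of $x$, and any non-smooth general member could be smoothed to a curve of the same or smaller degree, contradicting minimality. For (2), the evaluation morphism from the universal family of pointed members of $\sK$ is smooth at every free pointed curve, and the locus of free pointed curves contains a neighborhood of $\BP TC$ in $\sC$ by (4). For (5), the differential of the tangent map at $y \in C$ sends $\xi \in H^0(C, N_{C/X}(-y))$ to $\xi'(y) \in N_{C/X, y}$; the splitting (4) gives $N_{C/X}(-y) \cong \sO^{\oplus p} \oplus \sO(-1)^{\oplus(\dim X - p - 1)}$, so the image of the evaluation $\xi \mapsto \xi'(y)$ is precisely the $\sO(1)^{\oplus p}$-part of $N_{C/X}$ at $y$. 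Pulling this back to $T_y X$ and adding $T_y C$ gives $T_{y'} \widehat{\sC}_y = (\sO(2) \oplus \sO(1)^{\oplus p})|_y$, where the identification with the full affine tangent space uses birationality of the tangent map.

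The main obstacle is (3). My plan is to use the uniform splitting (4) to trivialize $T\sC_y|_{y'}$ and $N_{\sC_y, y'}$ along $C$ as $\sO(-1)^{\oplus p}$ and $\sO(-2)^{\oplus (\dim X - p - 1)}$ respectively, so that the projective second fundamental form becomes a section of a natural bundle over $C$ whose fibers are canonically isomorphic to a fixed representation of $\mathrm{GL}(T_{y'}\sC_y) \times \mathrm{GL}(N_{\sC_y,y'})$. The isomorphism class of $\mathrm{II}_{\sC_y, y'}$ is then the orbit of this section in its fiber at $y$. To show this orbit is independent of $y \in C$, I would argue that the infinitesimal variation of the second fundamental form along $C$ takes values in a subbundle that is a coboundary in the appropriate Spencer-type complex, forcing the variation to be tangent to the orbit; the positive-weight piece of (4) ensures that the relevant obstruction, which would live in a negatively graded summand, vanishes. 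The argument is a variant of the computations in Section~1 of \cite{Hw01} and Section~2 of \cite{HM05}, to which I would refer for the detailed cohomological bookkeeping.
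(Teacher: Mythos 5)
Your sketches of (4), (5) and (2) are essentially the standard arguments that the paper invokes by citation (Corollary 2.9 in Chapter IV of \cite{Ko} or Theorem 1.2 of \cite{Hw01} for (4), Proposition 2.3 of \cite{Hw01} for (5), Proposition 1.4 of \cite{Hw01} for (2)), and they are fine as outlines. The genuine gap is in (1). Nonsingularity of a general member of $\sK$ through a general point is \emph{not} a formal consequence of freeness and minimality: a singular rational curve of minimal degree is not excluded by ``smoothing it to a curve of the same or smaller degree,'' since deformations preserve the degree and minimality then gives no contradiction at all. Note also that your argument for (1) never uses the hypothesis $p=\dim\sC_x\geq 2$, which is precisely where that hypothesis enters: (1) is Theorem 3.3 of \cite{Ke}, whose proof is a nontrivial dimension estimate on the locus of singular members of $\sK_x$ (resting on finiteness of the tangent map), and the conclusion can fail when $p\leq 1$. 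As it stands, your step (1) would not go through; you must either quote Kebekus's theorem, as the paper does, or reproduce an argument of that depth.

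On (3), your proposal is workable in spirit but replaces the cited argument (Proposition 2.2 of \cite{Mo}, Proposition 3.1 of \cite{HH}) by a vaguer variational ``Spencer-coboundary'' mechanism that you do not actually carry out. The standard proof is much simpler and uses only (2), (4) and (5): along $C\cong\BP^1$ the tangent spaces $T_{y'}\sC_y$ and normal spaces $N_{\sC_y,y'}$, after the canonical twist by $T_yC\cong\sO(2)_y$, form the bundles $\sO(-1)^{\oplus p}$ and $\sO(-2)^{\oplus(\dim X-p-1)}$ coming from the splitting in (4); by (2) the second fundamental forms assemble into a holomorphic section over $C$ of the corresponding bundle $Hom(\Sym^2(\sO(-1)^{\oplus p}),\sO(-2)^{\oplus(\dim X-p-1)})$, which is trivial on $\BP^1$. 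A holomorphic section of a trivial bundle on $\BP^1$ is constant, and constancy up to the constant automorphisms $\GL_p\times\GL_{\dim X-p-1}$ of these bundles is exactly the asserted isomorphism of the systems of quadrics ${\rm II}_{\sC_y,y'}$. If you argue this way, no further cohomological bookkeeping is needed; if you insist on your orbit-tangency argument, you would still have to make precise what the ``Spencer-type complex'' and the vanishing ``obstruction'' are, which your sketch does not do.
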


\begin{proof}
That $C$ is nonsingular in (1) is a consequence of the assumption $p \geq 2$ and Theorem 3.3 of \cite{Ke}. (2) is a consequence of the freeness of $C$, no obstruction in deforming $C$ with one point fixed (e.g. Proposition 1.4 in \cite{Hw01}). (3) is by Proposition 2.2 of \cite{Mo} or
Proposition 3.1 of \cite{HH}.
(4) is a well-known consequence of  the bend-and-break argument (e.g. Corollary 2.9 in Chapter IV of \cite{Ko} or Theorem 1.2 in \cite{Hw01}). (5) is from the basic deformation theory of rational curves (e.g. Proposition 2.3 of \cite{Hw01}). \end{proof}

The following is well-known to experts.

\begin{proposition}\label{p.ratconn}
In Theorem \ref{t.standard}, assume that $\widehat{\sC}_x $ spans $T_x X$ for a general $x \in X$. Then for any open subset $O \subset X$ containing $C$,  there exists a holomorphic map $f: \BP^1
\to O$ such that $f^* T O$ is ample.
\end{proposition}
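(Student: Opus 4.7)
By Theorem \ref{t.standard}(4), $TX|_C \cong \sO(2)\oplus \sO(1)^{\oplus p}\oplus \sO^{\oplus q}$ with $q = \dim X - p - 1$. If $q = 0$, this bundle is already ample and a parametrization of $C$ gives the desired $f: \BP^1 \to C \subset O$. The substantive case is $q \geq 1$, treated below.

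The plan is the classical comb-and-smooth construction. By Theorem \ref{t.standard}(5), for a general $y \in C$ the affine tangent space $T_{y'} \widehat{\sC}_y$ equals the fiber of $\sO(2) \oplus \sO(1)^{\oplus p}$ at $y$, which has codimension $q$ in $T_y X$. The spanning hypothesis $\langle \widehat{\sC}_y \rangle = T_y X$ therefore yields, for each general $y \in C$, minimal rational curves through $y$ whose tangent at $y$ lies outside $T_{y'} \widehat{\sC}_y$ and projects to any prescribed direction in the trivial quotient summand $\sO^{\oplus q}|_y$ of $TX|_C$. I pick general points $y_1, \dots, y_q \in C$ and minimal rational curves $D_i \in \sK_{y_i}$ whose tangent directions at $y_i$ project to a basis of $\sO^{\oplus q}|_{y_i}$. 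The nodal rational tree $\widetilde C := C \cup_{y_1} D_1 \cup \cdots \cup_{y_q} D_q$ then has the property, by the standard comb-smoothing criterion (e.g.\ Chapter II, Section 7 of \cite{Ko}), that a generic smoothing $f_s: \BP^1 \to X$ of $\widetilde C$ in the Kontsevich space of stable maps satisfies $f_s^* TX$ ample, since each tooth contributes positive degree to a distinct originally trivial summand of $TX|_C$.

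The main obstacle is arranging that $f_s(\BP^1) \subset O$. Each tooth $D_i$ is a complete rational curve typically exiting any classical neighborhood of $C$, so $\widetilde C$ need not be contained in $O$, and since $f_s(\BP^1)$ accumulates on $\widetilde C$ in the Hausdorff metric as $s \to 0$, a naive smoothing gives no control. The resolution is a relative version of the construction: work in the Kontsevich space of stable maps of appropriate degree and allow the teeth to vary in their moduli $\sK_{y_i}$ simultaneously with the smoothing parameter. Both the locus of maps $f$ with $f^*TX$ ample and the locus with $f(\BP^1) \subset O$ are classical-open in the Hom scheme; the first is nonempty by the previous paragraph, and a careful local analysis at each node shows that the second contains smoothings whose image is concentrated in a prescribed Euclidean neighborhood of $C$, obtained by shrinking the contribution of each tooth before smoothing. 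Intersecting these two open conditions produces the desired $f: \BP^1 \to O$, and $f^*TO = f^*TX$ is ample as required.

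The delicate step is the last one: transferring the global comb-smoothing into a prescribed classical-open subset. The comb construction and its smoothing are entirely standard; the genuine difficulty, and the part where the essential work lies, is simultaneously controlling the image of the smoothed curve inside an arbitrary Euclidean neighborhood of $C$ while preserving the ample splitting type arising from smoothing.
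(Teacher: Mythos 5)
Your comb construction is standard, but the proof fails at exactly the step you flag, and the proposed resolution is not an argument. Your teeth $D_i$ are complete members of $\sK_{y_i}$ chosen only by a tangency condition at $y_i$; they have fixed positive degree against an ample line bundle, so for a thin Euclidean neighborhood $O$ of $C$ they will not be contained in $O$, and hence neither is the comb $\widetilde C$. There is then no way to confine a smoothing of $\widetilde C$ to $O$: any stable map (or cycle) sufficiently close to the comb has image Hausdorff-close to the \emph{entire} comb, teeth included, so it leaves $O$; while smoothings that are not close to the comb are not governed by the splitting-type bookkeeping you set up. ``Shrinking the contribution of each tooth before smoothing'' has no meaning in the Hom scheme or in the space of stable maps -- the degree of the smoothed curve is $\deg C+\sum_i \deg D_i$, which is fixed, so its image cannot be squeezed into a small neighborhood of $C$ unless the teeth already lie there. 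A genuine repair would require choosing the teeth among members of $\sK_{y_i}$ that are themselves small deformations of $C$ (hence inside $O$); but then their tangent directions at $y_i$ lie only in a small Euclidean neighborhood of $[T_{y_i}C]$ in $\sC_{y_i}$, and one must argue separately (e.g.\ via Zariski density of such a neighborhood in a spanning component of $\sC_{y_i}$) that these directions still generate the trivial quotient $\sO^{\oplus q}|_{y_i}$ -- none of which is in your proposal. A secondary point: with exactly $q$ teeth it is not automatic that the comb smooths with ample pullback; the standard arguments attach many teeth and pass to a subcomb.

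The paper avoids all of this by a different mechanism. It never uses transverse teeth: it considers connected chains of $k$ free members of $\sK$, notes that the spanning hypothesis plus Theorem 4.13 in Chapter IV of \cite{Ko} gives, for some $k$, chains joining two general points, and then Theorem 7.6 in Chapter II of \cite{Ko} smooths such chains to irreducible free curves through a fixed point covering an open set, so a general chain deforms to a curve with $f^*TX$ ample. The confinement to $O$ is then automatic: since $C$ itself is free and lies in $O$, the chains all of whose components are small deformations of $C$ form a nonempty Euclidean-open subset of the chain space and are contained in $O$; intersecting this with the dense locus of chains smoothable to curves with ample pullback, and using that smoothings close to a compact chain inside $O$ stay inside $O$, gives the required $f:\BP^1\to O$. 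In short, the paper gets positivity from the global chain-connecting result rather than from tangency bookkeeping, precisely so that every curve used can be taken to be a small deformation of $C$ and hence inside the prescribed neighborhood; your version gets the positivity cheaply but at the cost of leaving $O$, and that cost is not recoverable as written.
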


\begin{proof}
Let  $\sK^{\rm free} \subset \sK$ be the Zariski open subset  parametrizing members of $\sK$ which are free.
  For each positive integer $k$, let ${\rm Chain}^k(\sK^{\rm free})$ be the space of 1-dimensional cycles which are connected chains  of $k$  members of $\sK^{\rm free}$.
  The assumption that  $\widehat{\sC}_x $ spans $T_x X$  implies (e.g. by Theorem 4.13 in Chapter IV of \cite{Ko}) that there exists some positive integer $k,$  such that for two general points $x_1, x_2 \in X$, we can find a member of ${\rm Chain}^k(\sK^{\rm free})$ containing both $x_1$ and $x_2$.
  Then by Theorem 7.6 in Chapter II of \cite{Ko}, such a member of ${\rm Chain}^k(\sK^{\rm free})$ can be deformed  to a family of (irreducible) free rational curves on $X$ containing $x_1$. Fixing $x_1$ and varying $x_2$, we see that deformations of such members of
   ${\rm Chain}^k(\sK^{\rm free})$ give a family of  free rational curves on $X$ through $x_1$ which cover a nonempty open subset of $X$.
 Under the normalization $f: \BP^1 \to X$ of a general member of such a family, the vector bundle $f^*T X$ is ample. We conclude that  a general member of ${\rm Chain}^k(\sK^{\rm free})$ can be deformed to a rational curve under whose normalization $f: \BP^1 \to X$, the vector bundle $f^*T X$ is ample.

 Since $C$ is a member of $\sK^{\rm free}$,
the assumption that  $\widehat{\sC}_x $ spans $T_x X$  implies
 that  the subset of ${\rm Chain}^k(\sK^{\rm free})$ consisting of chains contained in $O$ includes a Euclidian open subset in
  ${\rm Chain}^k(\sK^{\rm free})$. Thus some chains in $O$ can be deformed to a rational curve
   contained in $O$ under whose normalization $f: \BP^1 \to O$, the vector bundle $f^*T O$ is ample.
 \end{proof}

The VMRT of lines through a general point of  a symplectic Grassmannian or an odd-symplectic Grassmannian is isomorphic to $Z \subset \BP W$ in Definition \ref{d.Z}, as explained in
 (3.2) of \cite{HM05}.
For the rest of the section, we will work with a uniruled projective manifold with the following property.

\begin{assumption}\label{a.setup}  Let $X$ be  a uniruled projective manifold with a family  $\sK$ of minimal rational curves. We assume that
  $$X^Z:= \{ x \in X, \ (\sC_x \subset \BP T_x X) \mbox{ is projectively equivalent to }
 (Z \subset \BP W).\}$$  is  a nonempty Zariski open subset in $X$.
 Note $\dim \sC_x = \dim Z = m+ n -1 \geq 2.$ Thus the assumptions in Theorem \ref{t.standard}
 and Proposition \ref{p.ratconn} hold.
  \end{assumption}

\begin{notation}\label{n.Sp}
Let ${\bf G} \subset \GL(W)$ be the group  in
 Definition \ref{d.Z} (5).
In Assumption \ref{a.setup}, let $\sP \subset {\rm Fr}(X^Z)$ be the principal ${\bf G}$-bundle whose fiber at $x \in X^Z$ is $$\sP_x:= \{ \varphi \in {\rm Fr}_x(X) = {\rm Isom}( W, T_x X), \varphi(\widehat{Z}) = \widehat{\sC}_x \}.$$ We have  fiber bundles on $X^Z$
$$ \sZ:= \sP \times^{\bf G} Z, \  \sE:= \sP \times^{\bf G} E, \  \sR:= \sP \times^{\bf G} \BP Q, $$ $$ \sS :=  \sP \times^{\bf G} S \ \cong \sP \times^{\bf G} \BP U $$
associated to the varieties $Z, E, \BP Q, \BP U \cong S$ (biregular)  with ${\bf G}$-actions from Definition \ref{d.Z}.  We  identify $\sZ$ with $\sC|_{X^Z}$ and regard $\sE$ as a fiber subbundle of $\sZ$.
  Let $D \subset W$ be the ${\bf G}$-stable subspace corresponding to $U \otimes Q$ in Lemma \ref{l.bfG} (ii) and let $B = W/D = \Sym^2 U$ be the quotient ${\bf G}$-module.
  We have vector bundles $$\sD := \sP \times^{\bf G} D \mbox{ and }  \sB:= \sP \times^{\bf G} B$$ on $X^Z$. There  are natural inclusions $$
  \sR \times_{X^Z} \sS \subset \BP \sD \mbox{ and }  \sS \subset \BP \sB $$ which come from the Segre embedding $\BP Q \times \BP U \subset \BP (Q \otimes U)$ and  the 2nd Veronese embedding $\BP U \cong S \subset \BP \Sym^2 U$, respectively.
  Let $$ \varepsilon: T X^Z \to \sB \mbox{ (resp. } \overline{\varepsilon}: \sZ \setminus \sE \to \sS )$$ be the vector bundle homomorphism (resp. holomorphic map) induced by
  $\epsilon : W \to \Sym^2 U$ in Definition \ref{d.Z} (4).

\end{notation}

\begin{remark}
As mentioned before, because of the kernel of $\iota$ in Lemma \ref{l.bfG} (i), the vector spaces $U$ and $Q$ do not have natural ${\bf G}$-module structures.  \end{remark}

\begin{proposition}\label{p.extensionsp}
In Notation \ref{n.Sp}, let $C \subset X$ be a general member of $\sK_x$ at a general point $x \in X^Z \subset X$ and let $\BP TC \subset \sZ$ be the corresponding curve in the VMRT-structure. Then $C \subset X^Z$ and $\BP TC \cap \sE = \emptyset$.  \end{proposition}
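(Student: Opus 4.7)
My plan is to transport local projective invariants of the VMRT along $C$ and to use the discriminating features of the second fundamental form of $Z$ at generic versus exceptional points, as recorded in Lemma \ref{l.IIZ}, to force both conclusions. By the generality of $x \in X^Z$ and $C \in \sK_x$, the tangent direction $x' = \BP T_x C$ is a general point of $\sC_x$, so under the projective equivalence $\sC_x \cong Z$ we have $x' \in Z \setminus E$. Lemma \ref{l.IIZ}(1)--(2) then gives that $\mathrm{II}_{\sC_x,x'}$ is surjective onto the normal space and equals precisely the system of quadrics on $T_{x'}\sC_x$ vanishing on $T^{\alpha}_{x'}$. Theorem \ref{t.standard}(3) propagates this invariant: $\mathrm{II}_{\sC_y,y'} \cong \mathrm{II}_{\sC_x,x'}$ as systems of quadrics for every $y \in C$.

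This immediately yields $\BP TC \cap \sE = \emptyset$: if $y \in C \cap X^Z$ satisfied $y' \in \sE_y$, then under $\sC_y \cong Z$ the point $y'$ would correspond to some $e \in E$, and Lemma \ref{l.IIZ}(3) would give that the base locus of $\mathrm{II}_{Z,e}$ strictly contains $T^{\alpha}_e$ (it also contains the blow-down direction $T^{\beta}_e$). Hence $\mathrm{II}_{Z,e}$ would not be isomorphic to $\mathrm{II}_{Z,x'}$ as a system of quadrics, contradicting the invariance above.

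For $C \subset X^Z$: since $X^Z$ is Zariski open with $x \in C \cap X^Z$, the set $C^{\mathrm{bad}} := C \setminus X^Z$ is a finite subset of the projective curve $C \cong \BP^1$. For any $y_0 \in C^{\mathrm{bad}}$, Theorem \ref{t.standard}(2) shows that $\sC_{y_0}$ is smooth at $y_0'$ of dimension $m+n-1$, and the invariance above forces $\mathrm{II}_{\sC_{y_0},y_0'}$ to be the generic $Z$-type form. Combining this with the transitivity of $\mathbf{G}$ on $Z \setminus E$ from Lemma \ref{l.bfG}(iii), the explicit blow-up description in Lemma \ref{l.blowup}, and the fact that $Z \setminus E$ is swept out by low-degree rational curves, one propagates the local projective equivalence $(\sC_{y_0}, y_0') \cong (Z, x')$ to a global equivalence $\sC_{y_0} \cong Z$, contradicting $y_0 \notin X^Z$. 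Thus $C^{\mathrm{bad}} = \emptyset$.

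The principal obstacle is the rigidity step in the last paragraph: a smooth flat limit of $Z$-type VMRTs in $\BP T_{y_0} X$ with the prescribed generic second fundamental form need not a priori be projectively equivalent to $Z$ as an embedded variety. Establishing this rigidity rests on the specific blow-up structure of $Z$ and on propagating the local projective equivalence along the plentiful lines and conics on $\sC_{y_0}$, followed by a Hartogs-type extension across the codimension-one exceptional divisor.
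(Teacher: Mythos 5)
Your argument for $\BP TC \cap \sE = \emptyset$ is sound and is exactly the paper's: the generality of $x'$ puts it over $Z \setminus E$, Theorem \ref{t.standard}(3) transports $\mathrm{II}_{\sC_x,x'}$ along $C$, and Lemma \ref{l.IIZ}(2)--(3) rules out points of $\sE$ because the second fundamental form at a point of $E$ is not isomorphic, as a system of quadrics, to the one at a point of $Z \setminus E$.

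The inclusion $C \subset X^Z$, however, contains a genuine gap, and it is the one you yourself flag as the ``principal obstacle.'' Two distinct things are missing. First, knowing that $\mathrm{II}_{\sC_{y_0},y_0'}$ is isomorphic to $\mathrm{II}_{Z,z}$ at the single point $y_0'$ fixes only second-order jet data there; it does not yield even a \emph{local} projective equivalence of the germ $(\sC_{y_0}, y_0')$ with $(Z,z)$, so the starting point of your propagation step (``one propagates the local projective equivalence $\ldots$ to a global equivalence'') is not established. Second, the global rigidity you need -- that a flat limit of copies of $Z \subset \BP W$, smooth away from a proper closed subset and with generic-type second fundamental form there, is again projectively equivalent to $Z \subset \BP W$ -- is precisely the content of the paper's Theorem \ref{t.rigid} and Corollary \ref{c.rigid}, and its proof is not a routine propagation: it requires showing that limits of conics through a general point remain irreducible conics (Lemma \ref{l.rigid}), that the limiting projective connection defined by these conics is locally flat (the Cartan--Thomas curvature argument in Lemma \ref{l.Weyl}), that the limiting foliation by linear spaces is linear for this flat connection, the developing-map extension of Lemma \ref{l.fundamental} recovering the blow-up $Z \to {\rm Bl}_{\BP Q}\BP(Q\oplus U)$, and finally the meromorphic/Hartogs extension of Lemma \ref{l.Mok2.3}. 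In that argument the hypotheses on the second fundamental form are used only to produce the linear fibration $\alpha^0$, to constrain lines, and to guarantee linear nondegeneracy (conditions (iii)--(v) of Theorem \ref{t.rigid}); the projective equivalence itself comes from the degeneration machinery, not from pointwise jet data. The paper's proof of the proposition simply combines Theorem \ref{t.standard}(2)--(3) with Corollary \ref{c.rigid}; if you invoke that corollary your argument closes and coincides with the paper's, but as written the rigidity step is asserted rather than proved.
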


\begin{proof}
 At the point $x' (:= \BP T_xC)  \in \sC_x,$ the second fundamental form ${\rm II}_{\sC_x, x'}$ is isomorphic to ${\rm II}_{Z,z}$ in Lemma \ref{l.IIZ} (2). For any point
$y \in C$ and the corresponding point  $y' := \BP T_{y} C \in \ \sC_{y},$  the second fundamental form ${\rm II}_{\sC_{y}, y'}$
 is isomorphic to ${\rm II}_{\sC_x, x'}$ by Theorem \ref{t.standard} (3).
Thus by Corollary \ref{c.rigid}, the subvariety $(\sC_y \subset \BP T_y X)$ is isomorphic
to  $(Z \subset \BP W)$ for any $y \in C$. It follows that $C \subset X^Z$. Furthermore, by Lemma \ref{l.IIZ} (3), the point $y' \in \sC_y$ cannot correspond to a point in $E \subset Z$.
Thus $\BP TC \cap \sE = \emptyset.$ \end{proof}

\begin{lemma}\label{l.Brauer}
Let $C \subset Y$ be a nonsingular rational curve in a complex manifold.
Then there exists a neighborhood $ M \subset Y$ of $C$ such that
\begin{itemize}
\item[(1)] any $\BP^k$-bundle on $M$ is the projectivization of a vector bundle of rank $k+1$ on $M$; and
    \item[(2)]  for a line bundle $\xi$ on $M,$ the degree of $\xi|_C$ is  even if and only if there exists a line bundle $\zeta$ on $M$ such that $\xi = \zeta^{\otimes 2}.$ \end{itemize} \end{lemma}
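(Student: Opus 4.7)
The plan is to take $M$ to be a sufficiently small open tubular neighborhood of $C$ in $Y$, so that $M$ admits a continuous deformation retraction onto $C$. Since $C \cong \BP^1$ has the homotopy type of $S^2$, this retraction induces isomorphisms $H^i(M,A) \cong H^i(C,A)$ for every abelian coefficient group $A$; in particular $H^2(M,\Z) = \Z$, $H^2(M,\Z/2) = \Z/2$, $H^i(M,\Z) = 0$ for $i \geq 3$, and the restriction map $H^2(M,\Z/2) \to H^2(C,\Z/2) = \Z/2$ is an isomorphism.

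For (2), I apply the Kummer sequence $1 \to \Z/2 \to \sO_M^* \xrightarrow{(\cdot)^2} \sO_M^* \to 1$, whose cohomology yields the exact sequence
$$\Pic(M) \xrightarrow{\otimes 2} \Pic(M) \xrightarrow{\delta} H^2(M,\Z/2).$$
The connecting map $\delta$ is the mod-$2$ reduction of the first Chern class, so under the isomorphism $H^2(M,\Z/2) \cong H^2(C,\Z/2)$ it sends $\xi$ to $\deg(\xi|_C) \bmod 2$. By exactness at the middle $\Pic(M)$, the class $\xi$ is a tensor square if and only if $\delta(\xi) = 0$, which happens if and only if $\deg(\xi|_C)$ is even. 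This gives (2) once (1) is known, but the argument does not actually require (1); only the Kummer sequence and the topological isomorphism above are used.

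For (1), a $\BP^k$-bundle $B \to M$ is the projectivization of a rank $k{+}1$ vector bundle precisely when its analytic Brauer class $\alpha(B) \in H^2(M,\sO_M^*)$ vanishes. The exponential sequence together with $H^3(M,\Z) = 0$ shows that $H^2(M,\sO_M^*)$ is a quotient of $H^2(M,\sO_M)$, so it is enough to produce a neighborhood on which $H^2(M,\sO_M) = 0$. To this end I pass to the formal completion $\widehat{Y}_C$ along $C$: the filtration $\sO_Y/\sI_C^{n+1}$ has graded pieces $\sI_C^n/\sI_C^{n+1} \cong \Sym^n N_{C/Y}^*$, coherent sheaves on $C \cong \BP^1$; since $\dim C = 1$ they have vanishing cohomology in degrees $\geq 2$, and passing to the inverse limit gives $H^2(\widehat{Y}_C,\sO) = 0$.

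The main obstacle is promoting this formal vanishing to honest vanishing on a single analytic neighborhood. Direct-limit vanishing alone only says that each cohomology class is killed after further shrinking, so one needs a finiteness input. This is supplied by a Cartan--Serre type argument applied to a fundamental system of relatively compact tubular neighborhoods of $C$, combined with the formal-neighborhood comparison theorem (cohomology of the formal completion agrees with the direct limit of analytic cohomology over shrinking open neighborhoods of $C$). Once this analytic step is in place, $H^2(M,\sO_M) = 0$ on a small enough $M$, the exponential sequence gives $H^2(M,\sO_M^*) = 0$, and the Brauer obstruction $\alpha(B)$ vanishes, completing (1). The remaining verifications are routine applications of the exponential and Kummer sequences together with the homotopy type of $C$.
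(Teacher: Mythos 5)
Your argument for part (2) is correct, and it takes a genuinely different route from the paper: the paper deduces (2) from the exponential sequence together with an asserted surjectivity of $\Pic(M)\to H^2(M,\Z)\cong\Z$, whereas your Kummer-sequence argument $1\to\Z/2\to\sO_M^*\xrightarrow{(\cdot)^2}\sO_M^*\to 1$ only uses exactness of $\Pic(M)\xrightarrow{\otimes 2}\Pic(M)\to H^2(M,\Z/2)$, the compatibility of the coboundary with $c_1$ mod $2$, and the topological isomorphism $H^2(M,\Z/2)\cong H^2(C,\Z/2)$; this bypasses any surjectivity claim and is, if anything, cleaner than the paper's one-line justification.

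The genuine gap is in part (1). Reducing to $H^2(M,\sO_M^*)=0$, and then (via $H^3(M,\Z)=0$) to $H^2(M,\sO_M)=0$, is fine, but your proof that $H^2(M,\sO_M)=0$ holds on an actual neighborhood does not go through as sketched. First, the ``formal-neighborhood comparison theorem'' you invoke, identifying $H^2$ of the completion $\widehat{Y}_C$ with $\varinjlim_U H^2(U,\sO_U)$, is not available for an arbitrary compact curve in a complex manifold: the standard comparison theorems (Grauert, Hironaka--Rossi) require $C$ to admit strongly pseudoconvex ($1$-convex) neighborhoods, i.e.\ to be exceptional, and in the situation where this lemma is applied $C$ is a free minimal rational curve with normal bundle $\sO(1)^{\oplus p}\oplus\sO^{\oplus q}$, so its small neighborhoods are pseudoconcave and no such theorem applies. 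Second, Cartan--Serre finiteness concerns compact complex spaces (or proper maps) and gives no finite-dimensionality for $H^2(U,\sO_U)$ of a non-compact tubular neighborhood; finiteness in the non-compact setting needs Andreotti--Grauert convexity/concavity hypotheses you have not verified. Third, even granting both inputs, ``every class dies after shrinking'' plus finite-dimensionality does not yield a single neighborhood with $H^2(M,\sO_M)=0$ (all restriction maps could vanish while every term is nonzero), and the lemma requires one fixed $M$ on which \emph{every} $\BP^k$-bundle is a projectivization. The paper sidesteps all of this: the obstruction class of a $\BP^k$-bundle is $(k+1)$-torsion in $H^2(M,\sO_M^*)$, and by the cited result of Schr\"oer the purely topological condition $H^3(M,\Z)=0$ already kills such classes, so no vanishing of $H^2(M,\sO_M)$ is needed. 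If you insist on an analytic argument, a workable alternative is to cover $C\cong\BP^1$ by two discs and use Siu's Stein neighborhood theorem to produce an $M$ with a two-element Stein (Leray) cover, giving $H^2(M,\sO_M)=0$ directly, but you would then still have to arrange the topological condition $H^3(M,\Z)=0$ on that same $M$.
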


        \begin{proof}
        Choose $M$ such that $H^i(M, \Z) \cong H^i(C, \Z)$ for all $i \in \Z$.
     It is well-known that (1) follows from  $H^3(M, \Z) = 0$ (e.g. by Proposition 1.1 in \cite{Sr}).
     (2) follows from the  surjective homomorphism $${\rm Pic}(M) = H^1(M, \sO_M^*)  \to H^2(M, \Z) \cong
     H^2(C, \Z) \cong \Z$$ arising from the exponential sequence.
     \end{proof}

\begin{proposition}\label{p.germ}
  In Proposition \ref{p.extensionsp}, fix  a general member $C_0 \subset X^Z$ of $\sK$.
  Then
  there exist vector bundles $\sU$ and $\sQ$ on  a Euclidean neighborhood $M \subset X^Z$ of $C_0 \subset X^Z$ admitting isomorphisms of projective bundles  $$\BP \sU \cong \sS|_M \mbox{ and } \BP \sQ \cong \sR$$ and  isomorphisms of vector bundles
$$\sD|_M  \cong \sU \otimes \sQ \mbox{ and } \sB|_M \cong \Sym^2 \sU.$$ \end{proposition}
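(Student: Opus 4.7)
The plan is to construct $\sU$ and $\sQ$ by first obtaining tentative lifts $\sU_0$ and $\sQ_0$ of the projective bundles $\sS|_M$ and $\sR|_M$ to honest vector bundles, and then correcting each by a line-bundle twist so as to simultaneously match the identifications $\sD|_M \cong \sU \otimes \sQ$ and $\sB|_M \cong \Sym^2 \sU$. First I would choose a Euclidean neighborhood $M \subset X^Z$ of $C_0$ small enough that Lemma \ref{l.Brauer} applies (legitimate since $C_0 \subset X^Z$ by Proposition \ref{p.extensionsp}). Then Lemma \ref{l.Brauer}(1) provides vector bundles $\sU_0$ and $\sQ_0$ on $M$ with $\BP \sU_0 \cong \sS|_M$ and $\BP \sQ_0 \cong \sR|_M$.

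Next I would compare $\sU_0 \otimes \sQ_0$ with $\sD|_M$ and $\Sym^2 \sU_0$ with $\sB|_M$. The fiberwise Segre subvariety $\sR \times_M \sS|_M \subset \BP \sD|_M$ from Notation \ref{n.Sp} coincides with the Segre image of $\BP \sU_0 \times_M \BP \sQ_0$ inside $\BP(\sU_0 \otimes \sQ_0)$; projective normality of the Segre map yields a canonical isomorphism $\BP \sD|_M \cong \BP(\sU_0 \otimes \sQ_0)$ of $\BP^{mn-1}$-bundles, so $\sD|_M \cong (\sU_0 \otimes \sQ_0) \otimes \lambda_1$ for some line bundle $\lambda_1$ on $M$. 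The same argument applied to the fiberwise 2nd Veronese $\sS|_M \hookrightarrow \BP \sB|_M$ versus $\BP \sU_0 \hookrightarrow \BP(\Sym^2 \sU_0)$ yields $\sB|_M \cong \Sym^2 \sU_0 \otimes \lambda_2$ for some line bundle $\lambda_2$ on $M$.

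The hard part will be to verify that $\deg \lambda_2|_{C_0}$ is even; this parity computation is the main obstacle. By Proposition \ref{p.extensionsp}, for each $y \in C_0$ the point $y':=\BP T_y C_0 \in \sC_y$ lies in $\sZ_y \setminus \sE_y$, so $\overline{\varepsilon}$ produces a section $\sigma: C_0 \to \sS|_{C_0}$ corresponding, under $\BP \sU_0|_{C_0} = \sS|_{C_0}$, to a line subbundle $\hat\sigma \subset \sU_0|_{C_0}$. Since $y' \notin \sE_y$ implies $T_y C_0 \not\subset \sD_y$ at each $y$, the map $\varepsilon$ restricts to an injection $TC_0 \hookrightarrow \sB|_{C_0}$, producing a line subbundle $\sL \subset \sB|_{C_0}$ canonically isomorphic to $TC_0 = \sO_{C_0}(2)$. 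Lemma \ref{l.tanZ} computes $\epsilon(u \otimes q + u^2) = u^2 \in \widehat{S}$, so $\overline{\varepsilon}: \sZ \setminus \sE \to \sS$ corresponds fiberwise to $[u \otimes q + u^2] \mapsto [u]$, and $\varepsilon(T_y C_0) = \C u^2$ is the 2nd Veronese image of $\sigma(y) = [u]$. Transporting across $\sB|_{C_0} \cong \Sym^2 \sU_0|_{C_0} \otimes \lambda_2|_{C_0}$, one identifies $\sL$ with $\hat\sigma^{\otimes 2} \otimes \lambda_2|_{C_0}$. Comparing degrees in $\Pic(C_0) = \Z$ gives $2 = 2\deg \hat\sigma + \deg \lambda_2|_{C_0}$, so $\deg \lambda_2|_{C_0}$ is even.

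With the parity established, Lemma \ref{l.Brauer}(2) supplies a line bundle $\xi$ on $M$ with $\xi^{\otimes 2} \cong \lambda_2$. Setting $\sU := \sU_0 \otimes \xi$ and $\sQ := \sQ_0 \otimes \lambda_1 \otimes \xi^{-1}$ completes the construction: $\BP \sU = \BP \sU_0 = \sS|_M$ and $\BP \sQ = \BP \sQ_0 = \sR|_M$ are automatic, while $\Sym^2 \sU = \Sym^2 \sU_0 \otimes \xi^{\otimes 2} \cong \Sym^2 \sU_0 \otimes \lambda_2 \cong \sB|_M$ and $\sU \otimes \sQ \cong (\sU_0 \otimes \sQ_0) \otimes (\xi \otimes \lambda_1 \otimes \xi^{-1}) \cong (\sU_0 \otimes \sQ_0) \otimes \lambda_1 \cong \sD|_M$, as required.
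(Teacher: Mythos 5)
Your proof is correct and follows essentially the same route as the paper: lift $\sS|_M$ and $\sR|_M$ to vector bundles via Lemma \ref{l.Brauer}(1), use the disjointness $\BP TC_0 \cap \sE = \emptyset$ from Proposition \ref{p.extensionsp} together with the Veronese identification to show the discrepancy line bundle has even degree on $C_0$, take a square root by Lemma \ref{l.Brauer}(2), and absorb the remaining twist into $\sQ$. The only difference is bookkeeping: you correct both twists at the end with $\lambda_1,\lambda_2,\xi$, while the paper first fixes $\sU$ and then twists $\sQ'$ by a single line bundle $\theta$.
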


\begin{proof}
Fix a neighborhood $M\subset X^Z$ of $C_0 \subset \BP^1$ as in Lemma \ref{l.Brauer}. There exists a vector bundle
$\sU'$  on $M$ such that $\sS|_M \cong  \BP \sU'$.
Then the two projective bundles $\BP \sB|_M$ and $\BP \Sym^2 \sU'$ are isomorphic.
Thus there is a line bundle $\xi$ on $M$ such that $\sB|_M \cong \xi \otimes \Sym^2 \sU'.$

 We claim that $\xi$ has even degree on $C_0$.
 By Proposition \ref{p.extensionsp}, the curve $\BP T C_0 \subset \sZ$ is disjoint from $\sE$.
Thus $\varepsilon: TM \to \sB|_M$ sends $T C_0$ isomorphically to a line subbundle  $\eta \subset \sB|_{C_0}$ such that $$\BP \eta = \overline{\varepsilon}(\BP TC_0)  \subset \sS \subset \BP \sB.$$
Since $\eta$ is an isomorphic image of $TC_0 \subset TM|_{C_0} \to \sB |_{C_0},$ we see that $\eta|_{C_0}$ has degree 2. From $$\BP \eta \subset \sS|_M  \subset \BP \sB|_M \cong  \BP \Sym^2 \sU'$$ where the second inclusion comes from the second Veronese embedding,  there exists a line subbundle $\lambda \subset \sU'|_{C_0}$ such that there is an isomorphism of line subbundles on $C_0$
$$( \eta \subset \sB|_{C_0} ) \ \cong \ (\xi \otimes \lambda^{\otimes 2}  \subset \xi \otimes \Sym^2 \sU'|_{C_0} \cong \sB|_{C_0}). $$
Thus $\xi \otimes \lambda^{\otimes 2}$ has degree 2. It follows that $\xi|_{C_0}$ has even degree, proving the claim.

 By the claim and Lemma \ref{l.Brauer}, we can find a
 line bundle $\zeta$ on $M$ such that $\zeta^{\otimes 2} = \xi$. Then $\sU := \sU' \otimes \zeta$ satisfies $\Sym^2 \sU \cong \sB|_M$.

 Applying Lemma \ref{l.Brauer} to the projective bundle $\sR|_M$, we obtain a vector bundle
 $\sQ'$ on $M$ such that $\BP \sQ' \cong \sR|_M$.
   From $$\BP \sQ' \times_M \BP \sU \cong \sR \times_M \sS \subset \BP \sD$$ where the inclusion comes from the fiberwise Segre embedding, we see that  $\BP (\sQ' \otimes \sU)
   \cong \BP \sD|_M$. Thus there exists a line bundle $\theta$ on $M$ such that
$\theta \otimes \sQ' \otimes \sU \cong \sD|_M$. Set $\sQ:= \theta \otimes \sQ'$ to finish the proof.   \end{proof}

\begin{proposition}\label{p.SAF}
 The pair $(M, \sU)$ in Proposition \ref{p.germ} satisfies SAF-condition  and $\sQ$ is of trivial type in the sense of Definition \ref{d.MU}. \end{proposition}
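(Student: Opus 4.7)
The plan is to show, via deformations of a general member $C \in \sK$ inside $M$, that for a general $u$ in the fiber of $\sU$, the pullback $f_u^{*}\sU$ splits as $\sO(1) \oplus \sO^{m-1}$ with the $\sO(1)$-summand having fiber $\C u$ at the marked point, and that $f_u^{*}\sQ$ is trivial. Fix a general $x \in M \cap X^Z$ and a general $C \in \sK_x$; after shrinking $M$, we may assume $C \subset M$. By Proposition \ref{p.extensionsp}, $[T_y C] \in \sC_y \setminus \sE_y$ for all $y \in C$, so composition with $\overline{\varepsilon} : \sZ \setminus \sE \to \sS = \BP\sU$ produces a section $C \to \BP\sU|_C$, equivalently a line subbundle $\sL \subset \sU|_C$ whose fiber at $y$ reads off the tangent direction of $C$ through $\overline{\varepsilon}$.

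Applying Lemma \ref{l.tanZ} fiberwise and using the identifications $\sD|_C \cong \sU|_C \otimes \sQ|_C$ and $\sB|_C \cong \Sym^2 \sU|_C$ from Proposition \ref{p.germ}, I obtain an exact sequence of bundles on $C$
$$0 \longrightarrow \sL \otimes \sQ|_C \longrightarrow T\widehat{\sC}|_C \longrightarrow \sL \otimes \sU|_C \longrightarrow 0,$$
where $T\widehat{\sC}|_C \subset TX|_C$ is the rank-$(m+n)$ affine tangent bundle of $\widehat{\sC}$ along $y \mapsto [T_y C]$, and the right-hand term uses the fiberwise isomorphism $\sL \odot \sU \cong \sL \otimes \sU$ (injectivity of multiplication by a nonzero element of $\sL$). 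By Theorem \ref{t.standard}, $T\widehat{\sC}|_C \cong \sO(2) \oplus \sO(1)^{m+n-1}$ and $TX|_C \cong \sO(2) \oplus \sO(1)^{m+n-1} \oplus \sO^N$.

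The crux is then a degree count. The composition $TC \hookrightarrow TX|_C \twoheadrightarrow \sB|_C$ is injective (as $T_y C \not\subset \sD_y$) with image in $\sL^{\otimes 2} \subset \Sym^2 \sU|_C$, so $\deg\sL \geq 1$. Writing $\sU|_C = \bigoplus \sO(a_i)$ and $\sQ|_C = \bigoplus \sO(b_j)$ with $a_i, b_j$ sorted decreasingly, the subbundle--and--quotient inequalities on $\BP^1$, applied to the exact sequence above with source $\sO(2) \oplus \sO(1)^{m+n-1}$, give $\deg\sL + a_1 \leq 2$, $\deg\sL + a_i \leq 1$ for $i \geq 2$, and $\deg\sL + b_1 \leq 2$, $\deg\sL + b_j \leq 1$ for $j \geq 2$. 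Combined with $a_1 \geq \deg\sL \geq 1$, we get $a_1 = \deg\sL = 1$, $a_i \leq 0$ for $i \geq 2$, $b_1 \leq 1$, and $b_j \leq 0$ for $j \geq 2$. Finally, $TX|_C \cong \sD|_C \oplus \sB|_C$ splits on $\BP^1$ and has exactly one $\sO(2)$-summand and all summands of nonnegative degree; this forces $a_i = 0$ for $i \geq 2$ and $b_j = 0$ for all $j$, hence $\sU|_C \cong \sO(1) \oplus \sO^{m-1}$ with $\sL$ its unique ample line subbundle, and $\sQ|_C \cong \sO^n$.

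To verify the SAF-condition, take $M'$ to be the open dense locus on which the above analysis applies, and take $\sU^o_x \subset \sU_x \setminus \{0\}$ to be the preimage of the image of $\overline{\varepsilon}$ restricted to directions arising from members of $\sK_x$ with the generic splitting type; since $\alpha : Z \setminus E \to S = \BP U$ is surjective, $\sU^o_x$ is nonempty Euclidean-open. For $u \in \sU^o_x$ choose $C_u \in \sK_x$ whose tangent direction at $x$ lifts to $[u]$, and let $f_u : \BP^1 \to M$ parametrize $C_u$ with $f_u(o) = x$. The calculation above yields $f_u^{*}\sU \cong \sL_u \oplus \sO^{m-1}$ with $\sL_u|_o = \C u$ and $f_u^{*}\sQ \cong \sO^n$. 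The main obstacle is the combined degree bookkeeping, which uses the subbundle and quotient bounds, the inequality $\deg\sL \geq 1$ from $\varepsilon(TC) \subset \sL^{\otimes 2}$, and the rigid splitting type of $TX|_C$ from Theorem \ref{t.standard} simultaneously to pin down both $\sU|_C$ and $\sQ|_C$.
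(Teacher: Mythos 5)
Your setup — the section of $\sS$ along $C$ giving the line subbundle $\sL\subset\sU|_C$, the exact sequence $0\to\sL\otimes\sQ|_C\to T\widehat{\sC}|_C\to\sL\otimes\sU|_C\to 0$ from Lemma \ref{l.tanZ}, and the splitting types from Theorem \ref{t.standard} — is exactly the paper's, but your numerical step has two genuine gaps. First, the termwise upper bounds you invoke for the \emph{quotient}, namely $\deg\sL+a_1\leq 2$ and $\deg\sL+a_i\leq 1$ for $i\geq 2$, are not valid: for a subsheaf of a bundle on $\BP^1$ the sorted splitting degrees are bounded above termwise by those of the ambient bundle, but for a quotient one only gets termwise \emph{lower} bounds. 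Concretely, both $\sO(3)$ and $\sO(2)^{\oplus 2}$ occur as quotients of $\sO(2)\oplus\sO(1)^{\oplus 2}$, so nothing forces $\deg\sL+a_1\leq 2$; and this inequality, combined with $a_1\geq\deg\sL\geq 1$, is precisely what you use to pin down $a_1=\deg\sL=1$ and $a_i\leq 0$, so the argument collapses there. Second, the assertion that $TX|_C\cong\sD|_C\oplus\sB|_C$ is unjustified: the extension $0\to\sD|_C\to TX|_C\to\sB|_C\to 0$ has no reason to split on $\BP^1$, and you need the splitting to transfer ``all summands of nonnegative degree'' from $TX|_C$ to the degrees $a_i+b_j$ of $\sD|_C$; a subbundle of a nef bundle on $\BP^1$ can have negative summands (e.g.\ $\sO(-1)\subset\sO^{\oplus 2}$), so without the splitting you get no lower bound on the $b_j$.

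For comparison, the paper closes these points as follows. It first gets $\deg\sL=1$ exactly, since the square map carries $\sL$ onto $\varepsilon(TC)\cong\sO(2)$. Writing $a=\deg f_u^*\sU$, $b=\deg f_u^*\sQ$, it then combines two degree identities: $a+b=\deg\bigl(T\widehat{\sC}|_C\otimes\sL^{-1}\bigr)=1$ from your exact sequence twisted by $\sL^{-1}$, and $bm+(m+n+1)(a-1)=0$ from $0\to\sU\otimes\sQ\to TM\to\Sym^2\sU\to 0$ together with $\deg TX|_C=m+n+1$; these give $a=1$, $b=0$. Next, $a_i\geq 0$ follows because $\Sym^2 f_u^*\sU$ is a quotient of the globally generated $f_u^*TM$ (the correct direction of the quotient inequality), which with $a=1$ yields $f_u^*\sU\cong\sO(1)\oplus\sO^{m-1}$. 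Finally, $b_j\leq 0$ follows from the observation that $\sL\otimes f_u^*\sQ$ meets the tangent line $TC$ (the $\sO(2)$-factor of $T\widehat{\sC}|_C$) trivially, by Proposition \ref{p.extensionsp}, hence maps into $\sO(1)^{m+n-1}$; combined with $\sum_j b_j=b=0$ this gives $f_u^*\sQ$ trivial. If you replace your two faulty steps by these degree counts and positivity arguments, your proof becomes essentially the paper's.
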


\begin{proof}
By abuse of notation, we will use the same symbols for the fiber bundles
on $X^Z$ in Notation \ref{n.Sp} and their restrictions on $M \subset X^Z$.

For a general member $C \subset M$ of $\sK$, we have $\BP TC \subset \sZ \setminus \sE$ from
Proposition \ref{p.extensionsp}. The image $\overline{\varepsilon}(\BP TC) \subset \sS$
 is a section of $\sS|_C \to C$ and the corresponding section of $\BP \sU |_C \to C$ under the isomorphism $\sS|_M \cong \BP \sU$ of Proposition \ref{p.germ}  determines  a line subbundle $\sL_C \subset \sU|_C$.  Since the square map $u \in \sU \mapsto u^2 \in \Sym^2 \sU$
sends $\sL_C$ to $\varepsilon(TC) \subset \Sym^2 \sU$, we see that $\sL_C$ is a line bundle of degree 1 on $C$.

For a general point $x \in M$, the tangent vectors of general members of $\sK_x$ lying on $M$
span a Euclidean open subset $\sU_x^o \subset \sU_x \setminus \{0\}.$
From the construction of $\sL_C \subset \sU|_C$, we see that  for  each $u \in \sU^o_x$, there exist a holomorphic map $f_u: \BP^1 \to M$ sending the base point $o \in \BP^1$ to $x$; and   a line subbundle  $\sL_u \subset f_u^*\sU$ which has degree 1 and the fiber of  which at $o$ is $\C  u$.

Set $a := {\rm deg}(f_u^* \sU)$ and $b := {\rm deg}(f_u^* \sQ)$. The  exact sequence $$0 \to f_u^*(\sU \otimes \sQ) \to f_u^* TM \stackrel{\varepsilon}{\to} f_u^* \Sym^2 \sU \to 0$$ gives
$$\deg f_u^*(\sU \otimes \sQ) + \deg f_u^* \Sym^2 \sU = \deg f_u^* TM.$$ Using
 $\deg f_u^* TM = 2 + \dim \sC_x = m+ n+1$ from Theorem \ref{t.standard} (4), we obtain
\begin{equation}\label{e.bm} b m + (m+n+1)(a-1) =0. \end{equation}
By Theorem \ref{t.standard} (5), the collection of affine tangent spaces  $$\{ T_{y'} \widehat{\sC}_y \subset T_y M, y \in f_u(\BP^1)\} $$  determines a subbundle $T^{+} \subset f_u^* TM$ which
corresponds to the sum of the positive factors $$\sO(2) \oplus \sO(1)^{m+n-1} \subset \sO(2) \oplus \sO(1)^{m+n-1} \oplus \sO^{\dim X -m-n} \ \cong f_u^* TM.$$
  The natural exact sequence in  Lemma \ref{l.tanZ} induces an exact sequence of vector bundles
$$ 0 \to \sL_u \otimes f_u^* \sQ \to T^+   \to \sL_u \otimes f_u^*\sU \to 0.$$
It follows that $$ 0 \to f_u^* \sQ \to T^+ (-1) \to f_u^* \sU \to 0,$$ which implies that
$$\deg (f_u^* \sQ) + \deg (f_u^* \sU)= \deg (T^+ (-1)).$$ This reads $b + a =1$. Combining it
with (\ref{e.bm}), we obtain $b=0$ and $a=1$. From the surjection
 $$\sO(2) \oplus \sO(1)^{m+n-1} \oplus \sO^{\dim X -m-n} \cong f_u^* TM  \stackrel{\varepsilon}{\to} f_u^* \Sym^2 \sU,$$ we see that
 $f_u^* \sU \cong \oplus_{i=1}^m \sO(a_i), a_1 \geq \cdots \geq a_m \geq 0.$ So
 $1 = a = \sum_i a_i$ implies that $a_1 =1$ and $a_2 = \cdots = a_m=0$. It follows that
$f_u^* \sU = \sL_u \oplus \sO^{m-1}$. This shows that $(M, \sU)$ satisfies SAF-condition.

 Note that in the exact sequence
$$ 0 \to \sL_u \otimes f_u^* \sQ \to T^+  \to \sL_u \otimes f_u^*\sU \to 0,$$ the intersection of
the line subbundle $({\rm d} f_u)T\BP^1 \subset T^+$ and the subbundle $\sL_u \otimes f_u^* \sQ$ is  0,
from $\BP TC \cap \sE = 0$ in Proposition \ref{p.extensionsp}.
Thus the image of $\sL_u \otimes f_u^* \sQ$ in $T^+ \cong \sO(2) \oplus \sO(1)^{m+n-1}$ is contained in $\sO(1)^{m+n-1}$.
This means that $f_u^* \sQ \cong \sO(b_1) \oplus \cdots \oplus \sO(b_n)$ with
$b_i \leq 0$. But $\deg f_u^*\sQ = b =0$.
Thus $f_u^* \sQ$ must be trivial. This proves that $\sQ$ is of trivial type.
\end{proof}

As a direct consequence of Proposition \ref{p.SAF} and Proposition \ref{p.symbol}, we obtain the following.

\begin{corollary}\label{c.varpi}
In Proposition \ref{p.SAF}, let $$\varpi \in \Hom( \wedge^2 \sD, TM/\sD) = \Hom(\wedge^2 (\sU \otimes \sQ), \Sym^2 \sU)$$ be the Frobenius bracket tensor of $\sD$.
 Then there exist a nonnegative integer ${\rm \bf n}_X$, a nonempty Zariski-open subset $M_o \subset M$ and an element $\omega^M \in H^0(M, \wedge^2 \sQ^*)$ with the following properties.
 \begin{itemize} \item[(1)] $(M_o, \sU|_{M_o})$ satisfies SAF-condition.
 \item[(2)]
For any $x \in M_o$, the nullity of $\omega^M_x:= \omega^M|_{\sQ_x}$ is ${\rm \bf n}_X$ and $$\varpi_x (u \otimes p, v \otimes q) = \omega^M_x(p,q) u \odot v $$ for $u, v \in \sU_x$ and $p, q \in \sQ_x$.\end{itemize} \end{corollary}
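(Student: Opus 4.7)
The plan is essentially to combine the two preceding propositions, since the corollary is billed as a direct consequence. First, I would invoke Proposition \ref{p.germ} to obtain the isomorphisms $\sD|_M \cong \sU \otimes \sQ$ and $TM/\sD \cong \Sym^2 \sU$ over the Euclidean neighborhood $M$ of a general minimal rational curve $C_0$. Under these identifications, the Frobenius bracket tensor
$$\varpi \in \Hom(\wedge^2 \sD, TM/\sD)$$
becomes an element of $\Hom(\wedge^2 (\sU \otimes \sQ), \Sym^2 \sU)$, putting it precisely in the form treated by Proposition \ref{p.symbol}.

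Next, by Proposition \ref{p.SAF}, the pair $(M, \sU)$ satisfies SAF-condition and $\sQ$ is of trivial type, which verifies the hypotheses of Proposition \ref{p.symbol}. Applying that proposition to $\varphi := \varpi$ then produces a nonnegative integer $k$ (which we relabel ${\rm \bf n}_X$), a section $\omega^M \in H^0(M, \wedge^2 \sQ^*)$, and a nonempty Zariski-open subset $M_o \subset M$ on which the nullity of $\omega^M_x$ is constantly equal to ${\rm \bf n}_X$, such that the desired formula
$$\varpi_x(u \otimes p, v \otimes q) = \omega^M_x(p,q)\, u \odot v$$
holds for all $x \in M_o$ and $u, v \in \sU_x$, $p, q \in \sQ_x$. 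Conclusion (iii) of Proposition \ref{p.symbol} directly gives property (1) of the corollary, namely that $(M_o, \sU|_{M_o})$ inherits SAF-condition. There is no genuine obstacle here: both the algebraic identity for $\varpi$ and the locus-and-nullity assertion are already packaged into Proposition \ref{p.symbol}, and the only content required of the present proof is the verification that Proposition \ref{p.germ} and Proposition \ref{p.SAF} together supply exactly the hypotheses needed. Thus the corollary follows at once.
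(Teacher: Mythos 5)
Your proof is correct and follows exactly the paper's route: the corollary is stated there as a direct consequence of Proposition \ref{p.SAF} (which supplies the SAF-condition for $(M,\sU)$ and the trivial type of $\sQ$) together with Proposition \ref{p.symbol} applied to $\varphi = \varpi$ under the identifications of Proposition \ref{p.germ}. Nothing further is needed.
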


Now we prove Theorem \ref{t.Moksp} in the following form.

\begin{theorem}\label{t.VMRT}
In the setting of Corollary \ref{c.varpi},
let $(V, \omega)$ be a presymplectic vector space of dimension $2m+n$ and nullity ${\rm \bf n}_{\omega} = {\rm \bf n}_X$.
 Then a general member $C \subset M_o$ of $\sK$ has a neighborhood
$O_C \subset M_o$ with an open immersion $h: O_C \to \Go(m, V;0)$ such that $h$ sends $C$ to a general  line in the presymplectic Grassmannian.  \end{theorem}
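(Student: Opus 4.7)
The plan is to apply the Cartan connection machinery of Section \ref{s.Cartan} to the structural data produced by Propositions \ref{p.germ}, \ref{p.SAF} and Corollary \ref{c.varpi}, then invoke local flatness and a developing map argument. The construction splits according to whether $\omega^M$ is generically nonzero (the \emph{positive case}, $\no_X < n$) or identically zero (the \emph{rank-zero case}, $\no_X = n$), matching Subsections \ref{ss.positive} and \ref{ss.rank0} respectively.

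In the positive case, Corollary \ref{c.varpi} together with the identifications of Proposition \ref{p.germ} exhibits the symbol algebra at each $x \in M_o$ as $(\sU_x \otimes \sQ_x) \oplus \Sym^2 \sU_x$ with bracket $[u \otimes p,\, v \otimes q] = \omega_x^M(p,q)\, u \odot v$; after fixing a presymplectic isomorphism $(\sQ_x, \omega_x^M) \cong (Q, \omega|_Q)$, which is possible since dimensions and nullities match, this becomes isomorphic to the $\fg_-$ of Notation \ref{n.positive}. I would then define a $G_0$-principal subbundle $E_0 \subset {\rm grFr}(M_o)$ whose fiber at $x$ consists of the graded Lie algebra isomorphisms $\fg_- \to {\rm gr}_x(M_o)$ induced by pairs of linear isomorphisms $\sU_x \cong U$ and $(\sQ_x, \omega_x^M) \cong (Q, \omega|_Q)$, modulo the kernel $\{\pm 1\}$ of $\iota$ from Lemma \ref{l.bfG}. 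Proposition \ref{p.SAF} supplies exactly the SAF and trivial-type hypotheses of Theorem \ref{t.Cpositive}, which then produces a Cartan connection of type $(G, G^0)$ on $M_o$ whose model $G/G^0$ is $\Go(m, V; 0)$ by Proposition \ref{p.spgrade}. In the rank-zero case, $\varpi = 0$ forces $\sD$ to be integrable, and the parallel construction combined with Theorem \ref{t.Crank0} and Proposition \ref{p.spgraderank0} delivers a Cartan connection of the desired type.

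Finally, I would shrink $M_o$ around a general $C \in \sK$ to a simply connected Euclidean neighborhood $O_C$ with $C \subset O_C$; Proposition \ref{p.ratconn} supplies a holomorphic map $f: \BP^1 \to O_C$ with $f^*TO_C$ ample, so Proposition \ref{p.Biswas} yields local flatness and Proposition \ref{p.develop} then produces an open immersion $h: O_C \to \Go(m, V; 0)$ lifting to an equivalence of Cartan connections. To verify $h(C)$ is a general line, one uses that the equivalence identifies $E_0|_{O_C}$ (and hence $\sZ|_{O_C}$) with its counterpart on $h(O_C)$, together with the standard Tanaka-theoretic identification of the minimal rational curves with integral curves of the $G^0$-invariant characteristic vector fields $\Upsilon^{-1}(v)$ on $E|_{O_C}$: these correspond under $\phi^*\Upsilon^{\mathrm{flat}} = \Upsilon$ to the analogous vector fields on $E^{\mathrm{flat}} = G$ whose projections to $G/G^0$ are lines. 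By Proposition \ref{p.extensionsp}, $\BP TC \cap \sE = \emptyset$, so the image $h(C)$ has tangent in $Z \setminus E$ at each point and is therefore a general line on $\Go(m, V; 0)$. The main hurdle is the case split and the bookkeeping of symbol algebras, as the deeper technical work -- the Spencer vanishing conditions and the construction of the Cartan connection itself -- has already been carried out in Sections \ref{s.Cartan} and \ref{s.SAF}.
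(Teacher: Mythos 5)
Your construction of the Cartan connection is essentially the paper's own argument: the same dichotomy ${\rm \bf n}_X=n$ versus ${\rm \bf n}_X<n$, the same inputs (Proposition \ref{p.germ}, Proposition \ref{p.SAF}, Corollary \ref{c.varpi}), Theorems \ref{t.Crank0} and \ref{t.Cpositive} respectively, identification of the model via Propositions \ref{p.spgraderank0} and \ref{p.spgrade}, and local flatness plus a developing map via Propositions \ref{p.ratconn}, \ref{p.Biswas} and \ref{p.develop}. Your direct definition of $E_0$ from pairs of isomorphisms $U\cong \sU_x$, $(Q,\omega)\cong(\sQ_x,\omega^M_x)$ is an acceptable substitute for the paper's route, which instead takes the subbundle $\sP'\subset\sP$ of frames compatible with the Frobenius bracket and quotients by $\Hom(U,Q)$; the two produce the same $G_0$-structure subordinate to the filtration, and the paper's detour through $\sP$ has the advantage of keeping the link with the VMRT-structure explicit.

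The genuine gap is in your last step. You assert a ``standard Tanaka-theoretic identification of the minimal rational curves with integral curves of the $G^0$-invariant characteristic vector fields $\Upsilon^{-1}(v)$''. No such identification is established anywhere in the paper, and it is not a formal consequence of having a Cartan connection: nothing in the construction ties the members of $\sK$ to the distinguished curves of the Cartan geometry, and proving such a statement is essentially of the same difficulty as the conclusion you want. (Note also that in the flat model the curves $\pi(\exp(s\,\xi)\cdot e)$ for $\xi\in\fg_-$ need not be lines for a general $\xi$ in the cone over $Z$, so even on $\Go(m,V;0)$ the claim requires care.) The paper argues differently: since the equivalence of Cartan connections identifies the underlying structures built from $\sP$, the derivative ${\rm d}h$ carries the VMRT $\sC_x\subset\BP T_xO_C$ at a general point to the VMRT of lines of $\Go(m,V)$ at $h(x)$; then the Cartan--Fubini-type local statement of \cite{Hw01}, Section 3.2, Step 1 --- an immersion matching VMRT-structures sends germs of minimal rational curves to germs of minimal rational curves --- shows that $h$ maps the germ of $C$ into a line, and immersivity gives that $C$ goes biregularly onto a general line (your use of Proposition \ref{p.extensionsp} to see that the tangent directions avoid $E$ is the right final touch). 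Relatedly, your parenthetical ``(and hence $\sZ|_{O_C}$)'' is doing real work in the positive case: there $E_0$ consists of graded frames, so it records $\sD$ and the graded identifications but not by itself the embedding $\sC_x\subset\BP T_xM$; this passage from the equivalence of Cartan connections to the matching of VMRT-structures is exactly what must be said before the germ-to-line argument can be quoted, and replacing it by the unproved characteristic-vector-field claim leaves the proof incomplete.
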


\begin{proof}
We define a Cartan connection of type $(G, G^0)$ on $M_o$ for a suitable pair $(G, G^0)$ with $G/G^0 = \Go(m, V;0)$ as follows.

First consider  the case $n = {\rm \bf n}_X$ ( i.e. $\varpi=0$ ) in Corollary \ref{c.varpi}.
Comparing  Notation \ref{n.Crank0} and Lemma \ref{l.bfG}, we see that ${\bf G} = G_0$ and
our $\sP$ is just a $G_0$-structure on $M$ with integrable $\sD$. Thus by Theorem \ref{t.Crank0} and Proposition \ref{p.SAF}, we have a Cartan connection of type $(G, G^0)$ on $M_o$.

If ${\rm \bf n}_X < n$ (i.e. $\varpi \neq 0$) in Corollary \ref{c.varpi},  we can assume that
$(V, \omega)$ arises from a presymplectic form  $\omega \in \wedge^2 Q^*$ of nullity ${\rm \bf n}_X$ in the way described in Definition 4.10.
 Consider $(\fg_0, \fg_-)$ as in Notation \ref{n.positive} associated with the presymplectic space $(Q, \omega)$.
Then the distribution $\sD \subset TM_o$ is a filtration of type $\fg_-$.
From the ${\bf G}$-principal bundle $\sP \to M_o$, we
 can obtain  a $G_0$-structure subordinate to $\sD$ as follows.
Let ${\bf G}' \subset {\bf G}$ be the image of
$$\Hom(U, Q) \sd ({\rm GL}(U) \times {\rm Sp}_{\omega}(Q)).$$
 Then we have a ${\bf G}'$-principal subbundle $\sP' \subset \sP$ on $M_o$ consisting of
 elements $\varphi \in \sP_x \subset {\rm Isom}(W, T_xM_o)$ such that
 $$\varphi ([w, w']) \equiv  \varpi (\varphi(w), \varphi(w')) \mod \sD_x $$  for all
$w, w' \in U \otimes Q \subset W.$ By $ G_0 = {\bf G}'/ \Hom(U,Q),$ the ${\bf G}'$-bundle $\sP'$ induces a $G_0$-structure $E_0 \subset {\rm grFr}(M)$.
Thus by Theorem \ref{t.Cpositive} and Proposition \ref{p.SAF}, we have a Cartan connection of type $(G, G^0)$ on $M_o$.

In either cases, the Cartan connection of type $(G, G^0)$ on $M_o$ is locally equivalent to that of $G/G^0$  by Proposition \ref{p.Biswas} and Proposition \ref{p.ratconn}.
By Propositions \ref{p.spgrade} and \ref{p.spgraderank0}, the homogeneous space $G/G^0$ is $\Go(m, V;0)$. Proposition \ref{p.develop} gives an immersive holomorphic map $h:O_C \to G/G^0$ defined on a neighborhood $O_C$ of a general member $C$ of $\sK$ contained in $M_o$, which lifts to an equivalence of Cartan connections.

The derivative ${\rm d} h: \BP T O_C \to \BP T \Go(m, V;0)$ sends the VMRT $\sC_x$ at a general point $x \in O_C$ to the VMRT of lines on $\Go(m, V)$ at $h(x)$. This implies that $h$ sends the germ of $C$ to the germ of a line in $\Go(m, V)$ (e.g. by Section 3.2 Step 1 in \cite{Hw01}). As $h$ is immersive, it must send $C$ biregularly to a general line in $\Go(m,V)$.     \end{proof}

We are ready to prove Theorems \ref{t.recog} and \ref{t.deform}.

\begin{proof}[Proof of Theorem \ref{t.recog}]
Now assume that $X$ is a Fano manifold of Picard number 1 in Theorem \ref{t.VMRT}.
Regard $\fg$ as the Lie algebra of holomorphic vector fields on $\Go(m, V;0).$
In the neighborhood $O_C$ of Theorem \ref{t.VMRT}, we have a Lie algebra $h^*\fg$ of holomorphic vector fields arising from $\fg$ via $h$. The elements of $h^*\fg$ generate local biholomorphisms sending germs of elements of $\sK$ to germs of elements of $\sK$.
Thus by Theorem \ref{t.CF} they can be extended to
global holomorphic vector fields on $X$. It follows that there exists a  connected algebraic group
$\widetilde{G}$ isogenous to $G$  acting on $X$ with an open orbit $O \subset X$ such that
$h$ can be extended to an \'etale morphism $ O \to G/G^0$.
 Since $G/G^0$ is simply connected by Proposition \ref{p.cell} (1), we conclude that $O \to G/G^0$ is biregular. In other words, the Fano manifold  $X$ of Picard number 1 contains a Zariski-open subset biregular to $G/G^0$. Thus $X$ is biregular to a symplectic Grassmannian or an odd-symplectic Grassmannnian by Theorem \ref{t.SpPic}. \end{proof}

\begin{proof}[Proof of Theorem \ref{t.deform}]
The central fiber $X:= \pi^{-1}(0)$ is a Fano manifold of Picard number 1.
By Theorem \ref{t.recog}, it suffices to show that for a suitable choice of $\sK$ on $X$,  the VMRT $\sC_x \subset \BP T_x X$ at a general point  $x \in X$ is isomorphic to $\sC_s \subset \BP T_s S$.
  This is exactly Proposition 3.5.2 of \cite{HM05}, which was stated for a symplectic Grassmannian $S$, but the proof given there works verbatim for an odd-symplectic Grassmannian. \end{proof}

\bigskip
{\bf Acknowledgment} We would like to thank Andi \v{C}ap, Boris Doubrov, Jaehyun Hong, Shin-Young Kim and Tohru Morimoto for valuable discussions on Tanaka theory. We are grateful to Baohua Fu and Nagiming Mok for all the help and  encouragement over the years.

\bigskip
Korea Institute for Advanced Study,
Seoul, Republic of Korea

jmhwang@kias.re.kr

qifengli@kias.re.kr
\end{document}